\definecolor{bgcolor}{rgb}{0.8,1,1}
\definecolor{bgcolor2}{rgb}{0.8,1,0.8}
\definecolor{mydarkgreen}{RGB}{39,130,67}
\definecolor{mydarkred}{RGB}{192,47,25}
\newcommand{\green}{\color{mydarkgreen}}
\newcommand{\red}{\color{mydarkred}}
\newcommand{\cmark}{{\green\ding{51}}}
\newcommand{\xmark}{{\red\ding{55}}}
\newtheorem{theorem}{Theorem}
\newtheorem{proposition}{Proposition}
\newtheorem{lemma}{Lemma}
\newtheorem{corollary}{Corollary}
\newtheorem{assumption}{A}
\newcommand{\R}{\ensuremath{\mathbb{R}}}
\newcommand{\Prob}{\ensuremath{\mathbb{P}}}
\newcommand{\EE}{\mathbb{E}}
\newcommand{\E}{\mathbb{E}}
\newcommand{\EEE}[1]{\mathbb{E}\left[#1\right]}
\def\<#1,#2>{\left\langle #1,#2 \right\rangle}
\def\KL{\operatorname{KL}}
\def\batchbound{M}
\newcommand{\indiacc}[1]{\mathbbm{1}_{\{#1\}}}
\newcommand{\tvnorm}[1]{\| #1 \|_{\operatorname{TV}}}
\newcommand{\PEcoupling}[2]{\tilde{\mathbb{E}}_{#1,#2}}
\newcommand{\PPcoupling}[2]{\tilde{\mathbb{P}}_{#1,#2}}
\newcommand{\PP}{\mathbb{P}}
\def\metricz{\mathsf{d}_{\Zset}}
\def\rset{\mathbb{R}}
\def\barf{\bar{f}}
\def\cmax{\sigma}
\def\dmax{\delta}
\def\Dmax{\Delta}
\def\nset{\ensuremath{\mathbb{N}}}
\def\nsets{\ensuremath{\mathbb{N}^*}}
\def\Zset{\mathsf{Z}}
\def\Zsigma{\mathcal{Z}}
\def\MKQ{\mathrm{Q}}
\def\MK{\mathrm{P}}
\def\Id{\mathrm{I}}
\newcommandx{\norm}[2][2=]{\Vert#1 \Vert_{{#2}}}
\def\taumix{\tau}
\newcommand{\dobrush}{\mathsf{\Delta}}
\newcommandx{\dobru}[3][1=,3=]{\dobrush_{#1}^{#3}( #2)}  %%% dobrushin coefficient
\def\nbiter{N}
\title{First Order Methods with Markovian Noise: from Acceleration to Variational Inequalities}
\author{%
  Aleksandr Beznosikov \\
  Innopolis University, Skoltech, MIPT, Yandex
  \And
  Sergey Samsonov\\
  HSE University
  \And
  Marina Sheshukova\\
  HSE University
  \And
  Alexander Gasnikov\\
  MIPT, Skoltech, IITP RAS
  \And
  Alexey Naumov\\
  HSE University
  \And
  Eric Moulines\\
  Ecole polytechnique
  % examples of more authors
  % \And
  % Coauthor \\
  % Affiliation \\
  % Address \\
  % \texttt{email} \\
  % \AND
  % Coauthor \\
  % Affiliation \\
  % Address \\
  % \texttt{email} \\
  % \And
  % Coauthor \\
  % Affiliation \\
  % Address \\
  % \texttt{email} \\
  % \And
  % Coauthor \\
  % Affiliation \\
  % Address \\
  % \texttt{email} \\
}
\begin{document}

\maketitle

\begin{abstract}
This paper delves into stochastic optimization problems that involve Markovian noise.
We present a unified approach for the theoretical analysis of first-order gradient methods for stochastic optimization and variational inequalities. Our approach covers scenarios for both non-convex and strongly convex minimization problems. To achieve an optimal (linear) dependence on the mixing time of the underlying noise sequence, we use the randomized batching scheme, which is based on the multilevel Monte Carlo method. Moreover, our technique allows us to eliminate the limiting assumptions of previous research on Markov noise, such as the need for a bounded domain and uniformly bounded stochastic gradients. Our extension to variational inequalities under Markovian noise is original. Additionally, we provide lower bounds that match the oracle complexity of our method in the case of strongly convex optimization problems.
\end{abstract}
\section{Introduction}
\vspace{-3mm}
Stochastic gradient methods are an essential ingredient for solving various optimization problems, with a wide range of applications in various fields such as machine learning \cite{goodfellow2014generative,GoodBengCour16}, empirical risk minimization problems \cite{van2000asymptotic}, and reinforcement learning \cite{Sutton1998,schulman2015trust,deeprl}. Various stochastic gradient descent methods (SGD) and their accelerated versions \cite{nesterov_accelerated,ghadimi2013stochastic} have been extensively studied under different statistical frameworks \cite{dieuleveut2017harder, vaswani2019fast}. The standard assumption for stochastic optimization algorithms is to consider independent and identically distributed noise variables. However, the growing usage of stochastic optimization methods in reinforcement learning \cite{bhandari2018finite, srikant2019finite, durmus2021stability} and distributed optimization \cite{lopes_sayed07, dimakis07, mao20} has led to increased interest in problems with Markovian noise. Despite this, existing theoretical works that consider Markov noise have significant limitations, and their analysis often results in suboptimal finite-time error bounds.
\par 
Our research aims to fill the gap in the existing literature on the first-order Markovian setting. By focusing on uniformly geometrically ergodic Markov chains, we obtain finite-time complexity bounds for achieving $\varepsilon$-accurate solutions that scale linearly with the mixing time of the underlying Markov chain. Our approach is based on careful applications of randomized batch size schemes and provides a unified view on both non-convex and strongly convex minimization problems, as well as variational inequalities.

\vspace{-1mm}

\textbf{Our contributions.} Our main contributions are the following:
\vspace{-1mm}

$\diamond$ \textbf{Accelerated SGD.} We provide the first analysis of SGD, including the Nesterov accelerated SGD method, with Markov noise without the assumption of bounded domain and uniformly bounded stochastic gradient estimates. Our results are summarised in \Cref{tab:comparison0} and \Cref{sec:acc-method} and cover both strongly convex and non-convex scenarios. Our findings for non-convex minimization problems complement the results obtained in \cite{dorfman2022adapting}.
\vspace{-1mm}

$\diamond$ \textbf{Lower bounds.} In \Cref{sec:lower_bounds_main} we give the lower bounds showing that the presence of mixing time in the upper complexity bounds is not an artefact of the proof. This is consistent with the results reported in \cite{nagaraj2020least}. 

$\diamond$ \textbf{Extensions.} In \Cref{sec:vi} we provide, as far as we know, the first analysis for variational inequalities with general stochastic Markov oracle, arbitrary optimization set, and arbitrary composite term. Our finite-time performance analysis provides complexity bounds in terms of oracle calls that scale linearly with the mixing time of the underlying chain, which is an improvement over the bounds obtained in \cite{wang2022stability} for the Markov setting.
\vspace{-4mm}

\paragraph{Related works.} Next, we briefly summarize the related works.
\vspace{-1mm}

$\diamond$ \textbf{Stochastic gradient methods.} 
Numerous research papers have reported significant improvements achieved by accelerated methods for stochastic optimization with stochastic gradient oracles involving independent and identically distributed (i.i.d.) noise. These methods have been extensively studied in theory \cite{hu2009accelerated,cotter2011better,devolder2011stochastic,lan2012optimal,lin2014smoothing,dvurechensky2016stochastic,gasnikov2018universal,vaswani2019fast,taylor2019stochastic,aybat2019universally,gorbunov2021near,woodworth2021even} and have shown practical success \cite{kingma2014adam, pmlr-v28-sutskever13}. The finite-time analysis of first-order methods in i.i.d. noise settings has been extensively studied by many authors, as discussed in \cite{lan20} and references therein. In \Cref{tab:comparison0} we include only some important results because i.i.d. setting is not in the interest of this paper.
\par 
While the literature on i.i.d. noise is extensive, existing research on the first-order Markovian setting is relatively sparse. In this study, we focus on Markov chains that are uniformly geometrically ergodic, and we refer the reader to \Cref{sec:main_res} for detailed definitions. We note that the complexity bounds which scale linearly with the mixing time of the underlying general Markov chain are currently available only for general convex and non-convex minimization problems. Namely, \cite{duchi2012ergodic} has investigated a version of the ergodic mirror descent algorithm that yields optimal convergence rates for Lipschitz, general convex and non-convex problems.
Recently, \cite{dorfman2022adapting} proposed a random batch size algorithm that adapts to the mixing time of the underlying chain for non-convex optimization with a compact domain. In particular, \cite[Theorem~4.3]{dorfman2022adapting} yields optimal complexity rates in terms of the number of oracle calls required for non-convex problems, which is consistent with the results obtained in \cite{duchi2012ergodic}. Unlike previous studies, this method is insensitive to the mixing time of the noise sequence.
\par 
For the general case of Markovian noise the finite-time analysis of non-accelerated SGD-type algorithms was carried out in \cite{sun2018markov} and \cite{doan23}. However, \cite{sun2018markov} heavily relies on the bounded domain assumption and uniformly bounded stochastic gradient oracles, while its bound in \cite[Theorem~5]{sun2018markov} has a suboptimal dependence on the mixing time of the underlying chain, see \Cref{tab:comparison0}. Additionally, \cite{sun2018markov} does not cover the strongly convex setting. On the other hand, \cite{doan23} covers both non-convex and strongly convex settings, but the bounds of \cite[Theorem~1]{doan23} has terms that are \emph{exponential} in the mixing time, and a careful examination reveals suboptimal dependence on the initial condition for strongly convex problems when SGD is applied.
\par 
In the study of Nesterov-accelerated SGD with Markovian noise, the authors of \cite{doan2020convergence} considered the use of a batch size of 1 and achieved a rate of forgetting the initial condition that matches that of the i.i.d. noise setting. However, their result is suboptimal in terms of the variance terms in both non-convex and strongly convex settings, as detailed in \Cref{tab:comparison0}. We emphasize that the case of unbounded gradient oracles with Markov noise is not treated in contrast to the i.i.d. setup
\cite{vaswani2019fast,liu2018accelerating}.
\par 
The above papers deal with general Markovian noise optimization. But there are also results that deal with Markovian stochasticity with a finite state space. Here we can highlight the work \cite{even2023stochastic}, where the author gives quite extensive results and achieves linear scaling by mixing time in the non-convex as well as strongly convex cases.
Recently, numerous papers have appeared dealing with the special scenario of distributed optimization \cite {pmlr-v162-sun22b}. 
% Distributed optimization imposes communication constraints, and direct comparison with the corresponding algorithms is not valid.
\cite{wang2022stability} investigates the generalization and stability of Markov SGD with special attention to the excess variance guarantees. We note that first, these algorithms only need to deal with a very special case of Markov gradients, and second, the corresponding dependence on the mixing time of the Markov chain is again quadratic. At the same time, there exist particular results, e.g. \cite{nagaraj2020least}, which provide a lower bound for the particular finite sum problems in the Markovian setting.
\par 
$\diamond$ \textbf{Variational inequalities.} Variational inequalities \cite{facchinei2007finite} have been an active area of research in applied mathematics for more than half a century \cite{NeumannGameTheory1944, HarkerVIsurvey1990,scutari2010vi}. VI cover important special cases, e.g., minimization over a convex domain, saddle point or min-max and fixed point problems. computational game theory \cite{facchinei2007finite}, robust \citep{BenTal2009:book} and nonsmooth \citep{nesterov2005smooth,nemirovski2004prox} optimization, supervised \cite{Thorsten, bach2011optimization} and unsupervised \cite{NIPS2004_64036755, bach2008convex} learning, image denoising \citep{esser2010general,chambolle2011first}. In the last 5 years, variational inequalities and their special cases have attracted much interest in the machine learning community due to new connections to reinforcement learning \cite{Omidshafiei2017:rl,Jin2020:mdp}, adversarial training \cite{Madry2017:adv}, and GANs \cite{daskalakis2017training,gidel2018variational,mertikopoulos2018optimistic,chavdarova2019reducing,pmlr-v89-liang19b,peng2020training}.

Variational inequalities (VI) and saddle point problems have their own well-established theory and methods. Unlike minimization problems, solving variational inequalities doesn't rely on (accelerated) gradient descent. Instead, the extragradient method \cite{korpelevich1976extragradient}, various modified versions \cite{nemirovski2004prox, hsieh2019convergence}, or similar techniques \cite{doi:10.1137/S0363012998338806} are recommended as the basic and theoretically optimal methods.
While deterministic methods have long been used for solving variational inequalities, stochastic methods have gained importance only in the last 15 years, following pioneering works by \cite{4610024, juditsky2011solving}.
We summarise the  results on methods for stochastic variational inequalities with the Lipschitz operator and smooth stochastic saddle point problems in \Cref{tab:comparison1}. 
The number of papers dealing with stochastic VIs and saddle point problems is small compared to those dealing with stochastic optimization, we include in  \Cref{tab:comparison1} papers with the i.i.d. noise (which we do not do for stochastic optimization).  The only competing work dealing with Markovian noise in saddle point problems consider the finite sum problem and thus the finite Markov chain \cite{wang2022stability}, therefore we do not include it in \Cref{tab:comparison1}. Moreover, the results from \cite{wang2022stability}
has much worse oracle complexity guarantees $\mathcal{O}(\taumix^2/\varepsilon^2)$ in terms of $\taumix$.
There are more papers dealing with stochastic finite-sum variational inequalities or saddle point problems, but in the i.i.d. setting \cite{chavdarova2019reducing, palaniappan2016stochastic, NEURIPS2020_0cc6928e, alacaoglu2022stochastic, beznosikov2023stochastic}. We also do not consider in  \Cref{tab:comparison1} because of the difference in the stochastic oracle structure.
It is important to note that, unlike most previous works, we consider the most general formulation of VI itself for an arbitrary optimization set and composite term.
\vspace{-4mm}
\paragraph{Notations and definitions.} Let $(\Zset,\metricz)$ be a complete separable metric space endowed with its Borel $\sigma$-field $\Zsigma$. Let $(\Zset^{\nset},\Zsigma^{\otimes \nset})$ be the corresponding canonical process. Consider the Markov kernel $\MKQ$ defined on $\Zset \times \Zsigma$, and denote by $\PP_{\xi}$ and $\E_{\xi}$ the corresponding probability distribution and the expected value with initial distribution $\xi$. Without loss of generality, we assume that $(Z_k)_{k \in \nset}$ is the corresponding canonical process. By construction, for any $A \in \Zsigma$, it holds that $\PP_{\xi}(Z_k \in A | Z_{k-1}) = \MKQ(Z_{k-1},A)$, $\PP_{\xi}$-a.s. If $\xi = \delta_{z}$, $z \in \Zset$, we write $\PP_{z}$ and $\E_{z}$ instead of $\PP_{\delta_{z}}$ and $\E_{\delta_{z}}$, respectively. For $x^{1},\ldots,x^{k}$ being the iterates of any stochastic first-order method, we denote $\mathcal{F}_{k} = \sigma(x^{j}, j \leq k)$ and write $\E_{k}$ as an alias for $\E[\cdot | \mathcal{F}_{k}]$. We also write $\nsets := \nset \setminus \{0\}$. For the sequences $(a_n)_{n \in \nset}$ and $(b_n)_{n \in \nset}$ we write $a_n \lesssim b_n$ if there exists a constant $c$ such that that $a_n \leq c b_{n}$ for all $n \in \nset$.

\vspace{-5mm}
%table for optimization with markovian noise
\begin{table}[h!]
\renewcommand{\arraystretch}{1.5}
    \centering
%    \small
    \scriptsize
\captionof{table}{This table summarizes our results on  first-order method with Markovian noise. The columns of the table indicate whether the authors consider optimization over bounded domain, potentially unbounded gradients, and whether or not they assume additional restrictions on the Markovian noise (finite state space or reversibility). For ease of comparison we provide the respective results on SGD and ASGD (accelerated SGD) in the i.i.d. setting.}
% \vspace{-0.2cm}
    \label{tab:comparison0}   
    \scriptsize
\resizebox{\linewidth}{!}{
  \begin{threeparttable}
    \begin{tabular}{|c|c|c|c|c|c|c|c|}
    \cline{3-4}
    \multicolumn{2}{c|}{} & \multicolumn{2}{c|}{\textbf{Unbounded}} & \multicolumn{4}{c}{} \\
    \cline{2-8}
    \multicolumn{1}{c|}{} & \textbf{Method} & \textbf{Domain}  & 
    \begin{tabular}{@{}c@{}}
    \textbf{Gradient}
    \\[-2mm]
    \textbf{noise}
    \end{tabular}
    & 
    \begin{tabular}{@{}c@{}}
    \textbf{General}
    \\[-2mm]
    \textbf{MC}
    \end{tabular} 
    &\textbf{Acceleration} 
    &
    \begin{tabular}{@{}c@{}}
    \textbf{Oracle complexity}
    \\[-2mm]
    \textbf{(Smooth and non-convex)}
    \end{tabular}
    & 
    \begin{tabular}{@{}c@{}}
    \textbf{Oracle complexity}
    \\[-2mm]
    \textbf{(Smooth and strongly convex)}
    \end{tabular} 
    \\
    \hline
    % \texttt{SMP} \cite{juditsky2011solving} & \cmark & \xmark & \xmark & \xmark &
    % \\
    % \hline
    \multirow{2}{*}{\rotatebox[origin=c]{90}{\textbf{i.i.d. }}} & \texttt{SGD} \cite{10.1214/aoms/1177729586, bachmoulines2011, ghadimi2016mini}  & \cmark  & \xmark  & N/A & \xmark  & $\mathcal{\tilde O} \left(L (f(x^0) - f(x^*)) \left[\frac{1}{\varepsilon^2}+ \frac{\cmax^2}{ \varepsilon^4}\right]\right)$ & $\mathcal{\tilde O}\left(\frac{L}{\mu}\log\frac{\| x^0 - x^*\|^2}{\varepsilon} + \frac{\sigma^2}{\mu^2\varepsilon}\right)$
    \\
    \cline{2-8}
    & \texttt{ASGD} \cite{vaswani2019fast,chen2020convergence} \tnote{{\color{blue}(1)}} & \cmark & \cmark & N/A & \cmark & $\mathcal{\tilde O} \left( L (f(x^0) - f(x^*)) \left[\frac{1 + \dmax^2}{\varepsilon^2}+ \frac{\cmax^2}{ \varepsilon^4}\right]\right)$ & $\mathcal{\tilde O}\left(\left(1 + \delta^2\right)\sqrt{\frac{L}{\mu}}\log\frac{\| x^0 - x^*\|^2}{\varepsilon} + \frac{\sigma^2}{\mu^2\varepsilon}\right)$
    \\
    \hline
    \hline
    \multirow{8}{*}{\rotatebox[origin=c]{90}{\textbf{Markovian \quad \quad \quad \quad}}} &
    \texttt{EMD} \cite{duchi2012ergodic}\tnote{{\color{blue}(2)}} & \xmark & \xmark & \cmark & \xmark & $\mathcal{\tilde O}\left( \frac{\taumix G^{2} D^{2}}{\varepsilon^{4}}\right)$ & \xmark 
    \\
    \cline{2-8}
    & \texttt{MC SGD} \cite{sun2018markov}\tnote{{\color{blue}(3)}} & \cmark & \xmark & \xmark & \xmark & $\mathcal{\tilde O}\left( h(G,L) \left(\tfrac{\taumix}{\varepsilon^{2}}\right)^{1/(1-q)}\right)$ & \xmark 
    \\
    \cline{2-8}
    &\texttt{MC SGD} \cite{doan23}\tnote{{\color{blue}(4)}} & \cmark & \cmark & \cmark & \xmark & $\mathcal{\tilde O}\left( \frac{\taumix L^2 (1 + \norm{x^{*}}^2 + \norm{x^{0}-x^*}^2)}{\varepsilon^{4}}\right)$ & $\mathcal{\tilde O} \left(e^{\taumix (L/\mu)^{2}}\left[ h(\tfrac{L}{\mu}) \log\frac{\norm{x^0-x^*}^2}{\varepsilon} + \frac{\taumix^{2} L^{2}(1 + \norm{x^*}^2)}{\mu^2 \varepsilon}\right]\right)$ \\
    \cline{2-8}
    & \texttt{ASGD}\cite{doan2020convergence}\tnote{{\color{blue}(5)}} & \xmark & \xmark & \xmark & \cmark & $\mathcal{\tilde O}\left( \tfrac{1}{\varepsilon^{4}}\left[B^{2} + G^6(L^2 \taumix^2 + 1)\right]\right)$ & $\mathcal{\tilde O} \left( \sqrt{\frac{L}{\mu}}\frac{\| x^0 - x^* \|^2}{\varepsilon^{1/2}} + \frac{\taumix^{2}(G^2 + \mu G D + \mu L D^2)}{\mu^{2} \varepsilon} \right)$ \\
    \cline{2-8}
    &\texttt{MAG} \cite{dorfman2022adapting}\tnote{{\color{blue}(6)}} & \cmark & \xmark & \cmark & \xmark & $\mathcal{\tilde O}\left( \frac{\taumix (G + L + B)^2G^2}{\varepsilon^{4}}\right)$ & \xmark  \\
    \cline{2-8}
    &\texttt{MC SGD} \cite{even2023stochastic} (Sec. 5.1) \tnote{{\color{blue}(7)}} & \cmark &\xmark & \xmark &\xmark & $\mathcal{O}\left( \frac{\tau (L(f(x^0) - f(x^*)) + \sigma^2)}{\varepsilon^2} + \frac{\tau (L(f(x^0) - f(x^*)) + \sigma^2) \sigma^2}{\varepsilon^4}\right)$ & $\mathcal{O} \left( \frac{\tau L}{\mu} \log \frac{(f(x^0) - f(x^*))/\mu + \sigma^2/(\mu L)}{\varepsilon} + \frac{\tau \sigma^2}{\mu^2 \varepsilon}\right)$ \\
    \cline{2-8}
    &\texttt{MC SGD} \cite{even2023stochastic} (Sec. 5.2) \tnote{{\color{blue}(8)}} & \cmark &\xmark & \xmark &\xmark & \xmark & $O \left( \frac{L}{\mu} \log \frac{\|x^0 - x^* \|^2}{\varepsilon} + \frac{L \tau \sigma^2_*}{\mu^3 \varepsilon}\right)$ \\
    \cline{2-8}
    & \cellcolor{bgcolor2}{\texttt{RASGD} (ours)} & \cellcolor{bgcolor2}{\cmark} & \cellcolor{bgcolor2}{\cmark} & \cellcolor{bgcolor2}{\cmark} & \cellcolor{bgcolor2}{\cmark} & \cellcolor{bgcolor2}{$\mathcal{\tilde O} \left(\taumix L (f(x^0) - f(x^*)) \left[\frac{1 + \dmax^2}{\varepsilon^2}+ \frac{\cmax^2}{ \varepsilon^4}\right]\right)$} & \cellcolor{bgcolor2}{$\mathcal{\tilde O} \left( \taumix \left[ (1 + \dmax^2) \sqrt{\frac{L}{\mu}} \log \frac{\norm{x^0 - x^*}^2}{\varepsilon} + \frac{\cmax^2}{\mu^2\varepsilon} \right] \right)$}
    \\
    \hline
    %%%%%%%%%%%%%%
    \end{tabular}   
    \begin{tablenotes}
    {\small  
    \item [] {\em notation:} $\mu$ and $L$ are as in \Cref{as:lipsh_grad} and \Cref{as:strong_conv},$G = \sup_{x,z} \norm{\nabla F(x,z)}$. Note that $G \geq L$ and $G^2 \geq \cmax^2$ under \Cref{as:bounded_markov_noise_UGE}. We also set $B = \sup_x |f(x)|$; $x^0$ - starting point, $x^*$ - solution, $\mathcal{D}$ - optimisation domain; $D = \sup_{x \in \mathcal{D}} \|x - x^*\|$, $\cmax$ and $\dmax$ - stochasticity parameters (see \Cref{as:bounded_markov_noise_UGE}); $\cmax_*$ - stochasticity parameter in $x^*$ ; $\taumix$ - mixing time of the chain (see \Cref{as:Markov_noise_UGE}), $\varepsilon$ - accuracy of the solution, measured as $\EE[\|\nabla f(x)\|^2] \lesssim \varepsilon^2$ for non-convex problems and $\EE[\|x - x^*\|^2] \lesssim \varepsilon$ for the strongly convex ones. Functions $h(L/\mu)$ and $h(G,L)$ stands for an implicit dependence of the respective parameters.
    \item[] \tnote{{\color{blue}(1)}}
    gives results with stepsize as a parameter, we choose it the close way as in our \Cref{cor:sample_complexity_accelerated}. \tnote{{\color{blue}(2)}} 
    %\cite{duchi2012ergodic} 
    covers more general noise setting, then just Markovian. \tnote{{\color{blue}(3)}} for general state-space Markov noise the analysis of \cite{sun2018markov} requires reversibility. Parameter $q \in (1/2;1)$ refers to the step size $\sim 1/k^{q}$. \tnote{{\color{blue}(4)}} The fluctuation terms in \cite[Theorem~1,3]{doan23} contain hidden dependence on the initial error and $\norm{x^*}$ in the fluctuation terms, making the result comparison complicated. They also contain hidden factors, which are exponential in $C  = \taumix / \log{4}$ in the notations of our paper. Moreover, the analysis of \cite{doan23} requires that $F(x,z)$ is Lipschitz w.r.t. $x$ for any $z \in \Zset$. \tnote{{\color{blue}(5)}} %\cite{doan2020convergence} 
    considers Markovian noise with finite state space and a specifically decreasing step size. Moreover, in the proof of \cite[Theorem~3]{doan2020convergence} (equations $(64)-(66)$) the authors lost the factor $C^{2}$, with $C = \taumix / \log{4}$. The result in the table accounts for this lost factor. \tnote{{\color{blue}(6)}} %\cite{dorfman2022adapting} 
    considers the adaptive tuning of batch size, which is oblivious to $\taumix$. 
    \tnote{{\color{blue}(7)}} considers Markov noise with finite state space and additionally assumes that all stochastic realization $F(\cdot, Z)$ are $L$-smooth.
    \tnote{{\color{blue}(8)}} considers Markovian noise with finite state space, $\sigma_*$ bounds noise only in $x^*$, but additionally assumes that all stochastic realization $F(\cdot, Z)$ are $L$-smooth and $\mu$-strongly convex.
    }
\end{tablenotes}    
    \end{threeparttable}
}
% \vspace{-0.1cm}
\end{table}

%table for variational inequalities
\begin{table}[h!]
\renewcommand{\arraystretch}{1.5}
    \centering
%    \small
    \scriptsize
\captionof{table}{This table summarizes the findings on methods for solving stochastic (strongly) monotone variational inequalities with a Lipschitz operator and (un)bounded stochasticity. The columns of the table indicate whether the authors consider variational inequalities or only certain saddle point problems, the arbitrariness of the sets, and the use of additional composite terms. The columns on stochasticity provide information on the assumptions made with respect to the stochastic operator, such as bounded variance and the Markovian noise setting. Note that with the exception of our work, all other studies assume the independent noise.} 
% \vspace{-0.2cm}
    \label{tab:comparison1}   
    \scriptsize
\resizebox{\linewidth}{!}{
  \begin{threeparttable}
    \begin{tabular}{|c|c|c|c|c|c|c|c|}
    \cline{3-7}
    \multicolumn{2}{c|}{} & \multicolumn{3}{c|}{\textbf{Statement}} & \multicolumn{2}{c|}{\textbf{Stochasticity}} & \multicolumn{1}{c}{}
    \\
    \cline{2-8}
    \multicolumn{1}{c|}{} &
    \textbf{Method} & \textbf{VI?} & \textbf{Any set?} & \textbf{Composite?} & \textbf{Unbounded?} & \textbf{Markovian?}  & \textbf{Oracle complexity} 
    \\
    \hline
    % \texttt{SMP} \cite{juditsky2011solving} & \cmark & \xmark & \xmark & \xmark &
    % \\
    % \hline
    \multirow{8}{*}{\rotatebox[origin=c]{90}{\textbf{Strongly monotone \quad \quad \quad }}} &
    \texttt{SPEG} \cite{gidel2018variational, hsieh2019convergence} & \cmark & \cmark & \xmark & \xmark & \xmark & $\mathcal{\tilde O}\left( \frac{L}{\mu} \log \frac{\| x^0 - x^*\|^2}{\varepsilon} + \frac{\sigma^2}{\mu^2 \varepsilon}\right)$ \tnote{{\color{blue}(1)}}
    \\
    \cline{2-8}
    &\texttt{SEG} \cite{kannan2019optimal} & \cmark & \cmark & \xmark & \xmark & \xmark & $\mathcal{\tilde O}\left( \frac{\| x^0 - x^* \|^2 }{ \varepsilon} +  \frac{B^2 + \sigma^2 + (B + \sigma)(1 + L D)}{ \sigma^2\varepsilon} \right)$
    \\
    \cline{2-8}
    &\texttt{SS-SEG} \cite{mishchenko2020revisiting, gorbunov2022stochastic} & \cmark & \cmark & \cmark & \cmark & \xmark & $\mathcal{\tilde O}\left( \frac{L}{\mu} \log \frac{\| x^0 - x^*\|^2}{\varepsilon} + \frac{\sigma^2_*}{\mu^2 \varepsilon} \right)$
    \\
    \cline{2-8}
    &\texttt{SEG} \cite{beznosikov2020distributed} & \xmark & \cmark & \xmark & \xmark & \xmark & $\mathcal{\tilde O}\left( \frac{L}{\mu} \log \frac{\| x^0 - x^*\|^2}{\varepsilon} + \frac{\sigma^2}{\mu^2 \varepsilon}\right)$
    \\
   \cline{2-8}
    &\texttt{DSEG} \cite{hsieh2020explore} & \cmark & \xmark & \xmark & \cmark & \xmark & $\mathcal{O}\left( \left[\frac{L^2 \sigma^2}{\mu^4 \varepsilon} \right]^3 \right)$ \tnote{{\color{blue}(2)}}
    \\
    \cline{2-8}
    &\texttt{UEG} \cite{gorbunov2022stochastic} & \cmark & \xmark & \xmark & \cmark & \xmark & $\mathcal{O}\left( \left(\frac{L + \Delta}{\mu} + \frac{\Delta^2}{\mu^2} \right) \log \frac{\| x^0 - x^*\|^2}{\varepsilon} + \frac{\sigma^2}{\mu^2 \varepsilon}\right)$
    \\
    \cline{2-8}
    &\texttt{SGDA} \cite{beznosikov2023stochastic} & \cmark & \cmark & \cmark & \xmark & \xmark & $\mathcal{O}\left( \frac{L^2}{\mu^2} \log \frac{\| x^0 - x^*\|^2}{\varepsilon} + \frac{\sigma^2}{\mu^2 \varepsilon}\right)$ \tnote{{\color{blue}(3)}}
    \\
    \cline{2-8}
    &\cellcolor{bgcolor2}{\texttt{REG} (ours)} & \cellcolor{bgcolor2}{\cmark} & \cellcolor{bgcolor2}{\cmark} & \cellcolor{bgcolor2}{\cmark} & \cellcolor{bgcolor2}{\cmark} & \cellcolor{bgcolor2}{\cmark} & \cellcolor{bgcolor2}{$\mathcal{\tilde O}\left( \taumix \cdot \left[\left(\frac{L + \Delta}{\mu} + \frac{\Delta^2}{\mu^2} \right) \log \frac{\| x^0 - x^*\|^2}{\varepsilon} + \frac{\sigma^2}{\mu^2 \varepsilon}\right] \right)$}
    \\
    \hline
    \hline
    \multirow{6}{*}{\rotatebox[origin=c]{90}{\textbf{Monotone \quad \quad \quad }}} &
    \texttt{SMP} \cite{juditsky2011solving} & \cmark & \cmark & \xmark & \xmark & \xmark & $\mathcal{O}\left( \frac{L D^2}{\varepsilon} + \frac{\sigma^2\Delta^2}{\varepsilon^2}\right)$
    \\
    \cline{2-8}
    &\texttt{VR-SEG} \cite{doi:10.1137/15M1031953} & \cmark & \cmark & \xmark & \cmark & \xmark & $\mathcal{O}\left( \frac{(\sigma + \Delta)^8 D^4}{\varepsilon^2} + \frac{D^4}{\varepsilon^2}\right)$
    \\
    \cline{2-8}
    &\texttt{IPM} \cite{iusem2019incremental} & \cmark & \cmark & \cmark & \cmark & \xmark & $\mathcal{O}\left( \mathcal{\tilde O}\left( \frac{L^4 D^4}{\varepsilon^2} + \frac{\sigma_*^2 D^4}{\varepsilon^2}\right) \right)$
    \\
    \cline{2-8}
    &\texttt{SS-SEG} \cite{mishchenko2020revisiting} & \cmark & \cmark & \cmark & \cmark & \xmark & $\mathcal{\tilde O}\left( \frac{L^2 D^4}{\varepsilon} + \frac{\sigma_*^4}{L^2 \varepsilon^2}\right)$
    \\
    \cline{2-8}
    &\texttt{SEG} \cite{beznosikov2020distributed} & \xmark & \cmark & \xmark & \xmark & \xmark & $\mathcal{O}\left( \frac{L D^2}{\varepsilon} + \frac{\sigma^2\Delta^2}{\varepsilon^2}\right)$
    % \\
    % \cline{2-8}
    % &\texttt{SGDA} \cite{wang2022stability} & \xmark & \cmark & \xmark & \xmark & \cmark & $\mathcal{O}\left( \taumix^2 \cdot \frac{G^4}{\varepsilon^2} + \frac{D^2}{\varepsilon^2}\right)$
    \\
    \cline{2-8}
    &\cellcolor{bgcolor2}{\texttt{REG} (ours)} & \cellcolor{bgcolor2}{\cmark} & \cellcolor{bgcolor2}{\cmark} & \cellcolor{bgcolor2}{\cmark} & \cellcolor{bgcolor2}{\cmark} & \cellcolor{bgcolor2}{\cmark} & \cellcolor{bgcolor2}{$\mathcal{\tilde O}\left( \taumix \cdot \left[\frac{L D^2}{\varepsilon} + \frac{\sigma^2 D^2}{\varepsilon^2} + \frac{\Delta^2 D^4}{\varepsilon^2} \right] \right)$}
    \\
    \hline
    %%%%%%%%%%%%%%
    \end{tabular}   
    \begin{tablenotes}
    {\small   
    \item [] {\em notation:} $\mu$ = constant of strong monotonicity of operator $F$, $L$ = Lipschitz constant of $F$, $B$ = uniform bound of $F$, $D$ = uniform bound of iterations $x^k$, $x^0$ = starting point, $x^*$ = solution, $\Delta$ and $\sigma$ = stochasticity parameters (see \Cref{as:bounded_markov_noise_UGE_op}, \cite{juditsky2011solving, gidel2018variational, hsieh2019convergence, kannan2019optimal, beznosikov2020distributed, beznosikov2023stochastic} take $\Delta = 0$), $\sigma_*$ = stochasticity parameter in $x^*$ (see \cite{mishchenko2020revisiting}), $\taumix$ = mixing time of the chain (see \Cref{as:Markov_noise_UGE}), $\varepsilon$ = accuracy of the solution.
    \item [] \tnote{{\color{blue}(1)}} give results with stepsize as a parameter, we choose it according to Section 3 from \cite{stich2019unified}.      
    \tnote{{\color{blue}(2)}} consider \Cref{as:bounded_markov_noise_UGE_op}, but do not provide explicit rates if $\Delta \neq 0$ (see also   \cite[Table~1]{gorbunov2022stochastic}). \tnote{{\color{blue}(3)}} consider the cocoercive case, for which in general $\ell = L^2/\mu$.    
    }
\end{tablenotes}    
    \end{threeparttable}
}
\vspace{-5mm}
\end{table}

\vspace{-6mm}
\section{Main results}
\label{sec:main_res}
\vspace{-3mm}
\textbf{Assumptions.} In this paper we study the minimization problem
\begin{equation}
\label{eq:erm}
\textstyle{
\min_{x \in \R^d} f(x) := \E_{Z \sim \pi} [F(x, Z)]
}\,,
\end{equation}
where the access to the function $f$ and its gradient is available only through the (unbiased) noisy oracle $F(x, Z)$ and $\nabla F(x, Z)$, respectively. In the following presentation we impose at least one of the following regularity constraint on the underlying function $f$ itself:
\begin{assumption}
\label{as:lipsh_grad}
The function $f$ is $L$-smooth on $\R^d$ with $L > 0$, i.e., it is differentiable and there is a constant $L > 0$ such that the following inequality holds for all $x,y \in \R^d$:
\begin{equation*}
    \label{L-smooth1}
    \norm{\nabla f(x) - \nabla f(y) } \leq L \norm{x-y }.
\end{equation*}
\end{assumption}

\begin{assumption}
\label{as:strong_conv}
The function $f$ is $\mu$-strongly convex on $\R^d$, i.e., it is continuously differentiable and there is a constant $\mu > 0$ such that the following inequality holds for all $x,y \in \R^d$:
\begin{equation}
 \label{eq:strong_conv}
 \textstyle{
 (\mu/2)\norm{x-y}^2 \leq f(x) - f(y) - \langle \nabla f(y), x-y \rangle
 }\,.
\end{equation}
\end{assumption}
\vspace{-2mm}
Next we specify our assumptions on the sequence of noise variables $\{Z_i\}_{i=0}^{\infty}$. We consider here the general setting of $\{Z_i\}_{i=0}^{\infty}$ being a time-homogeneous Markov chain. Such problems naturally arise in stochastic optimization. In the empirical risk minimization problems it naturally appears in the context of non-random minibatch choice. Indeed, a random choice of a batch number may lose to a non-random one, see \cite{mishchenko2020random, koloskova2023shuffle}. A wide range of problems dealing with Markovian noise is spawned by the reinforcement learning methods. The usual MDP setting falls naturally inside this paradigm, moreover, the analysis of non-tabular RL problems requires to deal with the general state-space Markov noise. Here the potential range of applications include the policy evaluation methods, such as the temporal difference methods \cite{sutton1988learning}, and policy optimization algorithms, such as policy gradient family, e.g. the celebrated REINFORCE algorithm \cite{williams1992simple}.

We denote by $\MKQ$ the Markov kernel corresponding to the sequence $\{Z_i\}_{i=0}^{\infty}$ and impose the following assumption on the mixing properties of $\MKQ$:

\begin{assumption}
\label{as:Markov_noise_UGE}
$\{Z_i\}_{i=0}^{\infty}$ is a stationary Markov chain on $(\Zset,\Zsigma)$ with Markov kernel $\MKQ$ and unique invariant distribution $\pi$. Moreover, $\MKQ$ is uniformly geometrically ergodic with mixing time $\taumix \in \nset$, i.e., for every $k \in \nset$,
\begin{equation}
\label{eq:drift-condition}
 \textstyle{
 \dobru{\MKQ^k} = \sup_{z,z' \in \Zset} (1/2) \norm{\MKQ^k(z, \cdot) - \MKQ^k(z',\cdot)}[\mathsf{TV}] \leq (1/4)^{\lfloor k / \taumix \rfloor}
 }\,.
\end{equation}
\end{assumption}
\vspace{-2mm}
The assumption \Cref{as:Markov_noise_UGE} is classical in the literature on optimization methods with Markovian noise and has been considered in particular in recent works \cite{sun2018markov,dorfman2022adapting,doan2020convergence}. In particular, this assumption covers finite state-space Markov chains with irreducible and aperiodic transition matrix considered in \cite{even2023stochastic}. Yet our definition of the mixing time $\taumix$ is more classical in the probability literature \cite{paulin_spectral}, and is slightly different from the one considered e.g. in \cite{even2023stochastic,mao20}. Next we specify our assumptions on stochastic gradient:

\begin{assumption}
\label{as:bounded_markov_noise_UGE}
For all $x \in \rset^{d}$ it holds that $\E_{\pi}[\nabla F(x, Z)] = \nabla f(x)$. Moreover, for all $z \in \Zset$ and $x \in \rset^{d}$ it holds that
\begin{equation}
\label{eq:growth_condition}
\norm{\nabla F(x, z) - \nabla f(x)}^{2} \leq \cmax^{2} + \dmax^{2} \| \nabla f(x) \|^{2}\,.
\end{equation}
\end{assumption}
\vspace{-2mm}
The assumption \Cref{as:bounded_markov_noise_UGE} resembles the strong growth condition \cite{vaswani2019fast}, which is classical for the overparametrized learning setup \cite{vaswani2019fast, DBLP:journals/corr/abs-1905-09997}. The main difference is that \Cref{as:bounded_markov_noise_UGE} concerns the almost sure bound in \eqref{eq:growth_condition}, which is unavoidable when dealing with uniformly geometrically ergodic Markovian noise \Cref{as:Markov_noise_UGE}. Note that it is possible that the quantity $\delta^2$ in \eqref{eq:growth_condition} is not instance-independent and scales with the ratio $L/\mu$ from \Cref{as:lipsh_grad}-\Cref{as:strong_conv} in the particular problems. With the assumptions \Cref{as:Markov_noise_UGE} and \Cref{as:bounded_markov_noise_UGE} we can prove the result on the mean squred error of the stochastic gradient estimate computed over batch size $n$ under arbitrary initial distribution. This result is summarized below in \Cref{lem:tech_markov}:
\begin{lemma}
\label{lem:tech_markov}
Assume \Cref{as:Markov_noise_UGE} and \Cref{as:bounded_markov_noise_UGE}. Then, for any $n \geq 1$ and $x \in \rset^{d}$, it holds that
\begin{equation}
\label{eq:var_bound_stationary}
\textstyle{
\E_{\pi}[\norm{n^{-1}\sum\nolimits_{i=1}^{n}\nabla F(x, Z_{i}) - \nabla f(x)}^2]
    \leq
    \frac{8 \taumix}{n} \left( \cmax^2 + \dmax^2 \| \nabla f(x) \|^2 \right)
}\,.
\end{equation}
Moreover, for any initial distribution $\xi$ on $(\Zset,\Zsigma)$, that
\begin{equation}
\label{eq:var_bound_any}
\textstyle{
\E_{\xi}[\norm{n^{-1}\sum\nolimits_{i=1}^{n}\nabla F(x, Z_i) - \nabla f(x)}^2] \leq \frac{C_{1} \taumix}{n} \left( \cmax^2 + \dmax^2 \| \nabla f(x) \|^2 \right)
}\,,
\end{equation}
where $C_{1} = 16(1 + \frac{1}{\ln^{2}{4}})$.
\end{lemma}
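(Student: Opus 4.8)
The plan is to reduce both inequalities to one second-moment expansion whose cross terms are controlled by the mixing rate of \Cref{as:Markov_noise_UGE}. Fix $x$ and abbreviate $h(z) := \nabla F(x,z) - \nabla f(x)$ and $V := \cmax^2 + \dmax^2 \|\nabla f(x)\|^2$. From \Cref{as:bounded_markov_noise_UGE} I will use two facts repeatedly: the \emph{pointwise} bound $\|h(z)\|^2 \le V$ for every $z \in \Zset$, and the centering $\int h \, d\pi = 0$ (since $\E_\pi[\nabla F(x,Z)] = \nabla f(x)$).

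The core step is a contraction estimate for the conditional mean of $h$. For any $z$ and $k \ge 1$ the Markov property gives $\E[h(Z_k) \mid Z_0 = z] = \MKQ^k h(z) := \int h(z') \MKQ^k(z, dz')$, and because $\pi(h)=0$ I may rewrite this as $\MKQ^k h(z) = \int h(z')\,(\MKQ^k(z,\cdot) - \pi)(dz')$. Bounding the vector integral by the pointwise bound on $h$ against the total-variation norm of the signed measure yields $\|\MKQ^k h(z)\| \le \sqrt{V}\,\norm{\MKQ^k(z,\cdot) - \pi}[\mathsf{TV}]$. Since $\pi = \pi \MKQ^k$, averaging the ergodicity bound of \Cref{as:Markov_noise_UGE} over the second point mass gives $\norm{\MKQ^k(z,\cdot) - \pi}[\mathsf{TV}] \le 2\,\dobru{\MKQ^k} \le 2\,(1/4)^{\lfloor k/\taumix \rfloor}$, hence $\|\MKQ^k h(z)\| \le 2\sqrt{V}\,(1/4)^{\lfloor k/\taumix \rfloor}$ uniformly in $z$. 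The crucial observation is that this estimate depends only on the Markov property and on $\pi(h)=0$, so it is \emph{insensitive to the initial law} of the chain.

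For \eqref{eq:var_bound_stationary} I expand $\E_\pi[\|\sum_{i=1}^n h(Z_i)\|^2]$ into $n$ diagonal terms, each at most $V$, and off-diagonal covariances. Conditioning the $(i,j)$ term ($i<j$) on $Z_i$ and applying the core estimate with $k=j-i$ gives $|\E_\pi[\langle h(Z_i), h(Z_j)\rangle]| \le 2V\,(1/4)^{\lfloor (j-i)/\taumix \rfloor}$. Summing over pairs, using the block evaluation $\sum_{k \ge 1} (1/4)^{\lfloor k/\taumix \rfloor} \le \taumix \sum_{m \ge 0} (1/4)^m = \tfrac{4}{3}\taumix$, and dividing by $n^2$ produces the bound $\tfrac{V}{n}\bigl(1 + \tfrac{16}{3}\taumix\bigr)$; since $\taumix \ge 1$ the bracket is at most $8\taumix$, which is \eqref{eq:var_bound_stationary}. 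Because the covariance estimate above is initialization-independent, the identical expansion applies verbatim to an arbitrary initial distribution $\xi$, so in fact \eqref{eq:var_bound_any} holds already with the sharp constant $8\taumix$; the marginally larger $C_1 = 16(1 + 1/\ln^2 4)$ is what one obtains from a more modular derivation that first splits off the transient mean $\E_\xi[n^{-1}\sum_i h(Z_i)]$, bounds it through $\|\E_\xi[h(Z_i)]\| \le \sqrt{V}\,\norm{\xi\MKQ^i - \pi}[\mathsf{TV}] \le 2\sqrt{V}(1/4)^{\lfloor i/\taumix \rfloor}$, and then sums the mixing series via the exponential envelope $(1/4)^{\lfloor k/\taumix \rfloor} \le 4\,e^{-k \ln 4 / \taumix}$; the rate $\ln 4/\taumix$ of the two geometric sums (one linear, one from squaring the bias) is precisely what generates the factors $1/\ln 4$ and $1/\ln^2 4$.

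I expect the only genuine obstacle to be the core contraction estimate: correctly converting the uniform-ergodicity bound of \Cref{as:Markov_noise_UGE} — stated as a contraction between two \emph{point-mass} initializations — into decay of $\|\MKQ^k h(z)\|$ toward $\pi(h)=0$ for a bounded \emph{vector-valued} observable, and tracking the total-variation normalization (the factor $2$ relating $\dobru{\MKQ^k}$ to $\norm{\MKQ^k(z,\cdot) - \pi}[\mathsf{TV}]$). Everything else is a bookkeeping of the geometric series, where the choice between the tight block summation (giving the sharp $8\taumix$) and the exponential envelope (giving the $\ln 4$-dependent constant) is the only point where the two stated constants diverge.
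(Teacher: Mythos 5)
Your proof is correct, but it takes a genuinely different route from the paper. The paper proves the stationary bound \eqref{eq:var_bound_stationary} by citing Paulin's pseudo-spectral-gap variance inequality (\cite[Proposition~3.4, Theorem~3.2]{paulin_spectral}), and then handles an arbitrary initial law $\xi$ by a maximal exact coupling with the stationary chain: Minkowski's inequality splits off a coupling term, which is controlled by $\sum_{i} i\,(1/4)^{\lfloor i/\taumix\rfloor}$ --- this is exactly where the $1/\ln^{2}4$ in $C_1$ comes from. You instead prove a single uniform contraction estimate $\|\MKQ^{k}h(z)\|\le 2\sqrt{V}\,(1/4)^{\lfloor k/\taumix\rfloor}$ (your handling of the total-variation normalization and of the identity $\pi=\pi\MKQ^{k}$ is consistent with the paper's convention $\dobru{\MKQ^k}=\sup_{z,z'}\tfrac12\tvnorm{\MKQ^k(z,\cdot)-\MKQ^k(z',\cdot)}$), and feed it into a direct diagonal/off-diagonal expansion of the second moment. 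Since that covariance bound is pointwise in $Z_i$ and hence initialization-independent, you get the constant $8\taumix$ (indeed $\tfrac{19}{3}\taumix$) \emph{uniformly over $\xi$}, which is strictly sharper than the paper's $C_1\taumix$ with $C_1=16(1+1/\ln^2 4)\approx 24.3$ and of course still implies \eqref{eq:var_bound_any}. The trade-offs: your argument is self-contained and avoids both the spectral-gap citation and the coupling construction, but it leans entirely on the almost-sure bound $\|\nabla F(x,z)-\nabla f(x)\|^2\le V$ from \Cref{as:bounded_markov_noise_UGE}; Paulin's route would survive with only an $L^2(\pi)$ moment bound on the gradient noise, and the coupling route is the standard tool when one wants non-stationary bounds without uniform boundedness. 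Under the assumptions actually in force here, your derivation is valid and arguably cleaner.
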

\vspace{-2mm}
\begin{proof}
We first prove \eqref{eq:var_bound_stationary}.  Note that due to \cite[Proposition~$3.4$]{paulin_spectral} the Markov kernel $\MKQ$ under \Cref{as:Markov_noise_UGE} admits a positive pseudospectral gap $\gamma_{ps} > 0$ such that
$1/{\gamma_{ps}} \leq 2\taumix$. Thus, applying the statement of \cite[Theorem~$3.2$]{paulin_spectral}, we get under \Cref{as:bounded_markov_noise_UGE} that
\[
\textstyle{
\E_{\pi}[\norm{n^{-1}\sum\nolimits_{i=1}^{n}\nabla F(x, Z_i) - \nabla f(x)}^2] \leq \frac{4 \E_{\pi}[\norm{\nabla F(x, Z_1) - \nabla f(x)}^2]}{n \gamma_{ps}} \leq \frac{8\taumix}{n}\left( \cmax^2 + \dmax^2 \| \nabla f(x) \|^2 \right)}\,.
\]
To prove the second part we use the maximal exact coupling construction and follow, e.g., \cite[Theorem~1]{moulines23Rosenthal}. The complete proof is given in \Cref{sec:proof:lem:tech_markov}.
\end{proof}
\vspace{-3mm}
The proof of \Cref{lem:tech_markov} simplifies the arguments in \cite[Lemma~4]{dorfman2022adapting} and allows us to obtain tighter values for the constants when dealing with the randomized batch size. Note that it is especially important to have the result for MSE under arbitrary initial distribution $\xi$, since in the proofs of our main results we will inevitably deal with the conditional expectations w.r.t. the previous iterate. We provide more details on the bias and variance of the Markov SGD gradients in the next section.

\vspace{-3mm}
\subsection{Accelerated method}
\label{sec:acc-method}
\vspace{-2mm}
We begin with a version of Nesterov accelerated SGD with randomized batch size, described in \Cref{alg:AGD_ASGD}.
Due to the unboundedness of the stochastic gradient variance (see \Cref{as:bounded_markov_noise_UGE}), using of the classical Nesterov accelerated method \cite[Section 2.2.]{nesterov2003introductory} does not give the desired result, it is necessary to introduce an additional momentum \cite{doi:10.1137/100802001, vaswani2019fast}. We use our own version, but partially similar to \cite{doi:10.1137/100802001, vaswani2019fast}.
The main feature of \Cref{alg:AGD_ASGD} is that the number of samples used during the $k$-th gradient computation scales as $2^{J_k}$, where $J_k$ comes from a truncated geometric distribution. The truncation parameter needs to be adopted (see \Cref{th:acc}) in order to control the computational complexity of the algorithm.    
%\vspace{-2mm}
{\tiny \begin{algorithm}[h!]
   \caption{\texttt{Randomized Accelerated GD}}
   \label{alg:AGD_ASGD}
\begin{algorithmic}[1]
\State {\bf Parameters:} stepsize $\gamma>0$, momentums $\theta, \eta, \beta, p$, number of iterations $\nbiter$, batchsize limit $\batchbound$
\State {\bf Initialization:} choose  $x^0  = x^0_f$
\For{$k = 0, 1, 2, \dots, \nbiter-1$}
    \State $x^k_g = \theta x^k_f + (1 - \theta) x^k$ \label{line_acc_1}
    \State Sample $\textstyle{J_k \sim \text{Geom}\left(1/2\right)}$
    \State
    \text{\small{ 
    $g^{k} = g^{k}_0 +
        \begin{cases}
        \textstyle{2^{J_k} \left( g^{k}_{J_k}  - g^{k}_{J_k - 1} \right)}, & \text{ if } 2^{J_k} \leq \batchbound \\
        0, & \text{ otherwise}
        \end{cases}
    $ \quad with \quad
    $
    \textstyle{g^k_j = 2^{-j} B^{-1} \sum\nolimits_{i=1}^{2^j B} \nabla f(x^{k}_g, Z_{T^{k} + i})}
    $
    }}
    \State    $\textstyle{x^{k+1}_f = x^k_g - p\gamma g^k}$ \label{line_acc_2}
    \State    $\textstyle{x^{k+1} = \eta x^{k+1}_f + (p - \eta)x^k_f + (1- p)(1 - \beta) x^k +(1 - p) \beta x^k_g}$ \label{line_acc_3}
    \State    $\textstyle{T^{k+1} = T^{k} + 2^{J_{k}} B}$\label{line_counter}
\EndFor
\end{algorithmic}
\end{algorithm}}
\vspace{-1mm}

Randomized batch size allows for efficient \emph{bias} reduction in the stochastic gradient estimates and can be seen as a particular case of the so called multilevel MCMC \cite{glynn2014exact, giles08}. In the optimization context this approach was successfully used by \cite{dorfman2022adapting} for the non-convex problems. Indeed, this bias naturally appears under the Markovian stochastic gradients oracles. It is easy to see that, with the counter $T^{k}$ defined in \Cref{line_counter}, we have
\[
\textstyle{\E_{k}[\nabla F(x^{k},Z_{T^{k}+i})] \neq \nabla f(x^{k})}\,.
\]
Below we show how the bias of the gradient estimate scales with the truncation parameter $M$. The statement of \Cref{lem:expect_bound_grad} yields that the gradient estimates $g_k$ introduced above have the bias, which decreases \emph{quadratically} with $M$.
\begin{lemma}
\label{lem:expect_bound_grad}
Assume \Cref{as:Markov_noise_UGE} and \Cref{as:bounded_markov_noise_UGE}. Then for the gradient estimates $g^k$ from \Cref{alg:AGD_ASGD} it holds that $\E_k[g^k] = \E_k[g^{k}_{\lfloor \log_2 \batchbound \rfloor}]$. Moreover, 
\begin{align*}
&\textstyle{\E_k[\| \nabla f(x^k) - g^k\|^2] \lesssim \bigl(\taumix B^{-1}\log_2 \batchbound + \taumix^2 B^{-2}\bigr)(\cmax^2 + \dmax^2 \| \nabla f(x^k)\|^2) }\,, \\
&\textstyle{\| \nabla f(x^k) - \E_{k}[g^k]\|^2 \lesssim \taumix^2 \batchbound^{-2}B^{-2} (\cmax^2 + \dmax^2 \| \nabla f(x^k)\|^2)}\,.
\end{align*}
\end{lemma}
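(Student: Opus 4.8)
The plan is to exploit the multilevel (MLMC) telescoping structure of $g^k$ together with the two regimes of \Cref{lem:tech_markov}. Throughout I condition on $\mathcal{F}_k$, so that the $\mathcal{F}_k$-measurable point $x^k_g$ at which the oracle is evaluated in \Cref{alg:AGD_ASGD} is deterministic (the displayed bounds are written with $\nabla f(x^k)$, but I work with the true evaluation point $x^k_g$), and I set $V := \cmax^2 + \dmax^2\|\nabla f(x^k_g)\|^2$ and $L := \lfloor\log_2\batchbound\rfloor$. Since $J_k$ is integer-valued, the truncation event $\{2^{J_k}\leq\batchbound\}$ equals $\{J_k\leq L\}$, and $\PP(J_k = j) = 2^{-j}$. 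Writing $h_j := \E_k[g^k_j]$, I would first compute $\E_k[g^k]$, using that, conditionally on $\mathcal{F}_k$, the index $J_k$ is independent of the continuing Markov trajectory while the $g^k_j$ are built from a nested prefix of that trajectory:
\begin{equation*}
\E_k[g^k] = h_0 + \sum_{j=1}^{L}\PP(J_k=j)\,2^{j}\,(h_j - h_{j-1}) = h_0 + \sum_{j=1}^{L}(h_j - h_{j-1}) = h_{L},
\end{equation*}
because $\PP(J_k=j)\,2^j = 1$ makes the sum telescope. This is exactly the identity $\E_k[g^k] = \E_k[g^k_{\lfloor\log_2\batchbound\rfloor}]$.

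For the bias bound I would write $h_{L} - \nabla f(x^k_g) = n^{-1}\sum_{i=1}^{n}\big(\E_k[\nabla F(x^k_g, Z_{T^k+i})] - \nabla f(x^k_g)\big)$ with $n = 2^{L}B$. The crucial input is the \emph{almost sure} bound from \Cref{as:bounded_markov_noise_UGE}, $\|\nabla F(x,z) - \nabla f(x)\| \leq \sqrt{V}$ for every $z$, which compensates for the unboundedness of the oracle. Since $\nabla f(x^k_g) = \E_\pi[\nabla F(x^k_g,Z)]$, each per-step bias equals $\int(\nabla F(x^k_g,z') - \nabla f(x^k_g))\,(\MKQ^i(Z_{T^k},\mathrm{d}z') - \pi(\mathrm{d}z'))$, which is bounded in norm by $\sqrt{V}\,\|\MKQ^i(Z_{T^k},\cdot)-\pi\|_{\mathsf{TV}} \leq 2\sqrt{V}\,\dobru{\MKQ^i} \leq 2\sqrt{V}\,(1/4)^{\lfloor i/\taumix\rfloor}$ by \Cref{as:Markov_noise_UGE}. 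Summing the geometric-type series $\sum_{i\geq1}(1/4)^{\lfloor i/\taumix\rfloor}\lesssim\taumix$ and using $n > \batchbound B/2$ yields $\|h_{L}-\nabla f(x^k_g)\|\lesssim \sqrt{V}\,\taumix/(\batchbound B)$; squaring gives the second displayed inequality.

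For the mean square bound I would use the split $\E_k\|\nabla f(x^k_g) - g^k\|^2 = \|\nabla f(x^k_g)-\E_k[g^k]\|^2 + \E_k\|g^k - \E_k[g^k]\|^2$. The first term is already controlled (crudely, $\lesssim \taumix^2 B^{-2}V$). For the variance I would bound $\E_k\|g^k-\E_k[g^k]\|^2 \lesssim \E_k\|g^k_0 - \nabla f(x^k_g)\|^2 + \sum_{j=1}^{L}\PP(J_k=j)\,2^{2j}\,\E_k\|g^k_j - g^k_{j-1}\|^2$, then apply $\|g^k_j - g^k_{j-1}\|^2 \leq 2\|g^k_j - \nabla f(x^k_g)\|^2 + 2\|g^k_{j-1}-\nabla f(x^k_g)\|^2$ together with the arbitrary-initial-distribution estimate \eqref{eq:var_bound_any} of \Cref{lem:tech_markov}, which gives $\E_k\|g^k_m - \nabla f(x^k_g)\|^2 \leq C_1\taumix V/(2^m B)$. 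The key cancellation is that $\PP(J_k=j)\,2^{2j} = 2^j$ exactly offsets the $2^{-j}$ decay of $\E_k\|g^k_j - g^k_{j-1}\|^2 \lesssim \taumix V/(2^j B)$, so each level contributes the same $\mathcal{O}(\taumix V/B)$, and summing over the $L \leq \log_2\batchbound$ admissible levels produces the announced $\taumix B^{-1}\log_2\batchbound$ factor; adding the bias term supplies the extra $\taumix^2 B^{-2}$.

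The main obstacle is the careful handling of this MLMC cancellation: one must track that the exponential estimator weight $2^{J_k}$, re-weighted by the geometric law $\PP(J_k=j)=2^{-j}$, interacts with the $1/(2^jB)$ variance decay of the nested averages in precisely the way that turns a naively divergent sum $\sum_j 2^j$ into a bounded per-level contribution, giving only logarithmic (rather than polynomial) dependence on $\batchbound$ while keeping the bias quadratically small. A secondary technical point is justifying, in the $\mathcal{F}_k$-conditional Markovian setup, that \eqref{eq:var_bound_any} applies to each nested block with the (arbitrary) conditional law of $Z_{T^k}$ as initial distribution, and that it is the \emph{almost sure} growth bound of \Cref{as:bounded_markov_noise_UGE} — not merely a second-moment bound — that renders the per-step bias estimate valid for unbounded oracles.
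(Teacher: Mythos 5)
Your proposal is correct and follows essentially the same route as the paper's proof: the telescoping identity from $\PP(J_k=j)\,2^j=1$, the MLMC variance decomposition in which the weight $2^{2j}\,\PP(J_k=j)=2^j$ cancels against the $\mathcal{O}(\taumix V/(2^jB))$ per-level variance supplied by \Cref{lem:tech_markov} to produce the $\log_2\batchbound$ factor, and a bias bound of order $\taumix/(\batchbound B)$ for $g^k_{\lfloor\log_2\batchbound\rfloor}$ using $2^{\lfloor\log_2\batchbound\rfloor}\geq\batchbound/2$. The only cosmetic difference is that you bound the bias directly via the total-variation contraction in \eqref{eq:drift-condition} combined with the almost-sure bound of \Cref{as:bounded_markov_noise_UGE}, whereas the paper recycles the coupling term of order $\taumix^2/n^2$ from the proof of \Cref{lem:tech_markov}; both give the same estimate.
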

The proof and the statement with explicit constants are given in \Cref{sec:proof:lem:expect_bound_grad}. Note that the \Cref{lem:expect_bound_grad} is a natural counterpart of the deterministic bound \Cref{lem:tech_markov}. Moreover, it gives the idea of the trade-off between the parameters $B$ and $M$. Namely, the expected number of oracle calls to compute $g_k$ is $\mathcal{O}(B\log_2(M))$ with the bias scaling as $M^{-2}$. Thus the increase of $M$ drastically reduced the bias with only a logarithmic payment in variance. At the same time, gradient variance scales as $(\taumix/B)^2$, but the increase of $B$ is much more expensive for the computational cost of the whole procedure. Taking into account the considerations above, we can prove the following result:
\begin{theorem}
\label{th:acc}
Assume \Cref{as:lipsh_grad} -- \Cref{as:bounded_markov_noise_UGE}. Let problem \eqref{eq:erm}
be solved by \Cref{alg:AGD_ASGD}. Then for any $b \in \nsets$,  $\gamma \in (0; \tfrac{3}{4L}]$, and $\beta, \theta, \eta, p, \batchbound, B$ satisfying 
\begin{align*}
%\label{eq:acc_gamma}
&\textstyle{p \simeq (1 + (  1 +  \gamma L) [\delta^2 \taumix b^{-1} + \dmax^2 \taumix^2 b^{-2}])^{-1}, \quad 
\beta \simeq \sqrt{p^2 \mu \gamma}, \quad\eta \simeq \sqrt{\frac{1}{\mu \gamma}}},
\\
&\textstyle{\theta \simeq \frac{p \eta^{-1} - 1}{\beta p \eta^{-1} - 1}, \quad  M \simeq \max\{2; \sqrt{p^{-1} (1 + p/\beta)}\}, \quad B = \lceil b \log_2 \batchbound \rceil}\,,
\end{align*}
it holds that
\begin{multline}
\label{eq:conv_guarant_accelerated}
\textstyle{
    \EE\left[\|x^{N} - x^*\|^2 + \frac{6}{\mu} (f(x^{N}_f) - f(x^*)) \right]
    \lesssim 
    \exp\bigl( - N\sqrt{\frac{p^2 \mu \gamma}{3}}\bigr) \bigl[\| x^0 - x^*\|^2 + \frac{6}{\mu} (f(x^0) - f(x^*)) \bigr]}
    \\
    \textstyle{+ \frac{p \sqrt{\gamma}}{\mu^{3/2}} \left( \sigma^2 \taumix b^{-1} + \cmax^2  \taumix^2 b^{-2} \right)}\,.
\end{multline}
\end{theorem}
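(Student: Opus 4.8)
The plan is to establish a one-step contraction for a Lyapunov function adapted to the three-sequence structure of \Cref{alg:AGD_ASGD} and then iterate it. I would take as potential exactly the quantity on the left of \eqref{eq:conv_guarant_accelerated},
\[
\mathcal{V}_k := \|x^k - x^*\|^2 + \tfrac{6}{\mu}\bigl(f(x^k_f) - f(x^*)\bigr)\,,
\]
and aim for a recursion $\E_k[\mathcal{V}_{k+1}] \leq (1-\rho)\mathcal{V}_k + C\cdot(\text{noise floor})$ with $\rho \simeq \sqrt{p^2\mu\gamma/3}$, which is the accelerated rate once $\gamma \sim 1/L$. Note that the gradient in \Cref{alg:AGD_ASGD} is evaluated at $x^k_g$, so throughout I apply \Cref{lem:expect_bound_grad} at the point $x^k_g$.

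First I would process the three update lines. Applying $L$-smoothness (\Cref{as:lipsh_grad}) to $x^{k+1}_f = x^k_g - p\gamma g^k$ yields the descent bound $f(x^{k+1}_f) \leq f(x^k_g) - p\gamma\langle\nabla f(x^k_g), g^k\rangle + \tfrac{Lp^2\gamma^2}{2}\|g^k\|^2$. In parallel I would expand $\|x^{k+1}-x^*\|^2$ using the momentum update in \cref{line_acc_3} together with the definition $x^k_g = \theta x^k_f + (1-\theta)x^k$ from \cref{line_acc_1}, collecting the terms linear in $g^k$. Strong convexity (\Cref{as:strong_conv}) evaluated at $x^k_g$ then supplies both the negative curvature term $-\tfrac{\mu}{2}\|x^k_g - x^*\|^2$ and the linear term $\langle\nabla f(x^k_g), x^k_g - x^*\rangle$ needed to drive the contraction. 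The role of $\theta,\eta,\beta,p$ is to make the deterministic part of this combination close under $k \mapsto k+1$; choosing them as in the statement forces the $\langle\nabla f(x^k_g),\cdot\rangle$ cross-terms to telescope into $\mathcal{V}$, leaving a residual of the form $-\rho\,\mathcal{V}_k$ plus stochastic remainders.

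The stochastic core is the decomposition $g^k = \nabla f(x^k_g) + (g^k - \E_k[g^k]) + (\E_k[g^k] - \nabla f(x^k_g))$ into true gradient, mean-zero fluctuation, and bias. Taking $\E_k$, the fluctuation contributes through the second-moment bound of \Cref{lem:expect_bound_grad}, while the bias cross-terms are controlled by Young's inequality and the bias bound of the same lemma, which is $\mathcal{O}(\taumix^2 M^{-2}B^{-2})$ and hence negligible for the stated $M$. The delicate point is that both bounds carry the multiplicative term $\delta^2\|\nabla f(x^k_g)\|^2$, characteristic of the unbounded-gradient regime. This is absorbed by the choice $p \simeq (1 + (1+\gamma L)[\delta^2\taumix b^{-1} + \delta^2\taumix^2 b^{-2}])^{-1}$: after the variance is multiplied by $p^2\gamma^2$, the $\delta^2$-proportional part is dominated by the negative $-p\gamma\|\nabla f(x^k_g)\|^2$ term produced by the smoothness and strong-convexity steps. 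With $B = \lceil b\log_2 M\rceil$ the surviving variance collapses to $\mathcal{O}((\sigma^2 + \delta^2\|\nabla f\|^2)(\taumix b^{-1} + \taumix^2 b^{-2}))$, so only the $\sigma^2$-proportional part persists as an irreducible floor.

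Finally I would iterate the one-step recursion from $k=0$ to $N-1$, using $(1-\rho)^N \leq \exp(-\rho N)$ for the initial-condition term and summing $\sum_k (1-\rho)^k \leq 1/\rho$ for the floor; substituting $\rho \simeq p\sqrt{\mu\gamma}$ and $\gamma \lesssim 1/L$ turns the per-step noise $\tfrac{6}{\mu}Lp^2\gamma^2\sigma^2\taumix b^{-1}$ divided by $\rho$ into precisely $\tfrac{p\sqrt\gamma}{\mu^{3/2}}(\sigma^2\taumix b^{-1} + \sigma^2\taumix^2 b^{-2})$. The main obstacle I anticipate is the bookkeeping of the second paragraph: verifying that the specific algebraic relations among $\theta,\eta,\beta,p$ make the momentum recursion exactly self-consistent \emph{while} keeping the multiplicative-noise terms absorbable. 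This coupling between the acceleration parameters and the noise model is exactly what separates the Markovian, unbounded-gradient analysis from the classical accelerated proof.
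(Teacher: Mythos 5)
Your proposal follows essentially the same route as the paper's proof: the same Lyapunov function $\|x^k-x^*\|^2+\tfrac{6}{\mu}(f(x^k_f)-f(x^*))$, the same combination of the smoothness descent step at $x^{k+1}_f=x^k_g-p\gamma g^k$ with strong convexity evaluated against $x^*$ and $x^k_f$, the same algebraic cancellation of the $\langle\nabla f(x^k_g),\cdot\rangle$ cross-terms via the relations among $\theta,\eta,\beta,p$, the same bias/fluctuation decomposition of $g^k$ controlled by \Cref{lem:expect_bound_grad}, and the same absorption of the $\dmax^2\|\nabla f(x^k_g)\|^2$ multiplicative noise into the negative gradient-norm term through the choice of $p$, followed by iterating the one-step contraction. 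The plan is correct and matches the paper's argument in all essential respects.
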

The proof is provided in \Cref{sec:proof:th:strongly_convex_upper_bound}.
The result of \Cref{th:acc} can be rewritten as an upper complexity bound under an appropriate choice of the remaining free parameter $b$:
\begin{corollary}
\label{cor:sample_complexity_accelerated}
Under the conditions of \Cref{th:acc}, choosing $b = \taumix$ and $\gamma$ as 
\begin{align*}
\textstyle{
        \gamma \simeq \min\left\{ \frac{1}{L}; \frac{1}{p^2 \mu \nbiter^2} \ln \left( \max\left\{ 2; \frac{\mu^2 \nbiter [\| x^0 - x^*\|^2 + 6 \mu^{-1} (f(x^0_f) - f(x^*))]}{\cmax^2} \right\}\right) \right\}}\,,
\end{align*}
in order to achieve $\varepsilon$-approximate solution (in terms of $\EE[\|x - x^*\|^2] \lesssim \varepsilon$) it takes 
\begin{equation}
\label{eq:sample_complexity_accelerated}
\textstyle{
\mathcal{\tilde O} \left( \taumix \left[ (1 + \dmax^2) \sqrt{\frac{L}{\mu}} \log \frac{1}{\varepsilon} + \frac{\cmax^2}{\mu^2\varepsilon} \right] \right)
} \quad \text{oracle calls}\,.
\end{equation}
\end{corollary}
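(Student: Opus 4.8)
\textbf{Proof proposal for \Cref{cor:sample_complexity_accelerated}.}
The plan is to take the convergence guarantee \eqref{eq:conv_guarant_accelerated} from \Cref{th:acc}, substitute the prescribed choices $b = \taumix$ and the stated form of $\gamma$, and then count oracle calls. First I would simplify the bias/variance factors appearing in \eqref{eq:conv_guarant_accelerated}. With $b = \taumix$, the two bracketed quantities collapse: $\delta^2 \taumix b^{-1} + \dmax^2 \taumix^2 b^{-2} = \delta^2 + \dmax^2 = 2\dmax^2$ inside the definition of $p$, so $p \simeq (1 + (1 + \gamma L)\dmax^2)^{-1}$; since $\gamma \leq 1/L$ gives $\gamma L \leq 1$, we get $p \simeq (1 + \dmax^2)^{-1}$, a constant depending only on $\dmax^2$. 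Likewise the fluctuation term $\sigma^2 \taumix b^{-1} + \cmax^2 \taumix^2 b^{-2} = \cmax^2 \taumix b^{-1} + \cmax^2 \taumix^2 b^{-2}$ (noting $\sigma = \cmax$) becomes $\cmax^2(1 + 1) = 2\cmax^2$ after setting $b = \taumix$, so the variance term in \eqref{eq:conv_guarant_accelerated} reduces to $\tfrac{p\sqrt{\gamma}}{\mu^{3/2}}\cmax^2$.

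Next I would balance the two terms in the error bound against the target accuracy $\varepsilon$. The bound reads $\EE[\|x^N - x^*\|^2] \lesssim \exp(-N\sqrt{p^2\mu\gamma/3})\,R_0 + \tfrac{p\sqrt{\gamma}}{\mu^{3/2}}\cmax^2$, where $R_0 = \|x^0 - x^*\|^2 + 6\mu^{-1}(f(x^0) - f(x^*))$. The stated stepsize $\gamma \simeq \min\{1/L;\ (p^2\mu N^2)^{-1}\ln(\max\{2; \mu^2 N R_0/\cmax^2\})\}$ is the standard ``switching'' step size: when the horizon $N$ is small the exponential term dominates and one uses $\gamma = 1/L$; when $N$ is large one shrinks $\gamma$ like $\log(N)/N^2$ so that the exponential is forced down to the scale of the fluctuation term. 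Plugging this $\gamma$ into the exponential factor, the choice of logarithm makes $\exp(-N\sqrt{p^2\mu\gamma/3})R_0 \lesssim \cmax^2/(\mu^2 N)$, so both terms are of order $\cmax^2/(\mu^2 N)$ up to logarithmic factors, and demanding this be $\lesssim \varepsilon$ yields $N \gtrsim \cmax^2/(\mu^2\varepsilon)$ in the fluctuation-dominated regime, while in the deterministic regime ($\gamma = 1/L$) one needs $N \gtrsim p^{-1}\sqrt{L/\mu}\log(R_0/\varepsilon) \simeq (1+\dmax^2)\sqrt{L/\mu}\log(1/\varepsilon)$. Taking the maximum of the two regimes gives the iteration count $N \simeq \tilde{\mathcal{O}}((1+\dmax^2)\sqrt{L/\mu}\log(1/\varepsilon) + \cmax^2/(\mu^2\varepsilon))$.

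Finally I would convert the iteration count into an oracle-call count. Each iteration $k$ draws $J_k \sim \mathrm{Geom}(1/2)$ and computes $g^k$ using $2^{J_k}B$ samples when $2^{J_k} \leq \batchbound$; the expected number of oracle calls per iteration is $\E[2^{J_k}B\,\indiacc{2^{J_k}\leq \batchbound}] = \mathcal{O}(B\log_2\batchbound)$ by the geometric weights (this is exactly the trade-off noted after \Cref{lem:expect_bound_grad}). With $B = \lceil b\log_2\batchbound\rceil = \lceil \taumix \log_2\batchbound\rceil$ and $\batchbound \simeq \max\{2;\sqrt{p^{-1}(1 + p/\beta)}\}$ a quantity polynomial in $L/\mu$ and $\dmax$, we have $\batchbound = \tilde{\mathcal{O}}(1)$ in the $\tilde{\mathcal{O}}$ sense, so the expected per-iteration cost is $\tilde{\mathcal{O}}(\taumix)$. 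Multiplying the iteration count by $\tilde{\mathcal{O}}(\taumix)$ produces the claimed total $\tilde{\mathcal{O}}(\taumix[(1+\dmax^2)\sqrt{L/\mu}\log(1/\varepsilon) + \cmax^2/(\mu^2\varepsilon)])$ oracle calls.

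The main obstacle I anticipate is the careful bookkeeping in the stepsize-balancing step: one must verify that with the logarithmically-tuned $\gamma$ the exponential term is genuinely driven below the fluctuation scale, and that the $\max$ inside the logarithm correctly prevents the argument from dropping below $1$ (handling the regime where $R_0$ is small). This is the place where the hidden logarithmic factors in $\tilde{\mathcal{O}}$ are generated and where a sloppy estimate could produce the wrong power of $\log(1/\varepsilon)$ or an incorrect dependence on $\dmax^2$. The oracle-counting via the geometric expectation, by contrast, is routine once the per-iteration expectation $\mathcal{O}(B\log_2\batchbound)$ is established.
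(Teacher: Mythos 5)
Your proposal is correct and follows essentially the same route the paper intends: substitute $b=\taumix$ so that $p\simeq(1+\dmax^2)^{-1}$ and the fluctuation factor collapses to $\cmax^2$, use the switching stepsize to balance the exponential and variance terms at scale $\tilde{\mathcal{O}}(\cmax^2/(\mu^2 N))$, and convert iterations to oracle calls via the expected per-iteration cost $\mathcal{O}(B\log_2\batchbound)=\tilde{\mathcal{O}}(\taumix)$ (the paper's appendix only writes out the decreasing-stepsize variant in \Cref{sec:decreas}, which performs the same balancing via \cite{stich2019unified}). The one bookkeeping point you flag is real: because the contraction exponent is $N\sqrt{p^2\mu\gamma/3}$, forcing the exponential below $\cmax^2/(\mu^2 N)$ strictly requires $\gamma\gtrsim \ln^2(\cdot)/(p^2\mu N^2)$ rather than a single power of the logarithm, but this only shifts hidden logarithmic factors and is shared with the paper's own statement of the stepsize.
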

\vspace{-2mm}
The results of Corollary \ref{cor:sample_complexity_accelerated} are obtained with fixed parameters of the method. In Corollary \ref{cor:sample_complexity_accelerated} these parameters are selected a bit artificially, e.g., the stepsize $\gamma$ depends on the iteration horizon $N$. In Appendix \ref{sec:decreas} we show how one can similar results, but with a decreasing stepsize.

\textbf{Comparison.} 
Running the procedure above requires to know the mixing time $\taumix$. Estimating the mixing time from a single trajectory of the running Markov chain is known to be computationally hard problem, see e.g. \cite{wolfer2019estimating} and references therein. At the same time, methods, which share the same (optimal) linear scaling of the sample complexity w.r.t. the mixing time also share the same drawback as our method. In particular, it holds true for the EMD algorithm \cite{duchi2012ergodic}, SGD-DD algorithm \cite{nagaraj2020least}, and usual SGD with Markovian data \cite{even2023stochastic}. At the same time, in the non-convex scanario the paper \cite{dorfman2022adapting} is truly oblivious to mixing time, allowing to obtain sample complexity rates for non--convex problems, which are homogeneous w.r.t. $\taumix$ with AdaGrad-type learning rate. An interesting direction for the future work to suggest a procedure that would allow to generalize the results of \cite{dorfman2022adapting} to accelerated SGD setting.

It is possible that the sample complexity bound \eqref{eq:sample_complexity_accelerated} is worse than the respective bounds for non-accelerated SGD with Markov data, provided that $\delta^2$ grows quickly with $L/\mu$. At the same time, this drawback is shared by the classical results on learning under the strong growth condition, see e.g. \cite{vaswani2019fast}. As it is shown in \cite{liu2018accelerating}, the respective rates can be worse than the ones obtained by usual SGD even under the i.i.d. noise setting, see Appendix F.3 in \cite{liu2018accelerating}. Making the analysis of accelerated SGD ‘backward compatible’ w.r.t. the rates of usual SGD requires to perform analysis in terms of additional problem-specific quantities, see \cite{jain2018accelerating, liu2018accelerating}.

The closest equivalent of the result \Cref{cor:sample_complexity_accelerated} is given by \cite[Theorem~3]{doan2020convergence}. However, the corresponding bound of \cite[Theorem~3]{doan2020convergence} is incomplete, since the factor $\taumix^2$ is lost in the proof (see equations $(64-66)$). With this completion, the bound of \cite[Theorem~3]{doan2020convergence} yields a variance term of order $\mathcal{\tilde O} \left( \frac{\cmax^2 \taumix^{2}}{\mu^2\varepsilon} \right)$, which is suboptimal with respect to $\taumix$. Moreover, the corresponding analysis relies heavily on the assumption of a bounded domain. In \cite{even2023stochastic}, the author considers Markovian noise with a finite number of states and manages to obtain a rather interesting result of the form $O \left( \frac{L}{\mu} \log \frac{1}{\varepsilon} + \frac{L \tau \sigma^2_*}{\mu^3 \varepsilon}\right)$. Here the first term does not depend on $\tau$, and the second consists only $\sigma^*$ (stochasticity in $x^*$), but the price for this is an additional factor $L/\mu$ in the second term and more strict assumption that all realizations $F(\cdot, z)$ are smooth and strongly convex.
In the context of overparameterized learning, our results are almost consistent with the bound of \cite[Theorem~1]{vaswani2019fast} under i.i.d. sampling. The difference is that the term $\dmax^2$ in \Cref{as:bounded_markov_noise_UGE} can be more pessimistic than the expectation bound in \cite{vaswani2019fast}. 
\vspace{-3mm}
\subsection{Lower bounds}
\label{sec:lower_bounds_main}
\vspace{-2mm}
We start with a lower bound for the complexity of Markovian stochastic optimization under the assumptions \Cref{as:lipsh_grad} --\Cref{as:bounded_markov_noise_UGE}. Below we provide a result that highlights that the bound of \Cref{th:acc} is tight provided that $\dmax$ does not scale with the instance-dependent quantities, e.g., condition number $L/\mu$. 

\begin{theorem}
\label{th:strongly_convex_lower_bound}
There exists an instance of the optimization problem satisfying assumptions \Cref{as:lipsh_grad} --\Cref{as:bounded_markov_noise_UGE} with $\dmax = 1$ and arbitrary $\sigma \geq 0, L, \mu > 0, \taumix \in \nsets$, such that for any first-order gradient method it takes at least 
\[
\textstyle{
N = \Omega\left(\taumix \sqrt{\frac{L}{\mu}} \log\frac{1}{\varepsilon} + \frac{\taumix \sigma^2}{\mu^2 \varepsilon}\right)
}
\]
oracle calls in order to achieve $\E[\norm{x^N - x^{*}}^2] \leq \varepsilon$. 
\end{theorem}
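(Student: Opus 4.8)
The plan is to establish the two summands separately and then combine them, since for any nonnegative $a,b$ one has $a+b \le 2\max\{a,b\}$. Thus it suffices to exhibit one hard instance on which any first-order method needs $\Omega(\taumix \sqrt{L/\mu}\log(1/\varepsilon))$ oracle calls and another on which it needs $\Omega(\taumix \sigma^2/(\mu^2\varepsilon))$ calls, and then to merge both features on a product (direct-sum) instance so that a single method must pay for both. Throughout, the role of the Markov chain is to \emph{dilute} the information carried by each oracle call by a factor $\taumix$, mirroring the batching bound of \Cref{lem:tech_markov}.

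For the optimization term $\taumix\sqrt{L/\mu}\log(1/\varepsilon)$ I would start from the classical worst-case quadratic: on $\R^d$ with $d$ large relative to the iteration budget, the tridiagonal strongly convex quadratic forces any first-order method to spend $\Omega(\sqrt{L/\mu}\log(1/\varepsilon))$ \emph{exact} gradient evaluations to reach accuracy $\varepsilon$. I would then build a Markov kernel $\MKQ$ on a finite state space of size $\Theta(\taumix)$ satisfying \Cref{as:Markov_noise_UGE} with mixing time $\taumix$, together with an oracle $F(x,z)$ whose gradient coincides with $\nabla f(x)$ only on a single ``informative'' state -- visited under $\pi$ roughly once every $\taumix$ steps -- and otherwise equals an uninformative vector lying in the ball $\{g : \norm{g - \nabla f(x)} \le \norm{\nabla f(x)}\}$, which is admissible since $\dmax=1$, $\sigma=0$. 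Because the informative state appears about $N/\taumix$ times among $N$ calls, the method effectively performs only $N/\taumix$ informative first-order steps, so the deterministic lower bound gives $N/\taumix \gtrsim \sqrt{L/\mu}\log(1/\varepsilon)$.

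For the statistical term $\taumix\sigma^2/(\mu^2\varepsilon)$ I would use a two-point (Le Cam) reduction. Take two $L$-smooth, $\mu$-strongly convex quadratics $f_{+},f_{-}$ whose minimizers satisfy $\norm{x^*_{+}-x^*_{-}} \asymp \sqrt{\varepsilon}$, and an oracle that, in the informative state, adds a signal of size $\asymp \mu\sqrt{\varepsilon}$ encoding the sign together with variance-$\sigma^2$ noise so that \Cref{as:bounded_markov_noise_UGE} holds. Using the same slowly-mixing kernel, I would bound the Kullback--Leibler divergence between the laws of the length-$N$ observation sequences under the two hypotheses; the key point is that a chain-rule decomposition combined with the mixing/regeneration structure yields $\KL \lesssim (N/\taumix)\cdot \mu^2\varepsilon/\sigma^2$ rather than the naive $N \cdot \mu^2\varepsilon/\sigma^2$. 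Requiring $\KL \lesssim 1$ for indistinguishability, and converting via the standard testing-to-estimation argument (any estimator $\varepsilon$-close to $x^*$ would identify the sign), yields $\E[\norm{x^N - x^*}^2] \gtrsim \varepsilon$ unless $N \gtrsim \taumix\sigma^2/(\mu^2\varepsilon)$.

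The main obstacle is the joint constraint in both constructions: producing an explicit kernel that \emph{exactly} realizes \Cref{as:Markov_noise_UGE} with the prescribed $\taumix$ while the associated oracle obeys the almost-sure growth condition \Cref{as:bounded_markov_noise_UGE} with $\dmax=1$ and the given $\sigma$, and -- most delicately -- proving the $1/\taumix$ dilution rigorously. For the statistical term this means establishing the tight trajectory-level KL bound: the naive per-step estimate loses the factor $\taumix$, so one must exploit ergodicity (e.g. a coupling or regeneration argument showing that only $\Theta(N/\taumix)$ observations are effectively ``fresh'') to contract the information by exactly the mixing time. Getting this factor sharp, rather than off by $\taumix$ or by spurious logarithmic factors, is the technical heart of the argument.
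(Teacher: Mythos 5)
Your plan follows essentially the same route as the paper: a direct-sum of two instances, with the deterministic term obtained from Nesterov's tridiagonal quadratic whose coordinates are revealed only as a slowly mixing two-state chain alternates (giving the $1/\taumix$ dilution), and the stochastic term from a Le Cam two-point argument whose trajectory-level KL is controlled via the chain rule and Pinsker's inequality, yielding exactly the factor-$\taumix$ gain you anticipate. The only cosmetic difference is that the paper encodes the two statistical hypotheses as two nearby Markov kernels (so the stationary mean of the noise shifts by $\asymp \alpha/\epsilon$, amplifying the separation) rather than as two nearby objective functions, but the mechanism and the resulting bound are the same.
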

The proof is provided in \Cref{sec:lower_bounds}. The idea of the constructed lower for deterministic part bound $\Omega\bigl(\taumix \sqrt{\frac{L}{\mu}} \log\frac{1}{\varepsilon}\bigr)$ goes back to \cite[Theorem~2.1.13]{nesterov2003introductory}. The stochastic part lower bound goes back to the classical statistical reasoning, and is well explained for i.i.d. noise in \cite[Chapter~4.1]{lan20}. Our adaptation for Markovian setting is based on Le Cam's theory, see \cite[Theorem~8]{aamari_levrard2019}, and also \cite{Yu1997}. For the case of Markov noise this lower bound is, to the best of our knowledge, original. The closest result to ours is the stochastic term lower bound in \cite[Proposition~1]{even2023stochastic}, but it is valid only for the vanilla stochastic gradient methods. Below we provide another lower bound showing that the dependence of the sample complexity \Cref{cor:sample_complexity_accelerated} on $\dmax$ is not an artefact of the proof. 
\begin{proposition}
\label{prop:regression_lower_bound}
There exists an instance of the optimization problem satisfying assumptions \Cref{as:lipsh_grad} --\Cref{as:bounded_markov_noise_UGE} with arbitrary $L, \mu > 0, \taumix \in \nsets$, $\dmax = \tfrac{L}{\mu}$, and $\cmax = 0$, such that for any first-order gradient method it takes at least 
\[
\textstyle{N = \Omega\left(\taumix \frac{L}{\mu} \log\frac{1}{\varepsilon}\right)}
\]
gradient calls in order to achieve $\E[\norm{x^N - x^{*}}^2] \leq \varepsilon$.
\end{proposition}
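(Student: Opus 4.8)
The plan is to exhibit a realizable least-squares (linear regression) instance driven by a slowly mixing finite-state chain, so that the noise vanishes at the optimum (forcing $\cmax = 0$) while the relative gradient fluctuation is exactly $\dmax = L/\mu$, and then to show that no first-order method can contract the error faster than a factor $1 - c\,\mu/(L\taumix)$ per oracle call. Concretely, I would take $f(x) = \tfrac12\,\E_{\pi}[(\langle a_Z,x\rangle - b_Z)^2]$ over a finite design $\{a_z\}$ indexed by the chain $\{Z_i\}$, with labels $b_z = \langle a_z, x^*\rangle$, so that every stochastic gradient $\nabla F(x,z) = a_z\langle a_z, x - x^*\rangle$ vanishes at $x^*$. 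The Hessian is $H = \E_\pi[a_Za_Z^\top]$; I would choose the design (in dimension two, with one rare ``heavy'' feature aligned with the minimal eigendirection) so that $H$ has spectrum in $[\mu,L]$, giving \Cref{as:lipsh_grad} and \Cref{as:strong_conv}, and so that $\sup_{x,z}\norm{\nabla F(x,z) - \nabla f(x)}^2 / \norm{\nabla f(x)}^2 = (L/\mu)^2$, i.e.\ exactly $\dmax = L/\mu$ with $\cmax = 0$ in \Cref{as:bounded_markov_noise_UGE}. Verifying these is a routine eigenvalue computation.

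The mixing time is injected through the transition kernel: I would use a nearly reducible two-block chain whose holding probabilities are tuned so that \eqref{eq:drift-condition} holds with the prescribed $\taumix$, exactly as in the companion \Cref{th:strongly_convex_lower_bound}. The point of near reducibility is that the informative (heavy) feature is revisited only once per $\Theta(\taumix)$ steps, so a run of $N$ oracle calls yields only $\Theta(N/\taumix)$ statistically fresh observations about the minimal eigendirection; this is the step that converts the per-sample rate into the overall factor $\taumix$.

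The core of the argument, and the part I expect to be the main obstacle, is the rate lower bound in $\dmax$, which must hold for \emph{every} first-order method, and in particular must rule out acceleration, since a purely deterministic quadratic of condition number $L/\mu$ already admits the faster $\sqrt{L/\mu}$ span rate. I would run a two-point (Le Cam) argument at geometrically decreasing scales: fix two instances with optima $x^*_\pm$ separated by $2s$, project onto the minimal eigendirection, and bound the total variation between the two gradient trajectories produced by one fresh observation. Because the fluctuation is multiplicative of relative size $\dmax$, these two trajectories stay statistically indistinguishable until $\Omega(L/\mu)$ fresh observations have been gathered, so the error cannot be driven below a constant fraction of $s^2$ within $o(L/\mu)$ fresh samples; telescoping this across $\log(1/\varepsilon)$ scales and re-inserting the factor $\taumix$ yields $N = \Omega(\taumix\,(L/\mu)\log(1/\varepsilon))$.

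The delicate points I anticipate are (i) controlling the total-variation distance under the Markovian (non-product) law of the observations, for which I would reuse the exact and maximal coupling machinery already invoked in the proof of \Cref{lem:tech_markov}; (ii) calibrating the heavy feature so that each scale genuinely costs $\Omega(L/\mu)$ fresh samples rather than $\Omega((L/\mu)^2)$, which fixes the linear (not quadratic) dependence on $\dmax$; and (iii) making the multi-scale reduction uniform over adaptive, history-dependent first-order methods, so that the lower bound is not restricted to SGD-type updates.
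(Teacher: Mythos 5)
Your instance is essentially the paper's: the paper also takes realizable Markovian least squares in dimension two, with $\varphi(1)=(1,0)$, $\varphi(-1)=(0,1)$ and a two-state kernel whose invariant law is $\pi=(1-1/Q,1/Q)$, so that $\Sigma^2=\operatorname{diag}(1-1/Q,1/Q)$, $L/\mu=Q$, $\cmax=0$ and $\dmax=Q-1$ (the rare state, not a large-norm feature, is what creates the small eigenvalue). Where you diverge is in how the hardness is established: the paper does \emph{not} prove the rate from scratch — after verifying \Cref{as:lipsh_grad}--\Cref{as:bounded_markov_noise_UGE} for this instance it invokes the information-theoretic lower bound of \cite{nagaraj2020least} as a black box to conclude $\E_{\pi}[\|x^k-x^*\|^2]\geq \exp\left(-ck/(Q\taumix)\right)$.

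The genuine gap is in your core step. A two-point Le Cam argument cannot deliver the factor $L/\mu$ here, because with $\cmax=0$ the oracle is noiseless conditionally on the state: observing $\nabla F(x,\varphi,Y)=\varphi\varphi^{\top}(x-x^*)$ at a known query $x$ with a known feature $\varphi$ reveals $\varphi^{\top}x^*$ \emph{exactly}. Two candidate instances with optima $x^*_{\pm}$ separated along the rare direction therefore become perfectly distinguishable after a \emph{single} fresh observation of the rare feature; the total variation between the two trajectory laws jumps to one at the first visit to the rare state, and no coupling refinement changes this. Your claim that the trajectories ``stay statistically indistinguishable until $\Omega(L/\mu)$ fresh observations have been gathered'' is false for this construction, and the telescoped bound would collapse to $\Omega(\taumix\log(1/\varepsilon))$. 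The $L/\mu$ factor in this proposition is not of Le Cam type at all: it reflects the restricted dynamics of gradient-span/SGD-type updates (the rare coordinate is only moved on rare visits, and each visit contracts it by at most a constant factor), which is exactly the content of the \cite{nagaraj2020least} bound the paper imports; Le Cam machinery is used in this paper only for the stochastic term (\Cref{lem:lower_bound_stochastic}), where $\cmax>0$ makes genuine indistinguishability possible. To repair your route you would either have to restrict the algorithm class explicitly, or randomize the feature conditionally on the state so that one observation does not identify the rare coordinate — and the latter must be done without violating $\cmax=0$ or changing $\dmax$, which is the real difficulty you have not addressed.
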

\vspace{-2mm}
This lower bound is adapted from the information-theoretic lower bound \cite{nagaraj2020least}. The detailed proof can be found in \Cref{sec:lower_bounds}. Recent studies \cite{kidambi2018insufficiency, nagaraj2020least, chen2020convergence} have revealed the impossibility of accelerating stochastic gradient descent (SGD) for online linear regression problems with specific noise structures. To address this issue, researchers have proposed various solutions, such as the MaSS algorithm \cite{liu2018accelerating} and the approach presented in \cite{JMLR:v18:16-595}. However, these methods rely heavily on the particular structure of the online regression setup. Another question that naturally arises is whether one can get rid of the dependence on $\taumix$ in the deterministic part of \eqref{eq:sample_complexity_accelerated} if $\dmax = 0$. The following counterexample shows that this is not the case in general.
\begin{proposition}
\label{prop:lower_bound_delta_0}
There exists an instance of the optimisation problem satisfying assumptions \Cref{as:lipsh_grad} --\Cref{as:bounded_markov_noise_UGE} with with arbitrary $L, \mu > 0, \taumix \in \nsets$, $\cmax = 1, \dmax = 0$, such that for any first-order gradient method it takes at least 
\[
\textstyle{
N = \Omega\left(\left(\taumix + \sqrt{\frac{L}{\mu}}\right) \log \frac{1}{\varepsilon} \right)
}
\]
oracle calls in order to achieve $\E[\norm{x^N - x^{*}}^2] \leq \varepsilon$.
\end{proposition}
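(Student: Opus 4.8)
The plan is to prove the two summands separately and then merge them into a single instance, using that $\Omega(a+b)=\Omega(\max\{a,b\})$ for $a,b\ge 0$. I would work on a product space $\R^{d_1}\times\R^{d_2}$ with a separable objective $f(x_1,x_2)=f_1(x_1)+f_2(x_2)$ and a product kernel $\MKQ=\MKQ_1\otimes\MKQ_2$, arranging that the first block forces the $\sqrt{L/\mu}\,\log(1/\varepsilon)$ term and the second forces the $\taumix\,\log(1/\varepsilon)$ term. Since a single oracle call returns the full gradient $\nabla F(x,z)=(\nabla F_1(x_1,z_1),\nabla F_2(x_2,z_2))$ and the two blocks live on orthogonal coordinates, both complexity lower bounds constrain the \emph{same} oracle-call count $N$, so $N$ must exceed the maximum of the two. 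I would place all of the stochasticity into the second block so that $\cmax=1,\dmax=0$ globally, let $f_1$ carry the condition number $L/\mu$ while $f_2$ has condition number $\Theta(1)$ (so $f$ is globally $L$-smooth and $\mu$-strongly convex), and make $\MKQ_1$ fast mixing so the product kernel inherits mixing time $\Theta(\taumix)$ under \Cref{as:Markov_noise_UGE} from its slow factor $\MKQ_2$.

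For the first block I would take Nesterov's worst-case quadratic (\cite[Theorem~2.1.13]{nesterov2003introductory}) with condition number $L/\mu$ and an \emph{exact} oracle on these coordinates; any first-order method is then confined to the expanding Krylov subspace of observed gradients and cannot beat the rate $(1-c\sqrt{\mu/L})^{2N}$, which gives $N=\Omega(\sqrt{L/\mu}\,\log(1/\varepsilon))$. For the second block I would use the one-dimensional $f_2(x)=(\mu/2)x^2$ together with a lazy two-state chain on $\{+1,-1\}$ whose holding probabilities are tuned so that its total-variation mixing time in \eqref{eq:drift-condition} is $\Theta(\taumix)$, and the perturbed oracle $\nabla F_2(x,s)=\mu x + s$, which is stationary-unbiased with $\cmax=1,\dmax=0$. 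The key quantitative input is that slow mixing inflates the effective variance of any batch average by the factor $\taumix$ — this is exactly the content of \Cref{lem:tech_markov}, and it is order-tight — so the information extractable per oracle call drops by $\taumix$ relative to the i.i.d. case. I would make this rigorous by a two-point/Le Cam argument in the spirit of \Cref{th:strongly_convex_lower_bound}: take two optima $\theta_{1},\theta_{2}$ with $|\theta_1-\theta_2|=2\rho$, couple their observation processes, and bound the total variation between the two trajectory laws after $N$ adaptive queries. A per-sample signal $\mu\rho$ against noise decorrelated only every $\taumix$ steps forces $N\gtrsim \taumix/(\mu\rho)^{2}$ to distinguish; setting $\rho^{2}\asymp\varepsilon$ yields $N=\Omega(\taumix/(\mu^{2}\varepsilon))$, which \emph{a fortiori} gives the claimed $\Omega(\taumix\log(1/\varepsilon))$. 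Combining the two blocks produces the stated $\Omega((\taumix+\sqrt{L/\mu})\log(1/\varepsilon))$.

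The main obstacle is the second block. First, the chain and oracle must be calibrated so that the mixing time is \emph{exactly} of order $\taumix$ under the paper's TV definition \eqref{eq:drift-condition} while keeping the almost-sure bound $\cmax=1,\dmax=0$; because the additive perturbation has bounded norm, the signal is never fully hidden when $\|\nabla f\|$ is large, so the argument must be run in the near-optimum regime where noise and signal are comparable. Second, the lower bound has to hold against \emph{arbitrary} first-order methods, including those that adaptively choose large randomized batch sizes as in \Cref{alg:AGD_ASGD}; this forces a careful chain-rule/data-processing control of the divergence between the two observation laws over the whole adaptively generated trajectory, rather than over a fixed query schedule. A secondary delicate point is interpretive: the cleanest route above produces a stronger statistical $\Omega(\taumix/\varepsilon)$ bound, and one must decide whether to present it as such or, to match the narrative preceding \Cref{cor:sample_complexity_accelerated} about the \emph{deterministic} part, to isolate the genuine $\log(1/\varepsilon)$ regime via a block/delay reduction in which each window of $\Theta(\taumix)$ consecutive samples yields at most one asymptotically unbiased read-out. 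Once the mixing term is secured, the merge with the Krylov bound of the first block is routine.
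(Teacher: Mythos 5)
Your proposal is correct as a proof of the literal statement, but it takes a genuinely different route from the paper's. The paper does not split the two terms across two blocks: it reuses the single Nesterov worst-case quadratic \eqref{eq:problem_form_hard} with the two-state kernel \eqref{eq:Mark_kernel_def_same} and lets the noise act only on the constant term of the gradient, via $\nabla F(x,Z) = \frac{\mu(Q-1)}{4}Ax - (1 + \indiacc{Z=-1} - \indiacc{Z=1})e_1 + \mu x$. Starting from $x^0=0$ and $Z_1=1$, the observed gradient is identically zero until the first state change at time $T_2$, so the error is lower bounded by $\tfrac12(1-q^2)^{-1}\bigl[(q^{2k}-q^{2d}) + (1-\epsilon)^{k-1}\bigr]$; the Krylov term and the waiting-time term $\PP(T_2\ge k)=(1-\epsilon)^{k-1}\ge \exp(-2k/(\taumix\log 4))$ then give both summands of the claimed bound from one instance, and crucially both are of $\log(1/\varepsilon)$ type. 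Your construction instead obtains the $\taumix$ term from a Le Cam two-point argument yielding $\Omega(\taumix/(\mu^2\varepsilon))$ — essentially \Cref{lem:lower_bound_stochastic} with $\cmax=1$ — and invokes $\taumix/(\mu^2\varepsilon)\gtrsim\taumix\log(1/\varepsilon)$ for small $\varepsilon$. That implication is valid and your product-kernel merge mirrors the paper's own combination step in \Cref{th:strongly_convex_lower_bound}, so the machinery you would need (data-processing over adaptive trajectories, mixing time of the Kronecker product) is exactly what the paper already develops there; your flagged obstacles are real but surmountable by those same arguments. What you lose relative to the paper is the intended content of \Cref{prop:lower_bound_delta_0}: it is meant to show that even with $\dmax=0$ the \emph{deterministic} ($\log(1/\varepsilon)$-type) part of the complexity cannot shed the factor $\taumix$, whereas your $\taumix$ term is inherited from the statistical variance term that is unavoidable anyway; the "block/delay" variant you sketch at the end is the one that would recover this, and the paper's masking construction is a clean way to implement it. What you gain is a numerically stronger bound for small $\varepsilon$ and maximal reuse of the Le Cam apparatus.
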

The proof is provided in \Cref{sec:lower_bounds}. %It remains an open question, whether one can construct a lower bound in the scenario of \Cref{prop:lower_bound_delta_0}, that scales as $\taumix \sqrt{\tfrac{L}{\mu}}$.
\vspace{-4mm}
\subsection{Non-convex problems}
\label{sec:non-cvx}
\vspace{-2mm}
Now we proceed with a randomized batch size version of the simple SGD algorithm. It is summarized in \Cref{alg:Rand-GD} and can be shown to achieve optimal rates of convergence for smooth non-convex problems. For the case of non-convex problems with Markov noise similar analysis appeared in \cite[Theorem~4]{dorfman2022adapting}. 

\vspace{-2mm}
\begin{algorithm}[h!]
\caption{\texttt{Randomized GD}}
\label{alg:Rand-GD}
\begin{algorithmic}[1]
\State {\bf Parameters:} stepsize $\gamma>0$, number of iterations $K$, bound on batchsize $B$, mixing time $\taumix$;
\State {\bf Initialization:} choose $x^0 \in \mathcal{X}$
\For{$k = 0, 1, 2, \dots, \nbiter-1$}
    \State Sample $J_k \sim \text{Geom}\left(\tfrac{1}{2}\right)$
    \State
    \text{\small{ 
    $
    g^{k} = g^{k}_0 +
    \begin{cases}
        2^{J_k} \left( g^{k}_{J_k}  - g^{k}_{J_k - 1} \right), & \text{ if } 2^{J_k} \leq \batchbound \\
        0, & \text{ otherwise}
    \end{cases}
    $ \quad with \quad
    $
    \textstyle{g^k_j = 2^{-j} B^{-1} \sum\nolimits_{i=1}^{2^j B} \nabla f(x^{k}, Z_{T^{k} + i})}
    $
    }}
    \label{line_gd_gk}
    \State $x^{k+1} = x^k - \gamma g^{k}$ \label{line_gd_3}
    \State $T^{k+1} = T^{k} +  2^{J_{k}} B$
\EndFor
\end{algorithmic}
\end{algorithm}
\vspace{-2mm}
By balancing the values of $B$ and $M$ with \Cref{lem:expect_bound_grad}, we establish the following result: 
\begin{theorem}
\label{th:non_convex_random_batch}
Assume \Cref{as:lipsh_grad},~\Cref{as:Markov_noise_UGE},~\Cref{as:bounded_markov_noise_UGE}. Let problem \eqref{eq:erm}
be solved by \Cref{alg:Rand-GD}. Let $f^*$ be a global (maybe not unique) minimum of $f$. Then for any $b \in \nsets$, and $\gamma$, $M$ satisfying
\begin{align*}
\gamma \lesssim (L [1 + \delta^2 \taumix b^{-1} + \dmax^2 \taumix^2 b^{-2}])^{-1},
\quad
M \simeq \max\{2;\sqrt{\gamma^{-1} L^{-1}}\}, \quad B = \lceil b \log_2 \batchbound \rceil,
\end{align*}
it holds that
\[
\textstyle{
\EE\left[ \frac{1}{\nbiter}\sum_{k=0}^{\nbiter-1}\| \nabla f(x^k)\|^2 \right] \lesssim \frac{f(x^0) - f^*}{\gamma \nbiter} + L \gamma  \cdot \left[ \sigma^2 \taumix b^{-1} +\cmax^2 \taumix^2 b^{-2} \right]}\,.
\]
\end{theorem}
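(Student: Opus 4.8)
The plan is to run the standard descent-lemma analysis for smooth non-convex optimization, paying the extra attention demanded by the fact that $g^k$ is a \emph{biased} estimate of $\nabla f(x^k)$. First I would apply $L$-smoothness (\Cref{as:lipsh_grad}) along the update $x^{k+1} = x^k - \gamma g^k$ to obtain
\[
f(x^{k+1}) \le f(x^k) - \gamma\langle\nabla f(x^k), g^k\rangle + \tfrac{L\gamma^2}{2}\norm{g^k}^2,
\]
and then take the conditional expectation $\E_k = \E[\cdot\mid\mathcal{F}_k]$. Writing the bias as $e^k := \E_k[g^k]-\nabla f(x^k)$, the inner-product term becomes $-\gamma\norm{\nabla f(x^k)}^2 - \gamma\langle\nabla f(x^k), e^k\rangle$, and Young's inequality gives $-\gamma\langle\nabla f(x^k), e^k\rangle \le \tfrac{\gamma}{4}\norm{\nabla f(x^k)}^2 + \gamma\norm{e^k}^2$. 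For the second-moment term I would use the crude split $\E_k[\norm{g^k}^2]\le 2\norm{\nabla f(x^k)}^2 + 2\,\E_k[\norm{g^k-\nabla f(x^k)}^2]$, so that the whole one-step inequality is expressed through the mean-squared error and the squared bias of $g^k$.

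The key inputs are then the two estimates of \Cref{lem:expect_bound_grad}: the MSE bound $\E_k[\norm{g^k-\nabla f(x^k)}^2]\lesssim(\taumix B^{-1}\log_2\batchbound + \taumix^2 B^{-2})(\cmax^2+\dmax^2\norm{\nabla f(x^k)}^2)$ and the squared-bias bound $\norm{e^k}^2\lesssim \taumix^2\batchbound^{-2}B^{-2}(\cmax^2+\dmax^2\norm{\nabla f(x^k)}^2)$. Using $B=\lceil b\log_2\batchbound\rceil \ge b\log_2\batchbound$ turns $\taumix B^{-1}\log_2\batchbound$ into $\le\taumix b^{-1}$ and $\taumix^2 B^{-2}$ into $\le\taumix^2 b^{-2}$. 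After collecting terms the one-step bound reads
\[
\E_k[f(x^{k+1})] \le f(x^k) - \tfrac{3\gamma}{4}\norm{\nabla f(x^k)}^2 + c_1\norm{\nabla f(x^k)}^2 + c_2,
\]
where $c_1$ gathers the $\norm{\nabla f(x^k)}^2$-coefficients coming from $L\gamma^2$ (second moment), from the $\dmax^2$ part of the MSE, and from the $\dmax^2$ part of the bias, while $c_2$ gathers the matching $\cmax^2$ pieces.

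The crux is to verify that the parameter choices make $c_1$ small enough to be absorbed into the $-\tfrac{3\gamma}{4}\norm{\nabla f(x^k)}^2$ headroom, leaving a coefficient that is still negative and of order $\gamma$. This is exactly where the two constraints interact: $\gamma\lesssim(L[1+\dmax^2\taumix b^{-1}+\dmax^2\taumix^2 b^{-2}])^{-1}$ forces $L\gamma\le\mathrm{const}$ (so $L\gamma^2\le\mathrm{const}\cdot\gamma$) and simultaneously bounds $L\gamma\dmax^2\taumix b^{-1}$ and $L\gamma\dmax^2\taumix^2 b^{-2}$, controlling the MSE $\dmax^2$ terms; while $\batchbound\simeq\sqrt{\gamma^{-1}L^{-1}}$, i.e. $\batchbound^{-2}\simeq\gamma L$, converts the bias factor $\gamma\,\dmax^2\taumix^2\batchbound^{-2}B^{-2}$ into $\lesssim L\gamma^2\dmax^2\taumix^2 b^{-2}\lesssim\gamma$. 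I expect this bookkeeping---pinning each absorbed term against the correct one of the two parameter constraints---to be the main obstacle, since mishandling precisely these factors is what produced the suboptimal or missing dependencies in earlier works.

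Finally I would take total expectation, telescope over $k=0,\dots,\nbiter-1$, and use $f(x^{\nbiter})\ge f^*$ to obtain $\tfrac{\gamma}{2}\sum_{k}\E[\norm{\nabla f(x^k)}^2]\le f(x^0)-f^*+\nbiter\,c_2$. Dividing by $\gamma\nbiter/2$ yields the term $\tfrac{f(x^0)-f^*}{\gamma\nbiter}$ directly, and the same substitutions $\batchbound^{-2}\simeq\gamma L$ and $B\ge b\log_2\batchbound$ turn $\tfrac{2}{\gamma}c_2$---whose pieces are $\gamma\,\cmax^2\taumix^2\batchbound^{-2}B^{-2}$ (bias) and $L\gamma^2\cmax^2(\taumix b^{-1}+\taumix^2 b^{-2})$ (MSE)---into $L\gamma[\cmax^2\taumix b^{-1}+\cmax^2\taumix^2 b^{-2}]$, which matches the claimed bound.
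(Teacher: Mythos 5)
Your proposal is correct and follows essentially the same route as the paper's proof: the descent lemma along $x^{k+1}=x^k-\gamma g^k$, the decomposition of the cross term into the true gradient, the bias $\E_k[g^k]-\nabla f(x^k)$, and the mean-zero fluctuation, the crude second-moment split, and then \Cref{lem:expect_bound_grad} to control both the MSE and the squared bias, with $\batchbound^{-2}\simeq \gamma L$ absorbing the bias and the stepsize condition absorbing the $\dmax^2$ contributions before telescoping. The only differences are immaterial constants in the Young/Cauchy--Schwarz steps (the paper uses $\tfrac{\gamma}{2}$-$\tfrac{\gamma}{2}$ where you use $\tfrac{\gamma}{4}$-$\gamma$), so the argument matches the paper's.
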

The proof is provided in \Cref{sec:proof:th:non_convex_upper_bound}. The next corollary immediately follows from the theorem. 
\begin{corollary}
Under the conditions of \Cref{th:non_convex_random_batch}, if we choose $b = \taumix$ and $\gamma$ given by
\begin{align*}
\textstyle{
\gamma \simeq \min\left\{ \frac{1}{L (1 + \dmax^2 )};\, \sqrt{\frac{f(x^0) - f^*}{L N \cmax^2 }} \right\}}\,,
\end{align*}
then to achieve $\varepsilon$-solution (in terms of $\EE[\|\nabla f(x)\|^2] \lesssim \varepsilon^2$) we need 
\begin{align*}
\textstyle{
    \mathcal{\tilde O} \left( \taumix \cdot \left[\frac{(1 + \dmax^2) L (f(x^0) - f^*)}{\varepsilon^2}+ \frac{L(f(x^0) - f^*)\cmax^2}{ \varepsilon^4} \right] \right)
} \quad \text{oracle calls}.
\end{align*}    
\end{corollary}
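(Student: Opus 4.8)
The plan is to start from the convergence guarantee of \Cref{th:non_convex_random_batch} and proceed in two stages: first convert it into an iteration complexity bound, then translate iterations into expected oracle calls. Substituting the prescribed choice $b = \taumix$ into the theorem, both mixing-time factors collapse since $\taumix b^{-1} = \taumix^2 b^{-2} = 1$; recalling that $\delta = \dmax$ and $\sigma = \cmax$ denote the same quantities, the stepsize constraint becomes $\gamma \lesssim (L(1 + \dmax^2))^{-1}$ and the two variance contributions merge, reducing the guarantee to the familiar non-convex SGD form
\[
\textstyle{\EE\bigl[\tfrac{1}{\nbiter}\sum_{k=0}^{\nbiter-1}\|\nabla f(x^k)\|^2\bigr] \lesssim \frac{f(x^0) - f^*}{\gamma \nbiter} + L\gamma\cmax^2}\,.
\]

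Next I would optimize the right-hand side over $\gamma$ subject to the cap $\gamma \lesssim (L(1+\dmax^2))^{-1}$. With the corollary's choice $\gamma \simeq \min\{(L(1+\dmax^2))^{-1}, \sqrt{(f(x^0)-f^*)/(L\nbiter\cmax^2)}\}$, the standard balancing argument (à la Ghadimi--Lan) gives
\[
\textstyle{\EE\bigl[\tfrac{1}{\nbiter}\sum_{k=0}^{\nbiter-1}\|\nabla f(x^k)\|^2\bigr] \lesssim \frac{(1+\dmax^2)L(f(x^0)-f^*)}{\nbiter} + \sqrt{\frac{L(f(x^0)-f^*)\cmax^2}{\nbiter}}}\,,
\]
and forcing each summand below $\varepsilon^2$ yields the iteration count
\[
\textstyle{\nbiter = \mathcal{\tilde O}\Bigl(\frac{(1+\dmax^2)L(f(x^0)-f^*)}{\varepsilon^2} + \frac{L(f(x^0)-f^*)\cmax^2}{\varepsilon^4}\Bigr)}\,.
\]

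The crux is to pass from iterations to oracle calls, since each outer iteration draws a random, geometrically distributed batch. I would compute the expected number of gradient evaluations per iteration directly from the law of $J_k \sim \mathrm{Geom}(1/2)$: when $2^{J_k} \leq \batchbound$ the estimator uses $2^{J_k} B$ samples, whereas when $2^{J_k} > \batchbound$ it collapses to $g^k_0$ and uses only $B$ samples. Summing $2^j B \cdot \PP(J_k = j)$ over the levels $j \leq \log_2\batchbound$ contributes $\tfrac12 B$ per level, so the truncation at $\batchbound$ is precisely what renders the otherwise divergent expectation $\EE[2^{J_k}]$ finite; the resulting per-iteration expected cost is $\mathcal{O}(B\log_2\batchbound)$. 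With $B = \lceil \taumix\log_2\batchbound\rceil$ this is $\mathcal{O}(\taumix(\log_2\batchbound)^2)$.

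Finally I would verify that $\log_2\batchbound$ is only a polylogarithmic factor. Since $\batchbound \simeq \max\{2,\sqrt{\gamma^{-1}L^{-1}}\}$, in the bias-dominated regime $\gamma^{-1}L^{-1} \simeq 1+\dmax^2$ while in the variance-dominated regime $\gamma^{-1}L^{-1} \simeq \sqrt{\nbiter\cmax^2/(L(f(x^0)-f^*))}$, so in both cases $\log_2\batchbound = \mathcal{\tilde O}(1)$ and is absorbed into the $\mathcal{\tilde O}$ notation. Multiplying the per-iteration cost $\mathcal{\tilde O}(\taumix)$ by $\nbiter$ then reproduces the claimed total. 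I expect the only genuinely delicate point to be the oracle-counting step: one must justify that the random total cost $\sum_k 2^{J_k}B$ has the right expectation under truncation, and that the $\log_2\batchbound$ factors — which feed back into both $B$ and the expected batch size — remain benign; the $\gamma$-optimization is entirely routine.
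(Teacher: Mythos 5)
Your derivation is correct and is exactly the argument the paper has in mind: it states the corollary "immediately follows" from \Cref{th:non_convex_random_batch}, and the intended steps are precisely your substitution $b=\taumix$, the standard Ghadimi--Lan balancing of $\gamma$, and the $\mathcal{O}(B\log_2\batchbound)=\mathcal{\tilde O}(\taumix)$ expected per-iteration cost of the truncated multilevel estimator. The only cosmetic quibble is the per-level contribution being $B$ rather than $\tfrac12 B$ under the convention $\Prob(J_k=j)=2^{-j}$, which is irrelevant to the asymptotics.
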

\vspace{-3mm}
\textbf{Comparison.} The respective bound for the non-convex setting provided in \cite[Theorem~1]{doan2020convergence} yields the sample complexity of order $\mathcal{\tilde O} \left(\frac{ \taumix^2 L(f(x^0) - f(x^*))\cmax^2}{ \varepsilon^4}\right)$.
Also we can note the results of \cite[Theorem 2]{even2023stochastic} with the following estimate
$O\left( \frac{\tau (L(f(x^0) - f(x^*)) + \sigma^2)}{\varepsilon^2} + \frac{\tau (L(f(x^0) - f(x^*)) + \sigma^2) \sigma^2}{\varepsilon^4}\right)$.

To achieve linear convergence rates in the non-convex setting we can use the Polyak-Lojasiewicz (PL) condition \cite{POLYAK1963864}. The respective result is provided in \Cref{sec:polyak_loj}.
\vspace{-3mm}
\subsection{Variational inequalities}
\label{sec:vi}
\vspace{-2mm}
In this section, we are interested in the following problem:
\begin{equation}
\label{eq:VI}
\textstyle{
\text{Find } x^* \in \mathcal{X} \text{ such that } \langle F(x^*), x - x^* \rangle + r(x) - r(x^*) \geq 0 \text{ for all } x \in \mathcal{X}.
}
\end{equation}
Here $F: \R^d \to \R^d$ an operator, $ \mathcal{X}$ a convex set, and $r: \R^d \to \R$ is a regularization term (a suitable lower semicontinuous convex function) which is assumed to have a simple structure. As mentioned earlier, this problem is quite general and covers a wide range of possible problem formulations. For example, if the operator $F$ is the gradient of a convex function $f$, then the problem \eqref{eq:VI} is equivalent to the composite minimization problem \cite{beck2017first}, i.e., minimization of $f(x) + r(x)$.  In the meantime, \eqref{eq:VI} is also a reformulation of the min-max problem
\begin{equation}
\label{eq:minmax}
\textstyle{
\min\limits_{x_1 \in \mathcal{X}_1}\max\limits_{x_2 \in \mathcal{X}_2} r_1(x_1) + g(x_1, x_2) - r_2(x_2),
}
\end{equation}
with convex-concave continuously differentiable $g$, convex sets $\mathcal{X}_1$, $\mathcal{X}_2$ and convex functions $r_1$, $r_2$. Using the first-order optimality conditions, it is easy to verify that \eqref{eq:minmax} is equivalent to \eqref{eq:VI} with $x= (x_1^T, x_2^T)^T$, $F(x) = (\nabla_{x_1}f(x_1,x_2)^T, -\nabla_{x_2}f(x_1,x_2)^T)^T$, and $r(x) = r_1(x_1) + r_2(x_2)$.

\begin{assumption}
\label{as:lipshitz_op}
The operator $F$ is $L$-Lipschitz continuous on $\mathcal{X}$ with $L > 0$, i.e., the following inequality holds for all $x,y \in \mathcal{X}$:
\begin{equation*}
    \label{eq:lipshitz_op}
    \textstyle{
    \norm{F(x) - F(y) } \leq L \norm{x-y } , \qquad \forall x,y\in \mathcal{X}.
    }
\end{equation*}
\end{assumption}

\begin{assumption}
\label{as:strong_monotone_op}
The operator $F$ is $ \mu_F$-strongly monotone on $\mathcal{X}$, i.e., the following inequality holds for all $x,y \in \mathcal{X}$:
\begin{equation}
 \label{eq:strong_monotone_op}
 \textstyle{
 \langle F(x) - F(y), x - y \rangle \geq \mu_F \|x - y\|^2.
 }
\end{equation}
The function $r$ is $\mu_r$-strongly convex on $\mathcal{X}$, i.e. for all $x, y \in \mathcal{X}$ and any $r'(x) \in \partial r(x)$ we have
\begin{equation}
    \label{eq:strong_function_vi}
    \textstyle{
    r(y) \geq r(x) + \langle r'(x), y - x \rangle + (\mu_r/2) \|x - y \|^2.
    }
\end{equation}
\end{assumption}
\vspace{-2mm}
These two assumptions are more than standard for the study of variational inequalities and are found in all the papers from \Cref{tab:comparison1}. We consider two cases: strongly monotone/convex with $\mu_F + \mu_r > 0$ and monotone/convex with $\mu_F + \mu_r = 0$.
\begin{assumption}
\label{as:bounded_markov_noise_UGE_op}
For all $x \in \rset^{d}$ it holds that $\E_{\pi}[F(x, Z)] = F(x)$. Moreover, for all $z \in \Zset$ and $x \in \mathcal{X}$ it holds that
\begin{equation}
\label{eq:growth_condition_vi}
\textstyle{
\norm{F(x, z) - F(x)}^{2} \leq \cmax^{2} + \Dmax^{2} \| x - x^* \|^{2}\,,
}
\end{equation}
where $x^*$ is some point from the solution set.
\end{assumption}
\vspace{-2mm}
\Cref{as:bounded_markov_noise_UGE_op} is found in the literature on variational inequalities \cite{hsieh2020explore, doi:10.1137/15M1031953, gorbunov2022stochastic} and is considered to be analog to \Cref{as:bounded_markov_noise_UGE} on overparametrized learning.

Just as the Nesterov accelerated method is optimal for smooth convex minimization problems, the ExtraGradient method \cite{korpelevich1976extragradient, nemirovski2004prox, juditsky2011solving} is optimal for monotone variational inequalities. Therefore, we take it as a base. On the extrapolation step (\Cref{lin:eg_extr}) of \Cref{alg:EG}, we simply collect a batch of size $B$, but on the main step (\Cref{lin:eg_main}) we use the randomization as in \Cref{alg:AGD_ASGD}. 
The next theorem gives the convergence of our method.
\vspace{-1mm}
\begin{theorem} \label{th:vi_str_mon}
Assume \Cref{as:lipshitz_op},~\Cref{as:strong_monotone_op} with $\mu_F + \mu_r > 0$,~\Cref{as:Markov_noise_UGE},~\Cref{as:bounded_markov_noise_UGE_op}. Let problem \eqref{eq:VI}
be solved by \Cref{alg:EG}. Then for any $b \in \nsets$, and $\gamma$, $M$ satisfying
\begin{eqnarray*}
&\textstyle{\gamma \lesssim \min\left\{ (\mu_F + \mu_r)^{-1} ; L^{-1}; (\mu_F + \mu_r) (\Delta^2 \tau b^{-1} + \Delta^2 \tau^2 b^{-2})^{-1} ; \sqrt{\Delta^{-2} \tau^{-1} b} \right\}}\,,
\\
&
\textstyle{M \simeq \max\{2;\sqrt{\gamma^{-1} (\mu_F + \mu_r)^{-1}}\}, \quad B = \lceil b \log_2 \batchbound \rceil}\,,
\end{eqnarray*}
it holds that
\begin{equation*}
\textstyle{
    \EEE{\|x^{N} - x^* \|^2}
        \lesssim
        \exp\left(- \frac{N (\mu_F + \mu_r) \gamma}{2}\right) \| x^0 - x^*\|^2
        + \frac{\gamma}{\mu}(\sigma^2 \tau b^{-1} + \sigma^2 \tau^2 b^{-2})
}\,.
\end{equation*}
\end{theorem}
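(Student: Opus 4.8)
The plan is to carry out the by-now-standard one-step contraction analysis of the prox-based extragradient method and then to substitute the Markovian bias and variance bounds of \Cref{lem:tech_markov} and \Cref{lem:expect_bound_grad} (in their operator analogue, applied with the growth constants of \Cref{as:bounded_markov_noise_UGE_op}) at the places where the i.i.d. analysis would use an unbiased bounded-variance oracle. First I would record the optimality conditions of the two prox steps of \Cref{alg:EG}: the extrapolation step (\Cref{lin:eg_extr}) produces $u^k = \mathrm{prox}_{\gamma r}(x^k - \gamma \hat F^k)$ with $\hat F^k$ a plain size-$B$ batch estimate of $F(x^k)$, and the main step (\Cref{lin:eg_main}) produces $x^{k+1} = \mathrm{prox}_{\gamma r}(x^k - \gamma g^k)$ with $g^k$ the randomized multilevel estimate of $F(u^k)$. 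Since the main-step condition reads $x^{k+1} - x^k = -\gamma(g^k + r'(x^{k+1}))$ for a suitable $r'(x^{k+1}) \in \partial r(x^{k+1})$, expanding $\norm{x^{k+1} - x^*}^2$ gives the identity
\begin{equation*}
\textstyle{\norm{x^{k+1} - x^*}^2 = \norm{x^k - x^*}^2 - \norm{x^k - x^{k+1}}^2 - 2\gamma \langle g^k + r'(x^{k+1}), x^{k+1} - x^* \rangle}\,.
\end{equation*}

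Second, I would split $g^k = (g^k - F(u^k)) + F(u^k)$ and $x^{k+1} - x^* = (x^{k+1} - u^k) + (u^k - x^*)$, so that strong monotonicity of $F$ (\Cref{as:strong_monotone_op}), strong convexity of $r$, and the variational characterization of $x^*$ turn the inner product $\langle F(u^k) + r'(x^{k+1}), x^{k+1} - x^* \rangle$ into a contraction term $\gtrsim (\mu_F + \mu_r)\gamma \norm{x^{k+1} - x^*}^2$ plus cross terms. The residual cross terms such as $\langle F(u^k), x^{k+1} - u^k \rangle$ and those involving $\hat F^k - F(x^k)$ are controlled through the extrapolation step: \Cref{as:lipshitz_op} together with the extrapolation prox inequality bounds $\norm{u^k - x^{k+1}}$ by $\gamma L \norm{x^k - u^k}$ up to noise, and the assumption $\gamma L \lesssim 1$ lets these be absorbed into $-\norm{x^k - x^{k+1}}^2$; this is the classical extragradient cancellation.

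Third, I would take the conditional expectation $\E_k[\cdot]$ and insert the Markovian error bounds. The operator analogue of \Cref{lem:expect_bound_grad} supplies both the bias $\norm{F(u^k) - \E_k[g^k]}^2 \lesssim \taumix^2 \batchbound^{-2} B^{-2}(\cmax^2 + \Dmax^2 \norm{u^k - x^*}^2)$ and the second moment $\E_k\norm{F(u^k) - g^k}^2 \lesssim (\taumix B^{-1}\log_2 \batchbound + \taumix^2 B^{-2})(\cmax^2 + \Dmax^2 \norm{u^k - x^*}^2)$ of the main-step estimator, while \Cref{lem:tech_markov} bounds the extrapolation batch by $\E_k\norm{\hat F^k - F(x^k)}^2 \lesssim (\taumix / B)(\cmax^2 + \Dmax^2 \norm{x^k - x^*}^2)$; with $B = \lceil b \log_2 \batchbound \rceil$ the factor $B^{-1}\log_2 \batchbound \simeq b^{-1}$, producing the $\taumix b^{-1}$ and $\taumix^2 b^{-2}$ scalings. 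The crux is then to absorb the multiplicative terms $\Dmax^2\norm{u^k - x^*}^2$ and $\Dmax^2\norm{x^k - x^*}^2$ into the negative contraction: using an auxiliary estimate $\norm{u^k - x^*}^2 \lesssim \norm{x^k - x^*}^2 + (\text{errors})$ from the extrapolation step, all multiplicative noise is reduced to $\norm{x^k - x^*}^2$, and the stepsize restrictions $\gamma \lesssim (\mu_F + \mu_r)(\Dmax^2 \taumix b^{-1} + \Dmax^2 \taumix^2 b^{-2})^{-1}$ and $\gamma \lesssim \sqrt{\Dmax^{-2}\taumix^{-1} b}$ are exactly what keep the net coefficient of $\norm{x^k - x^*}^2$ below $1 - \tfrac{(\mu_F + \mu_r)\gamma}{2}$, leaving only the additive $\cmax^2$ terms. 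This yields
\begin{equation*}
\textstyle{\E_k[\norm{x^{k+1} - x^*}^2] \leq \bigl(1 - \tfrac{(\mu_F + \mu_r)\gamma}{2}\bigr)\norm{x^k - x^*}^2 + \gamma^2\bigl(\cmax^2 \taumix b^{-1} + \cmax^2 \taumix^2 b^{-2}\bigr)}\,,
\end{equation*}
and taking total expectation and unrolling the geometric recursion (bounding the variance sum by $(\mu_F+\mu_r)^{-1}\gamma^{-1}$) gives the claimed exponential-plus-variance bound.

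I expect the main obstacle to be the simultaneous handling of (i) the non-vanishing conditional bias of the Markovian multilevel estimator, which has no i.i.d.\ counterpart, and (ii) the multiplicative noise $\Dmax^2\norm{\cdot - x^*}^2$ attached to two distinct points $x^k$ and $u^k$: propagating the bound at $u^k$ back to $x^k$ and then verifying that all three stepsize restrictions are simultaneously active enough to preserve the contraction coefficient requires careful constant bookkeeping. Equally delicate is splitting the strong monotonicity of $F$ and the strong convexity of $r$ across the two prox steps so that the sum $\mu_F + \mu_r$, rather than either constant alone, governs the contraction.
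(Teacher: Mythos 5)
Your plan matches the paper's proof essentially step for step: the prox optimality inequalities at both the extrapolation and main steps, the extragradient cancellation of $\|x^{k+1/2}-x^k\|^2$ via $\gamma L \lesssim 1$, the substitution of the operator analogues of \Cref{lem:tech_markov} and \Cref{lem:expect_bound_grad} for the bias and second moment of the multilevel estimator, the absorption of the multiplicative $\Dmax^2\|\cdot - x^*\|^2$ terms into the contraction through the third and fourth stepsize restrictions, and the final geometric unrolling with the variance sum bounded by $((\mu_F+\mu_r)\gamma)^{-1}$. The one delicate point you flag --- that strong monotonicity of $F$ contracts at $x^{k+1/2}$ while strong convexity of $r$ contracts at $x^{k+1}$, so the two must be reconciled via $\|x^{k+1}-x^{k+1/2}\|^2$ to obtain a contraction in $\mu_F+\mu_r$ --- is exactly how the paper's argument proceeds.
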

The proof is postponed to \Cref{sec:proof:th:strongly_mon_upper_bound}. One can get an estimate on oracle complexity.

\begin{algorithm}[h!]
   \caption{\texttt{Randomized ExtraGradient}}
   \label{alg:EG}
\begin{algorithmic}[1]
\State {\bf Parameters:} stepsize $\gamma>0$, number of iterations $\nbiter$
\State {\bf Initialization:} choose  $x^0 \in \mathcal{X}$
\For{$k = 0, 1, 2, \dots, \nbiter-1$}
    \State    $\textstyle{x^{k+1/2} = \text{prox}_{\gamma r} \bigl(x^k - \gamma B^{-1} \sum\nolimits_{i=1}^{B} F(x^{k}, Z_{T^{k} + i}) \bigr)}$ \label{lin:eg_extr}
    \State $\textstyle{T^{k+1/2} = T^{k} + B}$
    \State Sample $\textstyle{J_k \sim \text{Geom}\bigr(\tfrac{1}{2}\bigr)}$
    \State
    \text{\small{ 
    $
        g^{k} = g^{k}_0 +
        \begin{cases}
        \textstyle{2^{J_k} \left( g^{k}_{J_k}  - g^{k}_{J_k - 1} \right)}, & \text{ if } 2^{J_k} \leq \batchbound \\
        0, & \text{ otherwise}
        \end{cases}
    $ \quad with \quad
    $
    \textstyle{g^k_j = 2^{-j} B^{-1} \sum\nolimits_{i=1}^{2^j \cdot B} F(x^{k+1/2}, Z_{T^{k+1/2} + i})}
    $
    }}
    \State    $\textstyle{x^{k+1} = \text{prox}_{\gamma r} \bigl(x^k - \gamma g^{k}\bigr)}$ \label{lin:eg_main}
    \State    $\textstyle{T^{k+1} = T^{k+1/2} + 2^{J_{k}} B}$
\EndFor
\end{algorithmic}
\end{algorithm}
\vspace{-1mm}
\begin{corollary}
\label{cor:sample_complexity_vi_str_mon}
Under the conditions of \Cref{th:vi_str_mon}, if we choose $b = \taumix$ and $\gamma$ as follows
\begin{align*}
\textstyle{
        \gamma \simeq \min\left\{ \frac{1}{\mu_F + \mu_r}; \frac{1}{L};  \frac{\mu_F + \mu_r}{\Delta^2}; \frac{1}{\Delta};        
        \frac{1}{N(\mu_F + \mu_r)}\ln \left( \max\left\{ 2; \frac{\mu \nbiter \| x^0 - x^*\|^2 }{\cmax^2} \right\}\right) \right\}
}\,,
\end{align*}
then to achieve $\varepsilon$-solution (in terms of $\EE[\|x - x^*\|^2] \lesssim \varepsilon$) we need 
\begin{equation*}
\textstyle{
    \mathcal{\tilde O} \left( \taumix \cdot \left[\left(1 + \frac{L}{\mu_F + \mu_r} + \frac{\Delta}{\mu_F + \mu_r} + \frac{\Delta^2}{(\mu_F + \mu_r)^2}\right)\log \frac{1}{\varepsilon} + \frac{\cmax^2}{\mu^2\varepsilon} \right] \right)
} \quad \text{oracle calls}.
\end{equation*}
\end{corollary}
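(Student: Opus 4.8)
The plan is to derive the stated oracle complexity from the convergence guarantee of \Cref{th:vi_str_mon} in three moves: specialize that bound to $b = \taumix$, tune the stepsize $\gamma$ to balance the bias--decay term against the variance floor, and finally translate the resulting \emph{iteration} count into an \emph{oracle} count by accounting for the expected cost of the randomized multilevel batch. Throughout I write $\mu = \mu_F + \mu_r$, consistent with the variance term of \Cref{th:vi_str_mon}.

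First I would substitute $b = \taumix$ into both the admissibility conditions on $\gamma$ and the right-hand side of \Cref{th:vi_str_mon}. Since then $\taumix b^{-1} = \taumix^2 b^{-2} = 1$, the third and fourth stepsize constraints collapse to $(\mu_F+\mu_r)/\Delta^2$ and $1/\Delta$, so the feasibility region becomes $\gamma \lesssim \gamma_1 := \min\{(\mu_F+\mu_r)^{-1}, L^{-1}, (\mu_F+\mu_r)\Delta^{-2}, \Delta^{-1}\}$; equivalently $(\mu_F + \mu_r)^{-1}\gamma_1^{-1} \lesssim 1 + \tfrac{L}{\mu_F+\mu_r} + \tfrac{\Delta}{\mu_F+\mu_r} + \tfrac{\Delta^2}{(\mu_F+\mu_r)^2}$, which is exactly the bracketed factor multiplying $\log(1/\varepsilon)$ in the claim. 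Simultaneously the variance term simplifies to $\tfrac{\gamma}{\mu_F+\mu_r}\sigma^2$, leaving the recursion in the canonical form $\EEE{\norm{x^N - x^*}^2} \lesssim \exp(-a\gamma N)\,\norm{x^0 - x^*}^2 + \gamma c$ with $a = (\mu_F+\mu_r)/2$ and $c = \sigma^2/(\mu_F+\mu_r)$, valid for every $\gamma \le \gamma_1$.

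Next I would invoke the standard adaptive-stepsize (switching) lemma, cf. \cite{stich2019unified}, applied to this recursion. Taking $\gamma = \min\{\gamma_1, (aN)^{-1}\ln(\max\{2, a N \norm{x^0-x^*}^2/c\})\}$ — which matches the displayed choice up to the constant inside the logarithm, irrelevant under $\tilde{\mathcal O}$ — a two-case argument (whether the minimum is attained at $\gamma_1$ or at the balancing term) yields $\EEE{\norm{x^N-x^*}^2} = \tilde{\mathcal O}\bigl(\norm{x^0-x^*}^2 \exp(-a\gamma_1 N) + c/(aN)\bigr)$. Requiring each term to be $\lesssim \varepsilon$ gives the iteration bound $N = \tilde{\mathcal O}\bigl((a\gamma_1)^{-1}\log(1/\varepsilon) + c/(a\varepsilon)\bigr)$; substituting $a,\gamma_1,c$ turns this into $N = \tilde{\mathcal O}\bigl((1 + \tfrac{L}{\mu_F+\mu_r} + \tfrac{\Delta}{\mu_F+\mu_r} + \tfrac{\Delta^2}{(\mu_F+\mu_r)^2})\log\tfrac1\varepsilon + \tfrac{\sigma^2}{\mu^2\varepsilon}\bigr)$.

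Finally I would convert iterations to oracle calls. Each iteration of \Cref{alg:EG} spends $B$ queries on the extrapolation step and, by the multilevel construction, an expected $\mathcal O(B\log_2 M)$ queries on the main step (the geometric level consumes $2^{J_k}B$ samples with probability $2^{-(J_k+1)}$, summing to $\tfrac12 B(\lfloor\log_2 M\rfloor+1)$). With $B = \lceil \taumix \log_2 M\rceil$ this is $\tilde{\mathcal O}(\taumix)$ per iteration, since $M \simeq \sqrt{\gamma^{-1}(\mu_F+\mu_r)^{-1}}$ forces $\log_2 M = \mathcal O(\log N) = \mathcal O(\log(1/\varepsilon))$, a polylogarithmic factor hidden by $\tilde{\mathcal O}$. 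Multiplying the iteration bound by this per-iteration cost produces the advertised oracle complexity. The main obstacle is bookkeeping rather than conceptual: one must verify that the specific $\gamma$ in the corollary realizes the switching lemma's optimum and that every logarithmic factor — from the stepsize logarithm, from $\log_2 M$, and from $B$ — is genuinely polylogarithmic in $1/\varepsilon$ and in the condition number, so that collapsing them into $\tilde{\mathcal O}$ is legitimate; the randomized batch also means the oracle count is controlled only in expectation, which should be stated explicitly.
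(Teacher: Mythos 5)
Your derivation is correct and is essentially the argument the paper intends (the corollary is stated without an explicit proof in the appendix): substitute $b=\taumix$ so the third and fourth stepsize constraints collapse to $(\mu_F+\mu_r)/\Delta^2$ and $1/\Delta$, tune $\gamma$ by the standard switching argument to obtain the iteration count $\tilde{\mathcal O}\bigl((\mu_F+\mu_r)^{-1}\gamma_1^{-1}\log\tfrac1\varepsilon+\tfrac{\sigma^2}{(\mu_F+\mu_r)^2\varepsilon}\bigr)$, and multiply by the expected per-iteration cost $\mathcal O(B\log_2 M)=\tilde{\mathcal O}(\taumix)$ of the randomized multilevel batch. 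The one discrepancy you flag --- the argument of the logarithm in your balancing stepsize differs from the paper's by a factor of order $\mu_F+\mu_r$ --- only perturbs polylogarithmic terms and is absorbed into $\tilde{\mathcal O}$.
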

Note that one provide an (almost) matching lower complexity bounds for variational inequalities via lower bounds for saddle point problems, which are a special case of variational inequalities. The method for obtaining lower bounds for saddle point problems is reduced to obtaining estimates for the strongly convex minimization problem (see \cite{zhang2019lower, han2021lower}  for respective deterministic lower bounds), which we provide in \Cref{sec:lower_bounds_main}. Similarly, the question of constructing a lower bound which is tight w.r.t. $\Delta$ remains open.

For the monotone case, we use the {\em gap function} as a convergence criterion:
\begin{equation}
\label{eq:gap}
\textstyle{
    \text{Gap} (x) = \sup_{y \in \mathcal{X}} \left[ \langle F(y),  x - y  \rangle + r(x) - r(y)\right]\,.
}
\end{equation}
Such a criterion is standard and classical for monotone variational inequalities \cite{nemirovski2004prox, juditsky2011solving}. An important assumption for the gap function is the boundedness of the set $\mathcal{X}$.
\begin{assumption}
\label{as:boundedset}
The set $\mathcal{X}$ is bounded and has a diameter $D$, i.e., for all $x,y \in \mathcal{X}$:
$\| x - y\|^2 \leq D^2$.
\end{assumption}
\vspace{-2mm}
\Cref{as:boundedset} can be slightly relaxed. We need to use a simple trick from \cite{nesterov2007dual}. In particular, we need to consider $\mathcal{C}$ --  a compact subset of $\mathcal{X}$ and change $\mathcal{X}$ to $\mathcal{C}$ in \eqref{eq:gap}. But such a technique is rather technical and does not change the essence.
Finally, the following result holds.
\begin{theorem} \label{th:vi_mon}
Assume \Cref{as:lipshitz_op},~\Cref{as:strong_monotone_op} with $\mu_F + \mu_r = 0$,~\Cref{as:boundedset},~\Cref{as:Markov_noise_UGE},~\Cref{as:bounded_markov_noise_UGE_op}. Let problem \eqref{eq:VI}
be solved by \Cref{alg:EG}. Then for any $B \in \nsets$, and $\gamma$, $M$ satisfying $\gamma \lesssim L^{-1}\,, \,\, M = \sqrt{N}$, it holds that
\begin{equation*}
\textstyle{
    \EEE{\text{Gap}(\bar x^\nbiter)}
        \lesssim
        \frac{D^2}{\gamma N} + \gamma (\taumix B^{-1}\log_2 N + \taumix^2 B^{-2} )(\cmax^2 + \Dmax^2 D^2)
}~~ \text{where}~~ \bar x^\nbiter = \tfrac{1}{\nbiter} \sum_{k=0}^{\nbiter-1} x^{k+1/2}\,.
\end{equation*}
\end{theorem}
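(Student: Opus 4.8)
The plan is to analyze the randomized ExtraGradient iterations (\Cref{alg:EG}) in the monotone setting by combining the standard deterministic ExtraGradient descent-lemma machinery with the bias/variance control of the Markovian stochastic oracle afforded by \Cref{lem:tech_markov} and \Cref{lem:expect_bound_grad}. The core object is a one-step inequality: starting from the proximal characterization of $x^{k+1}$ (the main step, \Cref{lin:eg_main}) and $x^{k+1/2}$ (the extrapolation step, \Cref{lin:eg_extr}), I would write the firm-nonexpansiveness/prox inequality
\begin{equation*}
\textstyle{
\gamma [\langle g^k, x^{k+1} - y\rangle + r(x^{k+1}) - r(y)] \leq \tfrac{1}{2}\|x^k - y\|^2 - \tfrac{1}{2}\|x^{k+1} - y\|^2 - \tfrac{1}{2}\|x^{k+1} - x^k\|^2
}
\end{equation*}
valid for every $y \in \mathcal{X}$, together with the analogous inequality for the half-step. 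Adding these and using the definition of the gap criterion \eqref{eq:gap} through the monotonicity $\langle F(y), x^{k+1/2} - y\rangle \leq \langle F(x^{k+1/2}), x^{k+1/2} - y\rangle$, I would isolate $\langle F(y), x^{k+1/2} - y\rangle + r(x^{k+1/2}) - r(y)$ and telescope over $k = 0,\dots,N-1$. Because the oracle is only evaluated at $x^{k+1/2}$ and $x^k$, the standard Lipschitz bound \Cref{as:lipshitz_op} lets me absorb the cross term $\|x^{k+1} - x^{k+1/2}\|^2$ using $\gamma \lesssim L^{-1}$, exactly as in the deterministic Nemirovski prox method.

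\textbf{Handling the stochastic discrepancy.} The essential difference from the deterministic case is that $g^k$ is a biased estimator of $F(x^{k+1/2})$, so after telescoping I would be left with error terms of the form $\sum_k \langle g^k - F(x^{k+1/2}), x^{k+1/2} - y\rangle$ plus the discrepancy from the extrapolation batch. I would split each such inner product into a bias part $\langle \E_k[g^k] - F(x^{k+1/2}), \cdot\rangle$ and a martingale-difference part $\langle g^k - \E_k[g^k], \cdot\rangle$. The martingale part vanishes in expectation after a careful conditioning argument (since $y$ may depend on the trajectory, one takes $y$ fixed inside the $\sup$ and uses that $\bar x^N$ averages the half-iterates); the bias part is controlled by \Cref{lem:expect_bound_grad}, which with the choice $M = \sqrt{N}$ forces the squared bias to scale like $\taumix^2 \batchbound^{-2} N^{-1}$, hence contributes only lower-order terms after summation. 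The variance of $g^k$ and of the extrapolation batch $B^{-1}\sum_i F(x^k, Z_{T^k+i})$ is bounded via \Cref{lem:tech_markov} and the growth condition \eqref{eq:growth_condition_vi}, producing the claimed $(\taumix B^{-1}\log_2 N + \taumix^2 B^{-2})(\cmax^2 + \Dmax^2 D^2)$ factor, where boundedness of $\mathcal{X}$ (\Cref{as:boundedset}) replaces $\|x^{k+1/2} - x^*\|^2$ by $D^2$.

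\textbf{Assembling the bound.} After taking the supremum over $y \in \mathcal{X}$ and then expectation — using convexity of the gap so that $\E[\text{Gap}(\bar x^N)] \leq N^{-1}\sum_k \E[\langle F(y), x^{k+1/2} - y\rangle + r(x^{k+1/2}) - r(y)]$ with the $\sup$ carried outside via \Cref{as:boundedset} — the telescoped distance terms collapse to $\|x^0 - x^*\|^2 \leq D^2$, giving the optimization term $D^2/(\gamma N)$ and the stochastic term scaled by $\gamma$. The main obstacle I anticipate is the correct treatment of the martingale/bias decomposition in the presence of the supremum over $y$: since $\bar x^N$ and the trajectory are correlated, one cannot naively swap expectation and supremum, and the standard device is to bound $\sup_y \sum_k \langle g^k - \E_k g^k, x^{k+1/2} - y\rangle$ by introducing a fixed comparator sequence (or an auxiliary iterate driven by the noise) and controlling it with the diameter $D$ and a Cauchy--Schwarz/Young splitting. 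Getting the logarithmic factor $\log_2 N$ to appear only in the variance (and not multiply the bias or the leading term) requires tracking the $\log_2 \batchbound$ dependence in \Cref{lem:expect_bound_grad} carefully under $M = \sqrt N$, which is the step most likely to hide constant-factor subtleties.
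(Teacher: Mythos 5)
Your proposal follows essentially the same route as the paper's proof: prox inequalities for both the extrapolation and main steps, monotonicity plus Jensen to pass to the gap at the averaged half-iterates, a bias/martingale decomposition of the oracle error with the martingale term anchored at a fixed point (the paper uses $x^0$) to survive the supremum over the comparator, and \Cref{lem:tech_markov}/\Cref{lem:expect_bound_grad} with $M=\sqrt{N}$ to make the squared bias a lower-order contribution. The one detail to watch is that the squared bias per step scales as $\taumix^2 M^{-2}B^{-2}$, so with $M=\sqrt N$ and the weight $\gamma^2 N$ from Young's inequality it lands exactly at $\gamma^2\taumix^2 B^{-2}$, matching the stated variance term — which is what you anticipated.
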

The proof is postponed to \Cref{sec:proof:th:mon_upper_bound}. The following corollary holds.
\begin{corollary}
\label{cor:sample_complexity_vi_mon}
Under the conditions of \Cref{th:vi_mon}, if we choose $B = \taumix$ and $\gamma$ as follows
\begin{align*}
\textstyle{\gamma \simeq \min\left\{ \frac{1}{L};  \sqrt{\frac{D^2}{(\sigma^2 + \Delta^2 D^2) N}}\right\}}\,,
\end{align*}
then to achieve $\varepsilon$-solution (in terms of $\EE[\text{Gap} (x)] \lesssim \varepsilon$) we need 
\begin{equation*}
\textstyle{
\mathcal{\tilde O} \left( \taumix \left[ \frac{LD^2}{\varepsilon} +  \frac{\sigma^2 D^2 + \Delta^2 D^4}{\varepsilon^2}\right] \right)
} \quad \text{oracle calls}.
\end{equation*}
\end{corollary}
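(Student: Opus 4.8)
The plan is to read off \Cref{cor:sample_complexity_vi_mon} from the rate in \Cref{th:vi_mon} by plugging in the prescribed $B = \taumix$ and $M = \sqrt{N}$, then balancing the stepsize $\gamma$, inverting the resulting accuracy bound to obtain the iteration count, and finally translating iterations into oracle calls. First I would substitute $B = \taumix$ into the conclusion of \Cref{th:vi_mon}. Recalling that $\cmax = \sigma$ and $\Dmax = \Delta$, this makes $\taumix B^{-1} = 1$ and $\taumix^2 B^{-2} = 1$, so the variance multiplier becomes $\taumix B^{-1}\log_2 N + \taumix^2 B^{-2} = \log_2 N + 1$, which is polylogarithmic and hence absorbed by $\mathcal{\tilde O}$. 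The guarantee then reduces to
\[
\EEE{\text{Gap}(\bar x^N)} \lesssim \frac{D^2}{\gamma N} + \gamma\,(\sigma^2 + \Delta^2 D^2).
\]

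Next I would optimize this right-hand side over $\gamma$ under the constraint $\gamma \lesssim L^{-1}$ imposed by \Cref{th:vi_mon}. The unconstrained minimizer of $D^2/(\gamma N) + \gamma(\sigma^2 + \Delta^2 D^2)$ is precisely $\gamma = \sqrt{D^2/((\sigma^2 + \Delta^2 D^2)N)}$, matching the second entry of the $\min$ in the corollary, while the upper clamp $L^{-1}$ supplies the first entry. Splitting into the two regimes: when $\gamma = L^{-1}$ (small $N$) the second term is dominated by the first and the bound is $\lesssim LD^2/N$; when $\gamma$ equals the square-root term (large $N$) the two terms equalize and the bound is $\lesssim \sqrt{\sigma^2 D^2 + \Delta^2 D^4}/\sqrt{N}$. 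Demanding each bound be at most $\varepsilon$ yields $N \gtrsim LD^2/\varepsilon$ and $N \gtrsim (\sigma^2 D^2 + \Delta^2 D^4)/\varepsilon^2$, so the required number of iterations is
\[
N = \mathcal{\tilde O}\!\left(\frac{LD^2}{\varepsilon} + \frac{\sigma^2 D^2 + \Delta^2 D^4}{\varepsilon^2}\right).
\]

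The last step, which I expect to be the main obstacle since here the corollary goes beyond pure algebra, is converting this iteration count into a bound on oracle calls. Each iteration of \Cref{alg:EG} uses $B$ calls for the extrapolation batch and, on the main step, an expected $\mathcal{O}(B\log_2 M)$ calls to assemble the randomized multilevel estimator $g^k$, as established in the discussion following \Cref{lem:expect_bound_grad}; the truncation at $M$ is what keeps this finite, since $\E[2^{J_k}\mathbbm{1}\{2^{J_k}\leq M\}] = \mathcal{O}(\log_2 M)$ rather than divergent. With $B = \taumix$ and $M = \sqrt{N}$ the expected per-iteration cost is $\mathcal{O}(\taumix\log_2 N) = \mathcal{\tilde O}(\taumix)$, and multiplying by $N$ gives the advertised total
\[
\mathcal{\tilde O}\!\left(\taumix\left[\frac{LD^2}{\varepsilon} + \frac{\sigma^2 D^2 + \Delta^2 D^4}{\varepsilon^2}\right]\right).
\]
The only genuine care needed is tracking the several logarithmic contributions (the $\log_2 N$ in the variance, the $\log_2 M$ in the per-step cost, and any logarithm entering through the clamped stepsize) and confirming that each is polylogarithmic in the problem parameters and $1/\varepsilon$, so that all are legitimately hidden inside $\mathcal{\tilde O}$.
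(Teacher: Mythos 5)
Your proposal is correct and follows essentially the same route the paper intends for this corollary: substitute $B = \taumix$ into the bound of \Cref{th:vi_mon}, note the balancing of $\frac{D^2}{\gamma N}$ against $\gamma(\cmax^2+\Dmax^2 D^2)$ under the clamp $\gamma \lesssim L^{-1}$ (your observation that the variance term is dominated by $LD^2/N$ in the clamped regime is the right check), invert to get $N = \mathcal{\tilde O}\bigl(\frac{LD^2}{\varepsilon} + \frac{\cmax^2 D^2 + \Dmax^2 D^4}{\varepsilon^2}\bigr)$, and multiply by the expected per-iteration cost $\mathcal{O}(B\log_2 M) = \mathcal{\tilde O}(\taumix)$ of the truncated multilevel estimator. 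All logarithmic factors ($\log_2 N$ from $M=\sqrt{N}$ in the variance and $\log_2 M$ in the per-step cost) are correctly identified and legitimately absorbed into $\mathcal{\tilde O}$.
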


\textbf{Comparison.} These results is the first for variational inequalities with Markovian stochasticity, either in the strongly monotone or monotone cases. The only close work is \cite{wang2022stability}. The authors work with convex-concave saddle point problems and provide the following estimate on the oracle complexity
$\mathcal{O}\left( \taumix^2 \cdot \frac{G^4}{\varepsilon^2} + \frac{D^2}{\varepsilon^2}\right)$ (with $G$ -- the uniform bound of the operator), which is worse than ours at least in terms of $\taumix$. Moreover, the authors consider the case of a finite Markov chain, which is a special case of our setup.
\vspace{-4mm}
\section{Conclusion}
\vspace{-3mm}
In this paper, we present a unified random batch size framework that achieves optimal finite-time performance for non-convex and strongly convex optimization problems with Markov noise, as well as for variational inequalities. Unlike existing methods, our framework relaxes the assumptions typically imposed on the domain and stochastic gradient oracle. We also provide a variety of lower bounds, which are to the best of our knowledge original in the Markov setting. %A promising research direction is the development of adaptive versions of the Accelerated SGD method that do not require an a priori knowledge of the mixing time of the Markov chain.

\vspace{-4mm}
\section*{Acknowledgments}
\vspace{-3mm}
This research of A. Beznosikov has been supported by The Analytical Center for the Government of the Russian Federation (Agreement No. 70-2021-00143 dd. 01.11.2021, IGK 000000D730321P5Q0002). E. Moulines received support from the grant ANR-19-CHIA-002 SCAI and parts of his work has been done under the auspices of Lagrange Center for maths and computing.

\bibliographystyle{plain}
\bibliography{refs}

\begin{thebibliography}{100}

\bibitem{aamari_levrard2019}
Eddie Aamari and Cl{\'e}ment Levrard.
\newblock {Nonasymptotic rates for manifold, tangent space and curvature
  estimation}.
\newblock {\em The Annals of Statistics}, 47(1):177 -- 204, 2019.

\bibitem{alacaoglu2022stochastic}
Ahmet Alacaoglu and Yura Malitsky.
\newblock Stochastic variance reduction for variational inequality methods.
\newblock In {\em Conference on Learning Theory}, pages 778--816. PMLR, 2022.

\bibitem{aybat2019universally}
Necdet~Serhat Aybat, Alireza Fallah, Mert Gurbuzbalaban, and Asuman Ozdaglar.
\newblock A universally optimal multistage accelerated stochastic gradient
  method.
\newblock {\em Advances in neural information processing systems}, 32, 2019.

\bibitem{bach2011optimization}
Francis Bach, Rodolphe Jenatton, Julien Mairal, and Guillaume Obozinski.
\newblock Optimization with sparsity-inducing penalties.
\newblock {\em arXiv preprint arXiv:1108.0775}, 2011.

\bibitem{bach2008convex}
Francis Bach, Julien Mairal, and Jean Ponce.
\newblock Convex sparse matrix factorizations.
\newblock {\em arXiv preprint arXiv:0812.1869}, 2008.

\bibitem{beck2017first}
Amir Beck.
\newblock {\em First-order methods in optimization}.
\newblock Society for Industrial and Applied Mathematics (SIAM), 2017.

\bibitem{BenTal2009:book}
Aharon Ben-Tal, Laurent~El Ghaoui, and Arkadi Nemirovski.
\newblock {\em Robust Optimization}.
\newblock Princeton University Press, 2009.

\bibitem{beznosikov2023stochastic}
Aleksandr Beznosikov, Eduard Gorbunov, Hugo Berard, and Nicolas Loizou.
\newblock Stochastic gradient descent-ascent: Unified theory and new efficient
  methods.
\newblock In {\em International Conference on Artificial Intelligence and
  Statistics}, pages 172--235. PMLR, 2023.

\bibitem{beznosikov2020distributed}
Aleksandr Beznosikov, Valentin Samokhin, and Alexander Gasnikov.
\newblock Distributed saddle-point problems: Lower bounds, optimal and robust
  algorithms.
\newblock {\em arXiv preprint arXiv:2010.13112}, 2020.

\bibitem{bhandari2018finite}
Jalaj Bhandari, Daniel Russo, and Raghav Singal.
\newblock A finite time analysis of temporal difference learning with linear
  function approximation.
\newblock In {\em Conference on learning theory}, pages 1691--1692. PMLR, 2018.

\bibitem{chambolle2011first}
Antonin Chambolle and Thomas Pock.
\newblock A first-order primal-dual algorithm for convex problems with
  applications to imaging.
\newblock {\em Journal of mathematical imaging and vision}, 40(1):120--145,
  2011.

\bibitem{chavdarova2019reducing}
Tatjana Chavdarova, Gauthier Gidel, Fran{\c{c}}ois Fleuret, and Simon
  Lacoste-Julien.
\newblock Reducing noise in gan training with variance reduced extragradient.
\newblock {\em Advances in Neural Information Processing Systems}, 32, 2019.

\bibitem{chen2020convergence}
You-Lin Chen, Sen Na, and Mladen Kolar.
\newblock Convergence analysis of accelerated stochastic gradient descent under
  the growth condition.
\newblock {\em arXiv preprint arXiv:2006.06782}, 2020.

\bibitem{cotter2011better}
Andrew Cotter, Ohad Shamir, Nati Srebro, and Karthik Sridharan.
\newblock Better mini-batch algorithms via accelerated gradient methods.
\newblock {\em Advances in neural information processing systems}, 24, 2011.

\bibitem{daskalakis2017training}
Constantinos Daskalakis, Andrew Ilyas, Vasilis Syrgkanis, and Haoyang Zeng.
\newblock Training gans with optimism.
\newblock {\em arXiv preprint arXiv:1711.00141}, 2017.

\bibitem{devolder2011stochastic}
Olivier Devolder et~al.
\newblock Stochastic first order methods in smooth convex optimization.
\newblock Technical report, CORE, 2011.

\bibitem{dieuleveut2017harder}
Aymeric Dieuleveut, Nicolas Flammarion, and Francis Bach.
\newblock Harder, better, faster, stronger convergence rates for least-squares
  regression.
\newblock {\em The Journal of Machine Learning Research}, 18(1):3520--3570,
  2017.

\bibitem{dimakis07}
Alexandros~G. Dimakis, Soummya Kar, José M.~F. Moura, Michael~G. Rabbat, and
  Anna Scaglione.
\newblock Gossip algorithms for distributed signal processing.
\newblock {\em Proceedings of the IEEE}, 98(11):1847--1864, 2010.

\bibitem{doan23}
Thinh~T. Doan.
\newblock Finite-time analysis of markov gradient descent.
\newblock {\em IEEE Transactions on Automatic Control}, 68(4):2140--2153, 2023.

\bibitem{doan2020convergence}
Thinh~T Doan, Lam~M Nguyen, Nhan~H Pham, and Justin Romberg.
\newblock Convergence rates of accelerated markov gradient descent with
  applications in reinforcement learning.
\newblock {\em arXiv preprint arXiv:2002.02873}, 2020.

\bibitem{dorfman2022adapting}
Ron Dorfman and Kfir~Yehuda Levy.
\newblock Adapting to mixing time in stochastic optimization with markovian
  data.
\newblock In {\em International Conference on Machine Learning}, pages
  5429--5446. PMLR, 2022.

\bibitem{douc:moulines:priouret:soulier:2018}
R.~Douc, E.~Moulines, P.~Priouret, and P.~Soulier.
\newblock {\em Markov chains}.
\newblock Springer Series in Operations Research and Financial Engineering.
  Springer, 2018.

\bibitem{duchi2012ergodic}
John~C Duchi, Alekh Agarwal, Mikael Johansson, and Michael~I Jordan.
\newblock Ergodic mirror descent.
\newblock {\em SIAM Journal on Optimization}, 22(4):1549--1578, 2012.

\bibitem{moulines23Rosenthal}
Alain Durmus, Eric Moulines, Alexey Naumov, Sergey Samsonov, and Marina
  Sheshukova.
\newblock Rosenthal-type inequalities for linear statistics of markov chains.
\newblock {\em arXiv preprint arXiv:2303.05838}, 2023.

\bibitem{durmus2021stability}
Alain Durmus, Eric Moulines, Alexey Naumov, Sergey Samsonov, and Hoi-To Wai.
\newblock On the stability of random matrix product with markovian noise:
  Application to linear stochastic approximation and td learning.
\newblock In {\em Conference on Learning Theory}, pages 1711--1752. PMLR, 2021.

\bibitem{dvurechensky2016stochastic}
Pavel Dvurechensky and Alexander Gasnikov.
\newblock Stochastic intermediate gradient method for convex problems with
  stochastic inexact oracle.
\newblock {\em Journal of Optimization Theory and Applications}, 171:121--145,
  2016.

\bibitem{esser2010general}
Ernie Esser, Xiaoqun Zhang, and Tony~F Chan.
\newblock A general framework for a class of first order primal-dual algorithms
  for convex optimization in imaging science.
\newblock {\em SIAM Journal on Imaging Sciences}, 3(4):1015--1046, 2010.

\bibitem{even2023stochastic}
Mathieu Even.
\newblock Stochastic gradient descent under {M}arkovian sampling schemes.
\newblock {\em arXiv preprint arXiv:2302.14428}, 2023.

\bibitem{facchinei2007finite}
F.~Facchinei and J.S. Pang.
\newblock {\em Finite-Dimensional Variational Inequalities and Complementarity
  Problems}.
\newblock Springer Series in Operations Research and Financial Engineering.
  Springer New York, 2007.

\bibitem{gasnikov2018universal}
Alexander~Vladimirovich Gasnikov and Yu~E Nesterov.
\newblock Universal method for stochastic composite optimization problems.
\newblock {\em Computational Mathematics and Mathematical Physics}, 58:48--64,
  2018.

\bibitem{ghadimi2013stochastic}
Saeed Ghadimi and Guanghui Lan.
\newblock Stochastic first-and zeroth-order methods for nonconvex stochastic
  programming.
\newblock {\em SIAM Journal on Optimization}, 23(4):2341--2368, 2013.

\bibitem{ghadimi2016mini}
Saeed Ghadimi, Guanghui Lan, and Hongchao Zhang.
\newblock Mini-batch stochastic approximation methods for nonconvex stochastic
  composite optimization.
\newblock {\em Mathematical Programming}, 155(1-2):267--305, 2016.

\bibitem{gidel2018variational}
Gauthier Gidel, Hugo Berard, Ga{\"e}tan Vignoud, Pascal Vincent, and Simon
  Lacoste-Julien.
\newblock A variational inequality perspective on generative adversarial
  networks.
\newblock {\em arXiv preprint arXiv:1802.10551}, 2018.

\bibitem{giles08}
Michael~B. Giles.
\newblock Multilevel monte carlo path simulation.
\newblock {\em Operations Research}, 56(3):607--617, 2008.

\bibitem{glynn2014exact}
Peter~W Glynn and Chang-han Rhee.
\newblock Exact estimation for markov chain equilibrium expectations.
\newblock {\em Journal of Applied Probability}, 51(A):377--389, 2014.

\bibitem{goodfellow2014generative}
Ian Goodfellow, Jean Pouget-Abadie, Mehdi Mirza, Bing Xu, David Warde-Farley,
  Sherjil Ozair, Aaron Courville, and Yoshua Bengio.
\newblock Generative adversarial nets.
\newblock In {\em Advances in Neural Information Processing Systems (NIPS)},
  2014.

\bibitem{GoodBengCour16}
Ian~J. Goodfellow, Yoshua Bengio, and Aaron Courville.
\newblock {\em Deep Learning}.
\newblock MIT Press, Cambridge, MA, USA, 2016.
\newblock \url{http://www.deeplearningbook.org}.

\bibitem{gorbunov2022stochastic}
Eduard Gorbunov, Hugo Berard, Gauthier Gidel, and Nicolas Loizou.
\newblock Stochastic extragradient: General analysis and improved rates.
\newblock In {\em International Conference on Artificial Intelligence and
  Statistics}, pages 7865--7901. PMLR, 2022.

\bibitem{gorbunov2021near}
Eduard Gorbunov, Marina Danilova, Innokentiy Shibaev, Pavel Dvurechensky, and
  Alexander Gasnikov.
\newblock Near-optimal high probability complexity bounds for non-smooth
  stochastic optimization with heavy-tailed noise.
\newblock {\em arXiv preprint arXiv:2106.05958}, 2021.

\bibitem{han2021lower}
Yuze Han, Guangzeng Xie, and Zhihua Zhang.
\newblock Lower complexity bounds of finite-sum optimization problems: The
  results and construction.
\newblock {\em arXiv preprint arXiv:2103.08280}, 2021.

\bibitem{HarkerVIsurvey1990}
P.~T. Harker and J.-S. Pang.
\newblock Finite-dimensional variational inequality and nonlinear
  complementarity problems: a survey of theory, algorithms and applications.
\newblock {\em Mathematical programming}, 1990.

\bibitem{hsieh2019convergence}
Yu-Guan Hsieh, Franck Iutzeler, J{\'e}r{\^o}me Malick, and Panayotis
  Mertikopoulos.
\newblock On the convergence of single-call stochastic extra-gradient methods.
\newblock {\em Advances in Neural Information Processing Systems}, 32, 2019.

\bibitem{hsieh2020explore}
Yu-Guan Hsieh, Franck Iutzeler, J{\'e}r{\^o}me Malick, and Panayotis
  Mertikopoulos.
\newblock Explore aggressively, update conservatively: Stochastic extragradient
  methods with variable stepsize scaling.
\newblock {\em Advances in Neural Information Processing Systems},
  33:16223--16234, 2020.

\bibitem{hu2009accelerated}
Chonghai Hu, Weike Pan, and James Kwok.
\newblock Accelerated gradient methods for stochastic optimization and online
  learning.
\newblock {\em Advances in Neural Information Processing Systems}, 22, 2009.

\bibitem{doi:10.1137/15M1031953}
A.~N. Iusem, A.~Jofr\'{e}, R.~I. Oliveira, and P.~Thompson.
\newblock Extragradient method with variance reduction for stochastic
  variational inequalities.
\newblock {\em SIAM Journal on Optimization}, 27(2):686--724, 2017.

\bibitem{iusem2019incremental}
Alfredo~N Iusem, Alejandro Jofr{\'e}, and Philip Thompson.
\newblock Incremental constraint projection methods for monotone stochastic
  variational inequalities.
\newblock {\em Mathematics of Operations Research}, 44(1):236--263, 2019.

\bibitem{jain2018accelerating}
Prateek Jain, Sham~M Kakade, Rahul Kidambi, Praneeth Netrapalli, and Aaron
  Sidford.
\newblock Accelerating stochastic gradient descent for least squares
  regression.
\newblock In {\em Conference On Learning Theory}, pages 545--604. PMLR, 2018.

\bibitem{JMLR:v18:16-595}
Prateek Jain, Sham~M. Kakade, Rahul Kidambi, Praneeth Netrapalli, and Aaron
  Sidford.
\newblock Parallelizing stochastic gradient descent for least squares
  regression: Mini-batching, averaging, and model misspecification.
\newblock {\em Journal of Machine Learning Research}, 18(223):1--42, 2018.

\bibitem{4610024}
Houyuan Jiang and Huifu Xu.
\newblock Stochastic approximation approaches to the stochastic variational
  inequality problem.
\newblock {\em IEEE Transactions on Automatic Control}, 53(6):1462--1475, 2008.

\bibitem{Jin2020:mdp}
Yujia Jin and Aaron Sidford.
\newblock Efficiently solving {MDP}s with stochastic mirror descent.
\newblock In {\em Proceedings of the 37th International Conference on Machine
  Learning (ICML)}, volume 119, pages 4890--4900. PMLR, 2020.

\bibitem{Thorsten}
Thorsten Joachims.
\newblock A support vector method for multivariate performance measures.
\newblock pages 377--384, 01 2005.

\bibitem{juditsky2011solving}
Anatoli Juditsky, Arkadi Nemirovski, and Claire Tauvel.
\newblock Solving variational inequalities with stochastic mirror-prox
  algorithm.
\newblock {\em Stochastic Systems}, 1(1):17--58, 2011.

\bibitem{kannan2019optimal}
Aswin Kannan and Uday~V Shanbhag.
\newblock Optimal stochastic extragradient schemes for pseudomonotone
  stochastic variational inequality problems and their variants.
\newblock {\em Computational Optimization and Applications}, 74(3):779--820,
  2019.

\bibitem{kidambi2018insufficiency}
Rahul Kidambi, Praneeth Netrapalli, Prateek Jain, and Sham Kakade.
\newblock On the insufficiency of existing momentum schemes for stochastic
  optimization.
\newblock In {\em 2018 Information Theory and Applications Workshop (ITA)},
  pages 1--9. IEEE, 2018.

\bibitem{kingma2014adam}
Diederik~P Kingma and Jimmy Ba.
\newblock Adam: A method for stochastic optimization.
\newblock {\em arXiv preprint arXiv:1412.6980}, 2014.

\bibitem{koloskova2023shuffle}
Anastasia Koloskova, Nikita Doikov, Sebastian~U Stich, and Martin Jaggi.
\newblock Shuffle {SGD} is always better than {SGD}: Improved analysis of {SGD}
  with arbitrary data orders.
\newblock {\em arXiv preprint arXiv:2305.19259}, 2023.

\bibitem{korpelevich1976extragradient}
G.~M. Korpelevich.
\newblock The extragradient method for finding saddle points and other
  problems.
\newblock {\em Matecon}, 12:35--49, 1977.

\bibitem{lan2012optimal}
Guanghui Lan.
\newblock An optimal method for stochastic composite optimization.
\newblock {\em Mathematical Programming}, 133(1-2):365--397, 2012.

\bibitem{lan20}
Guanghui Lan.
\newblock {\em First-order and Stochastic Optimization Methods for Machine
  Learning}.
\newblock 01 2020.

\bibitem{pmlr-v89-liang19b}
Tengyuan Liang and James Stokes.
\newblock Interaction matters: A note on non-asymptotic local convergence of
  generative adversarial networks.
\newblock In Kamalika Chaudhuri and Masashi Sugiyama, editors, {\em Proceedings
  of the Twenty-Second International Conference on Artificial Intelligence and
  Statistics}, volume~89 of {\em Proceedings of Machine Learning Research},
  pages 907--915. PMLR, 16--18 Apr 2019.

\bibitem{lin2014smoothing}
Qihang Lin, Xi~Chen, and Javier Pena.
\newblock A smoothing stochastic gradient method for composite optimization.
\newblock {\em Optimization Methods and Software}, 29(6):1281--1301, 2014.

\bibitem{liu2018accelerating}
Chaoyue Liu and Mikhail Belkin.
\newblock Accelerating sgd with momentum for over-parameterized learning.
\newblock {\em arXiv preprint arXiv:1810.13395}, 2018.

\bibitem{lopes_sayed07}
Cassio~G. Lopes and Ali~H. Sayed.
\newblock Incremental adaptive strategies over distributed networks.
\newblock {\em IEEE Transactions on Signal Processing}, 55(8):4064--4077, 2007.

\bibitem{Madry2017:adv}
Aleksander Madry, Aleksandar Makelov, Ludwig Schmidt, Dimitris Tsipras, and
  Adrian Vladu.
\newblock Towards deep learning models resistant to adversarial attacks.
\newblock In {\em International Conference on Learning Representations (ICLR)},
  2018.

\bibitem{mao20}
Xianghui Mao, Kun Yuan, Yubin Hu, Yuantao Gu, Ali~H. Sayed, and Wotao Yin.
\newblock Walkman: A communication-efficient random-walk algorithm for
  decentralized optimization.
\newblock {\em IEEE Transactions on Signal Processing}, 68:2513--2528, 2020.

\bibitem{mertikopoulos2018optimistic}
Panayotis Mertikopoulos, Bruno Lecouat, Houssam Zenati, Chuan-Sheng Foo, Vijay
  Chandrasekhar, and Georgios Piliouras.
\newblock Optimistic mirror descent in saddle-point problems: Going the extra
  (gradient) mile.
\newblock {\em arXiv preprint arXiv:1807.02629}, 2018.

\bibitem{mishchenko2020random}
Konstantin Mishchenko, Ahmed Khaled, and Peter Richt{\'a}rik.
\newblock Random reshuffling: Simple analysis with vast improvements.
\newblock {\em Advances in Neural Information Processing Systems},
  33:17309--17320, 2020.

\bibitem{mishchenko2020revisiting}
Konstantin Mishchenko, Dmitry Kovalev, Egor Shulgin, Peter Richt{\'a}rik, and
  Yura Malitsky.
\newblock Revisiting stochastic extragradient.
\newblock In {\em International Conference on Artificial Intelligence and
  Statistics}, pages 4573--4582. PMLR, 2020.

\bibitem{deeprl}
Volodymyr Mnih, Koray Kavukcuoglu, David Silver, Andrei~A. Rusu, Joel Veness,
  Marc~G. Bellemare, Alex Graves, Martin Riedmiller, Andreas~K. Fidjeland,
  Georg Ostrovski, Stig Petersen, Charles Beattie, Amir Sadik, Ioannis
  Antonoglou, Helen King, Dharshan Kumaran, Daan Wierstra, Shane Legg, and
  Demis Hassabis.
\newblock Human-level control through deep reinforcement learning.
\newblock {\em Nature}, 518(7540):529--533, 2015.

\bibitem{bachmoulines2011}
Eric Moulines and Francis Bach.
\newblock Non-asymptotic analysis of stochastic approximation algorithms for
  machine learning.
\newblock In J.~Shawe-Taylor, R.~Zemel, P.~Bartlett, F.~Pereira, and K.Q.
  Weinberger, editors, {\em Advances in Neural Information Processing Systems},
  volume~24. Curran Associates, Inc., 2011.

\bibitem{nagaraj2020least}
Dheeraj Nagaraj, Xian Wu, Guy Bresler, Prateek Jain, and Praneeth Netrapalli.
\newblock Least squares regression with markovian data: Fundamental limits and
  algorithms.
\newblock {\em Advances in neural information processing systems},
  33:16666--16676, 2020.

\bibitem{nemirovski2004prox}
Arkadi Nemirovski.
\newblock Prox-method with rate of convergence {O(1/t)} for variational
  inequalities with lipschitz continuous monotone operators and smooth
  convex-concave saddle point problems.
\newblock {\em SIAM Journal on Optimization}, 15(1):229--251, 2004.

\bibitem{nesterov2005smooth}
Yu~Nesterov.
\newblock Smooth minimization of non-smooth functions.
\newblock {\em Mathematical programming}, 103(1):127--152, 2005.

\bibitem{doi:10.1137/100802001}
Yu. Nesterov.
\newblock Efficiency of coordinate descent methods on huge-scale optimization
  problems.
\newblock {\em SIAM Journal on Optimization}, 22(2):341--362, 2012.

\bibitem{nesterov_accelerated}
Yu.~E. Nesterov.
\newblock A method for solving the convex programming problem with convergence
  rate {$O(1/k^{2})$}.
\newblock {\em Dokl. Akad. Nauk SSSR}, 269(3):543--547, 1983.

\bibitem{nesterov2003introductory}
Yurii Nesterov.
\newblock {\em Introductory lectures on convex optimization: A basic course},
  volume~87.
\newblock Springer Science \& Business Media, 2003.

\bibitem{nesterov2007dual}
Yurii Nesterov.
\newblock Dual extrapolation and its applications to solving variational
  inequalities and related problems.
\newblock {\em Mathematical Programming}, 109(2):319--344, 2007.

\bibitem{NeumannGameTheory1944}
J.~Von Neumann and O.~Morgenstern.
\newblock {\em Theory of games and economic behavior}.
\newblock Princeton University Press, 1944.

\bibitem{Omidshafiei2017:rl}
Shayegan Omidshafiei, Jason Pazis, Christopher Amato, Jonathan~P. How, and John
  Vian.
\newblock Deep decentralized multi-task multi-agent reinforcement learning
  under partial observability.
\newblock In {\em Proceedings of the 34th International Conference on Machine
  Learning (ICML)}, volume~70, pages 2681--2690. PMLR, 2017.

\bibitem{palaniappan2016stochastic}
Balamurugan Palaniappan and Francis Bach.
\newblock Stochastic variance reduction methods for saddle-point problems.
\newblock {\em Advances in Neural Information Processing Systems}, 29, 2016.

\bibitem{paulin_spectral}
Daniel Paulin.
\newblock {Concentration inequalities for Markov chains by Marton couplings and
  spectral methods}.
\newblock {\em Electronic Journal of Probability}, 20(none):1 -- 32, 2015.

\bibitem{peng2020training}
Wei Peng, Yu-Hong Dai, Hui Zhang, and Lizhi Cheng.
\newblock Training gans with centripetal acceleration.
\newblock {\em Optimization Methods and Software}, 35(5):955--973, 2020.

\bibitem{POLYAK1963864}
B.T. Polyak.
\newblock Gradient methods for the minimisation of functionals.
\newblock {\em USSR Computational Mathematics and Mathematical Physics},
  3(4):864--878, 1963.

\bibitem{10.1214/aoms/1177729586}
Herbert Robbins and Sutton Monro.
\newblock {A Stochastic Approximation Method}.
\newblock {\em The Annals of Mathematical Statistics}, 22(3):400 -- 407, 1951.

\bibitem{schulman2015trust}
John Schulman, Sergey Levine, Pieter Abbeel, Michael Jordan, and Philipp
  Moritz.
\newblock Trust region policy optimization.
\newblock In {\em International conference on machine learning}, pages
  1889--1897. PMLR, 2015.

\bibitem{scutari2010vi}
Gesualdo Scutari, Daniel Palomar, Francisco Facchinei, and Jong-shi Pang.
\newblock Convex optimization, game theory, and variational inequality theory.
\newblock {\em Signal Processing Magazine, IEEE}, 27:35 -- 49, 06 2010.

\bibitem{srikant2019finite}
Rayadurgam Srikant and Lei Ying.
\newblock Finite-time error bounds for linear stochastic approximation andtd
  learning.
\newblock In {\em Conference on Learning Theory}, pages 2803--2830. PMLR, 2019.

\bibitem{stich2019unified}
Sebastian~U Stich.
\newblock Unified optimal analysis of the (stochastic) gradient method.
\newblock {\em arXiv preprint arXiv:1907.04232}, 2019.

\bibitem{pmlr-v162-sun22b}
Tao Sun, Dongsheng Li, and Bao Wang.
\newblock Adaptive {R}andom {W}alk {G}radient {D}escent for {D}ecentralized
  {O}ptimization.
\newblock In Kamalika Chaudhuri, Stefanie Jegelka, Le~Song, Csaba Szepesvari,
  Gang Niu, and Sivan Sabato, editors, {\em Proceedings of the 39th
  International Conference on Machine Learning}, volume 162 of {\em Proceedings
  of Machine Learning Research}, pages 20790--20809. PMLR, 17--23 Jul 2022.

\bibitem{sun2018markov}
Tao Sun, Yuejiao Sun, and Wotao Yin.
\newblock On {M}arkov chain gradient descent.
\newblock {\em Advances in neural information processing systems}, 31, 2018.

\bibitem{pmlr-v28-sutskever13}
Ilya Sutskever, James Martens, George Dahl, and Geoffrey Hinton.
\newblock On the importance of initialization and momentum in deep learning.
\newblock In Sanjoy Dasgupta and David McAllester, editors, {\em Proceedings of
  the 30th International Conference on Machine Learning}, volume~28 of {\em
  Proceedings of Machine Learning Research}, pages 1139--1147, Atlanta,
  Georgia, USA, 17--19 Jun 2013. PMLR.

\bibitem{sutton1988learning}
Richard~S Sutton.
\newblock Learning to predict by the methods of temporal differences.
\newblock {\em Machine learning}, 3:9--44, 1988.

\bibitem{Sutton1998}
Richard~S. Sutton and Andrew~G. Barto.
\newblock {\em Reinforcement Learning: An Introduction}.
\newblock The MIT Press, second edition, 2018.

\bibitem{taylor2019stochastic}
Adrien Taylor and Francis Bach.
\newblock Stochastic first-order methods: non-asymptotic and computer-aided
  analyses via potential functions.
\newblock In {\em Conference on Learning Theory}, pages 2934--2992. PMLR, 2019.

\bibitem{doi:10.1137/S0363012998338806}
Paul Tseng.
\newblock A modified forward-backward splitting method for maximal monotone
  mappings.
\newblock {\em SIAM Journal on Control and Optimization}, 38(2):431--446, 2000.

\bibitem{van2000asymptotic}
Aad~W Van~der Vaart.
\newblock {\em Asymptotic statistics}, volume~3.
\newblock Cambridge university press, 2000.

\bibitem{vaswani2019fast}
Sharan Vaswani, Francis Bach, and Mark Schmidt.
\newblock Fast and faster convergence of {SGD} for over-parameterized models
  and an accelerated perceptron.
\newblock In {\em The 22nd international conference on artificial intelligence
  and statistics}, pages 1195--1204. PMLR, 2019.

\bibitem{DBLP:journals/corr/abs-1905-09997}
Sharan Vaswani, Aaron Mishkin, Issam~H. Laradji, Mark Schmidt, Gauthier Gidel,
  and Simon Lacoste{-}Julien.
\newblock Painless stochastic gradient: Interpolation, line-search, and
  convergence rates.
\newblock {\em CoRR}, abs/1905.09997, 2019.

\bibitem{wang2022stability}
Puyu Wang, Yunwen Lei, Yiming Ying, and Ding-Xuan Zhou.
\newblock Stability and generalization for markov chain stochastic gradient
  methods.
\newblock {\em arXiv preprint arXiv:2209.08005}, 2022.

\bibitem{williams1992simple}
Ronald~J Williams.
\newblock Simple statistical gradient-following algorithms for connectionist
  reinforcement learning.
\newblock {\em Machine learning}, 8:229--256, 1992.

\bibitem{wolfer2019estimating}
Geoffrey Wolfer and Aryeh Kontorovich.
\newblock Estimating the mixing time of ergodic markov chains.
\newblock In {\em Conference on Learning Theory}, pages 3120--3159. PMLR, 2019.

\bibitem{woodworth2021even}
Blake~E Woodworth and Nathan Srebro.
\newblock An even more optimal stochastic optimization algorithm: minibatching
  and interpolation learning.
\newblock {\em Advances in Neural Information Processing Systems},
  34:7333--7345, 2021.

\bibitem{NIPS2004_64036755}
Linli Xu, James Neufeld, Bryce Larson, and Dale Schuurmans.
\newblock Maximum margin clustering.
\newblock In L.~Saul, Y.~Weiss, and L.~Bottou, editors, {\em Advances in Neural
  Information Processing Systems}, volume~17. MIT Press, 2005.

\bibitem{NEURIPS2020_0cc6928e}
Junchi Yang, Negar Kiyavash, and Niao He.
\newblock Global convergence and variance reduction for a class of
  nonconvex-nonconcave minimax problems.
\newblock In H.~Larochelle, M.~Ranzato, R.~Hadsell, M.F. Balcan, and H.~Lin,
  editors, {\em Advances in Neural Information Processing Systems}, volume~33,
  pages 1153--1165. Curran Associates, Inc., 2020.

\bibitem{Yu1997}
Bin Yu.
\newblock {\em Assouad, Fano, and Le Cam}, pages 423--435.
\newblock Springer New York, New York, NY, 1997.

\bibitem{zhang2019lower}
Junyu Zhang, Mingyi Hong, and Shuzhong Zhang.
\newblock On lower iteration complexity bounds for the saddle point problems.
\newblock {\em arXiv preprint arXiv:1912.07481}, 2019.

\end{thebibliography}
%%%%%%%%%%%%%%%%%%%%%%%%%%%%%%%%%%%%%%%%%%%%%%%%%%%%%%%%%%%%
\clearpage
\newpage
\appendix
\section{Notations and definitions.} Let $(\Zset,\metricz)$ be a complete separable metric space endowed with its Borel $\sigma$-field $\Zsigma$. Let $(\Zset^{\nset},\Zsigma^{\otimes \nset})$ be the corresponding canonical process. Consider the Markov kernel $\MKQ$ defined on $\Zset \times \Zsigma$, and denote by $\PP_{\xi}$ and $\E_{\xi}$ the corresponding probability distribution and the expected value with initial distribution $\xi$. Without loss of generality, we assume that $(Z_k)_{k \in \nset}$ is the corresponding canonical process. By construction, for any $A \in \Zsigma$, it holds that $\PP_{\xi}(Z_k \in A | Z_{k-1}) = \MKQ(Z_{k-1},A)$, $\PP_{\xi}$-a.s. If $\xi = \delta_{z}$, $z \in \Zset$, we write $\PP_{z}$ and $\E_{z}$ instead of $\PP_{\delta_{z}}$ and $\E_{\delta_{z}}$, respectively. We denote $\mathcal{F}_{k} = \sigma(x^{j}, j \leq k)$ and write $\E_{k}$ as an alias for $\E[\cdot | \mathcal{F}_{k}]$. For each function $f: \Zset \mapsto \rset$ with $\pi(f) <\infty$ we write $\barf(z) = f(z) - \pi(f)$.

\section{Proofs of \Cref{sec:acc-method}, \Cref{sec:non-cvx}}
\subsection{Proof of \Cref{lem:tech_markov}}
\label{sec:proof:lem:tech_markov}

\begin{lemma}[\Cref{lem:tech_markov}]\label{lem:tech_markov_app}
Assume \Cref{as:Markov_noise_UGE} and \Cref{as:bounded_markov_noise_UGE}. Then, for any $n \geq 1$ and $x \in \rset^{d}$, it holds that
\begin{equation}
\label{eq:var_bound_stationary_app}
\textstyle{
\E_{\pi}[\norm{n^{-1}\sum\nolimits_{i=1}^{n}\nabla F(x, Z_{i}) - \nabla f(x)}^2]
    \leq
    \frac{8 \taumix}{n} \left( \cmax^2 + \dmax^2 \| \nabla f(x) \|^2 \right)
}\,.
\end{equation}
Moreover, for any initial distribution $\xi$ on $(\Zset,\Zsigma)$, that
\begin{equation}
\label{eq:var_bound_any_app}
\textstyle{
\E_{\xi}[\norm{n^{-1}\sum\nolimits_{i=1}^{n}\nabla F(x, Z_i) - \nabla f(x)}^2] \leq \frac{C_{1} \taumix}{n} \left( \cmax^2 + \dmax^2 \| \nabla f(x) \|^2 \right)
}\,,
\end{equation}
where $C_{1} = 16(1 + \frac{1}{\ln^{2}{4}})$.
\end{lemma}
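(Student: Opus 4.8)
The plan is to establish the two inequalities separately: the stationary bound \eqref{eq:var_bound_stationary_app} will follow from an off-the-shelf spectral variance estimate, and the arbitrary-start bound \eqref{eq:var_bound_any_app} will be obtained by transporting it to a general initial law through an exact coupling with a stationary copy of the chain.

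For \eqref{eq:var_bound_stationary_app} I would fix $x$ and view $g(\cdot) := \nabla F(x,\cdot)\colon \Zset \to \R^d$ as a vector-valued observable with $\E_\pi[g] = \nabla f(x)$. The key point is that under \Cref{as:Markov_noise_UGE} the uniform ergodicity \eqref{eq:drift-condition} forces a strictly positive pseudospectral gap $\gamma_{ps}$, quantified by \cite[Proposition~3.4]{paulin_spectral} as $1/\gamma_{ps} \leq 2\taumix$. Applying the variance bound of \cite[Theorem~3.2]{paulin_spectral} coordinate by coordinate and summing over the $d$ coordinates gives
\[
\E_\pi\Bigl[\norm{n^{-1}\sum\nolimits_{i=1}^n g(Z_i) - \nabla f(x)}^2\Bigr] \leq \frac{4}{n\gamma_{ps}}\,\E_\pi\bigl[\norm{g(Z_1) - \nabla f(x)}^2\bigr]\,.
\]
It then remains to bound the one-step variance via the almost-sure growth condition \eqref{eq:growth_condition}, namely $\E_\pi[\norm{g(Z_1)-\nabla f(x)}^2]\leq\cmax^2+\dmax^2\norm{\nabla f(x)}^2$, and to substitute $1/\gamma_{ps}\leq 2\taumix$, which produces the factor $8\taumix/n$.

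For \eqref{eq:var_bound_any_app} I would construct, on a common probability space, a maximal exact coupling of the $\xi$-started chain $(Z_i)$ with a stationary copy $(Z_i')$, following \cite[Theorem~1]{moulines23Rosenthal}, so that $Z_i = Z_i'$ for every $i$ beyond a coupling time $T$. The coupling inequality together with \eqref{eq:drift-condition} yields the geometric tail $\PP(T \geq k) \lesssim (1/4)^{\lfloor k/\taumix\rfloor}$, whence $\E[T^2] \lesssim \taumix^2/\ln^2 4$. Writing $\phi := g - \nabla f(x)$ and splitting $n^{-1}\sum_{i=1}^n \phi(Z_i)$ into the stationary sum $n^{-1}\sum_{i=1}^n \phi(Z_i')$ plus the discrepancy $n^{-1}\sum_{i=1}^n(\phi(Z_i)-\phi(Z_i'))$, I would apply $\norm{a+b}^2\leq 2\norm{a}^2+2\norm{b}^2$: the stationary term is controlled by \eqref{eq:var_bound_stationary_app} and contributes the constant $16$. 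In the discrepancy term the summands vanish once the two chains have coalesced, so at most $\min(T,n)$ of them survive, and---this is the crucial use of \Cref{as:bounded_markov_noise_UGE}---each is bounded almost surely by $2(\cmax^2+\dmax^2\norm{\nabla f(x)}^2)^{1/2}$, giving the pathwise bound $4n^{-2}\min(T,n)^2(\cmax^2+\dmax^2\norm{\nabla f(x)}^2)$.

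The main obstacle is precisely this non-stationary discrepancy term, and the point I would stress is that \Cref{as:bounded_markov_noise_UGE} is imposed almost surely rather than merely in expectation: that is exactly what permits a uniform, term-by-term control of the pre-coalescence segment with no recourse to stationarity. To convert $\E[\min(T,n)^2]$ into the advertised $\taumix/n$ scaling I would split into regimes. When $n\lesssim\taumix$ the trivial almost-sure estimate $\norm{n^{-1}\sum\nolimits_{i=1}^n\phi(Z_i)}^2\leq\cmax^2+\dmax^2\norm{\nabla f(x)}^2$ already suffices, since then $\taumix/n\gtrsim 1$; when $n\gtrsim\taumix$ I would use $\E[\min(T,n)^2]\leq\E[T^2]\lesssim\taumix^2/\ln^2 4$ and absorb one factor $\taumix/n\leq 1$, producing the $1/\ln^2 4$ contribution. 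Summing the two pieces and tracking constants delivers $C_1=16(1+1/\ln^2 4)$; the remaining effort is the routine arithmetic of the geometric-tail summation.
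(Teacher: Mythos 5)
Your proposal is correct and follows essentially the same route as the paper's proof: the stationary bound via Paulin's pseudospectral-gap estimates (\cite[Proposition~3.4, Theorem~3.2]{paulin_spectral}), and the arbitrary-initialization bound via a maximal exact coupling with a stationary copy, an almost-sure control of the pre-coalescence discrepancy using \Cref{as:bounded_markov_noise_UGE}, a geometric-tail summation giving $\E[T^2]\lesssim \taumix^2/\ln^2 4$, and a case split on $n$ versus $\taumix$ to recover the $\taumix/n$ rate. The only cosmetic difference is that you split the square with $\norm{a+b}^2\leq 2\norm{a}^2+2\norm{b}^2$ where the paper uses Minkowski's inequality in $L^2$; the constants come out the same either way.
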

By \cite[Lemma~19.3.6 and Theorem~19.3.9 ]{douc:moulines:priouret:soulier:2018}, for any two probabilities $\xi,\xi'$ on $(\Zset,\Zsigma)$ there is a \emph{maximal exact coupling} $(\Omega,\mathcal{F},\PPcoupling{\xi}{\xi'},Z,Z',T)$ of $\PP ^{\MKQ}_{\xi}$ and $\PP ^{\MKQ}_{\xi'}$, that is,
\begin{equation}
\label{eq:coupling_time_def_markov}
\tvnorm{\xi \MKQ^n- \xi'\MKQ^n} = 2 \PPcoupling{\xi}{\xi'}(T > n)\,.
\end{equation}
We write $\PEcoupling{\xi}{\xi'}$ for the expectation with respect to $\PPcoupling{\xi}{\xi'}$. Using the coupling construction \eqref{eq:coupling_time_def_markov},
\begin{multline*}
\textstyle{\E^{1/2}_{\xi} [\norm{\sum_{i=1}^{n}\{\nabla f(x, Z_i) - \nabla f(x)\}}^2]
\leq \E^{1/2}_{\pi}[\norm{\sum_{i=0}^{n-1}\nabla f(x, Z_i) - \nabla f(x)}^{2}]} + \\
\textstyle{\PEcoupling{\xi}{\pi}^{1/2}[\norm{\sum_{i=0}^{n-1}\{\nabla f(x, Z_i) - \nabla f(x, Z^{\prime}_i)\}}^{2}]}\,.
\end{multline*}
The first term is bounded with \eqref{eq:var_bound_stationary_app}. Moreover, with \eqref{eq:coupling_time_def_markov} and \Cref{as:bounded_markov_noise_UGE}, we get
\begin{align*}
    \norm{\sum_{i=0}^{n-1}\{\nabla f(x, Z_i) - \nabla f(x, Z^{\prime}_i)\}}^{2} 
    &\leq
    8 \left( \cmax^2 + \dmax^2 \| \nabla f(x) \|^2 \right) \bigl(\sum_{i=0}^{n-1}\indiacc{Z_i \neq Z^{'}_{i}}\bigr)^{2}
    \\
    &= 8 \left( \cmax^2 + \dmax^2 \| \nabla f(x) \|^2 \right) \bigl(\sum_{i=0}^{n-1}\indiacc{T > i}\bigr)^{2} 
    \\
    &\leq
    16 \left( \cmax^2 + \dmax^2 \| \nabla f(x) \|^2 \right) \sum_{i=1}^{\infty} i\, \indiacc{T > i}\,.
\end{align*}
Thus, using the assumption \Cref{as:Markov_noise_UGE}, we bound
\[
\PEcoupling{\xi}{\pi}[\sum_{i=1}^{\infty} i\, \indiacc{T > i}] = \sum_{i=1}^{\infty} i \PPcoupling{\xi}{\xi'}(T > i) = \sum_{i=1}^{\infty} i (1/4)^{\lfloor i / \taumix \rfloor} \leq 4 \sum_{i=1}^{\infty} i (1/4)^{ i / \taumix }\,.
\]
Now we set $\rho = (1/4)^{1/ \taumix}$ and use an upper bound
\[
\sum_{k=1}^{\infty}k \rho^{k}
\leq \rho^{-1}\int_{0}^{+\infty}x^{p}\rho^{x}\,d x \leq \rho^{-1}\left(\ln{\rho^{-1}}\right)^{-2} \Gamma(2) = \rho^{-1}\left(\ln{\rho^{-1}}\right)^{-2} = \frac{\taumix^{2}}{(1/4)^{1/\taumix} \ln^{2}{4}}\,.
\]
Combining the bounds above yields
\[
\E_{\xi}[\norm{n^{-1}\sum\nolimits_{i=1}^{n}\nabla f(x, Z_i) - \nabla f(x)}^2] \leq \frac{c_{1} \taumix}{n} \left( \cmax^2 + \dmax^2 \| \nabla f(x) \|^2 \right) + \frac{c_{2} \taumix^{2}}{n^2} \left( \cmax^2 + \dmax^2 \| \nabla f(x) \|^2 \right)\,,
\]
where $c_{1} = 16$, $c_{2} = \frac{128 (1/4)^{-1/\taumix}}{\ln^{2}{4}}$. Now we consider the two cases. If $n < c_{1}\taumix$, we get from Minkowski's inequality that
\[
\E_{\xi}[\norm{n^{-1}\sum\nolimits_{i=1}^{n}\nabla f(x, Z_i) - \nabla f(x)}^2] \leq 2\cmax^2 + 2\dmax^2 \| \nabla f(x) \|^2\,,
\]
and \eqref{eq:var_bound_any_app} holds. If $n > c_{1}\taumix$, it holds that
\[
\frac{c_{2} \taumix^{2}}{n^2} \left( \cmax^2 + \dmax^2 \| \nabla f(x) \|^2 \right) \leq \frac{c_{2} \taumix^{2}}{n c_{1} \taumix}\left( \cmax^2 + \dmax^2 \| \nabla f(x) \|^2 \right)\,,
\]
and we also get \eqref{eq:var_bound_any_app}.

\subsection{Proof of \Cref{lem:expect_bound_grad}} 
\label{sec:proof:lem:expect_bound_grad}
Before we proceed to the proof, we give a statement of \Cref{lem:expect_bound_grad} with exact constants.
\begin{lemma}[\Cref{lem:expect_bound_grad}]
\label{lem:expect_bound_grad_appendix}
Assume \Cref{as:Markov_noise_UGE} and \Cref{as:bounded_markov_noise_UGE}. Then for the gradient estimates $g^k$ from \Cref{alg:AGD_ASGD} it holds that $\E_k[g^k] = \E_k[g^{k}_{\lfloor \log_2 \batchbound \rfloor}]$. Moreover, 
\begin{align}
\label{eq:var_bounds_random_grad}
&\E_k[\| \nabla f(x^k_g) - g^k\|^2] \leq \left(4 C_{1} \taumix B^{-1}\log_2 \batchbound + (4C_{1} + 2) \taumix^2 B^{-2}\right) (\cmax^2 + \dmax^2 \| \nabla f(x^k_g)\|^2)\,, \\
&\| \nabla f(x^k_g) - \E_{k}[g^k]\|^2 \leq C_{2}\taumix^2 \batchbound^{-2}B^{-2} (\cmax^2 + \dmax^2 \| \nabla f(x^k_g)\|^2)\,, \nonumber 
\end{align}
where $C_1$ is defined in \eqref{eq:var_bound_any_app} and $C_2 = 256/3$.
\end{lemma}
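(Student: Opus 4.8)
The plan is to exploit the single-term (Russian-roulette) multilevel structure of $g^k$, combining the almost-sure growth bound of \Cref{as:bounded_markov_noise_UGE}, the ergodicity of \Cref{as:Markov_noise_UGE}, and the mean-squared error bound of \Cref{lem:tech_markov} applied to the block averages $g^k_j$. Throughout I write $\Lambda^2 := \cmax^2 + \dmax^2\norm{\nabla f(x^k_g)}^2$ and $j_{\max} := \lfloor \log_2 \batchbound\rfloor$ (the largest level with $2^{j}\leq \batchbound$), and I condition on $\mathcal{F}_k$ so that $x^k_g$ and the counter $T^k$ are fixed while $J_k\sim\mathrm{Geom}(1/2)$ is independent of the fresh samples $Z_{T^k+1},Z_{T^k+2},\dots$. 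The first step is the telescoping identity: since $\PP(J_k=j)=2^{-j}$, the weight $2^{J_k}$ is exactly $1/\PP(J_k=J_k)$, so taking expectation over $J_k$ only (conditionally on the samples) gives $g^k_0 + \sum_{j=1}^{j_{\max}}(g^k_j-g^k_{j-1}) = g^k_{j_{\max}}$, whence $\E_k[g^k]=\E_k[g^k_{j_{\max}}]$. This proves the first claim and reduces the bias analysis to the block average $g^k_{j_{\max}}$, which averages $n:=2^{j_{\max}}\batchbound \geq \batchbound \batchbound/2$ consecutive samples, where I use $2^{j_{\max}}\ge \batchbound/2$.

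For the bias I would \emph{not} route through \Cref{lem:tech_markov}, since Jensen's inequality only yields the suboptimal $\taumix/n$ rate; instead I bound the per-sample bias directly. Writing $h(z)=\nabla F(x^k_g,z)-\nabla f(x^k_g)$, \Cref{as:bounded_markov_noise_UGE} gives $\pi(h)=0$ together with the almost-sure bound $\norm{h(z)}\leq \Lambda$, so for each $i$ one has $\norm{\E_k[h(Z_{T^k+i})]}\leq 2\Lambda\,\dobru{\MKQ^i}\leq 2\Lambda (1/4)^{\lfloor i/\taumix\rfloor}$ by the Dobrushin contraction of \Cref{as:Markov_noise_UGE} and convexity of total variation. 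Averaging over $i\leq n$, summing the geometric-type series $\sum_{i\geq1}(1/4)^{\lfloor i/\taumix\rfloor}\leq 4\taumix/3$, and using $n\geq \batchbound\batchbound/2$ yields $\norm{\nabla f(x^k_g)-\E_k[g^k]}\lesssim \taumix \batchbound^{-1}\batchbound^{-1}\Lambda$; squaring gives the claimed $C_2\taumix^2\batchbound^{-2}\batchbound^{-2}\Lambda^2$.

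For the variance I would use the bias--variance split $\E_k[\norm{\nabla f(x^k_g)-g^k}^2]=\norm{\nabla f(x^k_g)-\E_k[g^k]}^2+\E_k[\norm{g^k-\E_k[g^k]}^2]$, the first term being controlled by the bias bound just proved (it feeds the $\taumix^2\batchbound^{-2}$ term). For the genuine variance I decompose $g^k-\E_k[g^k]=(g^k-g^k_{j_{\max}})+(g^k_{j_{\max}}-\E_k[g^k_{j_{\max}}])$; conditioning on the samples these two pieces are orthogonal, since the first has conditional mean zero over $J_k$ while the second is sample-measurable, so the variance is the sum of the two squared norms. The deep-estimator piece is bounded by the non-stationary MSE bound \eqref{eq:var_bound_any} of \Cref{lem:tech_markov} with $n=2^{j_{\max}}\batchbound$, giving $\lesssim C_1\taumix \batchbound^{-1}\Lambda^2$. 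For the correction piece, taking the $J_k$-expectation produces $\sum_{j=1}^{j_{\max}}2^{j}\,\E_k[\norm{g^k_j-g^k_{j-1}}^2]$; writing $g^k_j-g^k_{j-1}=\tfrac12(\bar B_j-\bar A_j)$ as the scaled difference of two disjoint length-$2^{j-1}\batchbound$ block averages and applying \Cref{lem:tech_markov} to each block bounds $\E_k[\norm{g^k_j-g^k_{j-1}}^2]\lesssim C_1\taumix 2^{-j}\batchbound^{-1}\Lambda^2$, so the factor $2^j$ cancels and the sum collapses to $\lesssim C_1\taumix \batchbound^{-1} j_{\max}\,\Lambda^2\lesssim C_1\taumix \batchbound^{-1}\log_2\batchbound\cdot\Lambda^2$. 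Collecting the three contributions and tracking constants gives the claimed $(4C_1\taumix \batchbound^{-1}\log_2\batchbound+(4C_1+2)\taumix^2\batchbound^{-2})\Lambda^2$.

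I expect the delicate step to be the correction-variance term: one must recognize the single-term MLMC estimator, correctly pair the geometric weights $2^{j}$ against the $2^{-j}$ decay of the level increments $g^k_j-g^k_{j-1}$ (which is exactly where the logarithmic factor $\log_2\batchbound$ is produced and cannot be avoided), and justify applying \Cref{lem:tech_markov} to the \emph{second} disjoint block $\bar B_j$, whose initial law is an arbitrary conditional distribution of $Z_{T^k+2^{j-1}\batchbound}$ — this is precisely why the non-stationary version \eqref{eq:var_bound_any} of the lemma is indispensable. The quadratic-in-$1/n$ bias decay is the secondary point requiring care, since it is strictly sharper than what the MSE bound provides and relies on the almost-sure growth condition of \Cref{as:bounded_markov_noise_UGE} together with the exponential ergodicity of \Cref{as:Markov_noise_UGE}.
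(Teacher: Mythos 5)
Your proof is correct and follows essentially the same route as the paper: the pathwise telescoping identity for $\E_k[g^k]$, a level-by-level variance bound in which the geometric weight $2^{j}$ cancels the $\mathcal{O}(\taumix 2^{-j}B^{-1})$ MSE of the increments $g^k_j-g^k_{j-1}$ (producing the $\log_2\batchbound$ factor via \Cref{lem:tech_markov} in its arbitrary-initial-distribution form, which you correctly identify as the indispensable ingredient), and a $\taumix/n$ bias bound at the deepest level $n=2^{\lfloor\log_2\batchbound\rfloor}B$; your exact orthogonal bias--variance split and half-block decomposition of the increments are cosmetic variants of the paper's Cauchy--Schwarz split around $g^k_0$ and its triangle inequality through $\nabla f(x^k_g)$, and your Dobrushin-contraction bias argument is equivalent to the coupling step the paper leaves implicit. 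The only blemish is notational: in several displays you write $\batchbound$ where $B$ is meant (e.g.\ the final bound should read $4C_1\taumix B^{-1}\log_2\batchbound+(4C_1+2)\taumix^2 B^{-2}$ rather than $\batchbound^{-1}\log_2\batchbound$ and $\batchbound^{-2}$), though the surrounding derivation makes the intended quantities clear.
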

\begin{proof}
To show that $\E_k[g^k] = \E_k[g^{k}_{\lfloor \log_2 \batchbound \rfloor}]$ we simply compute conditional expectation w.r.t. $J_k$:
\begin{align*}
\E_k[g^k] &= \E_k\left[\E_{J_k}[g^k]\right] =
\E_k[g^k_0] + \sum\limits_{i=1}^{\lfloor \log_2 \batchbound \rfloor} \Prob\{J_k = i\} \cdot 2^i \E_k[g^{k}_{i}  - g^{k}_{i-1}] \\
&= \E_k[g^k_0] + \sum\limits_{i=1}^{\lfloor \log_2 \batchbound \rfloor} \E_k[g^{k}_{i}  - g^{k}_{i-1}] = \E_k[g^{k}_{\lfloor \log_2 \batchbound \rfloor}]\,.
\end{align*}
We start with the proof of the first statement of \eqref{eq:var_bounds_random_grad} by taking the conditional expectation for $J_k$:
\begin{align*}
    &\E_{k}[\| \nabla f(x^k_g) - g^k\|^2]
    \leq
    2\E_{k}[\| \nabla f(x^k_g) - g^k_0\|^2] + 2\E_{k}[\| g^k - g^k_0\|^2]
    \\
    &\quad=
    2\E_{k}[\| \nabla f(x^k_g) - g^k_0\|^2] + 2 \sum\nolimits_{i=1}^{\lfloor \log_2 \batchbound \rfloor} \Prob\{J_k = i\} \cdot 4^i \E_k[\|g^{k}_{i}  - g^{k}_{i-1}\|^2] \\
    &\quad=
    2\E_{k}[\| \nabla f(x^k_g) - g^k_0\|^2] + 2\sum\nolimits_{i=1}^{\lfloor \log_2 \batchbound \rfloor} 2^i \E_k[\|g^{k}_{i}  - g^{k}_{i-1}\|^2] \\
    &\quad\leq
    2\E_{k}[\| \nabla f(x^k_g) - g^k_0\|^2] + 4\sum\nolimits_{i=1}^{\lfloor \log_2 \batchbound \rfloor} 2^i \left(\E_k[\|\nabla f(x^k_g)  - g^{k}_{i-1}\|^2 + \E_k[\|g^{k}_{i}  - \nabla f(x^k_g)\|^2] \right)\,.
\end{align*}
To bound $\E_{k}[\| \nabla f(x^k_g) - g^k_0\|^2]$, $\E_{k}[\|\nabla f(x^k_g)  - g^{k}_{i-1}\|^2$, $\E_{k}[\|g^{k}_{i}  - \nabla f(x^k_g)\|^2]$, we apply \Cref{lem:tech_markov} and get
\begin{align*}
    \E_{k}[\| \nabla f(x^k_g) - g^k\|^2]
    &\leq 2 \cmax^{2} + 4\sum\nolimits_{i=1}^{\lfloor \log_2 \batchbound \rfloor} 2^i \left(\frac{C_{1} \taumix}{2^{i} B} (\cmax^2 + \dmax^2 \| \nabla f(x^k_g)\|^2) + \frac{C_{1} \taumix^{2}}{2^{2i} B^{2}}(\cmax^2 + \dmax^2 \| \nabla f(x^k_g)\|^2) \right)\\
    &\leq
    \frac{4 C_{1} (\cmax^2 + \dmax^2 \| \nabla f(x^k_g)\|^2) \taumix \log_2 \batchbound}{B} + \frac{(4C_{1} + 2) (\cmax^2 + \dmax^2 \| \nabla f(x^k_g)\|^2) \taumix^2}{B^2}\,.
\end{align*}
To show the second part of the statement, we use \Cref{lem:expect_bound_grad} and get
\[
\| \nabla f(x^k_g) - \E_{k}[g^k]\|^2 = \| \nabla f(x^k_g) - \E_k[g^{k}_{\lfloor \log_2 \batchbound \rfloor}]\|^2\,.
\]
The remaining proof once again uses \Cref{lem:tech_markov} and is omitted. To conclude we use that $2^{\lfloor \log_{2}\batchbound \rfloor} \geq \batchbound/2$.
\end{proof}

\subsection{Proof of \Cref{th:acc}.} 
\label{sec:proof:th:strongly_convex_upper_bound}
We preface the proof by two technical Lemmas.
\begin{lemma}
\label{lem:tech_lemma_acc_gd}
Assume \Cref{as:lipsh_grad} and \Cref{as:strong_conv}. Then for the iterates of \Cref{alg:AGD_ASGD} with $\theta = (p \eta^{-1} - 1) / (\beta p \eta^{-1} - 1)$, $\theta > 0$, $\eta \geq 1$, $p > 0$, it holds that
\begin{align}
\label{eq:acc_temp4}
    \E_k[\|x^{k+1} - x^*\|^2]
     \leq&
    (1 + \alpha p \gamma \eta)( 1 - \beta) \| x^k - x^*\|^2 + (1 + \alpha p \gamma \eta) \beta\|x^k_g - x^*\|^2 
    \notag\\
    &
    + (1 + \alpha p \gamma \eta) (\beta^2 - \beta )\|x^k - x^k_g\|^2
    + p^2 \eta^2 \gamma^2 \EE_{k}[\| g^k \|^2] 
    \notag\\
    &
    - 2 \eta^2 \gamma \langle \nabla f(x^k_g), x^k_g + (p \eta^{-1} - 1)  x^k_f - \eta^{-1} p x^*\rangle 
    \notag\\
    &
    + \frac{p \eta \gamma}{\alpha} \|\EE_k[g^k] - \nabla f(x^k_g)\|^2\,,
\end{align}
where $\alpha > 0$ is any positive constant.
\end{lemma}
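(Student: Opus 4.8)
The plan is to collapse the three-line momentum update into a single clean gradient step and then read off the bound by squaring. First I would substitute line~\ref{line_acc_2} ($x^{k+1}_f = x^k_g - p\gamma g^k$) into line~\ref{line_acc_3} to eliminate $x^{k+1}_f$, obtaining $x^{k+1} = D - p\eta\gamma g^k$ with the $\mathcal{F}_k$-measurable deterministic part
\[
D = \bigl[\eta + (1-p)\beta\bigr]x^k_g + (p-\eta)x^k_f + (1-p)(1-\beta)x^k .
\]
Regrouping, $D = \{\eta x^k_g + (p-\eta)x^k_f\} + (1-p)w^k$, where $w^k := (1-\beta)x^k + \beta x^k_g$.

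The crux of the lemma is a purely algebraic identity: with the prescribed value $\theta = (p\eta^{-1}-1)/(\beta p\eta^{-1}-1) = (\eta-p)/(\eta-\beta p)$ and line~\ref{line_acc_1} ($x^k_g = \theta x^k_f + (1-\theta)x^k$), one has $\eta x^k_g + (p-\eta)x^k_f = p\,w^k$. I would verify this by substituting the expression for $x^k_g$ and matching the coefficients of $x^k_f$ and of $x^k$ separately; both matching conditions reduce to exactly $\theta(\eta - p\beta) = \eta - p$, i.e.\ the stated formula for $\theta$. Granting this, $D = p\,w^k + (1-p)w^k = w^k$, so the entire update collapses to $x^{k+1} = w^k - p\eta\gamma g^k$. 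This cancellation is what drives the result and is the main obstacle; note that \Cref{as:lipsh_grad} and \Cref{as:strong_conv} are not actually used here (beyond guaranteeing that $\nabla f(x^k_g)$ is well defined) — they enter only in the subsequent step that bounds the inner-product term.

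Next I would expand $\|x^{k+1}-x^*\|^2 = \|w^k - x^*\|^2 - 2p\eta\gamma\langle g^k, w^k - x^*\rangle + p^2\eta^2\gamma^2\|g^k\|^2$ and take $\E_k$, using that $w^k$ is $\mathcal{F}_k$-measurable. In the cross term I would split the biased mean as $\E_k[g^k] = \nabla f(x^k_g) + (\E_k[g^k] - \nabla f(x^k_g))$: the deterministic part contributes $-2p\eta\gamma\langle\nabla f(x^k_g), w^k - x^*\rangle$, while the bias part is controlled by Young's inequality $2|\langle a,b\rangle| \le \alpha\|a\|^2 + \alpha^{-1}\|b\|^2$ applied with $a = w^k-x^*$ and $b = \E_k[g^k]-\nabla f(x^k_g)$, yielding $\alpha p\eta\gamma\|w^k - x^*\|^2 + \tfrac{p\eta\gamma}{\alpha}\|\E_k[g^k] - \nabla f(x^k_g)\|^2$. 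This already produces the factor $(1 + \alpha p\gamma\eta)$ on $\|w^k-x^*\|^2$, the term $p^2\eta^2\gamma^2\E_k[\|g^k\|^2]$, and the last (bias) term of \eqref{eq:acc_temp4}.

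Finally I would repackage the two remaining pieces. For the quadratic term, the convex-combination identity $\|(1-\beta)u + \beta v\|^2 = (1-\beta)\|u\|^2 + \beta\|v\|^2 - \beta(1-\beta)\|u-v\|^2$ with $u = x^k - x^*$ and $v = x^k_g - x^*$ rewrites $(1+\alpha p\gamma\eta)\|w^k - x^*\|^2$ as exactly the first three lines of \eqref{eq:acc_temp4}, using $\beta^2 - \beta = -\beta(1-\beta)$. For the inner product, reinserting $p(w^k - x^*) = \eta x^k_g + (p-\eta)x^k_f - p x^*$ from the key identity gives
\[
-2p\eta\gamma\langle\nabla f(x^k_g), w^k - x^*\rangle = -2\eta^2\gamma\langle\nabla f(x^k_g),\, x^k_g + (p\eta^{-1}-1)x^k_f - \eta^{-1}p x^*\rangle,
\]
which matches the penultimate line of \eqref{eq:acc_temp4}. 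Collecting all terms yields the claimed bound.
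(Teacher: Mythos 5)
Your proposal is correct and follows essentially the same route as the paper's proof: substitute the update rules to write $x^{k+1}$ as a deterministic part minus $p\eta\gamma g^k$, expand the square, handle the bias via Young's inequality with parameter $\alpha$, and use the choice of $\theta$ to identify the deterministic part with $\beta x^k_g + (1-\beta)x^k$ and to rewrite the gradient inner product. The only (cosmetic) difference is that you derive both identities from the single relation $\eta x^k_g + (p-\eta)x^k_f = p\,w^k$, whereas the paper carries out the two coefficient manipulations separately; your observation that \Cref{as:lipsh_grad} and \Cref{as:strong_conv} are not actually needed here is also accurate.
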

\begin{proof}
We start with lines \ref{line_acc_3} and \ref{line_acc_2} of \Cref{alg:AGD_ASGD}:
\begin{align*}
    \|x^{k+1} - x^*\|^2
    =&
    \| \eta x^{k+1}_f + (p - \eta)x^k_f + (1- p)(1 - \beta) x^k +(1 - p) \beta x^k_g - x^*\|^2
    \notag\\
    =&
    \| \eta x^k_g - p\eta\gamma g^k + (p - \eta)x^k_f + (1- p)(1 - \beta) x^k +(1 - p) \beta x^k_g - x^*\|^2
    \notag\\
    =&
    \| \eta x^k_g + (p - \eta)x^k_f + (1- p)(1 - \beta) x^k +(1 - p) \beta x^k_g - x^*\|^2 + p^2 \gamma^2 \eta^2 \| g^k \|^2 
    \notag\\
    &
    - 2 p\gamma \eta \langle  g^k, \eta x^k_g + (p - \eta)x^k_f + (1- p)(1 - \beta) x^k +(1 - p) \beta x^k_g - x^*\rangle.
\end{align*}
Using straightforward algebra, we get
\begin{align*}
    \|x^{k+1} - x^*\|^2
    =&
    \| \eta x^k_g + (p - \eta)x^k_f + (1- p)(1 - \beta) x^k +(1 - p) \beta x^k_g - x^*\|^2 + p^2 \gamma^2 \eta^2 \| g^k \|^2 
    \notag\\
    &
    - 2 p\gamma \eta \langle \nabla f(x^k_g), \eta x^k_g + (p - \eta)x^k_f + (1- p)(1 - \beta) x^k +(1 - p) \beta x^k_g - x^* \rangle
   \notag\\
    &
    - 2 p\gamma \eta \langle \EE_k[g^k] - \nabla f(x^k_g), \eta x^k_g + (p - \eta)x^k_f + (1- p)(1 - \beta) x^k +(1 - p) \beta x^k_g - x^* \rangle
    \notag\\
    &
    - 2 p\gamma \eta \langle g^k - \EE_k[g^k], \eta x^k_g + (p - \eta)x^k_f + (1- p)(1 - \beta) x^k +(1 - p) \beta x^k_g - x^* \rangle
    \notag\\
    \leq&
    (1 + \alpha p \eta \gamma)\| \eta x^k_g + (p - \eta)x^k_f + (1- p)(1 - \beta) x^k +(1 - p) \beta x^k_g - x^* \|^2
    \notag\\
    &
    - 2 p\gamma \eta \langle \nabla f(x^k_g), \eta x^k_g + (p - \eta)x^k_f + (1- p)(1 - \beta) x^k +(1 - p) \beta x^k_g - x^* \rangle
    \notag\\
    &
    - 2 p\gamma \eta \langle g^k - \EE_k[g^k], \eta x^k_g + (p - \eta)x^k_f + (1- p)(1 - \beta) x^k +(1 - p) \beta x^k_g - x^* \rangle
    \notag\\
    &
    + p^2\gamma^2 \eta^2 \| g^k \|^2 + \frac{p\gamma \eta}{\alpha} \|\EE_k[g^k] - \nabla f(x^k_g)\|^2.
\end{align*}
In the last step we also applied Cauchy-Schwartz inequality in the form \eqref{eq:inner_prod} with $\alpha > 0$. Taking the conditional expectation, we get
\begin{align}
\label{eq:acc_temp1}
\E_{k}[\|x^{k+1} - x^*\|^2] \leq& (1 + \alpha p \eta \gamma) \|\eta x^k_g + (p - \eta)x^k_f + (1- p)(1 - \beta) x^k +(1 - p) \beta x^k_g - x^*\|^2 
\notag\\
    &
    - 2 p\gamma \eta \langle \nabla f(x^k_g), \eta x^k_g + (p - \eta)x^k_f + (1- p)(1 - \beta) x^k +(1 - p) \beta x^k_g - x^* \rangle
    \notag\\
    &
    + p^2 \gamma^2 \eta^2 \E_{k}[\| g^k \|^2]  + \frac{p\gamma \eta}{\alpha} \|\EE_k[g^k] - \nabla f(x^k_g)\|^2\,.
\end{align}
Now let us handle expression $\| \eta x^k_g + (p - \eta)x^k_f + (1- p)(1 - \beta) x^k +(1 - p) \beta x^k_g  - x^*\|^2$ for a while. Taking into account line \ref{line_acc_1} and the choice of $\theta$ such that $\theta = (p \eta^{-1} - 1) / (\beta p \eta^{-1} - 1)$ (in particular, $(p \eta^{-1} - 1) = (\beta p \eta^{-1} - 1)\theta$ and $ \eta (1 - \beta p \eta^{-1}) (1 - \theta) = p (1 - \beta)$), we get
\begin{align*}
\eta x^k_g + (p - \eta) & x^k_f + (1- p)(1 - \beta) x^k +(1 - p) \beta x^k_g
\notag\\
&=
(\eta + (1 - p) \beta) x^k_g + (p - \eta)x^k_f + (1- p)(1 - \beta) x^k
\notag\\
&=
(\eta + (1 - p) \beta) x^k_g + \eta (p \eta^{-1} - 1)x^k_f + (1- p)(1 - \beta) x^k
\notag\\
&=
(\eta + (1 - p) \beta) x^k_g + \eta (\beta p \eta^{-1} - 1)\theta x^k_f + (1- p)(1 - \beta) x^k
\notag\\
&=
(\eta + (1 - p) \beta) x^k_g + \eta (\beta p \eta^{-1} - 1)(x^k_g - (1 - \theta) x^k) + (1- p)(1 - \beta) x^k
\notag\\
&=
\beta x^k_g - \eta (\beta p \eta^{-1} - 1)(1 - \theta) x^k + (1- p)(1 - \beta) x^k
\notag\\
&=
\beta x^k_g + p(1 - \beta) x^k + (1- p)(1 - \beta) x^k
\notag\\
&=
\beta x^k_g + (1 - \beta) x^k
\,.
\end{align*}
Substituting into $\|\eta x^k_g + (p - \eta)x^k_f + (1- p)(1 - \beta) x^k +(1 - p) \beta x^k_g - x^*\|^2$, we get
\begin{align}
\label{eq:acc_temp3}
\| \eta x^k_g + (p - \eta)& x^k_f + (1- p)(1 - \beta) x^k +(1 - p) \beta x^k_g - x^*\|^2
\notag\\
&=
\| \beta x^k_g + (1 - \beta) x^k - x^*\|^2
\notag\\
&=
\| x^k - x^* + \beta (x^k_g - x^k)\|^2
\notag\\
&=
\| x^k - x^*\|^2 + 2 \beta \langle x^k - x^*, x^k_g - x^k \rangle  + \beta^2\|x^k - x^k_g\|^2
\notag\\
&=
\| x^k - x^*\|^2 + \beta \left( \|x^k_g - x^*\|^2 - \| x^k - x^*\|^2 - \| x^k_g - x^k\|^2\right)  + \beta^2\|x^k - x^k_g\|^2
\notag\\
&=
( 1 - \beta) \| x^k - x^*\|^2 + \beta\|x^k_g - x^*\|^2  + (\beta^2 - \beta )\|x^k - x^k_g\|^2.
\end{align}
Again with line \ref{line_acc_1} and the choice of $\theta$ such that $\theta = (p \eta^{-1} - 1) / (\beta p \eta^{-1} - 1)$ (in particular, $\eta^{-1} p (1 - \beta) = (1 - \beta p \eta^{-1}) (1 - \theta)$ and $(\beta p \eta^{-1} - 1)\theta = (p \eta^{-1} - 1)$), one can also note
\begin{align}
\label{eq:acc_temp8}
\eta x^k_g &+ (p - \eta)x^k_f + (1- p)(1 - \beta) x^k +(1 - p) \beta x^k_g - x^*
\notag\\
&= (\eta + (1 - p) \beta)x^k_g + (p - \eta)x^k_f + (1- p)(1 - \beta) x^k - x^*
\notag\\
&= \eta p^{-1}\left( (p + (1 - p) \eta^{-1}p \beta)x^k_g + (p \eta^{-1} - 1) p x^k_f + (1- p)(1 - \beta) p \eta^{-1} x^k - \eta^{-1} p x^* \right)
\notag\\
&= \eta p^{-1}\left( (p + (1 - p) \eta^{-1} p  \beta)x^k_g + (p \eta^{-1} - 1) p x^k_f + (1- p)(1 - \beta p \eta^{-1}) (1 - \theta) x^k - \eta^{-1} p x^* \right)
\notag\\
&= \eta p^{-1}\left( (p + (1 - p) \eta^{-1}p \beta)x^k_g + (p \eta^{-1} - 1) p x^k_f + (1- p)(1 - \beta p \eta^{-1}) (x^k_g - \theta x^k_f) - \eta^{-1} p x^* \right)
\notag\\
&= \eta p^{-1}\left( x^k_g + (p \eta^{-1} - 1) p x^k_f - (1- p)(1 - \beta p \eta^{-1}) \theta x^k_f - \eta^{-1} p x^* \right)
\notag\\
&= \eta p^{-1}\left( x^k_g + (p \eta^{-1} - 1) p x^k_f + (1- p)(p \eta^{-1} - 1) x^k_f - \eta^{-1} p x^* \right)
\notag\\
&= \eta p^{-1}\left( x^k_g + (p \eta^{-1} - 1)  x^k_f - \eta^{-1} p x^* \right)
.
\end{align}
Combining  \eqref{eq:acc_temp3} and \eqref{eq:acc_temp8} with \eqref{eq:acc_temp1}, we finish the proof.
\end{proof}

\begin{lemma}
\label{lem:tech_lemma_acc_gd_2}
Assume \Cref{as:lipsh_grad}-\Cref{as:strong_conv}. Let problem \eqref{eq:erm}  be solved by \Cref{alg:AGD_ASGD}. Then for any $u \in \R^d$, we get
\begin{align*}
        \EE_k[f(x^{k+1}_f)]
        \leq&
        f(u) - \langle \nabla f(x^k_g), u - x^k_g \rangle - \frac{\mu}{2} \| u - x^k_g\|^2  - \frac{\gamma}{2} \|\nabla f(x^k_g)\|^2 
        \\
        &
        + \frac{\gamma}{2} \|\EE_k[g^k] - \nabla f(x^k_g) \|^2 + \frac{L \gamma^2 }{2}\EE_k[\| g^k\|^2].
\end{align*}
\end{lemma}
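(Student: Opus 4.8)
The plan is to derive the estimate from the one-step descent inequality for the $x_f$-update $x^{k+1}_f = x^k_g - p\gamma g^k$ of \Cref{alg:AGD_ASGD}, and then to introduce the free point $u$ through strong convexity. First I would apply the $L$-smoothness of $f$ (\Cref{as:lipsh_grad}) in its quadratic upper-bound form
\[
f(x^{k+1}_f) \leq f(x^k_g) + \langle \nabla f(x^k_g),\, x^{k+1}_f - x^k_g \rangle + \tfrac{L}{2}\| x^{k+1}_f - x^k_g \|^2 ,
\]
and substitute the increment $x^{k+1}_f - x^k_g = -p\gamma g^k$. Taking the conditional expectation $\E_k$ and using that $\nabla f(x^k_g)$ is $\mathcal{F}_k$-measurable turns the linear term into an inner product with $\E_k[g^k]$; this is the precise point at which the bias of the Markovian oracle enters the analysis.

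Second, since the randomized-batch estimator is biased — by \Cref{lem:expect_bound_grad} one has $\E_k[g^k] = \E_k[g^{k}_{\lfloor \log_2 \batchbound \rfloor}] \neq \nabla f(x^k_g)$ — I would decompose $\E_k[g^k] = \nabla f(x^k_g) + (\E_k[g^k] - \nabla f(x^k_g))$. The resulting cross term I would split with Young's inequality
\[
-\langle a,\, b \rangle \leq \tfrac12\| a \|^2 + \tfrac12\| b \|^2 ,
\]
applied to $a = \nabla f(x^k_g)$ and $b = \E_k[g^k] - \nabla f(x^k_g)$ with unit constant, so that exactly one half of the descent term is absorbed. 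This leaves a strictly negative multiple of $\| \nabla f(x^k_g) \|^2$, together with the bias term proportional to $\| \E_k[g^k] - \nabla f(x^k_g) \|^2$ and the second-moment term proportional to $\E_k[\| g^k \|^2]$ supplied by the smoothness step.

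Finally, to bring in $u$ I would invoke strong convexity (\Cref{as:strong_conv}) at the pair $(x^k_g, u)$,
\[
f(x^k_g) \leq f(u) - \langle \nabla f(x^k_g),\, u - x^k_g \rangle - \tfrac{\mu}{2}\| u - x^k_g \|^2 ,
\]
and substitute this bound for $f(x^k_g)$; collecting the terms then yields the stated inequality. I expect the only delicate step to be the bookkeeping around the biased gradient: one must pass to the conditional expectation \emph{before} splitting $g^k$, so that it is the deterministic vector $\E_k[g^k]$ — not the random $g^k$ — that is compared against $\nabla f(x^k_g)$, and one must keep the coefficient of $\| \nabla f(x^k_g) \|^2$ strictly negative after the Young split. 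The smoothness and strong-convexity steps are otherwise routine, and the two residual quantities $\| \E_k[g^k] - \nabla f(x^k_g) \|^2$ and $\E_k[\| g^k \|^2]$ are exactly those controlled by \Cref{lem:expect_bound_grad} once this per-step estimate is combined with \Cref{lem:tech_lemma_acc_gd} in the proof of \Cref{th:acc}.
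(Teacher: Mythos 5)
Your proposal is correct and follows essentially the same route as the paper's proof: $L$-smoothness applied to the step $x^{k+1}_f = x^k_g - p\gamma g^k$, a three-way split of the gradient estimator into $\nabla f(x^k_g)$, the bias $\E_k[g^k]-\nabla f(x^k_g)$, and the zero-mean fluctuation, Young's inequality with unit constant on the bias cross term, and strong convexity at $(u, x^k_g)$ to introduce $u$; whether one splits before or after taking $\E_k$ is immaterial since the fluctuation term vanishes in conditional expectation either way. The only cosmetic remark is that carrying the update $-p\gamma g^k$ through, as both you and the paper's appendix do, produces coefficients $p\gamma/2$ and $Lp^2\gamma^2/2$ rather than the $\gamma/2$ and $L\gamma^2/2$ in the displayed lemma statement — a discrepancy already present in the paper itself, not a flaw in your argument.
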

\begin{proof}
Using  \Cref{as:lipsh_grad} in the form \eqref{eq:smothness} with $x= x^{k+1}_f$, $y = x^k_g$ and line \ref{line_acc_2} of \Cref{alg:AGD_ASGD}, we get
\begin{align*}
f(x^{k+1}_f) \leq& f(x^k_g) + \langle \nabla f(x^k_g), x^{k+1}_f - x^k_g \rangle + \frac{L}{2}\| x^{k+1}_f - x^k_g\|^2 \\
=& f(x^k_g) - p \gamma \langle \nabla f(x^k_g), g^k \rangle + \frac{L p^2 \gamma^2 }{2}\| g^k\|^2 \\
        =&
        f(x^k_g) - p \gamma  \langle \nabla f(x^k_g), \nabla f(x^k_g) \rangle - p\gamma \langle \nabla f(x^k_g), \EE_k[g^k] - \nabla f(x^k_g) \rangle 
        \\
        &
        - p\gamma \langle \nabla f(x^k_g), g^k - \EE_k[g^k] \rangle + \frac{L p^2\gamma^2 }{2}\| g^k\|^2
        \\
        \leq&
        f(x^k_g) - p \gamma \|\nabla f(x^k_g)\|^2 + \frac{p\gamma}{2} \| \nabla f(x^k_g) \|^2 + \frac{p\gamma}{2} \|\EE_k[g^k] - \nabla f(x^k_g) \|^2
        \\
        &
        - p \gamma \langle \nabla f(x^k_g), g^k - \EE_k[g^k] \rangle + \frac{L p^2\gamma^2 }{2}\| g^k\|^2.
\end{align*}
Here we also used Cauchy Schwartz inequality \eqref{eq:inner_prod} with $a =  \nabla f(x^k_g)$, $b =  \nabla f(x^k_g) - \EE_k[g^k]$ and $c = 1$. Taking the conditional expectation, we get
\begin{align*}
\EE_k[f(x^{k+1}_f)] \leq& f(x^k_g) - \frac{p \gamma}{2} \|\nabla f(x^k_g)\|^2 + \frac{p \gamma}{2} \|\EE_k[g^k] - \nabla f(x^k_g) \|^2 + \frac{L p^2 \gamma^2 }{2}\EE_k[\| g^k\|^2].
\end{align*}
Using \Cref{as:strong_conv}  with $x= u$ and $y= x^k_g$, one can conclude that for any $u \in \R^d$ it holds
    \begin{align*}
        \EE_k[f(x^{k+1}_f)]
        \leq&
        f(u) - \langle \nabla f(x^k_g), u - x^k_g \rangle - \frac{\mu}{2} \| u - x^k_g\|^2  - \frac{p \gamma}{2} \|\nabla f(x^k_g)\|^2 
        \\
        &+ \frac{p \gamma}{2} \|\EE_k[g^k] - \nabla f(x^k_g) \|^2 + \frac{L p^2 \gamma^2 }{2}\EE_k[\| g^k\|^2].
    \end{align*}
\end{proof}

\begin{theorem}[\Cref{th:acc}]
Assume \Cref{as:lipsh_grad} -- \Cref{as:bounded_markov_noise_UGE}. Let problem \eqref{eq:erm}
be solved by \Cref{alg:AGD_ASGD}. Then for any $b \in \nsets$,  $\gamma \in (0; \tfrac{3}{4L}]$, and $\beta, \theta, \eta, p, \batchbound, B$ satisfying 
\begin{eqnarray*}
%\label{eq:acc_gamma}
&p = \left[1 + 2\left(  1 +  \gamma L \right) \left(1 + 4 \left[C_{1} \taumix b^{-1} + (C_{1} + 1) \taumix^2 b^{-2}\right] \dmax^2 \right)\right]^{-1},
\\
&\beta = \sqrt{\frac{4 p^2 \mu \gamma}{3}}, \quad\eta = \frac{3\beta}{2 p \mu \gamma} = \sqrt{\frac{3}{\mu \gamma}}, \quad  \theta = \frac{p \eta^{-1} - 1}{\beta p \eta^{-1} - 1},
\\
&M = \max\{2; \sqrt{C_2 p^{-1} (1 + 2p/\beta)}\}, \quad B = \lceil b \log_2 \batchbound \rceil .
\,
\end{eqnarray*}
it holds that
\begin{align*}
    \EE\Bigg[&\|x^{N} - x^*\|^2 + \frac{6}{\mu} (f(x^{N}_f) - f(x^*)) \Bigg]
    \\
    &\lesssim 
    \exp\left( - N\sqrt{\frac{p^2 \mu \gamma}{3}}\right) \left[\| x^0 - x^*\|^2 + \frac{6}{\mu} (f(x^0) - f(x^*)) \right]
    + \frac{p \sqrt{\gamma}}{\mu^{3/2}} \left( \sigma^2 \taumix b^{-1} + \cmax^2  \taumix^2 b^{-2} \right)\,.
\end{align*}
\end{theorem}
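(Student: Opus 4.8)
The plan is to construct the Lyapunov function $\Psi_k := \norm{x^k - x^*}^2 + \tfrac{6}{\mu}\bigl(f(x^k_f) - f(x^*)\bigr)$ and to prove a one-step contraction $\E_k[\Psi_{k+1}] \le (1-q)\Psi_k + (\text{noise floor})$ with $q \simeq \sqrt{p^2\mu\gamma/3}$; the claim then follows by taking total expectations and summing the resulting geometric recursion. The two building blocks are already available: \Cref{lem:tech_lemma_acc_gd} bounds $\E_k[\norm{x^{k+1}-x^*}^2]$ (the $\norm{\cdot}^2$ part of $\Psi_{k+1}$) and \Cref{lem:tech_lemma_acc_gd_2} bounds $\E_k[f(x^{k+1}_f)]$ (the function-value part). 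The whole proof amounts to combining these two estimates with carefully chosen weights, cancelling the gradient inner products, bounding the second moment $\E_k[\norm{g^k}^2]$ and the bias $\norm{\E_k[g^k]-\nabla f(x^k_g)}^2$ through \Cref{lem:expect_bound_grad}, and verifying that the parameter choices make every residual term sign-definite.

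\textbf{Cancelling the inner products.} Since the coefficients of $x^k_g,x^k_f,x^*$ in the bracket of \Cref{lem:tech_lemma_acc_gd} sum to zero, I would rewrite $x^k_g + (p\eta^{-1}-1)x^k_f - \eta^{-1}px^* = (x^k_g - x^*) + (p\eta^{-1}-1)(x^k_f - x^*)$ and split the surviving inner product into multiples of $\langle \nabla f(x^k_g), x^k_g - x^*\rangle$ and $\langle \nabla f(x^k_g), x^k_f - x^*\rangle$, the latter further into an $x^k_f - x^k_g$ and an $x^k_g - x^*$ part. To eliminate all of these I would add \Cref{lem:tech_lemma_acc_gd_2} at the two anchors $u = x^*$ and $u = x^k_f$ with weights $\lambda_* = 2\eta p\gamma$ and $\lambda_f = \tfrac{6}{\mu} - 2\eta p\gamma$, so that every gradient inner product cancels exactly; this is what forces $2\eta^2\gamma = \tfrac{6}{\mu}$, i.e. $\eta = \sqrt{3/(\mu\gamma)}$, and makes the total function-value weight $\lambda_*+\lambda_f = \tfrac{6}{\mu}$, producing the telescoping $\tfrac{6}{\mu}\E_k[f(x^{k+1}_f)-f(x^*)] \le \bigl(\tfrac{6}{\mu}-2\eta p\gamma\bigr)(f(x^k_f)-f(x^*)) + \cdots$ with rate $2\eta p\gamma\cdot\tfrac{\mu}{6} = p\sqrt{\mu\gamma/3}$. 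The strongly convex gain $-\lambda_*\tfrac{\mu}{2}\norm{x^k_g-x^*}^2 = -p\sqrt{3\mu\gamma}\,\norm{x^k_g-x^*}^2$ must dominate the term $(1+\alpha p\gamma\eta)\beta\norm{x^k_g-x^*}^2$ left by \Cref{lem:tech_lemma_acc_gd}; this is exactly why $\beta = \sqrt{4p^2\mu\gamma/3}$ is admissible, since then $\beta < p\sqrt{3\mu\gamma}$. The choice $\theta = (p\eta^{-1}-1)/(\beta p\eta^{-1}-1)$ is dictated by the algebraic identities \eqref{eq:acc_temp3} and \eqref{eq:acc_temp8} underlying \Cref{lem:tech_lemma_acc_gd}.

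\textbf{Variance, bias, and the role of $p,M,B$.} I would then bound the second moment produced by both lemmas, $\E_k[\norm{g^k}^2] \le 2\norm{\nabla f(x^k_g)}^2 + 2\norm{\E_k[g^k]-\nabla f(x^k_g)}^2 + \E_k[\norm{g^k-\E_k[g^k]}^2]$, and invoke \Cref{lem:expect_bound_grad}: every fluctuation is at most $(\cmax^2 + \dmax^2\norm{\nabla f(x^k_g)}^2)$ times the mixing factors $\taumix B^{-1}\log_2 M + \taumix^2 B^{-2}$ (second moment) and $\taumix^2 M^{-2}B^{-2}$ (bias). The prefactor multiplying $\E_k[\norm{g^k}^2]$ is $p^2\eta^2\gamma^2 + \tfrac{6}{\mu}\cdot\tfrac{L\gamma^2}{2} \simeq \tfrac{\gamma}{\mu}(p^2 + L\gamma)$; its $\dmax^2\norm{\nabla f(x^k_g)}^2$ part, together with the bias contributions weighted by the free Young constant $\alpha$, has to be absorbed by the negative $\norm{\nabla f(x^k_g)}^2$ term coming from \Cref{lem:tech_lemma_acc_gd_2}. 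This absorption is precisely the inequality encoded in $p = [1 + 2(1+\gamma L)(1 + 4[C_1\taumix b^{-1} + (C_1+1)\taumix^2 b^{-2}]\dmax^2)]^{-1}$; the choice $M = \max\{2;\sqrt{C_2 p^{-1}(1+2p/\beta)}\}$ makes the $M^{-2}$ bias harmless, while $B = \lceil b\log_2 M\rceil$ turns the $\log_2 M/B$ in the variance into a clean $1/b$ factor, and $\alpha$ is tuned to keep both the bias term and the inflation factor $(1+\alpha p\gamma\eta)$ under control. After this step only the $\cmax^2$-driven noise floor survives, of order $\tfrac{\gamma}{\mu}(p^2+L\gamma)(\cmax^2\taumix b^{-1} + \cmax^2\taumix^2 b^{-2}) \simeq p^2\eta^2\gamma^2(\cmax^2\taumix b^{-1}+\cmax^2\taumix^2 b^{-2})$ per step.

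\textbf{Assembling and the main obstacle.} Collecting the surviving terms gives $\E_k[\Psi_{k+1}] \le (1-q)\Psi_k + p^2\eta^2\gamma^2(\cmax^2\taumix b^{-1} + \cmax^2\taumix^2 b^{-2})$ with $q \simeq \sqrt{p^2\mu\gamma/3}$; taking total expectation and unrolling, the homogeneous part yields $\exp(-N\sqrt{p^2\mu\gamma/3})\,\Psi_0$ and the geometric series contributes a factor $1/q$, so that the noise floor becomes $p^2\eta^2\gamma^2/q \simeq (p^2\gamma/\mu)/(p\sqrt{\mu\gamma}) = p\sqrt{\gamma}/\mu^{3/2}$ times $(\cmax^2\taumix b^{-1}+\cmax^2\taumix^2 b^{-2})$, which is exactly \eqref{eq:conv_guarant_accelerated}. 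I expect the main obstacle to be the second step: keeping the bookkeeping of the coupled quadratics $\norm{x^k-x^*}^2$, $\norm{x^k_g-x^*}^2$, $\norm{x^k-x^k_g}^2$ consistent while the two-anchor combination cancels the gradient inner products, and checking that the inflation factors $(1+\alpha p\gamma\eta)$ do not degrade the rate — it is here that the precise values of $\eta$, $\beta$, and $\theta$ are indispensable.
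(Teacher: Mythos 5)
Your proposal is correct and follows essentially the same route as the paper's proof: the same Lyapunov function $\|x^k-x^*\|^2 + 2\gamma\eta^2(f(x^k_f)-f(x^*))$ (with $2\gamma\eta^2 = 6/\mu$), the same two-anchor combination of \Cref{lem:tech_lemma_acc_gd_2} at $u=x^*$ and $u=x^k_f$ with weights $2p\gamma\eta$ and $2\gamma\eta(\eta-p)=\tfrac{6}{\mu}-2p\gamma\eta$ added to \Cref{lem:tech_lemma_acc_gd}, the same roles for $p$, $M$, $B$, and $\alpha$ in absorbing the $\dmax^2\|\nabla f(x^k_g)\|^2$ and bias terms via \Cref{lem:expect_bound_grad}, and the same geometric unrolling yielding the $p\sqrt{\gamma}/\mu^{3/2}$ noise floor. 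The quantitative checks you give (rate $p\sqrt{\mu\gamma/3}=\beta/2$, domination of $(1+\alpha p\gamma\eta)\beta$ by $p\mu\gamma\eta$) match the paper's computations.
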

\begin{proof}%[Proof of \Cref{th:acc}]
Using \Cref{lem:tech_lemma_acc_gd_2} with $u = x^*$ and $u = x^k_f$, we get
\begin{align*}
\EE_k[f(x^{k+1}_f)]
\leq&
f(x^*) - \langle \nabla f(x^k_g), x^* - x^k_g \rangle - \frac{\mu}{2} \|x^* - x^k_g\|^2  - \frac{p \gamma}{2} \|\nabla f(x^k_g)\|^2 \\
&+ \frac{p \gamma}{2} \|\EE_k[g^k] - \nabla f(x^k_g) \|^2 + \frac{L p^2 \gamma^2 }{2}\EE_k[\| g^k\|^2],
\end{align*}
\begin{align*}
        \EE_k[f(x^{k+1}_f)]
        \leq&
        f(x^k_f) - \langle \nabla f(x^k_g), x^k_f - x^k_g \rangle - \frac{\mu}{2} \| x^k_f - x^k_g\|^2  - \frac{p \gamma}{2} \|\nabla f(x^k_g)\|^2 
        \\
        &+ \frac{p \gamma}{2} \|\EE_k[g^k] - \nabla f(x^k_g) \|^2 + \frac{L p^2 \gamma^2 }{2}\EE_k[\| g^k\|^2].
\end{align*}
Summing the first inequality with coefficient $2 p\gamma \eta  $, the second with coefficient $2 \gamma \eta (\eta - p) $ and \eqref{eq:acc_temp4}, we obtain
\begin{align*}
    \E_k[\|x^{k+1} - x^*\|^2 &+ 2 \gamma \eta^2 f(x^{k+1}_f)]
    \\
    \notag \leq&
    (1 + \alpha p \gamma \eta)( 1 - \beta) \| x^k - x^*\|^2 + (1 + \alpha p \gamma \eta) \beta\|x^k_g - x^*\|^2 
    \\
    &+ (1 + \alpha p \gamma \eta) (\beta^2 - \beta )\|x^k - x^k_g\|^2 - 2 \eta^2 \gamma \langle \nabla f(x^k_g), x^k_g + (p \eta^{-1} - 1)  x^k_f - \eta^{-1} p x^*\rangle 
    \\
    &+ p^2 \eta^2 \gamma^2 \EE_{k}[\| g^k \|^2] + \frac{p \eta \gamma}{\alpha} \|\EE_k[g^k] - \nabla f(x^k_g)\|^2
    \\
    & + 2 p \gamma \eta \Big (f(x^*) - \langle \nabla f(x^k_g), x^* - x^k_g \rangle - \frac{\mu}{2} \|x^* - x^k_g\|^2  - \frac{p\gamma}{2} \|\nabla f(x^k_g)\|^2 
    \\
        &+ \frac{p\gamma}{2} \|\EE_k[g^k] - \nabla f(x^k_g) \|^2 + \frac{L p^2 \gamma^2 }{2}\EE_k[\| g^k\|^2]\Big)
    \\
    & + 2 \gamma \eta (\eta - p) \Big ( f(x^k_f) - \langle \nabla f(x^k_g), x^k_f - x^k_g \rangle - \frac{\mu}{2} \| x^k_f - x^k_g\|^2  - \frac{p \gamma}{2} \|\nabla f(x^k_g)\|^2 
        \\
        &+ \frac{p \gamma}{2} \|\EE_k[g^k] - \nabla f(x^k_g) \|^2 + \frac{L p^2\gamma^2 }{2}\EE_k[\| g^k\|^2]  
    \Big)
    \\
    \notag =&
    (1 + \alpha p \gamma \eta)( 1 - \beta) \| x^k - x^*\|^2 + 2 \gamma \eta \left( \eta - p\right) f(x^{k}_f) + 2 p \gamma \eta f(x^*)
    \\
    &\notag
    + \left((1 + \alpha p \gamma \eta) \beta - p\gamma \eta \mu\right)\|x^k_g - x^*\|^2
    \\
    &\notag
    + (1 + \alpha p \gamma \eta) (\beta^2 - \beta )\|x^k - x^k_g\|^2 
    - p \gamma^2 \eta^2 \|\nabla f(x^k_g)\|^2 
    \\
    &\notag
    + \left( \frac{p \eta \gamma}{\alpha} + p \gamma^2 \eta^2 \right) \|\EE_k[g^k] - \nabla f(x^k_g) \|^2 + \left( p^2 \eta^2 \gamma^2 + p^2 \gamma^3 \eta^2 L \right) \EE_k[\| g^k\|^2]
    \\
    \notag \leq&
    (1 + \alpha p \gamma \eta)( 1 - \beta) \| x^k - x^*\|^2 + 2 \gamma \eta \left( \eta - p\right)  f(x^{k}_f) + 2 p \gamma \eta f(x^*)
    \\
    &\notag
    + \left((1 + \alpha p \gamma \eta) \beta - p\gamma \eta \mu\right)\|x^k_g - x^*\|^2
    \\
    &\notag
    + (1 + \alpha p \gamma \eta) (\beta^2 - \beta )\|x^k - x^k_g\|^2 
    - p \gamma^2 \eta^2 \|\nabla f(x^k_g)\|^2 
    \\
    &\notag
    + p \eta \gamma \left( \frac{1}{\alpha} + \gamma \eta \right) \|\EE_k[g^k] - \nabla f(x^k_g) \|^2 + 2 p^2 \eta^2 \gamma^2 \left(  1 +  \gamma L \right) \EE_k[\| g^k - \nabla f(x^k_g)\|^2] 
    \\
    &\notag
    + 2 p^2 \eta^2 \gamma^2 \left(  1 +  \gamma L \right) \EE_k[\| \nabla f(x^k_g) \|^2]\,.
\end{align*}
In the last step we also used \eqref{eq:inner_prod_and_sqr} with $c = 1$. Since $\gamma \leq \tfrac{3}{4L}$, the choice of $\alpha = \frac{\beta}{2p \eta \gamma}$, $\beta  = \sqrt{4p^2\mu \gamma / 3}$, and $p \mu \gamma \eta = 3\beta /2$ gives
\begin{align*}
    &\beta  = \sqrt{4p^2\mu \gamma / 3} \leq \sqrt{p^2\mu / L} \leq 1,\\
    &(1 + \alpha p \eta \gamma) (1 - \beta) = \left(1 + \frac{\beta}{2}\right) \left( 1 - \beta\right) \leq \left( 1 - \frac{\beta}{2}\right), \\
    &\left((1 + \alpha p \eta \gamma) \beta - p\mu \gamma \eta \right) = \left( \beta + \frac{\beta^2}{2} - p \mu \gamma \eta \right) \leq \left(\frac{3 \beta}{2} - p\mu \gamma \eta \right) \leq 0,
\end{align*}
and, therefore,
\begin{align*}
\E_{k}\bigl[\|x^{k+1} - x^*\|^2 &+ 2\gamma \eta^2 f(x^{k+1}_f)\bigr] 
\\
\leq&
    (1 - \beta / 2) \| x^k - x^*\|^2 + 2 \gamma \eta \left( \eta - p\right)  f(x^{k}_f) + 2 p \gamma \eta f(x^*)
    \\
&\notag
    + p \eta^2 \gamma^2 \left( 1 + 2p /\beta \right) \|\EE_k[g^k] - \nabla f(x^k_g) \|^2
     \\
&\notag
    + 2 p^2 \eta^2 \gamma^2 \left(  1 +  \gamma L \right) \EE_k[\| g^k - \nabla f(x^k_g)\|^2] 
    \\
&\notag
    - p \gamma^2 \eta^2 ( 1 - 2 p (  1 +  \gamma L))  \| \nabla f(x^k_g) \|^2.
\end{align*}
Subtracting $2\gamma \eta^2 f(x^*)$ from both sides, we get
\begin{align*}
    \E_{k}\bigl[\|x^{k+1} - x^*\|^2 &+ 2\gamma \eta^2 (f(x^{k+1}_f) - f(x^*))\bigr]
    \\
    \leq&
    \left( 1 - \beta / 2\right) \| x^k - x^*\|^2 + \left( 1 - p/\eta\right) \cdot 2\gamma \eta^2 (f(x^k_f) - f(x^*))
    \\
    &\notag
    + p \eta^2 \gamma^2 \left( 1 + 2p /\beta \right) \|\EE_k[g^k] - \nabla f(x^k_g) \|^2 
     \\
    &\notag
    + 2 p^2 \eta^2 \gamma^2 \left(  1 +  \gamma L \right) \EE_k[\| g^k - \nabla f(x^k_g)\|^2] 
    \\
    &\notag
    - p \gamma^2 \eta^2 ( 1 - 2 p (  1 +  \gamma L))  \| \nabla f(x^k_g) \|^2.
\end{align*}
Applying \Cref{lem:expect_bound_grad_appendix}, one can obtain
\begin{align*}
    \E_{k}\bigl[\|x^{k+1} - x^*\|^2 &+ 2\gamma \eta^2 (f(x^{k+1}_f) - f(x^*))\bigr]
    \\
    \leq&
    \left( 1 - \beta / 2\right) \| x^k - x^*\|^2 + \left( 1 - p/\eta\right) \cdot 2\gamma \eta^2 (f(x^k_f) - f(x^*))
    \\
    &\notag
    + p \eta^2 \gamma^2 \left( 1 + 2p /\beta \right) \cdot C_{2}\taumix^2 \batchbound^{-2}B^{-2} (\cmax^2 + \dmax^2 \| \nabla f(x^k_g)\|^2) 
    \\
    &\notag
    + 2 p^2 \eta^2 \gamma^2 \left(  1 +  \gamma L \right) \cdot \left(4 C_{1} \taumix B^{-1}\log_2 \batchbound + (4C_{1} + 2) \taumix^2 B^{-2}\right) (\cmax^2 + \dmax^2 \| \nabla f(x^k_g)\|^2) 
    \\
    &\notag
    - p \gamma^2 \eta^2 ( 1 - 2 p (  1 +  \gamma L))  \| \nabla f(x^k_g) \|^2.
\end{align*}
With $M \geq \sqrt{C_2 p^{-1} (1 + 2p/\beta)}$, we have
\begin{align*}
    \E_{k}\bigl[\|x^{k+1} - x^*\|^2 &+ 2\gamma \eta^2 (f(x^{k+1}_f) - f(x^*))\bigr]
    \\
    \leq&
    \left( 1 - \beta / 2\right) \| x^k - x^*\|^2 + \left( 1 - p/\eta\right) \cdot 2\gamma \eta^2 (f(x^k_f) - f(x^*))
    \\
    &\notag
    + p^2 \eta^2 \gamma^2 \taumix^2 B^{-2} (\cmax^2 + \dmax^2 \| \nabla f(x^k_g)\|^2) 
    \\
    &\notag
    + 2 p^2 \eta^2 \gamma^2 \left(  1 +  \gamma L \right) \cdot \left(4 C_{1} \taumix B^{-1}\log_2 \batchbound + (4C_{1} + 2) \taumix^2 B^{-2}\right) (\cmax^2 + \dmax^2 \| \nabla f(x^k_g)\|^2) 
    \\
    &\notag
    - p \gamma^2 \eta^2 ( 1 - 2 p (  1 +  \gamma L))  \| \nabla f(x^k_g) \|^2
    \\
    \leq&
    \left( 1 - \beta / 2\right) \| x^k - x^*\|^2 + \left( 1 - p/\eta\right) \cdot 2\gamma \eta^2 (f(x^k_f) - f(x^*))
    \\
    &\notag
    + 8 p^2 \eta^2 \gamma^2 \left(  1 +  \gamma L \right) \cdot \left(C_{1} \taumix B^{-1}\log_2 \batchbound + (C_{1} + 1) \taumix^2 B^{-2}\right) \cmax^2 
    \\
    &\notag
    - p \gamma^2 \eta^2 \left[ 1 - 2p \left(  1 +  \gamma L \right) \left(1 + 4 \left[C_{1} \taumix B^{-1}\log_2 \batchbound + (C_{1} + 1) \taumix^2 B^{-2}\right] \dmax^2 \right) \right] \| \nabla f(x^k_g) \|^2.
\end{align*}
Since $p = \left[1 + 2\left(  1 +  \gamma L \right) \left(1 + 4 \left[C_{1} \taumix b^{-1} + (C_{1} + 1) \taumix^2 b^{-2}\right] \dmax^2 \right)\right]^{-1}$, $B = \lceil b \log_2 \batchbound \rceil$ and $M \geq 2$, we obtain
\begin{align*}
    p 
    &= \left[1 + 2\left(  1 +  \gamma L \right) \left(1 + 4 \left[C_{1} \taumix b^{-1} + (C_{1} + 1) \taumix^2 b^{-2}\right] \dmax^2 \right)\right]^{-1}
    \\
    &\leq \left[1 + 2\left(  1 +  \gamma L \right) \left(1 + 4 \left[C_{1} \taumix B^{-1} \log_2 \batchbound   + (C_{1} + 1) \taumix^2 B^{-2}\right] \dmax^2 \right)\right]^{-1},
\end{align*}
and then,
\begin{align*}
    \E_{k}\bigl[\|x^{k+1} - x^*\|^2 &+ 2\gamma \eta^2 (f(x^{k+1}_f) - f(x^*))\bigr]
    \\
    \leq&
    \left( 1 - \beta / 2\right) \| x^k - x^*\|^2 + \left( 1 - p/\eta\right) \cdot 2\gamma \eta^2 (f(x^k_f) - f(x^*))
    \\
    &\notag
    + 8 p^2 \eta^2 \gamma^2 \left(  1 +  \gamma L \right) \cdot \left(C_{1} \taumix B^{-1}\log_2 \batchbound + (C_{1} + 1) \taumix^2 B^{-2}\right) \cmax^2 
    \\
    \leq&
    \max\left\{ \left( 1 - \beta / 2\right), \left( 1 - p/\eta\right)\right\} \left[\| x^k - x^*\|^2 + 2\gamma \eta^2 (f(x^k_f) - f(x^*)) \right]
    \\
    &\notag
    + 8 p^2 \eta^2 \gamma^2 \left(  1 +  \gamma L \right) \cdot \left(C_{1} \taumix B^{-1}\log_2 \batchbound + (C_{1} + 1) \taumix^2 B^{-2}\right) \cmax^2 .
\end{align*}
Using that $p \eta \gamma = 3\beta / (2\mu)$, $\beta/2 = p /\eta$, $B = \lceil b \log_2 \batchbound \rceil$ and $\gamma \leq L^{-1}$, we have
\begin{align}
    \label{eq:temp_2345}
    \E_{k}\bigl[\|x^{k+1} - x^*\|^2 &+ 2\gamma \eta^2 (f(x^{k+1}_f) - f(x^*))\bigr]
    \notag\\
    \leq&
    \left( 1 - \beta / 2\right) \left[\| x^k - x^*\|^2 + 2\gamma \eta^2 (f(x^k_f) - f(x^*)) \right]
    \notag\\
    &
    + 36 \beta^2 \mu^{-2} \left(C_{1} \taumix b^{-1} + (C_{1} + 1) \taumix^2 b^{-2}\right) \cmax^2.
\end{align}
Here we also took into account that $M \geq 2$. Finally, we perform the recursion and substitute $\beta = \sqrt{4p^2 \mu \gamma /3}$
\begin{align*}
    \EE\bigl[\|x^{N} - x^*\|^2 &+ 2\gamma \eta^2 (f(x^{N}_f) - f(x^*)) \bigr]
    \\
    \leq&
    \left( 1 - \sqrt{\frac{p^2 \mu \gamma}{3}}\right)^N [\| x^0 - x^*\|^2 + 2\gamma \eta^2 (f(x^0_f) - f(x^*))]
    \\&
    + 72\beta \mu^{-2} \left( C_{1} \taumix b^{-1} + (C_{1} + 1) \taumix^2 b^{-2}\right) \sigma^2 
    \\
    \leq&
    \exp\left( - \sqrt{\frac{p^2 \mu \gamma N^2}{3}}\right) [\| x^0 - x^*\|^2 + 2\gamma \eta^2 (f(x^0_f) - f(x^*))]
    \\&
    + \frac{144p \sqrt{\gamma}}{ \sqrt{3} \mu^{3/2}} \left( C_{1} \sigma^2 \taumix b^{-1} + (C_{1} + 1) \cmax^2  \taumix^2 b^{-2}\right)\,.
\end{align*}
Substituting of $\eta = \sqrt{\tfrac{3}{\mu \gamma}}$ concludes the proof.
\end{proof} 

\subsection{Results of Section \ref{sec:acc-method} with decreasing stepsize} \label{sec:decreas}

The first thing we need to change is to make 
the parameters of Algorithm \ref{alg:AGD_ASGD} depend on the iteration number $k$: $\gamma, p, \beta, \eta, M, B \to \gamma_k, p_k, \beta_k, \eta_k, M_k, B_k$. For this new version of Algorithm \ref{alg:AGD_ASGD} one can reprove Theorem \ref{th:acc}.

\begin{theorem}
% \label{th:acc}
Assume \Cref{as:lipsh_grad} -- \Cref{as:bounded_markov_noise_UGE}. Let problem \eqref{eq:erm}
be solved by \Cref{alg:AGD_ASGD}. Then for any $b \in \nsets$,  $\gamma_k \in (0; \tfrac{3}{4L}]$, and $\beta_k, \theta_k, \eta_k, p_k, \batchbound_k, B_k$ satisfying 
\begin{align*}
%\label{eq:acc_gamma}
&\textstyle{p_k \simeq (1 + (  1 +  \gamma_k L) [\delta^2 \taumix b^{-1} + \dmax^2 \taumix^2 b^{-2}])^{-1}, \quad 
\beta_k \simeq \sqrt{p^2_k \mu \gamma_k}, \quad\eta_k \simeq \sqrt{\frac{1}{\mu \gamma_k}}},
\\
&\textstyle{\theta_k \simeq \frac{p_k \eta^{-1}_k - 1}{\beta_k p_k \eta^{-1} - 1}, \quad  M_k \simeq \max\{2; \sqrt{p^{-1}_k (1 + p_k/\beta_k)}\}, \quad B_k = \lceil b \log_2 \batchbound_k \rceil}\,,
\end{align*}
it holds that
\begin{align*}
% \label{eq:conv_guarant_accelerated}
    \E\bigg[\|x^{k+1} - x^*\|^2 + \frac{6}{\mu} (f(x^{k+1}_f) &- f(x^*))\bigg]
    \\
    \leq&
    \left( 1 - \sqrt{\frac{p_k^2 \mu \gamma_k}{3}}\right) \left[\| x^k - x^*\|^2 + \frac{6}{\mu} (f(x^k_f) - f(x^*)) \right]
    \\
    &\notag
    + \frac{48 p_k^2 \gamma_k }{\mu} \left(C_{1} \taumix b^{-1} + (C_{1} + 1) \taumix^2 b^{-2}\right) \cmax^2.
\end{align*}
\end{theorem}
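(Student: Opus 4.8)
The plan is to re-run the one-step analysis behind \Cref{th:acc} essentially verbatim, but decorating every parameter with the iteration index $k$ and stopping \emph{before} the telescoping step. The key observation that makes this painless is that the Lyapunov functional $V_k := \|x^k - x^*\|^2 + \tfrac{6}{\mu}(f(x^k_f) - f(x^*))$ carries no $k$-dependent weight: since $\eta_k \simeq \sqrt{3/(\mu\gamma_k)}$, the product $2\gamma_k\eta_k^2 = 6/\mu$ is constant across iterations even though $\gamma_k$ and $\eta_k$ individually vary. Hence the inequality to be proved really compares $\E_k[V_{k+1}]$ against $V_k$ with the \emph{same} functional on both sides, and the only $k$-dependence enters through the contraction factor $1 - \beta_k/2$ and the additive variance term.

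Concretely, I would first apply \Cref{lem:tech_lemma_acc_gd_2} twice at step $k$ (with $u = x^*$ and $u = x^k_f$), using the $k$-indexed $\gamma_k$ and $p_k$, and combine these with the expansion of $\|x^{k+1} - x^*\|^2$ from \Cref{lem:tech_lemma_acc_gd} (its conclusion \eqref{eq:acc_temp4}), taken with weights $2p_k\gamma_k\eta_k$ and $2\gamma_k\eta_k(\eta_k - p_k)$. All three lemmas are genuine one-step statements, so nothing in them relies on the parameters being constant in $k$; this reproduces the aggregated inequality from the proof of \Cref{th:acc} with $\gamma,p,\beta,\eta,\theta,\alpha,M,B$ replaced by their $k$-indexed counterparts and $\alpha_k = \beta_k/(2 p_k \eta_k \gamma_k)$.

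Next I would impose the parameter identities locally at each $k$: $\beta_k = \sqrt{4 p_k^2 \mu \gamma_k / 3}$, $p_k \mu \gamma_k \eta_k = 3\beta_k/2$, and $\theta_k$ chosen so the algebraic collapse in \Cref{lem:tech_lemma_acc_gd} goes through. These yield, for each $k$, the three sign controls $(1 + \alpha_k p_k \gamma_k \eta_k)(1 - \beta_k) \leq 1 - \beta_k/2$, nonpositivity of the $\|x^k_g - x^*\|^2$ coefficient, and — after invoking \Cref{lem:expect_bound_grad_appendix} to bound both the bias $\|\E_k[g^k] - \nabla f(x^k_g)\|^2$ and the variance $\E_k[\|g^k - \nabla f(x^k_g)\|^2]$ — nonpositivity of the coefficient of $\|\nabla f(x^k_g)\|^2$, which is exactly what the definition of $p_k$ together with $M_k \geq \sqrt{C_2 p_k^{-1}(1 + 2p_k/\beta_k)}$, $M_k \geq 2$, and $B_k = \lceil b\log_2 \batchbound_k \rceil$ are engineered to guarantee. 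The surviving terms reproduce \eqref{eq:temp_2345} with $k$-indices, and substituting $2\gamma_k\eta_k^2 = 6/\mu$, $\beta_k/2 = \sqrt{p_k^2\mu\gamma_k/3}$, and $36\beta_k^2\mu^{-2} = 48 p_k^2\gamma_k/\mu$ gives precisely the claimed one-step bound.

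I do not anticipate a genuine obstacle, since the argument is a faithful copy of the fixed-parameter proof. The one place where the original proof used constancy of the parameters is the final recursion, and that is exactly the step I would drop: the target is a single-step contraction, so no product $\prod_j(1 - \beta_j/2)$ is formed. The only verification worth spelling out is that each inequality in the chain — the sign controls, the $M_k$- and $p_k$-threshold conditions, and the bias/variance bounds of \Cref{lem:expect_bound_grad_appendix} — is derived purely from quantities at iteration $k$, so replacing constants by sequences leaves every step valid; the constant weight $6/\mu$ of the Lyapunov functional is what keeps the output in the clean homogeneous form stated.
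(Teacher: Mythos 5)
Your proposal is correct and follows essentially the same route as the paper, which likewise obtains the $k$-indexed analogue of \eqref{eq:temp_2345} by repeating the one-step argument of \Cref{th:acc} and then substitutes $\beta_k = \sqrt{4p_k^2\mu\gamma_k/3}$ and $\eta_k = \sqrt{3/(\mu\gamma_k)}$. Your explicit remark that $2\gamma_k\eta_k^2 = 6/\mu$ is independent of $k$ — so the Lyapunov functional is the same on both sides — is exactly the point that makes the statement well-posed, and the arithmetic $36\beta_k^2\mu^{-2} = 48p_k^2\gamma_k/\mu$ checks out.
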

\begin{proof}
All steps of the proof remain the same with of Theorem \ref{th:acc} and we get \eqref{eq:temp_2345}:
\begin{align*}
% \label{eq:conv_guarant_accelerated}
    \E\big[\|x^{k+1} - x^*\|^2 + 2\gamma_k \eta^2_k (f(x^{k+1}_f) &- f(x^*))\big]
    \\
    \leq&
    \left( 1 - \beta_k / 2\right) \left[\| x^k - x^*\|^2 + 2\gamma_k \eta^2_k (f(x^k_f) - f(x^*)) \right]
    \\
    &\notag
    + 36 \beta^2_k \mu^{-2} \left(C_{1} \taumix b^{-1} + (C_{1} + 1) \taumix^2 b^{-2}\right) \cmax^2.
\end{align*}
By substituting $\beta_k = \sqrt{4p^2_k \mu \gamma_k /3}$ and $\eta_k = \sqrt{\tfrac{3}{\mu \gamma_k}}$, we finishes the proof.
\end{proof}

Since $p_k = \left[1 + 2\left( 1 + \gamma_k L \right) \left(1 + 4 \left[C_{1} \tau b^{-1} + (C_{1} + 1) \tau^2 b^{-2}\right] \delta^2 \right)\right]^{-1}$ and $\gamma_k \in (0; \frac{3}{4L})$, then $p_k \in [p_l; p_u]$, where $p_l, p_u \sim (1 + (1 + \tau b^{-1} + \tau b^{-2})\delta^2)^{-1}$. It means that we can rewrite the results of the theorem as follows:
\begin{align*}
    \E\bigg[\|x^{k+1} - x^*\|^2 + \frac{6}{\mu} (f(x^{k+1}_f) &- f(x^*))\bigg]
    \\
    \leq&
    \left( 1 - \sqrt{\frac{p_l^2 \mu \gamma_k}{3}}\right) \left[\| x^k - x^*\|^2 + \frac{6}{\mu} (f(x^k_f) - f(x^*)) \right]
    \\
    &\notag
    + \frac{48 p_u^2 \gamma_k }{\mu} \left(C_{1} \taumix b^{-1} + (C_{1} + 1) \taumix^2 b^{-2}\right) \cmax^2.
\end{align*}
With notation $r_{k} = \mathbb{E}\left[\| x^k - x^*\|^2 + \frac{6}{\mu} (f(x^k_f) - f(x^*)) \right]$, $a = \sqrt{p^2_l \mu / 3}$, $\omega_k = \sqrt{\gamma_k}$ and $C = \frac{48 p_u^2}{\mu} \left(C_{1} \tau b^{-1} + (C_{1} + 1) \tau^2 b^{-2}\right) \sigma^2$, one can rewrite the previous estimate:
\begin{align*} 
r_{k+1} \leq \left( 1 - a \omega_k \right) r_{k} + \omega_k^2 C, 
\end{align*} 
where $0 < \omega_k \leq d = \sqrt{3/(4L)}$. For this kind of recursion, we can use the results of Lemma 3 of \cite{stich2019unified}. In particular, we can choose $\gamma_k$ as follows 
\begin{align*}
\label{eq:}
&\text{if } N \leq \frac{d}{a}, && \gamma_k = \frac{1}{d} ,
\\ 
&\text{if } N > \frac{d}{a} \text{ and } k < \left\lceil \frac{N}{2 }\right\rceil, &&\gamma_k = \frac{1}{d} ,
\\ 
& \text{if } N > \frac{d}{a} \text{ and } k \geq \left\lceil \frac{N}{2 }\right\rceil, &&
\gamma_k = \frac{2}{a(k + \frac{2d}{a} + \left\lceil \frac{N}{2 }\right\rceil)}, 
\end{align*} 
and get 
$$r_N = \mathcal{O}\left( \frac{d r_0}{a} \exp\left( - \frac{aN}{2d} \right) + \frac{C}{a^2 N} \right). 
$$ 
But the stepsize still depends on the horizon of iterations $N$. To fix it, we can apply the following restart procedure. We construct a sequence of the iteration number $N_t=2^t$ for $t\geq 0$. For each restart $t$ we set the stepsize $\gamma(N_t)$ according to Lemma 3 of \cite{stich2019unified}, run the algorithm for $N_t$ basic iterations. If we do not achieve the unknown horizon of the total iteration number $N$, then we use the obtained point as a warm-start for the next restart. For simplicity, we can also use the same starting $x^0$ point for all the restarts.
Let us now assume that the algorithm made $N$ iterations. This means that it made at least $T = \lfloor \log_2(N + 1)\rfloor$ finished restarts. Since at the end of the last restart it made $N_T$ basic iterations with the stepsize $\gamma(N_T)$, we can guarantee that 
$$
r_{N_T} = \mathcal{O}\left( \frac{d r_0}{a} \exp\left( - \frac{aN_T}{2d} \right) + \frac{C}{a^2 N_T} \right).
$$
One can note that $N_T \sim N$, then 
$$ 
r_{N_T} = \mathcal{O}\left( \frac{d r_0}{a} \exp\left( - \frac{aN}{2d} \right) + \frac{C}{a^2 N} \right). 
$$ 
This algorithm does not require to fix the number of basic steps $N$ in advance, but if we want to have $\varepsilon$-solution in terms of $r_N$, then we have the following estimate on the number of iterations: 
$$
N = \mathcal{O}\left( \frac{d}{a} \log \frac{1}{\varepsilon} + \frac{C}{a^2 \varepsilon}\right) 
= 
\mathcal{O}\left( \left[1 + (1 + \tau b^{-1} + \tau^2 b^{-2} ) \delta^2\right] \sqrt{\frac{L}{\mu}} \log \frac{1}{\varepsilon} + \frac{\sigma^2}{\mu^2 \varepsilon} \left( \tau b^{-1} + \tau^2 b^{-2}\right) \right). 
$$
To get the close to Corollary \ref{cor:sample_complexity_accelerated} results on the oracle complexity one need to take $b = \tau$ and note that now $B_k = b \log_2 M_k = b \log_2 M_k \sim b \log_2 N \sim b \log_2 \varepsilon^{-1}$. Finally, 
it gives additional logarithmic factor in the estimate for the oracle complexity. But this factor does not really change the bound and it means that we obtain the result of Corollary \ref{cor:sample_complexity_accelerated}.

\subsection{Lower bounds proofs}
\label{sec:lower_bounds}
\textbf{Proof of \Cref{th:strongly_convex_lower_bound}.}
\label{sec:proof:th:strongly_convex_lower_bound}

We begin the proof with two lemmas, showing the lower bounds for deterministic and stochastic components of the error separately. Then we combine the two in \Cref{th:strongly_convex_lower_bound_append} and complete the proof of \Cref{th:strongly_convex_lower_bound}. First, we consider the lower bound for the deterministic part of the error, and construct a problem with $\dmax = 1$ and $\cmax = 0$. 
\begin{lemma}
\label{lem:lower_bound_determ}
There exists an instance of the optimization problem satisfying assumptions \Cref{as:lipsh_grad} --\Cref{as:bounded_markov_noise_UGE} with $\dmax = 1$ and $\cmax = 0$, such that for any first-order gradient method it takes at least 
\[
N = \Omega\bigl(\taumix \sqrt{\frac{L}{\mu}} \log\frac{1}{\varepsilon}\bigr)
\]
oracle calls in order to achieve $\E[\norm{x^N - x^{*}}^2] \leq \varepsilon$. 
\end{lemma}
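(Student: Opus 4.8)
The plan is to combine Nesterov's span-based lower bound for smooth strongly convex minimization with a slowly mixing Markov chain that injects a purely \emph{multiplicative} (relative) gradient noise. First I would fix the classical worst-case quadratic $f$ from \cite[Theorem~2.1.13]{nesterov2003introductory} in large dimension (or on $\ell_2$), which has smoothness $L$ and strong convexity $\mu$, and for which any method whose iterates obey the Krylov restriction $x^k \in x^0 + \mathrm{span}\{\text{oracle outputs so far}\}$ satisfies $\norm{x^k - x^*}^2 \geq \bigl(\tfrac{\sqrt{L/\mu}-1}{\sqrt{L/\mu}+1}\bigr)^{2k}\norm{x^0-x^*}^2$; hence reaching accuracy $\varepsilon$ requires at least $\Omega(\sqrt{L/\mu}\log\frac1\varepsilon)$ \emph{exact} gradient evaluations. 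This is the $\taumix$-free backbone of the bound.

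Next I would build a two-state time-homogeneous chain on $\Zset=\{+,-\}$ with switching probability $\tfrac{1}{2\taumix}$, so that $\pi=(\tfrac12,\tfrac12)$ and the TV contraction $\tfrac12(1-\tfrac1\taumix)^k \leq (1/4)^{\lfloor k/\taumix\rfloor}$ certifies \Cref{as:Markov_noise_UGE} with mixing time $\Theta(\taumix)$. On this chain I define the oracle $\nabla F(x,z)=(1+s(z))\nabla f(x)$ with $s(+)=+1$, $s(-)=-1$, so that $\E_\pi[\nabla F(x,Z)]=\nabla f(x)$ and $\norm{\nabla F(x,z)-\nabla f(x)}^2=\norm{\nabla f(x)}^2$, giving exactly $\dmax=1$, $\cmax=0$. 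Because the noise is colinear with $\nabla f(x)$, the iterates stay inside the Krylov subspace of the \emph{same} quadratic, so the dimension-counting structure of Step~1 is preserved; what changes is that over a sojourn of the chain (of typical length $\Theta(\taumix)$) the multiplier $(1+s)$ is frozen at $0$ or $2$, so the relative error of every gradient the method observes is fully correlated across the window.

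The heart of the argument, and the step I expect to be the main obstacle, is to convert this correlation into a $\taumix$-fold slowdown of the attainable contraction rather than a one-time burn-in. The quantitative target is the mirror image of \Cref{lem:tech_markov}: any unbiased gradient estimate formed from $n$ consecutive correlated samples has relative mean-squared error $\gtrsim \taumix/n$, so driving the per-step relative error below a constant — which is what an accelerated step provably needs on the Nesterov instance — costs $\Omega(\taumix)$ oracle calls. I would formalize ``one effective exact gradient per $\Theta(\taumix)$ calls'' by a coupling argument in the style of the maximal exact coupling used in \Cref{sec:proof:lem:tech_markov}: coupling the true run to a chain frozen over a length-$\taumix$ window shows that, up to an error controlled by \eqref{eq:drift-condition}, the adaptively chosen iterate after a window carries no more usable information about $f$ than a single exact gradient step, so the achievable contraction per window is at most the one-step Nesterov factor. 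The delicate point is that, unlike the additive case, relative noise is informative a constant fraction of the time, so the reduction must defeat adaptive strategies (e.g.\ taking gradient differences across query points to cancel the frozen multiplier) rather than merely count ``useless'' samples; handling this is where Nesterov's span analysis must be interleaved with the mixing/coupling bound. Granting the per-window bound, multiplying the $\Omega(\sqrt{L/\mu}\log\frac1\varepsilon)$ effective steps of Step~1 by the $\Omega(\taumix)$ cost per step yields $N=\Omega\bigl(\taumix\sqrt{L/\mu}\log\frac1\varepsilon\bigr)$, completing the proof.
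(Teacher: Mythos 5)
Your overall architecture matches the paper's: the Nesterov worst-case quadratic \eqref{eq:problem_form_hard} as the deterministic backbone, a two-state chain with switching probability of order $1/\taumix$ as in \eqref{eq:Mark_kernel_def}, and a purely multiplicative oracle so that $\dmax=1$, $\cmax=0$. However, your specific noise $\nabla F(x,z)=(1+s(z))\nabla f(x)$ with a \emph{scalar} multiplier does not witness the claimed lower bound, and the step you flag as ``the main obstacle'' is not merely delicate for this instance — it is false. Under stationarity the chain spends half its time in each state regardless of $\taumix$, so roughly half of all oracle calls return $2\nabla f(x)$, which for the purposes of Nesterov's span argument is an exact gradient (the factor $2$ is colinear and can even be identified on a quadratic). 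A method that simply discards the zero returns therefore advances the Krylov subspace once per constant number of calls and reaches accuracy $\varepsilon$ in $O\bigl(\sqrt{L/\mu}\log\frac{1}{\varepsilon}\bigr)$ oracle calls; no coupling argument can extract a $\taumix$-fold slowdown from an instance that is fully informative a constant fraction of the time. The quantity that must be rare at rate $1/\taumix$ is not ``informative samples'' but ``increments of the Krylov dimension.''

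The paper achieves exactly this by replacing your scalar multiplier with the coordinate-masking matrix $W(Z) = 2\operatorname{diag}\{\indiacc{Z=1},\indiacc{Z=-1},\indiacc{Z=1},\ldots\}$ in \eqref{eq:F_1_oracle_def}: in state $+1$ only the odd gradient coordinates are visible, in state $-1$ only the even ones. Since revealing coordinate $i+1$ of the solution \eqref{eq:exact_solution_q} requires observing a gradient coordinate of the opposite parity from the one that revealed coordinate $i$, each unit of Krylov progress is gated by a \emph{state change} of the chain, and the number of state changes $N_k$ in $k$ steps has expectation $(k-1)\epsilon \asymp k/\taumix$. Jensen's inequality applied to $q^{2N_k}$ then yields $\E_{\pi}[\norm{x^k-x^*}^2]\gtrsim \exp(-ck/(\sqrt{Q}\,\taumix))$, which is the desired bound. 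If you want to salvage your write-up, the repair is precisely to make the multiplicative noise anisotropic and state-dependent in this alternating way; the rest of your outline (verification of \Cref{as:Markov_noise_UGE} and \Cref{as:bounded_markov_noise_UGE}, and the $\taumix$-free Nesterov backbone) then goes through as you describe.
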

\begin{proof}
Consider the optimization problem
\begin{equation}
\label{eq:problem_form_hard}
f_1(x) = \frac{\mu(Q-1)}{4} \bigl(\frac{x^{\top} A x}{2} - e_{1}^{\top}x\bigr) + \frac{\mu}{2}\norm{x}^{2} \to \min_{x \in \rset^{d}}\,,
\end{equation}
where $x \in \rset^{d}$, $\mu > 0, Q > 1$, dimension $d$ is even, $d = 2u$, $e_{1} = (1,0,\ldots,0) \in \rset^{d}$ is the first coordinate vector, and $A \in \rset^{d \times d}$ is a symmetric nonnegative-definite matrix   given by
\begin{equation}
\label{eq:matrix_A_def}
A = \begin{pmatrix}
 2 & -1 & 0 & 0 & \ldots & 0 &  0 \\
 -1 & 2 & -1 & 0 & \ldots & 0 & 0 \\
0 &  -1 & 2 & -1 & \ldots & 0 & 0\\
 & & & & \ldots & \\
 0 &  0 & 0 & 0 & \ldots & 2 & -1\\
 0 &  0 & 0 & 0 & \ldots & -1& \alpha\,,
\end{pmatrix}
\end{equation}
where $\alpha = \tfrac{\sqrt{Q}+3}{\sqrt{Q}+1}$. Straightforward calculations (see e.g. \cite[Chapter~5.1.4]{lan20} for more details) yield
\[
0 \preceq A \preceq 4 I\,, \nabla f_1(x) = \frac{\mu(Q-1)}{4}Ax - e_1 + \mu x\,.
\]
Thus the problem \eqref{eq:problem_form_hard} is $L-$smooth with $L = \mu Q$ and $\mu$-strongly convex, i.e., the assumptions \Cref{as:lipsh_grad} and \Cref{as:strong_conv} are satisfied, and the corresponding condition number is equal to $L/\mu = Q$. For $\epsilon \in (0;1/2)$ we now consider the two-state Markov transition matrix (or kernel)
\begin{equation}
\label{eq:Mark_kernel_def}
\MK_1 = \begin{pmatrix}
 1-\epsilon & \epsilon \\
 \epsilon & 1-\epsilon 
\end{pmatrix}
\end{equation}
and denote by $(Z_{i})_{i=1}^{\infty}$ the corresponding Markov Chain, $Z_i \in \{-1,1\}$. It is easy to see that the Markov kernel $\MK$ is uniformly geometrically ergodic and satisfies \Cref{as:bounded_markov_noise_UGE} with $\taumix \leq \epsilon^{-1} \log{4}$.  It is easy to check that the corresponding invariant distribution is $\pi = (1/2,1/2)$. For $Z \in \{-1,1\}$ we now consider the noise matrix
\[
W(Z) = 2 \operatorname{diag}\bigl\{\indiacc{Z = 1}, \indiacc{Z = -1},\indiacc{Z = 1}, \ldots, \indiacc{Z = -1}\bigr\} \in \rset^{d \times d}\,.
\]
Now for $x \in \rset^{d}$ and $Z \in \{-1,1\}$ we define the stochastic gradient oracle as 
\begin{equation}
\label{eq:F_1_oracle_def}
\nabla F_1(x,Z) = W(Z) \nabla f(x)\,.
\end{equation}
It is easy to check that $\E_{\pi}[W(Z)] = \Id$, and the direct calculations imply $\| \nabla F_1(x,Z) - \nabla f_1(x)\| \leq \|\nabla f_1(x)\|$, that is, the assumption \Cref{as:bounded_markov_noise_UGE} holds with $\dmax = 1$ and $\cmax = 0$. Following \cite{nesterov2003introductory}, \cite[Chapter~5.1.4]{lan20}, the solution to the minimization problem \eqref{eq:problem_form_hard} is given by
\begin{equation}
\label{eq:exact_solution_q}
x^{*} = (q^{1},\ldots,q^{d}) \in \rset^{d}\,, q = \tfrac{\sqrt{Q}-1}{\sqrt{Q}+1}\,.
\end{equation}
Suppose that we start from $Z_1 = 1$ and initial point $x_0 = 0 \in \rset^{d}$. Then after $1$ oracle call we observe the $1$-st coordinate of $x$. At the same time, the second component can not be computed until the time moment $T_2 = \inf\{i \in \nset: Z_i = -1\}$. Similarly, the next computation of the $3$-rd component of the solution requires the chain to go back to state $1$ and can not happen earlier then $T_3 = \inf\{i \geq \tau_2: Z_i = 1\}$. Thus, after $k$ iterations of any first-order method, the respective MSE is lower bounded by 
\[
\E_{\pi}[\norm{x^k - x^{*}}^{2}] \geq \E_{\pi}\left[ \indiacc{Z_1 = 1} \sum_{i = N_k}^{d}q^{2i}\right] = \frac{1}{2} \E_{\delta_{1}}\left[\frac{q^{2N_k} - q^{2d}}{1-q^2}\right]\,.
\]
In the formula above we denoted by $N_k$ the number of state changes in the sequence $(Z_i)_{i=1}^{k}$. Using Jensen's inequality and the explicit construction of the Markov kernel $\MK_1$ in \eqref{eq:Mark_kernel_def}, we deduce that
\begin{align*}
\E_{\pi}[\norm{x^k - x^{*}}^{2}] 
&\geq \frac{1}{2} \frac{q^{2\E_{\delta_{1}}[N_k]} - q^{2d}}{1-q^2} = \frac{1}{2} \frac{q^{2 (k-1) \epsilon} - q^{2d}}{1-q^2} \geq (1/2)(1-q^2)^{-1}q^{(2 / \log 4) k / \taumix} = \\
&= (1/2) (1-q^2)^{-1} \left(1 - \frac{2}{\sqrt{Q}+1}\right)^{(2 / \log 4) k / \taumix} \\
&\geq (1/2) (1-q^2)^{-1} \exp\left(-\frac{8k}{(\sqrt{Q}+1) \taumix \log{4}}\right)\,,
\end{align*}
provided that $d$ is large enough. In the last inequality we also used that $1 - x \geq e^{-2x}$ for $x \in [0;1/2]$. Hence, taking into account that $Q$ is the condition number of the problem \eqref{eq:problem_form_hard}, we get the desired lower bound.
\end{proof}

Now we consider an instance of the problem with $\dmax = 0$, arbitrary $\cmax \geq 0$, and construct the respective lower bound for the stochastic part of the error.

\begin{lemma}
\label{lem:lower_bound_stochastic}
There exists an instance of the optimization problem satisfying assumptions \Cref{as:lipsh_grad} --\Cref{as:bounded_markov_noise_UGE} with $\dmax = 0$ and arbitrary $\sigma \geq 0$, such that for any first-order gradient method it takes at least 
\[
N = \Omega\left(\frac{\taumix \sigma^2}{\mu^2 \varepsilon}\right)
\]
oracle calls in order to achieve $\E[\norm{x^N - x^{*}}^2] \leq \varepsilon$.
\end{lemma}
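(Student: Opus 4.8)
The plan is to use Le Cam's two-point method, exactly as for the i.i.d.\ stochastic lower bound in \cite[Chapter~4.1]{lan20}, but driven by a Markov chain whose slow mixing shrinks the effective number of informative samples from $N$ down to $N/\taumix$. First I would set up two one-dimensional instances $f_{\pm}(x) = (\mu/2)(x \mp c)^2$, which are $\mu$-strongly convex and $L$-smooth (e.g.\ with $L = \mu$), and whose minimizers $x^*_{\pm} = \pm c$ are separated by $2c$. For the noise I would fix a smooth, zero-mean density $\nu$ supported on $[-\cmax,\cmax]$ with bounded Fisher-type curvature (e.g.\ the raised cosine $\nu(w) \propto \cos^2(\pi w/(2\cmax))$, whose square root is Lipschitz so that shifts of $\nu$ have finite Hellinger distance), and define the stochastic oracle $\nabla F_{\pm}(x,w) = \mu(x \mp c) + w$. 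Since $\norm{\nabla F_{\pm}(x,w) - \nabla f_{\pm}(x)} = |w| \le \cmax$ almost surely and $\nu$ has zero mean, this satisfies \Cref{as:bounded_markov_noise_UGE} with $\dmax = 0$ and the prescribed $\sigma$.

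Next I would realize the Markovian correlation through a \emph{lazy refresh} kernel acting on the noise value, $\MK(w,\cdot) = (1-\epsilon)\delta_{w} + \epsilon\, \nu$ with $\epsilon = \log(4)/\taumix$. A direct computation gives $\MK^k(w,\cdot) = (1-\epsilon)^k \delta_{w} + (1 - (1-\epsilon)^k)\nu$, so $\nu$ is invariant and $\dobru{\MK^k} = (1-\epsilon)^k \le \mathrm{e}^{-\epsilon k} \le (1/4)^{\lfloor k/\taumix\rfloor}$, i.e.\ \Cref{as:Markov_noise_UGE} holds with mixing time $\taumix$. The point of this kernel is that the noise value is resampled from $\nu$ only at the \emph{refresh times} $R = \{i \le N : \text{a fresh draw occurs at step } i\}$ and is held constant in between; hence $\E[|R|] = N\epsilon = N\log(4)/\taumix$, and within a constant-noise block every query returns $\mu x_i \mp \mu c + w$ with the \emph{same} $w$, which after subtracting the known quantity $\mu x_i$ carries the information of a single shifted sample.

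With this in place I would invoke the two-point inequality: for any first-order method producing $x^N$, writing $\Prob_{\pm}^{N}$ for the law of the whole interaction transcript under $f_{\pm}$,
\[
\max_{s \in \{+,-\}} \E_{s}[\norm{x^N - x^*_{s}}^2] \ge \tfrac{c^2}{2}\bigl(1 - \tfrac12\tvnorm{\Prob_{+}^{N} - \Prob_{-}^{N}}\bigr).
\]
The crux is to control the total variation, which is where the factor $\taumix$ enters. Since the refresh times $R$ have the \emph{same} law under both hypotheses (the chain dynamics do not depend on $s$), I would condition on $R$ and use joint concavity of the Hellinger affinity together with tensorization across the independent blocks to get $H^2(\Prob_{+}^{N},\Prob_{-}^{N}) \le \E[|R|]\,H^2(\nu_{+},\nu_{-})$, where $\nu_{\pm}$ are the two $\mp\mu c$-shifted noise laws. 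For the chosen density a Taylor expansion gives $H^2(\nu_{+},\nu_{-}) \le C (\mu c)^2/\cmax^2$, so that $\tvnorm{\Prob_{+}^{N}-\Prob_{-}^{N}}$ is bounded through $H^2(\Prob_{+}^{N},\Prob_{-}^{N}) \le C N (\mu c)^2 / (\taumix \cmax^2)$. Choosing $c^2 = c_0\,\taumix \cmax^2/(\mu^2 N)$ with $c_0$ small enough forces the total variation below $1$, and the two-point bound then yields $\max_{s}\E_{s}[\norm{x^N - x^*_{s}}^2] \ge c^2/4 = \Omega(\taumix \sigma^2/(\mu^2 N))$; requiring this to be at most $\varepsilon$ gives $N = \Omega(\taumix \sigma^2/(\mu^2 \varepsilon))$.

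The main obstacle is precisely this Markovian total-variation bound: unlike the i.i.d.\ case one cannot simply tensorize, and because the noise is almost surely bounded the KL divergence between shifted laws is infinite, so I would work with the Hellinger distance instead and exploit that conditioning on the common refresh times $R$ reduces the correlated trajectory to $\E[|R|] \approx N/\taumix$ effectively independent blocks. Controlling the adaptivity of the queries across blocks, so that each block contributes exactly one shifted sample to the information budget, is the delicate point; I would handle it by the chain rule for the Hellinger affinity along the interaction protocol, in the spirit of \cite{aamari_levrard2019,Yu1997}.
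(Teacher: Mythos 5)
Your proposal is correct, but it proves the lemma by a genuinely different route than the paper. The paper keeps the observation map fixed (a two-state noise $Y\in\{-1,1\}$ entering as $\nabla F_2(x,Y)=\mu(x-x^*)+\tfrac{\sigma}{2}Y$) and instead perturbs the \emph{Markov kernel}: it compares two transition matrices $\MK_0$ and $\MK_\alpha$ whose stationary distributions, and hence the induced estimands $\theta(\varphi)$, differ by $\Theta(\sigma\alpha/(\mu\epsilon))$; indistinguishability is then quantified by Pinsker plus the chain rule for KL and a reverse-Pinsker bound $\KL(\MK_0(\cdot|y)\,\|\,\MK_\alpha(\cdot|y))\le\alpha^2/(2\epsilon)$, so the factor $\taumix\sim\epsilon^{-1}$ enters through the per-step KL cost of tilting a rare transition. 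You instead fix a single lazy-refresh kernel and perturb the \emph{function} (a location shift of the minimizer), and the factor $\taumix$ enters through the expected number of refresh events $\E[|R|]\approx N/\taumix$, i.e.\ through the effective sample size. Both give the same rate. Your route makes the ``$N/\taumix$ independent samples'' heuristic completely transparent and stays closer to the classical i.i.d.\ location-shift argument, but it forces two extra ingredients the paper avoids: the noise law must be a smooth density (with two-point noise your shifted hypotheses would have disjoint supports and be perfectly distinguishable from a single sample, and even for densities the KL between compactly supported shifts is infinite --- your switch to Hellinger is the right fix), and you must justify tensorization under adaptive querying. That last point is sound as you sketch it: conditionally on the refresh times (whose law is hypothesis-independent) the entire transcript is a common measurable function of the algorithm's randomness and the i.i.d.\ latent draws $(w_j)_{j\in R}$, so data processing from $\nu_\pm^{\otimes|R|}$ together with joint convexity of the squared Hellinger distance gives $H^2\lesssim \E[|R|]\,(\mu c)^2/\cmax^2$ exactly as you claim. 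The only caveats worth recording are the mild regularity conditions $N\gtrsim\taumix$ (so that $\E[|R|]\gtrsim 1$) and $\mu c\lesssim\cmax$ (so the shifted densities overlap), which mirror the paper's own restriction $n\ge\epsilon^{-1}$.
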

\begin{proof}
Our proof is based on a simple $1$-dimensional optimization problem and Le Cam's lemma \cite[Theorem~8]{aamari_levrard2019}, see also \cite{Yu1997}. Consider the following minimization problem
\begin{equation}
\label{eq:f_2_stochastic}
f_2(x) = \frac{\mu}{2}(x-x^*)^2 \mapsto \min_{x \in \rset}\,.
\end{equation}
Obviously this problem satisfies \Cref{as:strong_conv} with strong convexity constant $\mu$ and \Cref{as:lipsh_grad} with $L = \mu$. Consider the noisy gradient oracle
\begin{equation}
\label{eq:F_2_oracle_def}
\nabla F_2(x,Y) = \mu (x-x^*) + \frac{\sigma}{2}Y\,,
\end{equation}
where $Y$ is a noise variable taking values $Y \in \{-1,1\}$. For now we do not specify the distribution of $Y$, yet we easily note that for any distribution $\pi$ on $\{-1,1\}$ we have
\[
\norm{\nabla F_2(x,Y) - \E_{\pi}\nabla F_2(x,Y)}^2 \leq \sigma^2\,.
\]
Consider the sequence of noise variables $(Y_i)_{i=1}^{n}$ with the joint distribution to the specified later, and any sequence of design points $(x_i)_{i=1}^{n}$, where the resulting gradients are evaluated. At this point the statistician observes the gradients
\[
(\mu(x_i-x^*) + \frac{\sigma}{2}Y_i)\,, i = 1\ldots,n\,,
\]
and, since $x_i$ and $\mu$ are known, this is equivalent to observing
\[
x^* - \frac{\sigma}{2\mu}Y_i\,, i = 1\ldots,n.
\]
Now we aim to construct to "almost indistinguishable" models for the noise variables $Y_i$. Namely, we consider the parametric family of Markov kernels
\begin{equation}
\label{eq:2_sample_MK}
\MK_{\varphi} = \begin{pmatrix}
 1- \epsilon & \epsilon \\
 \epsilon + \varphi & 1 - \epsilon - \varphi
\end{pmatrix}\,, 
\end{equation}
where the parameters $\varphi, \epsilon \in (0;1/4)$, and $\varphi \in [0;\alpha]$, and the parameter $\alpha$ will be set depending on $\epsilon$ and $n$ later. It is easy to check that the invariant distribution of the Markov kernel $\MK_{\varphi}$ is given by
\[
\pi^{\varphi} = \bigl(\frac{\epsilon + \varphi}{2\epsilon + \varphi},\frac{\epsilon}{2\epsilon + \varphi}\bigr)\,.
\]
Now we consider the setting of Le Cam's lemma \cite[Theorem~8]{aamari_levrard2019}. Namely, for a fixed sample size $n$ we consider the family of Markov kernels $(\MK_{\varphi})_{\varphi \in [0;\alpha]}$, and family of corresponding joint $n-$step distributions under stationarity, that is, $\pi^{\varphi}\MK_{\varphi}^{\otimes n}$. The reader not familiar with the respective notation could find it, in particular, in \cite[Chapter~1]{douc:moulines:priouret:soulier:2018}. As a parameter of interest we consider the expectation
\begin{equation}
\label{eq:theta_varphi}
\theta(\varphi) := \theta(\pi^{\varphi}\MK_{\varphi}^{\otimes n}) := \E_{\pi^{\varphi}}[x^* - \frac{\sigma}{2\mu}Z_i] = x^* - \frac{\cmax \varphi}{2\mu(2\epsilon + \varphi)}\,.
\end{equation}
Now we consider the $2$ representatives of the above class, that is, the $n$-step distributions corresponding the parameters $\varphi = 0$ and $\varphi = \alpha$. Then the direct application of Le Cam's lemma \cite[Theorem~8]{aamari_levrard2019} yields
\begin{equation}
\label{eq:le_cam_lemma}
\inf_{\widehat{\theta}}\sup_{\varphi \in [0; \alpha]}\E^{1/2}_{\pi^{\varphi}\MK_{\varphi}^{\otimes n}}[|\widehat{\theta} - \theta(\varphi)|^2] \geq \frac{1}{2}|\theta(0) - \theta(\alpha)|(1 - \tvnorm{\pi^{0}\MK_{0}^{\otimes n} - \pi^{\alpha}\MK_{\alpha}^{\otimes n}})\,,
\end{equation}
where $\widehat{\theta} = \widehat{\theta}(Y_1,\ldots,Y_n)$ is any measurable function. Thus, taking square and using the definition of $\theta(\varphi)$ in \eqref{eq:theta_varphi}, we obtain that
\begin{equation}
\label{eq:le_cam_1}
\inf_{\widehat{\theta}}\sup_{\varphi \in [0; \alpha]}\E_{\pi}[|\theta - \theta(\varphi)|^2] \geq \frac{\cmax^2\alpha^2}{16\mu^2(2\epsilon+\alpha)^2}(1 - \tvnorm{\pi^{0}\MK_{0}^{\otimes n} - \pi^{\alpha}\MK_{\alpha}^{\otimes n}})\,.
\end{equation}
Now we set $\alpha = \sqrt{\tfrac{\epsilon}{n}}$ and apply the statement of \Cref{lem:kl_divergence_estimate} with this choice of $\alpha$. Note that we impose at this point the regularity condition $n \geq \epsilon^{-1}$ in order to have $\alpha \leq \epsilon$. Thus we get
\[
\inf_{\widehat{\theta}}\sup_{\varphi \in [0; \sqrt{\tfrac{\epsilon}{n}}]}\E_{\pi}[|\widehat{\theta} - \theta(\varphi)|^2] \geq \frac{\cmax^2 \epsilon}{32 \mu^2 n (2\epsilon + \sqrt{\tfrac{\epsilon}{n}})^2} \geq \frac{\cmax^2}{288 \mu^2 n \epsilon}\,,
\]
and the statement follows by noticing that the corresponding mixing time $\taumix \leq c \epsilon^{-1}$ for some $c > 0$ (see e.g. \cite[Proposition~1]{nagaraj2020least}).
\end{proof}

\begin{lemma}
\label{lem:kl_divergence_estimate}
Consider the family of Markov kernels 
\[
\MK_{\varphi} = \begin{pmatrix}
 1-\epsilon & \epsilon \\
 \epsilon + \varphi & 1 - \epsilon - \varphi
\end{pmatrix}
\]
and the corresponding invariant distributions $\pi^{\varphi} = \bigl(\frac{\epsilon + \varphi}{2\epsilon + \varphi},\frac{\epsilon}{2\epsilon + \varphi}\bigr)$ for $\varphi \in \{0,\alpha\}$. Then it holds that 
\[
\tvnorm{\pi^{0}\MK_{0}^{\otimes n} - \pi^{\alpha}\MK_{\alpha}^{\otimes n}} \leq \frac{1}{2}\sqrt{\frac{n \alpha^2}{\epsilon}}\,.
\]
\end{lemma}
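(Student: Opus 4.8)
The plan is to pass from total variation to relative entropy via Pinsker's inequality, and then reduce the latter to an elementary per-transition computation by tensorizing along the chain. Write $P = \pi^{0}\MK_0^{\otimes n}$ and $Q = \pi^{\alpha}\MK_\alpha^{\otimes n}$ for the joint laws of $(Z_1,\dots,Z_n)$. With the total-variation normalization used here, Pinsker's inequality reads $\tvnorm{P-Q}\le\sqrt{\tfrac12\KL(P\|Q)}$, so, matching the claimed constant, it suffices to establish $\KL(P\|Q)\le \tfrac{n\alpha^2}{2\epsilon}$. Everything then hinges on an estimate of the Kullback--Leibler divergence between the two trajectory measures.

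First I would invoke the chain rule for relative entropy of Markov laws: since under either model the conditional law of $Z_{i+1}$ given the past is the corresponding kernel evaluated at $Z_i$, one gets
\[
\KL(P\|Q) = \KL(\pi^{0}\|\pi^{\alpha}) + \sum_{i=1}^{n-1} \E_{z\sim \pi^{0}\MK_0^{\,i-1}}\!\left[\KL(\MK_0(z,\cdot)\|\MK_\alpha(z,\cdot))\right].
\]
The key structural simplification is that $\pi^{0}=(1/2,1/2)$ is invariant under $\MK_0$, so every state-marginal in the sum equals $\pi^{0}$; this removes the $i$-dependence and collapses the sum into $n-1$ identical terms. Moreover $\MK_0$ and $\MK_\alpha$ share their first row, so the divergence from state $1$ vanishes and only the second-row term
\[
d := \KL\bigl((\epsilon,1-\epsilon)\,\big\|\,(\epsilon+\alpha,1-\epsilon-\alpha)\bigr)
\]
survives, weighted by $\pi^{0}(\{2\})=\tfrac12$. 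Hence $\KL(P\|Q)=\KL(\pi^{0}\|\pi^{\alpha})+\tfrac{n-1}{2}\,d$.

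Next I would bound the two remaining pieces. For $d$, regarding it as a function of $\alpha$ one has $d(0)=0$ and $d'(0)=0$, so a second-order Taylor expansion with remainder controlled on $[0,\alpha]$ (equivalently, the $\chi^2$ bound $\KL(p\|q)\le (p-q)^2/(q(1-q))$) gives $d\lesssim \alpha^2/\epsilon$ in the regime $0<\alpha\le\epsilon<1/4$, where the second-derivative terms $\epsilon/(\epsilon+\xi)^2$ and $(1-\epsilon)/(1-\epsilon-\xi)^2$ are controlled by $1/\epsilon$ and an absolute constant respectively. The initial-distribution term is lower order: $\pi^{\alpha}$ deviates from $\pi^{0}$ by $O(\alpha/\epsilon)$ in each coordinate, so $\KL(\pi^{0}\|\pi^{\alpha})\lesssim \alpha^2/\epsilon^2$, which is absorbed into the main term using the standing regularity $n\ge \epsilon^{-1}$. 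Combining yields $\KL(P\|Q)\lesssim n\alpha^2/\epsilon$, and Pinsker's inequality then delivers the stated bound.

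The structural steps---Pinsker, the Markov chain rule, and the collapse of the sum by invariance of the uniform $\pi^{0}$---are routine. I expect the only genuinely delicate point to be tracking the constant $\tfrac12$: the crude $\chi^2$ inequality overshoots the true leading Taylor coefficient of $d$ by a factor, and the initial-distribution contribution must be folded back in via $n\ge\epsilon^{-1}$. To land exactly on $\tfrac12$ rather than a larger absolute constant, one should bound the Taylor remainder of $d$ directly (exploiting $\epsilon\le 1/4$ to uniformly control the second derivative) instead of settling for the looser $\chi^2$ estimate.
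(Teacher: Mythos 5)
Your proposal is correct and follows essentially the same route as the paper's proof: Pinsker's inequality, the Markov chain rule for KL, and a per-transition second-order (reversed-Pinsker/$\chi^2$-type) bound of order $\alpha^2/\epsilon$ using $\alpha\le\epsilon\le 1/4$. Your treatment is in fact slightly more careful than the paper's, since you exploit the vanishing first-row contribution with its weight $1/2$ and explicitly absorb the initial-distribution term via $n\ge\epsilon^{-1}$, whereas the paper folds these into the final constant without comment.
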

\begin{proof}
Note first that an application of Pinsker's inequality yields
\[
\tvnorm{\pi^{0}\MK_{0}^{\otimes n} - \pi^{\alpha}\MK_{\alpha}^{\otimes n}} \leq \sqrt{(1/2)\KL(\pi^{0}\MK_{0}^{\otimes n} || \pi^{\alpha}\MK_{\alpha}^{\otimes n})}\,.
\]
Using the chain rule for KL-divergence, we get 
\begin{equation}
\label{eq:cond_kl_chain}
\KL(\pi^{0}\MK_{0}^{\otimes n} || \pi^{\alpha}\MK_{\alpha}^{\otimes n}) = \KL(\pi^{0} || \pi^{\alpha}) + \sum_{i=1}^{n-1}\sum_{y \in \{-1,1\}}\MK_{\pi^{0}\MK_{0}^{\otimes n}}(Y_i = y)\KL(\MK_0(\cdot|y)||\MK_{\alpha}(\cdot|y))\,.
\end{equation}
In the notation above for $y \in \{-1,1\}$ we have set $\KL(\MK_0(\cdot|y)||\MK_{\alpha}(\cdot|y))$ for the $1-$step conditional distribution
\[
\KL(\MK_0(\cdot|y)||\MK_{\alpha}(\cdot|y)) = \sum_{x \in \{-1,1\}}\MK_0(x|y)\log\frac{\MK_0(x|y)}{\MK_{\alpha}(x|y)}\,.
\]
Now an application of reversed Pinsker's inequality together with $\alpha \leq \epsilon$ yields that 
\[
\KL(\MK_0(\cdot|y)||\MK_{\alpha}(\cdot|y)) \leq \frac{\alpha^2}{2\epsilon}\,,
\]
and the bound \eqref{eq:cond_kl_chain} implies that
\[
\KL(\pi^{0}\MK_{0}^{\otimes n} || \pi^{\alpha}\MK_{\alpha}^{\otimes n}) \leq \frac{n\alpha^2}{2\epsilon}\,.
\]
Combining the bounds above yields the statement.
\end{proof}

Now we are ready to combine the bounds above and prove \Cref{th:strongly_convex_lower_bound}.

\begin{theorem}[\Cref{th:strongly_convex_lower_bound}]
\label{th:strongly_convex_lower_bound_append}
There exists an instance of the optimization problem satisfying assumptions \Cref{as:lipsh_grad} --\Cref{as:bounded_markov_noise_UGE} with $\dmax = 1$ and arbitrary $\sigma \geq 0, L, \mu > 0, \taumix \in \nsets$, such that for any first-order gradient method it takes at least 
\[
\textstyle{
N = \Omega\bigl(\taumix \sqrt{\frac{L}{\mu}} \log\frac{1}{\varepsilon} + \frac{\taumix \sigma^2}{\mu^2 \varepsilon}\bigr)
}
\]
oracle calls in order to achieve $\E[\norm{x^N - x^{*}}^2] \leq \varepsilon$. 
\end{theorem}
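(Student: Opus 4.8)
The plan is to combine the two hard instances of \Cref{lem:lower_bound_determ} and \Cref{lem:lower_bound_stochastic} by a direct-sum construction and then argue that the complexity of the combined problem dominates each of the two separately, hence their sum up to a factor of two. Concretely, let $f_1$ on $\rset^{d_1}$ be the instance \eqref{eq:problem_form_hard} realizing the deterministic bound with constants $L,\mu$ (condition number $Q=L/\mu$, $\dmax=1$, $\cmax=0$), and let $f_2$ on $\rset$ be the quadratic \eqref{eq:f_2_stochastic} realizing the stochastic bound (smoothness $\mu\le L$, strong convexity $\mu$, $\dmax=0$, $\cmax=\sigma$). I would set $f(x)=f_1(x_{(1)})+f_2(x_{(2)})$ on $\rset^{d_1+1}$ with $x=(x_{(1)},x_{(2)})$, whose minimizer is $x^*=(x_1^*,x_2^*)$.

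First I would verify that the combined instance satisfies \Cref{as:lipsh_grad}--\Cref{as:bounded_markov_noise_UGE}. Since $f$ is block-separable its Hessian is block-diagonal, so $f$ is $\mu$-strongly convex (the smaller modulus) and $L$-smooth (the larger smoothness constant), giving \Cref{as:lipsh_grad} and \Cref{as:strong_conv}. For the oracle I set $\nabla F(x,\cdot)=(\nabla F_1(x_{(1)},\cdot),\nabla F_2(x_{(2)},\cdot))$; the noise then splits additively, and the per-block bounds give $\norm{\nabla F(x,z)-\nabla f(x)}^2\le \norm{\nabla f_1(x_{(1)})}^2+\sigma^2\le \sigma^2+\norm{\nabla f(x)}^2$, so \Cref{as:bounded_markov_noise_UGE} holds with $\dmax=1$ and $\cmax=\sigma$.

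The key step is the single driving chain. I would drive the two blocks by a product of the two two-state chains of the lemmas, $Z_i=(Z_i^{(1)},Z_i^{(2)})$, choosing a common spectral parameter $\epsilon\asymp\taumix^{-1}$ so that each factor has mixing time $\Theta(\taumix)$. Because the coupling time of a product chain is the maximum of the coupling times of its factors, the product is uniformly geometrically ergodic with mixing time $\Theta(\taumix)$, which verifies \eqref{eq:drift-condition} in \Cref{as:Markov_noise_UGE} after adjusting $\epsilon$ by an absolute constant. The deterministic block is fed the symmetric kernel \eqref{eq:Mark_kernel_def}, preserving the coordinate-revealing obstruction of \Cref{lem:lower_bound_determ}, while the stochastic block is fed the perturbed family \eqref{eq:2_sample_MK}, so the Le Cam argument of \Cref{lem:lower_bound_stochastic} applies verbatim: the first factor contributes identically under both hypotheses $\varphi\in\{0,\alpha\}$ and therefore does not inflate $\tvnorm{\cdot}$ in \Cref{lem:kl_divergence_estimate}.

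Finally I would decompose the error. By separability $\norm{x^N-x^*}^2=\norm{x^N_{(1)}-x_1^*}^2+\norm{x^N_{(2)}-x_2^*}^2$, and each oracle call on the joint problem reveals one call's worth of information on each block. Hence if $\E[\norm{x^N-x^*}^2]\le\varepsilon$ then both block errors are at most $\varepsilon$, and \Cref{lem:lower_bound_determ} and \Cref{lem:lower_bound_stochastic} force $N=\Omega(\taumix\sqrt{L/\mu}\log(1/\varepsilon))$ and $N=\Omega(\taumix\sigma^2/(\mu^2\varepsilon))$ simultaneously; since $N$ exceeds both, it exceeds half their sum, which yields the claimed bound. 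The main obstacle is the third step: one must confirm that the product chain genuinely obeys the uniform ergodicity bound \eqref{eq:drift-condition} with mixing time $\Theta(\taumix)$, and that the two distinct kernel structures (symmetric versus the perturbed Le Cam family) can coexist on one trajectory without the deterministic block spoiling the statistical indistinguishability or vice versa — the product construction is precisely what makes these two requirements compatible.
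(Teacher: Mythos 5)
Your proposal is correct and follows essentially the same route as the paper: a direct-sum of the hard instances from \Cref{lem:lower_bound_determ} and \Cref{lem:lower_bound_stochastic}, driven by the product of the two two-state chains, with the block-separable oracle and the error decomposition letting both lemmas apply simultaneously. The only cosmetic difference is that you verify the product chain's mixing time via coupling times, whereas the paper uses reversibility of $\MK_1 \otimes \MK_{\varphi}$ and the fact that its spectrum consists of pairwise products of eigenvalues; both arguments yield the same $\Theta(\taumix)$ conclusion.
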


\begin{proof}
We split the original problem into two parts. Indeed, for any $d \in \nsets$ we consider $x = (x_{det},x_{stoch}) \in \rset^{d+1}$, where $x_{det} \in \rset^{d}$ and $x_{stoch} \in \rset$. Now we consider the minimization problem
\begin{equation}
\label{eq:combined_eq}
f(x) = f(x_{det},x_{stoch}) = f_1(x_{det}) + f_2(x_{stoch}) \to \min_{x \in \rset^{d+1}}\,,
\end{equation}
where the functions $f_1: \rset^{d} \to \rset$ and $f_2: \rset \to \rset$ are defined in \eqref{eq:problem_form_hard} and \eqref{eq:f_2_stochastic}, respectively. We fix the respective parameters $\mu, Q,$ and $\cmax$. Applying \Cref{lem:lower_bound_determ} and \Cref{lem:lower_bound_stochastic}, we get that the respective problem \eqref{eq:combined_eq} is $L$-smooth and $\mu$-strongly convex with $L = \mu Q$ and parameter $Q > 1$ defined in \eqref{eq:problem_form_hard}. For $Z,Y \in \{-1,1\}$ we define the stochastic gradient oracle as 
\[
\nabla F(x,Z,Y) = (\nabla F_1(x_{det},Z), \nabla F_2(x_{stoch},Y)) \in \rset^{d+1}\,.
\]
The oracles $\nabla F_1(x_{det},Z)$ and $\nabla F_2(x_{stoch},Y)$ are defined in \eqref{eq:F_1_oracle_def} and \eqref{eq:F_2_oracle_def}, respectively. \Cref{lem:lower_bound_determ} and \Cref{lem:lower_bound_stochastic} imply that \Cref{as:bounded_markov_noise_UGE} holds with $\dmax = 1$ and $\cmax > 0$ defined in \eqref{eq:F_2_oracle_def}. Consider now the Markov chains $(Z_{i})_{i=1}^{\infty}$ with the transition kernel $\MK_1$ defined in \eqref{eq:Mark_kernel_def} and $(Y_{i})_{i=1}^{\infty}$ with the transition kernel $\MK_\varphi$ of the form \eqref{eq:2_sample_MK}. As in the proof of \Cref{lem:lower_bound_stochastic}, we take $\varphi \in [0; \sqrt{\epsilon/n}]$ and assume that $n \geq \epsilon^{-1}$. Consider the joint process $(X_i,Y_i)_{i=1}^{\infty}$ of independently evolving Markov chains $(Z_{i})_{i=1}^{\infty}$ and $(Y_{i})_{i=1}^{\infty}$. It is easy to see that such a process is a Markov chain on $\{-1,1\}^{2}$ with the transition kernel 
\[
\MK = \MK_1 \otimes \MK_{\varphi}\,,
\]
where $\otimes$ stands for the Kronecker's product. In is clear that $\MK$ is irreducible and aperiodic, hence the assumption \Cref{as:Markov_noise_UGE} holds. Note that both $\MK_1$ and $\MK_{\varphi}$ are reversible (see e.g. \cite{paulin_spectral}[Section~3.1] for the respective definitions). Thus their Kronecker's product is also reversible, with the spectrum given by the pairwise products of eigenvalues of $\MK_1$ and $\MK_{\varphi}$. Hence, with the direct calculations, we compute the eigenvalues of $\MK$: $\{1,1-2\epsilon-\varphi,1-2\epsilon,(1-2\epsilon)(1-2\epsilon-\varphi)\}$. Thus the corresponding spectral gap $\gamma = 2\epsilon$, and the mixing time $\taumix$ of $\MK$ is bounded by 
\[
\frac{1}{2\epsilon (1 + 1/\log{2})} \leq \taumix \leq \frac{2 \log{2} + \log{6}}{4\epsilon}\,,
\]
see \cite{paulin_spectral}[Proposition~3.3]. Hence, the mixing time of the corresponding joint chain scales as $\epsilon^{-1}$, as for $(Z_{i})_{i=1}^{\infty}$ and $(Y_{i})_{i=1}^{\infty}$ separately. On the $k$-th step of the stochastic gradient computations we rely on the stochastic gradient
\[
\nabla F(x_k,Z_k,Y_k)\,,
\]
computed using the pair $(Z_k,Y_k)$. To complete the proof it remains to apply the complexity results of \Cref{lem:lower_bound_determ} and \Cref{lem:lower_bound_stochastic} to the parts $x_{det}$ and $x_{stoch}$, respectively.
\end{proof}

\begin{proposition}[\Cref{prop:regression_lower_bound}]
There exists an instance of the optimization problem satisfying assumptions \Cref{as:lipsh_grad} --\Cref{as:bounded_markov_noise_UGE} with arbitrary $L, \mu > 0, \taumix \in \nsets$, $\dmax = \tfrac{L}{\mu}$, and $\cmax = 0$, such that for any first-order gradient method it takes at least 
\[
\textstyle{N = \Omega\bigl(\taumix \frac{L}{\mu} \log\frac{1}{\varepsilon}\bigr)}
\]
gradient calls in order to achieve $\E[\norm{x^N - x^{*}}^2] \leq \varepsilon$.
\end{proposition}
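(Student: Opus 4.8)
The plan is to construct a noiseless streaming least-squares instance whose population Hessian has condition number $L/\mu$ and whose stochastic oracle carries purely multiplicative noise, so that \Cref{as:bounded_markov_noise_UGE} holds with $\cmax = 0$ and $\dmax = L/\mu$, and then to reduce the resulting recursion in the hard direction to the information-theoretic lower bound of \cite{nagaraj2020least}. Concretely, on $\rset^{2}$ I would take $f(x) = \tfrac12 (x - x^*)^\top \Sigma (x - x^*)$ with $\Sigma = \operatorname{diag}(\mu, L)$, so that \Cref{as:lipsh_grad} and \Cref{as:strong_conv} hold with the prescribed $L,\mu$ and condition number $L/\mu$. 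I would realize $f$ through a two-point design $a \in \{a_1,a_2\}$ with per-sample oracle $\nabla F(x,a) = a a^\top (x - x^*)$, choosing $a_1,a_2$ and a stationary law $\pi$ so that $\mathbb{E}_\pi[a a^\top] = \Sigma$. The noise then equals $(a a^\top - \Sigma)(x - x^*)$, which vanishes at $x=x^*$ (hence $\cmax = 0$); calibrating the design so that the largest generalized singular value of $a a^\top - \Sigma$ relative to $\Sigma$ equals $L/\mu$, with the extremal direction being the low-curvature coordinate, produces exactly $\dmax = L/\mu$ in \eqref{eq:growth_condition}.

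For the Markovian structure I would drive $(a_t)$ by a two-state kernel
\[
\MK = \begin{pmatrix} 1-p & p \\ q & 1-q \end{pmatrix},
\]
fixing the ratio $p/q$ by the required $\pi$ and $p+q \asymp 1/\taumix$ by the required mixing time; exactly as in \Cref{lem:lower_bound_determ} this is reversible, uniformly geometrically ergodic, and has mixing time of order $\taumix$, so \Cref{as:Markov_noise_UGE} holds. This reduces the whole analysis to the scalar dynamics in the low-curvature coordinate $\xi_t := x_1^t - x_1^*$, where any first-order method only ever sees gradients of the form $c(Z_t)\,\xi_t$ with $c(Z_t)$ a sign-indefinite multiplier of mean $\mu$ and fluctuation of order $L$, correlated on the time scale $\taumix$.

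Next I would lower bound this scalar Markovian stochastic-approximation recursion by adapting the information-theoretic argument of \cite{nagaraj2020least}. Because consecutive multipliers are correlated over windows of length $\asymp \taumix$, a trajectory of length $N$ carries the effective information of only $N/\taumix$ independent samples; and the multiplicative nature of the noise, namely a signal of size $\mu$ buried in fluctuations of order $L$, caps the per-(effective-)sample contraction at $1 - O(\mu/L)$, with no $\sqrt{L/\mu}$ acceleration available. Formalizing this yields $\mathbb{E}[\xi_N^2] \gtrsim \exp\!\bigl(-c N / (\taumix (L/\mu))\bigr)\,\xi_0^2$, and since $\mathbb{E}[\norm{x^N - x^*}^2] \geq \mathbb{E}[\xi_N^2]$, reaching accuracy $\varepsilon$ forces $N = \Omega\bigl(\taumix (L/\mu) \log(1/\varepsilon)\bigr)$.

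The main obstacle is precisely this last step: establishing the lower bound for an arbitrary, possibly adaptive and randomized, first-order method rather than for plain SGD, and rigorously quantifying how Markovian correlation degrades the effective sample size by the factor $\taumix$ while the multiplicative noise rules out acceleration (forcing a linear, not square-root, dependence on $L/\mu$). This demands a careful coupling/comparison argument tying the paper's TV-based definition of $\taumix$ in \Cref{as:Markov_noise_UGE} to the mixing-time notion of \cite{nagaraj2020least}, in the same spirit as the spectral-gap bookkeeping already carried out in \Cref{lem:lower_bound_stochastic}. By contrast, the algebraic construction and the verification of \Cref{as:lipsh_grad}--\Cref{as:bounded_markov_noise_UGE} are routine.
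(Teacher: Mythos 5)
Your proposal is correct and follows essentially the same route as the paper: a realizable two-dimensional streaming least-squares instance with coordinate-indicator features driven by a two-state reversible chain, so that the design matrix has condition number $L/\mu$, the multiplicative noise gives $\cmax=0$ and $\dmax = L/\mu$, and the final rate $\E[\norm{x^N-x^*}^2]\gtrsim \exp(-cN/(\taumix L/\mu))$ is obtained by invoking the information-theoretic lower bound of \cite{nagaraj2020least}. The paper likewise defers the hard step (the bound for arbitrary first-order methods under Markovian sampling) entirely to that reference, so your identification of it as the main obstacle matches the structure of the paper's own argument.
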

\begin{proof}
In this part we closely follow the setting of \cite{nagaraj2020least}. We consider the setting of linear regression:
\begin{equation}
\label{eq:least_squares_opt}
f(x) = \frac{1}{2}\E_{(\varphi,Y) \sim \mathcal{D}}\bigl[|Y - \varphi^{\top}x|^{2}\bigr] \to \min_{x}\,,
\end{equation}
where $\varphi \in \rset^{d}$ is a (random) feature vector, $Y \in \rset$ is a (random) regressor, with the joint distribution $(\varphi,Y) \sim \mathcal{D}$, and $x \in \rset^{d}$ is the optimized parameter. We consider the so-called realizable case, that is, we assume that 
\[
Y = \varphi^{\top}x^{*}
\]
for some vector $x^{*} \in \rset^{d}$. In this scenario the problem \eqref{eq:least_squares_opt} reduces to 
\[
f(x) = \frac{1}{2}(x - x^{*})^{\top} \Sigma^{2} (x - x^{*}) \to \min_{x}\,,
\]
where we have denoted $\Sigma^2 = \E_{\pi}[\varphi \varphi^{\top}]$. This means that the exact gradient is given by $\nabla f(x) = \Sigma^{2} (x - x^{*})$. Now we consider the stochastic setting of the online regression with sequentially observed data points $(\varphi_i,Y_i)_{i=1}^{N}$ with $Y_i = \varphi_i^{\top}x^{*}$. In this case the $i$-th realization of stochastic gradient at point $x \in \rset^{d}$ is given by 
\[
\nabla F(x, \varphi_i, Y_i) = \varphi_i(\varphi_i^{\top}x - Y_i) = \varphi_i \varphi_i^{\top} (x - x^{*})\,.
\]
Hence, with a simple algebra we get 
\[
\| \nabla F(x, \varphi_i, Y_i) - \nabla f(x)\| = \| (\Sigma^2 - \varphi_i \varphi_i^{\top})(x-x^{*})\| = \| (\Id - \varphi_i \varphi_i^{\top}\Sigma^{-2})\nabla f(x)\|\,, 
\]
where we have used the fact that $x - x^{*} = (\Sigma^{2})^{-1}\nabla f(x)$ and used additional notation $\Sigma^{-2} := (\Sigma^{2})^{-1}$. Fix now the condition number $Q > 1$, parameter $\epsilon \in (0;1/4)$ and consider the Markov kernel
\[
\MK = \begin{pmatrix}
 1- \frac{\epsilon}{Q-1} & \frac{\epsilon}{Q-1} \\
 \epsilon & 1- \epsilon
\end{pmatrix}
\]
and the corresponding canonical chain $(Z_i)_{i=1}^{N}$. The invariant distribution of $\MK$ is given by $\pi = (1 - 1/Q, 1/Q)$, and the corresponding mixing time $\taumix$ is bounded by
\[
\taumix \leq \frac{(Q-1)\log{4}}{Q\epsilon}\,,
\]
see e.g. \cite[Proposition~1]{nagaraj2020least}. We let $\varphi = \varphi(Z)$, and w.l.o.g. we can assume that $Z \in \{-1,1\}$. Consider 
\[
\varphi(1) = (1,0)\,, \quad \varphi(-1) = (0,1)\,.
\]
The design matrix $\Sigma^2$ is given by 
\[
\Sigma^2 = \E_{\pi}[\varphi(Z_i) \varphi(Z_i)^{\top}] = \begin{pmatrix}
 1-1/Q & 0 \\
 0 & 1/Q
\end{pmatrix}\,,
\]
which implies that \Cref{as:lipsh_grad} and \Cref{as:strong_conv} are satisfied with $\mu = 1/Q$ and $L = 1 - 1/Q$. Then the direct calculations yield 
\[
\| \nabla F(x, \varphi(Z_i), Y_i) - \nabla f(x)\| \leq (Q - 1) \|\nabla f(x)\|\,,
\]
and the assumption \Cref{as:bounded_markov_noise_UGE} is satisfied with $\dmax = Q - 1$. Then the direct application of lower bound \cite{nagaraj2020least} implies the lower bound
\[
\E_{\pi}[\|x^k - x^*\|^2] \geq \exp\left(-\frac{c k}{Q \taumix}\right)
\]
after $k$ iterations of any first-order method with Markovian sampling oracle defined above. Here $c > 0$ is some absolute positive constant not dependeing upon $\taumix$ and $Q$. This means that the instance-dependent increase of $\dmax$ yields to inevitably slower convergence rates.
\end{proof}

\begin{proposition}[\Cref{prop:lower_bound_delta_0}]
There exists an instance of the optimization problem satisfying assumptions \Cref{as:lipsh_grad} --\Cref{as:bounded_markov_noise_UGE} with with arbitrary $L, \mu > 0, \taumix \in \nsets$, $\cmax = 1, \dmax = 0$, such that for any first-order gradient method it takes at least 
\[
\textstyle{
N = \Omega\left(\left(\taumix + \sqrt{\frac{L}{\mu}}\right) \log\{\frac{1}{\varepsilon}\}\right)
}
\]
oracle calls in order to achieve $\E[\norm{x^N - x^{*}}^2] \leq \varepsilon$.
\end{proposition}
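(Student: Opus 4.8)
The plan is to realize the two summands $\sqrt{L/\mu}\,\log(1/\varepsilon)$ and $\taumix\log(1/\varepsilon)$ by two independent hard instances and then glue them together with the product-chain device already used in \Cref{th:strongly_convex_lower_bound_append}. I would write $x=(x_{det},x_{mix})$ and set $f(x)=f_A(x_{det})+f_B(x_{mix})$, so that $\norm{x-x^*}^2$ splits additively and any method must drive both blocks below $\varepsilon$ simultaneously. Since a single oracle call returns the gradient of both blocks, the required number of calls is at least the maximum of the two per-block lower bounds, and $\max\{a,b\}\geq \tfrac12(a+b)$ converts this into the claimed $\Omega\bigl((\taumix+\sqrt{L/\mu})\log(1/\varepsilon)\bigr)$. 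Crucially, each per-block bound holds even if the method is handed the other block for free, so the two arguments do not interfere.

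For the $\sqrt{L/\mu}\log(1/\varepsilon)$ block I would take $f_A$ to be the Nesterov worst-case quadratic \eqref{eq:problem_form_hard} with condition number $Q=L/\mu$ but with an \emph{exact} gradient oracle, so that this block contributes $\dmax=0$ and $\cmax=0$. The bound $N=\Omega(\sqrt{L/\mu}\log(1/\varepsilon))$ is then the classical span/Krylov argument of \cite[Theorem~2.1.13]{nesterov2003introductory}, which transfers verbatim because bounded additive noise on the other block cannot enlarge the Krylov subspace spanned by the $f_A$-gradients. This reuses the deterministic ingredient underlying \Cref{lem:lower_bound_determ}, only with a noiseless oracle.

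The delicate block is $f_B$, where I must produce $N=\Omega(\taumix\log(1/\varepsilon))$ under $\dmax=0$, $\cmax=1$. Here I would take $f_B$ well conditioned (so no $\sqrt{L/\mu}$ factor sneaks in) and drive the slowdown purely through the mixing time, using a two-state chain of the form \eqref{eq:Mark_kernel_def} with switching probability $\epsilon\sim 1/\taumix$. The key feature of $\dmax=0$ is that the oracle error is a signal-independent perturbation $b(Z)$ with $\norm{b(Z)}\leq\cmax=1$ and $\E_\pi[b]=0$; such a perturbation can \emph{overwrite} the true gradient only where the latter is itself of size $O(\cmax)$, i.e. inside the ball of radius $\sim\cmax/\mu$ around $x^*$. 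Inside that ball I would engineer a mixing-gated revelation at geometric scales: the bounded perturbation corrupts the gradient direction controlling the current scale until the chain changes state, so that each halving of the residual error costs $\Omega(\taumix)$ oracle calls, and reaching accuracy $\varepsilon$ requires $\Omega(\log(1/\varepsilon))$ halvings.

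The gluing step then follows \Cref{th:strongly_convex_lower_bound_append}: make the two chains independent, form the Kronecker kernel $\MK=\MK_A\otimes\MK_B$, verify it is irreducible, aperiodic and reversible with spectral gap of order $\epsilon$ so that \Cref{as:Markov_noise_UGE} holds with $\taumix$ of the prescribed order, and check that the joint oracle satisfies \Cref{as:bounded_markov_noise_UGE} with $\dmax=0$ and $\cmax=1$ (block $A$ contributes $0$, block $B$ at most $1$). The main obstacle is exactly the $f_B$ construction: unlike \Cref{lem:lower_bound_determ}, where multiplicative noise ($\dmax=1$) zeroes out gradient coordinates everywhere, a bounded additive perturbation gates information flow only near the optimum, so the scale-revelation argument must be executed at shrinking radii, and one must confirm that a constant-factor reduction genuinely forces a fresh state change (hence $\Omega(\taumix)$ samples) at every scale. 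Establishing this gating uniformly across the $\Theta(\log(1/\varepsilon))$ scales, while keeping $f_B$ well conditioned, is the crux of the argument.
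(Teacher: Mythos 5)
There is a genuine gap: the entire $\Omega(\taumix\log(1/\varepsilon))$ half of your bound rests on the $f_B$ block, and you do not construct it --- you correctly identify the obstruction (a bounded additive perturbation with $\dmax=0$ can only mask gradients where they are already of size $O(\cmax)$) and then defer exactly the step that would resolve it. The ``mixing-gated revelation at geometric scales'' you sketch is the mechanism of \Cref{lem:lower_bound_determ}, but that lemma relies on \emph{multiplicative} noise ($\dmax=1$) to zero out coordinates everywhere; with purely additive noise the gating radius shrinks with $\mu$ while the scales $q^i$ shrink geometrically, and you give no argument that the two can be reconciled while keeping $f_B$ well conditioned. As written, the proposal proves only the $\sqrt{L/\mu}\log(1/\varepsilon)$ term.

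The paper's proof shows the hard part can be done far more simply, and with a single instance rather than a Kronecker gluing. Take the same Nesterov quadratic \eqref{eq:problem_form_hard} whose exact gradient is $\frac{\mu(Q-1)}{4}Ax - e_1 + \mu x$, and define the oracle $\nabla F(x,Z) = \frac{\mu(Q-1)}{4}Ax - (1 + \indiacc{Z=-1} - \indiacc{Z=1})e_1 + \mu x$ over the two-state chain \eqref{eq:Mark_kernel_def}: the additive perturbation $\pm e_1$ has norm $1$ and mean zero, so $\cmax=1$, $\dmax=0$, but in state $Z=1$ it exactly cancels the constant term $-e_1$. Starting from $x^0=0$ in state $1$, every observed gradient is identically $0$ until the first jump time $T_2$, so on the event $\{T_2\geq k\}$ the iterate has not moved and the error is a constant. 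Since $\PP(T_2\geq k)=(1-\epsilon)^{k-1}\approx e^{-2k/(\taumix\log 4)}$, the expected error is at least $c\,e^{-ck/\taumix}$; the $\log(1/\varepsilon)$ factor multiplying $\taumix$ thus comes from the geometric \emph{tail of a single waiting time}, not from $\Theta(\log(1/\varepsilon))$ successive $\Omega(\taumix)$-cost revelations. Adding this to the usual Krylov term $q^{2k}$ of the same instance yields both summands at once. If you want to salvage your two-block architecture you must either carry out the multi-scale gating in full or replace block $B$ by this one-event tail argument.
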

\begin{proof}
Let us consider the same minimization problem \eqref{eq:problem_form_hard} as in the proof of \Cref{th:strongly_convex_lower_bound}. Recall that the true gradient in this setting is given by
\[
\nabla f(x) = \frac{\mu(Q-1)}{4}Ax - e_1 + \mu x\,.
\]
Hence the problem \eqref{eq:problem_form_hard} is $L-$smooth with $L = \mu Q$ and $\mu$-strongly convex, that is, assumptions \Cref{as:lipsh_grad} and \Cref{as:strong_conv} are satisfied, and the corresponding condition number equals $L/\mu = Q$. Now for $\epsilon \in (0;1/2)$ we consider the discrete-state space Markov kernel 
\begin{equation}
\label{eq:Mark_kernel_def_same}
\MK = \begin{pmatrix}
 1-\epsilon & \epsilon \\
 \epsilon & 1-\epsilon 
\end{pmatrix}
\end{equation}
and the corresponding Markov Chain $(Z_{i})_{i=1}^{\infty}$. It is easy to see that the Markov kernel $\MK$ is uniformly geometrically ergodic and satisfies \Cref{as:bounded_markov_noise_UGE} with $\taumix \leq \epsilon^{-1} \log{4}$. Each $Z_i$ takes $2$ different values, and w.l.o.g. we can assume that $Z_i \in \{-1,1\}$. It is easy to check that the corresponding invariant distribution is $\pi = (1/2,1/2)$. For $Z \in \{-1,1\}$ we now consider the noisy oracle
\[
\nabla F(x,Z) = \frac{\mu(Q-1)}{4}Ax - (1 + \indiacc{Z = -1} - \indiacc{Z = 1}) e_1 + \mu x\,.
\]
It is easy to check that $\E_{\pi}[\nabla F(x,Z)] = \nabla f(x)$, and the direct calculations imply $\| \nabla F(x,Z) - \nabla f(x)\| \leq 1$, that is, the assumption \Cref{as:bounded_markov_noise_UGE} holds with $\dmax = 0$ and $\cmax = 1$. Suppose that we start from $Z_1 = 1$ and initial point $x_0 = 0 \in \rset^{d}$. Then we observe $\nabla F(x,Z) = 0 \in \rset^{d}$ unless the time moment $T_2 = \inf\{i \in \nset: Z_i = -1\}$. Thus, after $k$ iterations of any first-order method, the respective MSE is lower bounded by 
\begin{align*}
\E_{\pi}[\norm{x^k - x^{*}}^{2}] &\geq \E_{\pi}\left[ \indiacc{Z_1 = 1} \sum_{i = k}^{d}q^{2i}\right] + \E_{\pi}\left[ \indiacc{Z_1 = 1, T_2 \geq k}(1-q^{2d})\right] \\
&\geq \frac{1}{2} (1-q^2)^{-1} (q^{2k} - q^{2d}) + \frac{1}{2} (1-q^2)^{-1} \PP_{\delta_{1}}(T_2 \geq k) \\
&= \frac{1}{2} (1-q^2)^{-1} (q^{2k} - q^{2d}) + \frac{1}{2} (1-q^2)^{-1} (1-\epsilon)^{k-1}\,. 
\end{align*} 
Hence, with the defition of $q$ in \eqref{eq:exact_solution_q}, we get from the previous bound that 
\[
\E_{\pi}[\norm{x^k - x^{*}}^{2}] \geq \frac{1}{2} (1-q^2)^{-1} \left[\exp\left(-\frac{4k}{(\sqrt{Q}+1)}\right) - q^{2d}\right] + \frac{1}{2} (1-q^2)^{-1} \exp\left(-\frac{2k}{\taumix \log{4}}\right)\,,
\]
where in the last inequality we also used that $1 - x \geq e^{-2x}$ for $x \in [0;1/2]$. Now the statement follows from the definition of $Q = L/\mu$.
\end{proof}

\subsection{Proof of \Cref{th:non_convex_random_batch}} \label{sec:proof:th:non_convex_upper_bound}

\begin{theorem}[\Cref{th:non_convex_random_batch}]
Assume \Cref{as:lipsh_grad},~\Cref{as:Markov_noise_UGE},~\Cref{as:bounded_markov_noise_UGE}. Let problem \eqref{eq:erm}
be solved by \Cref{alg:Rand-GD}. Let $f^*$ be a global (maybe not unique) minimum of $f$. Then for any $b \in \nsets$, and $\gamma$, $M$ satisfying
\begin{eqnarray*}
&\gamma \leq \left[4L \left( 1 +  4\left[C_{1} \taumix b^{-1} + (C_{1} + 1) \taumix^2 b^{-2}\right] \dmax^2\right)\right]^{-1},
\\
&M = \max\{2;\sqrt{C_2 \gamma^{-1} L^{-1}}\}, \quad B = \lceil b \log_2 \batchbound \rceil,
\end{eqnarray*}
it holds that
\[
\EE\left[ \frac{1}{\nbiter}\sum_{k=0}^{\nbiter-1}\| \nabla f(x^k)\|^2 \right] \lesssim \frac{f(x^0) - f^*}{\gamma \nbiter} + L \gamma  \cdot \left[ \sigma^2 \taumix b^{-1} +\cmax^2 \taumix^2 b^{-2} \right]\,.
\]
\end{theorem}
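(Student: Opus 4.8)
The plan is to run the standard smooth non-convex descent argument, but carefully tracking the \emph{bias} of the multilevel estimator $g^k$, which is the genuinely new ingredient compared to the i.i.d.\ case. First I would apply $L$-smoothness (\Cref{as:lipsh_grad}) to the update $x^{k+1} = x^k - \gamma g^k$ of \Cref{line_gd_3} in \Cref{alg:Rand-GD}, obtaining
\[
f(x^{k+1}) \le f(x^k) - \gamma \langle \nabla f(x^k), g^k\rangle + \tfrac{L\gamma^2}{2}\norm{g^k}^2 .
\]
Taking the conditional expectation $\E_k$ and splitting $\E_k[g^k] = \nabla f(x^k) + (\E_k[g^k] - \nabla f(x^k))$ isolates the bias $\E_k[g^k]-\nabla f(x^k)$. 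Here I would invoke \Cref{lem:expect_bound_grad_appendix}, whose two bounds control respectively the variance $\E_k[\norm{g^k-\nabla f(x^k)}^2]$ and the bias $\norm{\E_k[g^k]-\nabla f(x^k)}^2$; note these are conditional statements, so they rely on the arbitrary-initial-distribution part \eqref{eq:var_bound_any} of \Cref{lem:tech_markov} applied conditionally on $\mathcal{F}_k$.

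Next I would handle the two error sources by Young's inequality on the cross term, $-\gamma\langle\nabla f(x^k), \E_k[g^k]-\nabla f(x^k)\rangle \le \tfrac{\gamma}{2}\norm{\nabla f(x^k)}^2 + \tfrac{\gamma}{2}\norm{\E_k[g^k]-\nabla f(x^k)}^2$, and by the decomposition $\E_k[\norm{g^k}^2] \le 2\E_k[\norm{g^k-\nabla f(x^k)}^2] + 2\norm{\nabla f(x^k)}^2$. Writing $V := 4C_1\taumix B^{-1}\log_2 \batchbound + (4C_1+2)\taumix^2 B^{-2}$ for the variance prefactor and $\beta := C_2\taumix^2\batchbound^{-2}B^{-2}$ for the bias prefactor, and using \Cref{as:bounded_markov_noise_UGE} to replace each by $(\cmax^2 + \dmax^2\norm{\nabla f(x^k)}^2)$ times its prefactor, yields a one-step inequality
\[
\E_k[f(x^{k+1})] \le f(x^k) - \Big(\tfrac{\gamma}{2} - L\gamma^2 - \tfrac{\gamma}{2}\beta\dmax^2 - L\gamma^2 V\dmax^2\Big)\norm{\nabla f(x^k)}^2 + \big(\tfrac{\gamma}{2}\beta + L\gamma^2 V\big)\cmax^2 .
\]

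The crux is to show the coefficient of $\norm{\nabla f(x^k)}^2$ is at most $-\gamma/4$, and this is exactly where the parameter choices enter. The bound $B \ge b\log_2\batchbound$ gives $\taumix B^{-1}\log_2\batchbound \le \taumix b^{-1}$ and (since $\batchbound\ge 2$, hence $\log_2\batchbound\ge 1$) $\taumix^2 B^{-2}\le\taumix^2 b^{-2}$, so that $V\dmax^2 \le 4S\dmax^2$ with $S := C_1\taumix b^{-1}+(C_1+1)\taumix^2 b^{-2}$; while the choice $\batchbound \ge \sqrt{C_2\gamma^{-1}L^{-1}}$ forces $\beta \le \gamma L\,\taumix^2 b^{-2}$, effectively downgrading the bias to order $\gamma L$. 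I expect this coupling to be the main obstacle: on its own the bias carries no spare factor of $\gamma$, so without linking $\batchbound$ to $\gamma^{-1}$ it could not be absorbed into the descent term at all. The stepsize restriction $\gamma \le [4L(1+4S\dmax^2)]^{-1}$ is then tuned precisely so that $L\gamma\big(1 + V\dmax^2 + O(1)\,\taumix^2 b^{-2}\dmax^2\big) \le 1/4$, which kills all the $\dmax^2\norm{\nabla f(x^k)}^2$ contributions and leaves the advertised negative margin.

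Finally, with the clean descent inequality $\E_k[f(x^{k+1})] \le f(x^k) - \tfrac{\gamma}{4}\norm{\nabla f(x^k)}^2 + \big(\tfrac{\gamma}{2}\beta + L\gamma^2 V\big)\cmax^2$ in hand, I would take total expectation, sum over $k=0,\dots,\nbiter-1$, telescope the $f$-differences against $f(x^0)-f^*$, and divide by $\gamma \nbiter/4$. The leftover additive terms are bounded via $\beta\cmax^2 \le \gamma L\taumix^2 b^{-2}\cmax^2$ and $V\cmax^2 \le [4C_1\taumix b^{-1}+(4C_1+2)\taumix^2 b^{-2}]\cmax^2$, which after dividing by $\gamma$ turn into $L\gamma(\cmax^2\taumix b^{-1}+\cmax^2\taumix^2 b^{-2})$, while the telescoped term becomes $(f(x^0)-f^*)/(\gamma \nbiter)$. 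This produces exactly the claimed bound up to absolute constants.
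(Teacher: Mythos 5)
Your proposal is correct and follows essentially the same route as the paper's proof: descent lemma from \Cref{as:lipsh_grad}, Young's inequality to split off the bias term, the decomposition $\E_k[\norm{g^k}^2]\le 2\E_k[\norm{g^k-\nabla f(x^k)}^2]+2\norm{\nabla f(x^k)}^2$, \Cref{lem:expect_bound_grad_appendix} for the bias/variance, the choice of $\batchbound$ to downgrade the bias to order $\gamma L$, the stepsize restriction to secure a $-\gamma/4$ margin, and telescoping. You also correctly identify the two load-bearing points — the conditional use of the arbitrary-initial-distribution bound and the coupling of $\batchbound$ to $\gamma^{-1}$ — so no gaps remain.
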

\begin{proof}
We start from \Cref{as:lipsh_grad} (in the form \eqref{eq:smothness} with $x = x^{k+1}$ and $y = x^k$) and line \ref{line_gd_3} of \Cref{alg:Rand-GD}:
\begin{align*}
f(x^{k+1}) &\leq f(x^k) + \langle \nabla f(x^k), x^{k+1} - x^k \rangle + \frac{L}{2} \| x^{k+1} - x^k\|^2 \\
&\leq f(x^k) - \gamma \langle \nabla f(x^k), g^k \rangle + \frac{\gamma^2 L}{2} \| g^k\|^2
\\
&=
        f(x^k) - \gamma  \langle \nabla f(x^k), \nabla f(x^k) \rangle - \gamma \langle \nabla f(x^k), \EE_k[g^k] - \nabla f(x^k) \rangle 
        \\
        &
        - \gamma \langle \nabla f(x^k), g^k - \EE_k[g^k] \rangle + \frac{L \gamma^2 }{2}\EE_k [\| g^k\|^2]
\end{align*}
Subtracting $f^*$ from both sides, using Cauchy Schwartz inequality \eqref{eq:inner_prod} and taking the conditional expectation, we get
\begin{align*}
\EE_k[f(x^{k+1}) - f^*]
        \leq&
        f(x^k) - f^* - \gamma \|\nabla f(x^k)\|^2 + \frac{\gamma}{2} \| \nabla f(x^k) \|^2 
        \\
        &+ \frac{\gamma}{2} \|\EE_k[g^k] - \nabla f(x^k_g) \|^2
         + \frac{L \gamma^2 }{2} \EE_k[\| g^k\|^2]
         \\
         =&
        f(x^k) - f^* - \frac{\gamma}{2} \| \nabla f(x^k) \|^2 
        + \frac{\gamma}{2} \|\EE_k[g^k] - \nabla f(x^k) \|^2
         + \frac{L \gamma^2 }{2} \EE_k[\| g^k\|^2].
\end{align*}
Reapplying Cauchy Schwartz inequality \eqref{eq:inner_prod_and_sqr} one more time, we have
\begin{align*}
\EE_k[f(x^{k+1}) - f^*] 
\leq&
    f(x^k) - f^* - \frac{\gamma}{2} (1 - 2\gamma L) \| \nabla f(x^k) \|^2
    \\
         &+ \frac{\gamma}{2} \|\EE_k[g^k] - \nabla f(x^k) \|^2
         + L \gamma^2 \EE_k[\| g^k - \nabla f(x^k)\|^2]\,.
\end{align*}
\Cref{lem:expect_bound_grad_appendix} with $x^k_g$ replaced by $x^k$ gives
\begin{align*}
\EE_k[f(x^{k+1}) - f^*] 
\leq&
    f(x^k) - f^* - \frac{\gamma}{2} (1 - 2\gamma L) \| \nabla f(x^k) \|^2
    \\
         &+ \frac{\gamma}{2}  \cdot C_{2}\taumix^2 \batchbound^{-2}B^{-2} (\cmax^2 + \dmax^2 \| \nabla f(x^k)\|^2)
         \\
         &
         + L \gamma^2  \cdot \left(4 C_{1} \taumix B^{-1}\log_2 \batchbound + (4C_{1} + 2) \taumix^2 B^{-2}\right) (\cmax^2 + \dmax^2 \| \nabla f(x^k)\|^2)\,.
\end{align*}
With $M \geq \sqrt{C_2 \gamma^{-1} L^{-1}} $, we have
\begin{align*}
\EE_k[f(x^{k+1}) - f^*] 
\leq&
    f(x^k) - f^* - \frac{\gamma}{2} (1 - 2\gamma L) \| \nabla f(x^k) \|^2
    \\
         &+ \frac{L \gamma^2}{2}  \cdot \taumix^2 B^{-2} (\cmax^2 + \dmax^2 \| \nabla f(x^k)\|^2)
         \\
         &
         + L \gamma^2  \cdot \left(4 C_{1} \taumix B^{-1}\log_2 \batchbound + (4C_{1} + 2) \taumix^2 B^{-2}\right) (\cmax^2 + \dmax^2 \| \nabla f(x^k)\|^2)
         \\
\leq&
f(x^k) - f^* 
\\
         &
- \frac{\gamma}{2} \left[1 - 2\gamma L \left( 1 +  4\left[C_{1} \taumix B^{-1}\log_2 \batchbound + (C_{1} + 1) \taumix^2 B^{-2}\right] \dmax^2\right) \right] \| \nabla f(x^k) \|^2
    \\
         &
         + 4L \gamma^2  \cdot \left(C_{1} \taumix B^{-1}\log_2 \batchbound + (C_{1} + 1) \taumix^2 B^{-2}\right) \cmax^2\,.
\end{align*}
Since $\gamma \leq \left[4L \left( 1 +  4\left[C_{1} \taumix b^{-1} + (C_{1} + 1) \taumix^2 b^{-2}\right] \dmax^2\right)\right]^{-1}$, $B = \lceil b \log_2 \batchbound \rceil$ and $M \geq 2$, one can obtain
\begin{align*}
    \gamma 
    &\leq 
    \left[4L \left( 1 +  4\left[C_{1} \taumix b^{-1} + (C_{1} + 1) \taumix^2 b^{-2}\right] \dmax^2\right)\right]^{-1}
    \\
    &\leq
    \left[4L \left( 1 +  4\left[C_{1} \taumix B^{-1} \log_2 \batchbound + (C_{1} + 1) \taumix^2 B^{-2}\right] \dmax^2\right)\right]^{-1},
\end{align*}
and then,
\begin{align}
\label{eq:non_conv_temp40}
\EE_k[f(x^{k+1}) - f^*] 
\leq&
f(x^k) - f^* - \frac{\gamma}{4} \| \nabla f(x^k) \|^2
        \notag\\
         &+ 4L \gamma^2  \cdot \left(C_{1} \taumix B^{-1}\log_2 \batchbound + (C_{1} + 1) \taumix^2 B^{-2}\right) \cmax^2\,.
\end{align}
By doing a small rearrangements, summing over all $k$ from $0$ to $\nbiter-1$, averaging over $\nbiter$ iterations, taking the full expectation of both sides, we  get
\[
\EE\left[ \frac{1}{\nbiter}\sum\limits_{k=0}^{\nbiter-1}\| \nabla f(x^k)\|^2 \right] \leq \frac{4(f(x^0) - f^*)}{\gamma \nbiter} + 16L \gamma  \cdot \left[ C_{1} \sigma^2 \taumix B^{-1}\log_2 \batchbound + (C_{2} + 1) \cmax^2 \taumix^2 B^{-2} \right]\,.
\]
Substituting $B = \lceil b \log_2 \batchbound \rceil$ and using $M \geq 2$ finish the proof.
\end{proof}

\subsection{Result for Polyak-Loiasyewitch condition}
\label{sec:polyak_loj}
\begin{assumption}
\label{as:pl}
The function $f$ satisfies PL condition on $\R^d$ with $\mu > 0$, i.e. the following inequality holds for all $x\in \R^d$:
\begin{equation*}
    \norm{\nabla f(x) } \geq 2\mu (f(x) - f^*),
\end{equation*}
where  $f^*$ is a global (potentially not unique) minimum of $f$.
\end{assumption}

\begin{corollary}
Under the conditions of \Cref{th:non_convex_random_batch} and \Cref{as:pl}, if we choose $b = \taumix$ and $\gamma$ given by
\begin{align}
\label{eq:PL_gamma}
        \gamma \simeq \min\left\{ \frac{1}{(1+\delta^2)L}; \frac{1}{\mu \nbiter} \ln \left( \max\left\{ 2; \frac{\mu^2 \nbiter (f(x^0) - f^*)}{L\cmax^2} \right\}\right) \right\}\,,
\end{align}
then to achieve $\varepsilon$-solution (in terms of $\EE[f(x) - f^*] \lesssim \varepsilon$) we need 
\begin{equation*}
        \mathcal{\tilde O} \left( \taumix \cdot \left[ (1 + \dmax^2) \frac{L}{\mu} \log \frac{1}{\varepsilon} + \frac{L\cmax^2}{\mu^2\varepsilon} \right] \right) \quad \text{oracle calls}.
\end{equation*}  
\end{corollary}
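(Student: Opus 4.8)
The plan is to turn the one-step descent inequality established in the proof of \Cref{th:non_convex_random_batch} into a linear recursion by invoking the PL inequality, and then to tune the stepsize exactly as in the strongly convex case. Concretely, I would start from the per-iteration bound \eqref{eq:non_conv_temp40}, which under the choice $b = \taumix$, $B = \lceil b \log_2 \batchbound \rceil$ reads
\[
\EE_k[f(x^{k+1}) - f^*] \leq f(x^k) - f^* - \frac{\gamma}{4}\|\nabla f(x^k)\|^2 + c_0 L \gamma^2 \cmax^2\,,
\]
for an absolute constant $c_0$, since with $b = \taumix$ the bracket $C_1 \taumix B^{-1}\log_2 \batchbound + (C_1+1)\taumix^2 B^{-2}$ is bounded by a constant (because $B \geq \taumix \log_2 \batchbound$ and $\batchbound \geq 2$). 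Applying \Cref{as:pl} in the form $\|\nabla f(x^k)\|^2 \geq 2\mu(f(x^k)-f^*)$ and taking full expectations, with $r_k := \EE[f(x^k)-f^*]$ I obtain the contraction
\[
r_{k+1} \leq \Bigl(1 - \frac{\mu\gamma}{2}\Bigr) r_k + c_0 L \gamma^2 \cmax^2\,.
\]

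Next I would unroll this recursion (or directly invoke Lemma~3 of \cite{stich2019unified}, whose hypotheses match exactly with contraction factor $a = \mu/2$ and noise constant $C = c_0 L \cmax^2$). Unrolling and using $1 - \tfrac{\mu\gamma}{2} \leq \exp(-\tfrac{\mu\gamma}{2})$ gives
\[
r_N \leq \exp\Bigl(-\frac{\mu\gamma N}{2}\Bigr) r_0 + \frac{2 c_0 L \gamma \cmax^2}{\mu}\,.
\]
Here the admissible stepsize range from \Cref{th:non_convex_random_batch} with $b=\taumix$ becomes $\gamma \lesssim [(1+\dmax^2)L]^{-1} =: \gamma_{\max}$. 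Plugging the stated choice $\gamma \simeq \min\{\gamma_{\max}, \tfrac{1}{\mu N}\ln(\cdots)\}$ into this bound, i.e.\ performing the standard balancing of the exponential against the $\mathcal O(\gamma)$ fluctuation term, yields
\[
r_N = \mathcal{\tilde O}\Bigl( r_0 \exp(-\mu \gamma_{\max} N / 2) + \frac{L\cmax^2}{\mu^2 N}\Bigr)\,.
\]

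From this it is immediate that $r_N \leq \varepsilon$ holds as soon as the number of \emph{iterations} satisfies $N = \mathcal{\tilde O}\bigl((1+\dmax^2)\tfrac{L}{\mu}\log\tfrac{1}{\varepsilon} + \tfrac{L\cmax^2}{\mu^2\varepsilon}\bigr)$: the first term comes from forcing the exponential below $\varepsilon$ given $\gamma_{\max}^{-1} \simeq (1+\dmax^2)L$, and the second from the fluctuation term. Finally I would convert iterations to oracle calls. As noted after \Cref{lem:expect_bound_grad}, the expected number of oracle queries used to form $g^k$ is $\mathcal O(B \log_2 \batchbound) = \mathcal O(\taumix (\log_2 \batchbound)^2)$ with $b=\taumix$; since $\batchbound \simeq \sqrt{\gamma^{-1}L^{-1}}$ is at most polynomial in the relevant parameters, $\log_2 \batchbound$ is polylogarithmic in $1/\varepsilon$ and is absorbed into $\mathcal{\tilde O}$. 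Multiplying $N$ by this per-iteration cost gives the claimed oracle complexity.

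The main obstacle I anticipate is the bookkeeping in the stepsize-tuning step: one must verify that the two regimes of the $\min$ defining $\gamma$ indeed reproduce the two additive terms of the complexity while respecting $\gamma \leq \gamma_{\max}$, and that all $\log_2 \batchbound$ factors arising from the randomized batch remain polylogarithmic so they may legitimately be hidden in $\mathcal{\tilde O}$. By contrast, the passage from \eqref{eq:non_conv_temp40} to the contraction and the subsequent unrolling are entirely routine.
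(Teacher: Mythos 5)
Your proposal is correct and follows essentially the same route as the paper: start from the one-step bound \eqref{eq:non_conv_temp40}, apply \Cref{as:pl} to obtain the contraction $r_{k+1} \leq (1-\mu\gamma/2)r_k + \mathcal{O}(L\gamma^2\cmax^2)$, unroll with $(1-\mu\gamma/2)^N \leq \exp(-\mu\gamma N/2)$, and then substitute the stepsize \eqref{eq:PL_gamma} together with $b=\taumix$ and the per-iteration cost $\mathcal{O}(B\log_2\batchbound)$. The only cosmetic difference is your optional appeal to Lemma~3 of \cite{stich2019unified}, which the paper does not need here.
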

\begin{proof}
We start from \eqref{eq:non_conv_temp40} and apply \Cref{as:pl}.
\begin{align*}
\EE_k[f(x^{k+1}) - f^*] 
\leq&
(1 - \mu \gamma / 2)(f(x^k) - f^*)
        \notag\\
         &+ 4L \gamma^2  \cdot \left(C_{1} \taumix B^{-1}\log_2 \batchbound + (C_{1} + 1) \taumix^2 B^{-2}\right) \cmax^2\,.
\end{align*}
Next, we perform the recursion
\begin{align*}
\EE[f(x^{N}) - f^*] 
\leq&
(1 - \mu \gamma / 2)^N (f(x^k) - f^*)
        \notag\\
         &+ 8L \mu^{-1} \gamma  \cdot \left(C_{1} \taumix B^{-1}\log_2 \batchbound + (C_{1} + 1) \taumix^2 B^{-2}\right) \cmax^2
\\
\leq&
\exp(- \mu \gamma N / 2) (f(x^0) - f^*)
        \notag\\
         &+ 8L \mu^{-1} \gamma  \cdot \left(C_{1} \taumix B^{-1}\log_2 \batchbound + (C_{1} + 1) \taumix^2 B^{-2}\right) \cmax^2\,.
\end{align*}
It remains to substitute $\gamma$ from \eqref{eq:PL_gamma}, $B = \lceil b \log_2 \batchbound \rceil$ and $b = \tau$.
\end{proof}

\subsection{Proof of \Cref{th:vi_str_mon}} \label{sec:proof:th:strongly_mon_upper_bound}
We preface the proof by technical Lemma.
\begin{lemma}
    \label{lem:tech1}
    Let $r$ be $\mu_r$-strongly convex and $x^+ = \text{prox}_{\gamma r}(x)$. Then for all $u \in \mathcal{X}$ the following iniqulity hold:
    $$
    \langle x^+ - x, u - x^+ \rangle \geq \gamma \left( r(x^+) - r(u) + \frac{\mu_r}{2} \| x^+ - u\|^2\right).
    $$
\end{lemma}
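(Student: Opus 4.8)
The plan is to use the variational characterization of the proximal operator together with the strong convexity of $r$. By definition, $x^+ = \text{prox}_{\gamma r}(x)$ is the unique minimizer of the map $y \mapsto \gamma r(y) + \tfrac{1}{2}\|y - x\|^2$ over $\mathcal{X}$. Writing down the first-order optimality condition for this (strongly) convex problem, there exists a subgradient $r'(x^+) \in \partial r(x^+)$ such that $0 = \gamma r'(x^+) + (x^+ - x)$, equivalently $r'(x^+) = \gamma^{-1}(x - x^+)$. This is the only structural fact I need from the prox step.

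Next I would invoke the $\mu_r$-strong convexity of $r$ in the form \eqref{eq:strong_function_vi}, applied with the subgradient $r'(x^+)$ at the point $x^+$ and arbitrary $u \in \mathcal{X}$:
\[
r(u) \geq r(x^+) + \langle r'(x^+), u - x^+ \rangle + \frac{\mu_r}{2}\|x^+ - u\|^2.
\]
Substituting $r'(x^+) = \gamma^{-1}(x - x^+)$ and multiplying through by $\gamma$ turns the inner-product term into $\langle x - x^+, u - x^+ \rangle$, yielding
\[
\langle x - x^+, u - x^+ \rangle \leq \gamma\Bigl(r(u) - r(x^+) - \frac{\mu_r}{2}\|x^+ - u\|^2\Bigr).
\]

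Finally I would rearrange, using the identity $\langle x - x^+, u - x^+ \rangle = -\langle x^+ - x, u - x^+ \rangle$ and negating both sides, to obtain exactly the claimed bound
\[
\langle x^+ - x, u - x^+ \rangle \geq \gamma\Bigl(r(x^+) - r(u) + \frac{\mu_r}{2}\|x^+ - u\|^2\Bigr).
\]
The only point requiring care is the sign bookkeeping in the optimality condition and in this final rearrangement; everything else is a direct substitution. There is no genuine analytic obstacle, since this is the standard "prox inequality" specialized to a $\mu_r$-strongly convex regularizer.
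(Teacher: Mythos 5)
Your proof is correct and is essentially identical to the paper's: both extract the subgradient $r'(x^+) = \gamma^{-1}(x - x^+)$ from the first-order optimality condition of the prox step, plug it into the $\mu_r$-strong convexity inequality \eqref{eq:strong_function_vi} at $x^+$, and rearrange. (If anything, your version is slightly cleaner, since the paper's write-up contains a small typo in stating the optimality condition as $(x - x^+) \in \partial r(x^+)$ without the factor $\gamma^{-1}$.)
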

\begin{proof}
The optimality condition for $x^+ = \text{prox}_{\gamma r}(x) = \arg\min_{y \in \mathcal{X}} (\gamma r (y) + \tfrac{1}{2} \| x^+ - y\|^2)$ gives that $(x - x^+) \in \partial r(x^+)$. Therefore, using strong convexity (see \Cref{as:strong_monotone_op}) for $r'(x^+) = (x - x^+) \in \partial r(x^+)$, we get
$$
\gamma (r (u) - r (x^+)) \geq \langle x - x^+, u - x^+ \rangle + \frac{\gamma \mu_r}{2} \| x^+ - u\|^2.
$$
After small rearrangements we have what we need to prove.
\end{proof}
\begin{theorem}[\Cref{th:vi_str_mon}]
Assume \Cref{as:lipshitz_op},~\Cref{as:strong_monotone_op} with $\mu_F + \mu_r > 0$,~\Cref{as:Markov_noise_UGE},~\Cref{as:bounded_markov_noise_UGE_op}. Let problem \eqref{eq:VI}
be solved by \Cref{alg:EG}. Then for any $b \in \nsets$, and $\gamma$, $M$ satisfying
\begin{eqnarray*}
&\text{\small{$\gamma \leq \min\{ (3\mu_F + 3\mu_r)^{-1} ; (3L)^{-1}; (6\mu_F + \mu_r) \cdot  [120(C_{1} \taumix b^{-1} + (C_{1} + 1) \taumix^2 b^{-2})\Dmax^2 ]^{-1} ; \sqrt{(18C_{1})^{-1} \Delta^{-2} \tau^{-1} b} \}$}}\,,
\\
&
M = \max\{2;\sqrt{C_2 \gamma^{-1} (\mu_F + \mu_r)^{-1}}\}, \quad B = \lceil b \log_2 \batchbound \rceil\,,
\end{eqnarray*}
it holds that
\begin{equation*}
    \EEE{\|x^{N} - x^* \|^2}
        \lesssim
        \exp\left(- \frac{(\mu_F + \mu_r) \gamma N}{16}\right) \| x^0 - x^*\|^2
        + \frac{\gamma}{\mu}(\sigma^2 \tau b^{-1} + \sigma^2 \tau^2 b^{-2})
\,.
\end{equation*}
\end{theorem}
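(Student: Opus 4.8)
The plan is to run the standard ExtraGradient descent argument, adapted to the composite, stochastic, and Markovian setting. The two tools are \Cref{lem:tech1}, which converts each prox step into an inner-product inequality handling the regularizer $r$, and an operator version of \Cref{lem:expect_bound_grad} — obtained from \Cref{as:bounded_markov_noise_UGE_op} by exactly the same multilevel argument as for the gradient oracle, now with $\sigma^2 + \Delta^2\|x - x^*\|^2$ in place of $\sigma^2 + \delta^2\|\nabla f(x)\|^2$. I write $F^k = B^{-1}\sum_{i=1}^{B} F(x^k, Z_{T^k + i})$ for the extrapolation batch and keep $g^k$ for the randomized estimator at $x^{k+1/2}$, which by \Cref{lem:expect_bound_grad} satisfies $\E_k[g^k] = \E_k[g^k_{\lfloor \log_2 M \rfloor}]$ with a bias decaying like $M^{-2}$.

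First I would apply \Cref{lem:tech1} to the main step $x^{k+1} = \text{prox}_{\gamma r}(x^k - \gamma g^k)$ with $u = x^*$, and to the extrapolation step $x^{k+1/2} = \text{prox}_{\gamma r}(x^k - \gamma F^k)$ with $u = x^{k+1}$. Adding the two inequalities, the intermediate regularizer values $r(x^{k+1})$ cancel, leaving a net $\gamma(r(x^{k+1/2}) - r(x^*))$, and after the three-point identity $2\langle x^{k+1} - x^k, x^* - x^{k+1}\rangle = \|x^k - x^*\|^2 - \|x^{k+1} - x^*\|^2 - \|x^{k+1} - x^k\|^2$ one obtains a one-step bound of the form
\begin{equation*}
\|x^{k+1} - x^*\|^2 \leq \|x^k - x^*\|^2 - \|x^{k+1} - x^{k+1/2}\|^2 - \|x^{k+1/2} - x^k\|^2 + 2\gamma\langle g^k, x^* - x^{k+1/2}\rangle + 2\gamma\langle g^k - F^k, x^{k+1/2} - x^{k+1}\rangle\,,
\end{equation*}
together with the strong-convexity terms of $r$ coming from \Cref{lem:tech1}.

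Next I would process the two inner products. In $\langle g^k, x^* - x^{k+1/2}\rangle$ I split $g^k = F(x^{k+1/2}) + (\E_k[g^k] - F(x^{k+1/2})) + (g^k - \E_k[g^k])$; the deterministic part $\langle F(x^{k+1/2}), x^{k+1/2} - x^*\rangle$ is lower bounded via \Cref{as:strong_monotone_op} and the VI optimality of $x^*$ (which gives $r(x^{k+1/2}) - r(x^*) \geq -\langle F(x^*), x^{k+1/2} - x^*\rangle$), producing the contraction $-\gamma(\mu_F + \mu_r)\|x^{k+1/2} - x^*\|^2$, while the bias and the martingale-difference term are retained for the expectation. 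For $\langle g^k - F^k, x^{k+1/2} - x^{k+1}\rangle$ I use Young's inequality together with the splitting $g^k - F^k = (g^k - F(x^{k+1/2})) + (F(x^{k+1/2}) - F(x^k)) + (F(x^k) - F^k)$, bounding the middle piece by $L\|x^{k+1/2} - x^k\|$ via \Cref{as:lipshitz_op}, so that the resulting $\|x^{k+1/2} - x^k\|^2$ and $\|x^{k+1/2} - x^{k+1}\|^2$ terms are absorbed into the two negative squares provided $\gamma \lesssim L^{-1}$.

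Taking $\E_k[\cdot]$ and invoking the operator version of \Cref{lem:expect_bound_grad}, the surviving stochastic terms carry factors $(\tau B^{-1}\log_2 M + \tau^2 B^{-2})$ and $\tau^2 M^{-2} B^{-2}$ multiplying $\sigma^2 + \Delta^2\|x^{k+1/2} - x^*\|^2$ (for $g^k$) and $\sigma^2 + \Delta^2\|x^k - x^*\|^2$ (for $F^k$). The main obstacle is precisely these $\Delta^2\|\cdot - x^*\|^2$ contributions: unlike a uniform variance bound they grow with the iterate distance and compete directly with the contraction $-\gamma(\mu_F + \mu_r)\|x^{k+1/2} - x^*\|^2$. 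This is exactly what the stepsize restrictions $\gamma \lesssim (\mu_F + \mu_r)(\Delta^2 \tau b^{-1} + \Delta^2 \tau^2 b^{-2})^{-1}$ and $\gamma \lesssim \sqrt{\Delta^{-2}\tau^{-1} b}$ are designed for (with $B = \lceil b \log_2 M \rceil$ and $M = \max\{2, \sqrt{C_2 \gamma^{-1}(\mu_F + \mu_r)^{-1}}\}$): they force $\gamma \Delta^2(\tau b^{-1} + \tau^2 b^{-2}) \lesssim \mu_F + \mu_r$ so that a fixed fraction of the contraction survives, and the choice of $M$ makes the bias negligible relative to the descent. After bounding $\|x^{k+1/2} - x^*\|^2 \lesssim \|x^k - x^*\|^2 + \|x^{k+1/2} - x^k\|^2$ and absorbing the latter, I reach a recursion $\E_k\|x^{k+1} - x^*\|^2 \leq (1 - c(\mu_F + \mu_r)\gamma)\|x^k - x^*\|^2 + \gamma^2 \sigma^2(\tau b^{-1} + \tau^2 b^{-2})$. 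Unrolling this geometric recursion and dividing the additive term by the rate $(\mu_F + \mu_r)\gamma$ yields the exponential forgetting of the initial condition and the stated fluctuation term $\tfrac{\gamma}{\mu}(\sigma^2 \tau b^{-1} + \sigma^2 \tau^2 b^{-2})$.
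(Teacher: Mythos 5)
Your proposal is correct and follows essentially the same route as the paper's proof: the two applications of \Cref{lem:tech1} to the prox steps, the three-point identity, the same decompositions of $g^k$ and $g^k - F^k$ into deterministic, bias, and martingale parts, the operator version of \Cref{lem:expect_bound_grad} with $\cmax^2 + \Dmax^2\|x - x^*\|^2$, and the same mechanism by which the stepsize restrictions let the contraction absorb the $\Dmax^2$-dependent variance terms before unrolling the geometric recursion. No gaps.
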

\begin{proof}
We start from \Cref{lem:tech1} for $x^{k+1} = \text{prox}_{\gamma r} \left(x^k - \gamma g^{k}\right)$ with $x^+ = x^{k+1}$, $x = x^k - \gamma g^{k}$, $u = x^*$ and for $x^{k+1/2} = \text{prox}_{\gamma r} \left(x^k - \gamma B^{-1} \sum_{i=1}^{B} F(x^{k}, z^{k}_i)\right)$ with $x^+ = x^{k+1/2}$, $x = x^k - \gamma B^{-1} \sum_{i=1}^{B} F(x^{k}, z^{k}_i)$, $u = x^{k+1}$:
    \begin{align*}
         \langle x^{k+1} - x^k + \gamma g^{k}, x^* - x^{k+1} \rangle \geq \gamma \left( r(x^{k+1}) - r(x^*) + \frac{\mu_r}{2} \| x^{k+1} - x^*\|^2\right),
    \end{align*}
and
    \begin{align*}
         \langle x^{k+1/2} - x^k + \gamma B^{-1} \sum\limits_{i=1}^{B} &F(x^{k}, z^{k}_i), x^{k+1} - x^{k+1/2} \rangle 
         \\
         &\geq \gamma \left( r(x^{k+1/2}) - r(x^{k+1}) + \frac{\mu_r}{2} \| x^{k+1} - x^{k+1/2}\|^2\right).
    \end{align*}
Summing up these two inequalities, we get
    \begin{align*}
         \langle x^{k+1} - x^k &+ \gamma g^{k}, x^* - x^{k+1} \rangle + \langle x^{k+1/2} - x^k + \gamma F(x^k, z^k), x^{k+1} - x^{k+1/2} \rangle
         \\
         &\geq
         \gamma \left( r(x^{k+1/2}) - r(x^*) + \frac{\mu_r}{2} \| x^{k+1} - x^*\|^2 + \frac{\mu_r}{2} \| x^{k+1} - x^{k+1/2}\|^2\right).
    \end{align*}
After some rearrangements, we have
    \begin{align*}
         &\langle x^{k+1} - x^k, x^* - x^{k+1} \rangle + \langle x^{k+1/2} - x^k, x^{k+1} -  x^{k+1/2}\rangle
         \\
         &+ \gamma \langle g^k - B^{-1} \sum\limits_{i=1}^{B} F(x^{k}, z^{k}_i), x^{k+1/2} - x^{k+1} \rangle  + \gamma \langle g^k, x^* - x^{k+1/2} \rangle
         \\
         &\hspace{2cm}\geq
         \gamma \left( r(x^{k+1/2}) - r(x^*) + \frac{\mu_r}{2} \| x^{k+1} - x^*\|^2 + \frac{\mu_r}{2} \| x^{k+1} - x^{k+1/2}\|^2\right).
    \end{align*}
With $2\langle a, b\rangle = \| a + b\|^2 - \| a\|^2 - \| b\|^2$, we deduce
    \begin{align*}
        \|x^k & - x^* \|^2 - \| x^{k+1} - x^*\|^2 - \| x^{k+1} - x^k \|^2
        \\
        &+ \| x^{k+1} - x^k\|^2 - \| x^{k+1/2} - x^k \|^2 - \| x^{k+1} -  x^{k+1/2} \|^2
        \\
        &+ 2\gamma \langle g^k - B^{-1} \sum\limits_{i=1}^{B} F(x^{k}, z^{k}_i), x^{k+1/2} - x^{k+1} \rangle  + 2\gamma \langle g^k, x^* - x^{k+1/2} \rangle
         \\
         &\hspace{2cm}\geq
         2\gamma \left( r(x^{k+1/2}) - r(x^*) + \frac{\mu_r}{2} \| x^{k+1} - x^*\|^2 + \frac{\mu_r}{2} \| x^{k+1} - x^{k+1/2}\|^2\right).
    \end{align*}
After rewriting in a slightly different way,
    \begin{align*}
        \|x^{k+1} - x^* \|^2 +\|x^{k+1/2} - x^{k+1}\|^2
        \leq&
        \| x^k - x^*\|^2
        - 2\gamma \langle g^{k}, x^{k+1/2} - x^* \rangle 
        \\
        &
        - 2\gamma \langle B^{-1} \sum\limits_{i=1}^{B} F(x^{k}, z^{k}_i) - g^{k}, x^{k+1/2} - x^{k+1} \rangle
        \\
        &
        - \| x^{k+1/2} - x^k\|^2 - 2\gamma (r(x^{k+1/2}) - r(x^*)) 
        \\
        &
        - \mu_r \gamma \| x^{k+1} - x^*\|^2 - \mu_r\gamma \| x^{k+1} - x^{k+1/2}\|^2
        \\
        \leq&
        \| x^k - x^*\|^2
        - 2\gamma \langle g^{k}, x^{k+1/2} - x^* \rangle
        \\
        &
        + \gamma^2 \left\| B^{-1} \sum\limits_{i=1}^{B} F(x^{k}, z^{k}_i) - g^{k}\right\|^2 + \| x^{k+1/2} - x^{k+1} \|^2
        \\
        &
        - \| x^{k+1/2} - x^k\|^2 - 2\gamma (r(x^{k+1/2}) - r(x^*)) 
        \\
        &
        - \mu_r \gamma \| x^{k+1} - x^*\|^2 - \mu_r\gamma \| x^{k+1} - x^{k+1/2}\|^2.
    \end{align*}
In the last step, we used Cauchy-Schwartz inequality \eqref{eq:inner_prod}. Subtracting $\| x^{k+1} - x^{k+1/2}\|^2$ from both parts, we get
    \begin{align*}
        \|x^{k+1} - x^* \|^2
        \leq&
        \| x^k - x^*\|^2
        - 2\gamma \langle g^{k}, x^{k+1/2} - x^* \rangle + \gamma^2 \| B^{-1} \sum\limits_{i=1}^{B} F(x^{k}, z^{k}_i) - g^{k}\|^2
        \\
        &
        - \| x^{k+1/2} - x^k\|^2 - 2\gamma (r(x^{k+1/2}) - r(x^*)) 
        \\
        &
        - \mu_r \gamma \| x^{k+1} - x^*\|^2 - \mu_r\gamma \| x^{k+1} - x^{k+1/2}\|^2
        \\
        =&
        \| x^k - x^*\|^2
        - 2\gamma \langle F(x^{k+1/2}), x^{k+1/2} - x^* \rangle
        \\
        &
        - 2\gamma \langle \E_{k+1/2}[g^{k}] - F(x^{k+1/2}), x^{k+1/2} - x^* \rangle
        \\
        &
        - 2\gamma \langle g^{k} - \E_{k+1/2}[g^{k}], x^{k+1/2} - x^* \rangle
        \\
        &
        + \gamma^2 \| F(x^k) - F(x^{k+1/2}) + F(x^k) - B^{-1} \sum\limits_{i=1}^{B} F(x^{k}, z^{k}_i) + F(x^{k+1/2}) - g^{k}\|^2
        \\
        &
        - \| x^{k+1/2} - x^k\|^2 - 2\gamma (r(x^{k+1/2}) - r(x^*)) 
        \\
        &
        - \mu_r \gamma \| x^{k+1} - x^*\|^2 - \mu_r\gamma \| x^{k+1} - x^{k+1/2}\|^2.
    \end{align*}
Again with Cauchy-Schwartz inequality \eqref{eq:sum_sqr}, we conduct
    \begin{align}
        \label{eq:vi_temp1}
        \|x^{k+1} - x^* \|^2
        \leq&
        \| x^k - x^*\|^2
        - 2\gamma \langle F(x^{k+1/2}), x^{k+1/2} - x^* \rangle
        \notag\\
        &
        - 2\gamma \langle \E_{k+1/2}[g^{k}] - F(x^{k+1/2}), x^{k+1/2} - x^* \rangle
        \notag\\
        &
        - 2\gamma \langle g^{k} - \E_{k+1/2}[g^{k}], x^{k+1/2} - x^* \rangle
        + 3\gamma^2 \| B^{-1} \sum\limits_{i=1}^{B} F(x^{k}, z^{k}_i) - F(x^k) \|^2
        \notag\\
        &
        + 3\gamma^2 \| F(x^{k+1/2}) - g^{k}\|^2
        + 3\gamma^2 \| F(x^{k+1/2}) - F(x^k)\|^2
        - \| x^{k+1/2} - x^k\|^2
        \notag\\
        &
        - 2\gamma (r(x^{k+1/2}) - r(x^*)) - \mu_r \gamma \| x^{k+1} - x^*\|^2 - \mu_r\gamma \| x^{k+1} - x^{k+1/2}\|^2.
    \end{align}
\Cref{as:lipshitz_op} and the property of the solution \eqref{eq:VI}: $- (r(x^{k+1/2}) - r(x^*)) \leq \langle F(x^*), x^{k+1/2} - x^*\rangle$, together give
    \begin{align*}
        \|x^{k+1} - x^* \|^2
        \leq&
        \| x^k - x^*\|^2
        - 2\gamma \langle F(x^{k+1/2}) - F(x^*), x^{k+1/2} - x^* \rangle
        \\
        &
        - 2\gamma \langle \E_{k+1/2}[g^{k}] - F(x^{k+1/2}), x^{k+1/2} - x^* \rangle
        \\
        &
        - 2\gamma \langle g^{k} - \E_{k+1/2}[g^{k}], x^{k+1/2} - x^* \rangle
        \\
        &
        + 3\gamma^2 \| B^{-1} \sum\limits_{i=1}^{B} F(x^{k}, z^{k}_i) - F(x^k) \|^2
        + 3\gamma^2 \| F(x^{k+1/2}) - g^{k}\|^2
         \\
        &
        + 3\gamma^2 L^2 \| x^{k+1/2} - x^k\|^2
        - \| x^{k+1/2} - x^k\|^2
        \\
        &
        - \mu_r \gamma \| x^{k+1} - x^*\|^2 - \mu_r\gamma \| x^{k+1} - x^{k+1/2}\|^2.
    \end{align*}
Next, one can apply \Cref{as:strong_monotone_op} and have
    \begin{align*}
        \|x^{k+1} - x^* \|^2
        \leq&
        \| x^k - x^*\|^2
        - 2\mu_F \gamma \| x^{k+1/2} - x^*\|^2
        \\
        &
        - 2\gamma \langle \E_{k+1/2}[g^{k}] - F(x^{k+1/2}), x^{k+1/2} - x^* \rangle
        \\
        &
        - 2\gamma \langle g^{k} - \E_{k+1/2}[g^{k}], x^{k+1/2} - x^* \rangle
        + 3\gamma^2 \| B^{-1} \sum\limits_{i=1}^{B} F(x^{k}, z^{k}_i) - F(x^k) \|^2
        \\
        &
        + 3\gamma^2 \| F(x^{k+1/2}) - g^{k}\|^2
        + 3\gamma^2 L^2 \| x^{k+1/2} - x^k\|^2
        - \| x^{k+1/2} - x^k\|^2
        \\
        &
        - \mu_r \gamma \| x^{k+1} - x^*\|^2 - \mu_r\gamma \| x^{k+1} - x^{k+1/2}\|^2.
    \end{align*}
Using Cauchy-Schwartz inequality \eqref{eq:inner_prod} one more time, we get
    \begin{align*}
        \|x^{k+1} - x^* \|^2
        \leq&
        \| x^k - x^*\|^2
        - 2\mu_F \gamma \| x^{k+1/2} - x^*\|^2 
        \\
        &
        - \mu_r \gamma \| x^{k+1} - x^*\|^2 - \mu_r\gamma \| x^{k+1} - x^{k+1/2}\|^2
        \\
        &
        +\frac{4\gamma}{\mu_F + \mu_r} \| \E_{k+1/2}[g^{k}] - F(x^{k+1/2})\|^2 + \frac{(\mu_F + \mu_r)\gamma}{4} \| x^{k+1/2} - x^* \|^2
        \\
        &
        - 2\gamma \langle g^{k} - \E_{k+1/2}[g^{k}], x^{k+1/2} - x^* \rangle
        + 3\gamma^2 \| B^{-1} \sum\limits_{i=1}^{B} F(x^{k}, z^{k}_i) - F(x^k)\|^2
        \\
        &
        + 3\gamma^2 \| F(x^{k+1/2}) - g^{k}\|^2
        + 3\gamma^2 L^2 \| x^{k+1/2} - x^k\|^2
        - \| x^{k+1/2} - x^k\|^2
        \\
        \leq&
        \| x^k - x^*\|^2
        - \frac{7\mu_F \gamma}{4} \| x^{k+1/2} - x^*\|^2 
        \\
        &
        - \mu_r \gamma \| x^{k+1} - x^*\|^2 - \mu_r\gamma \| x^{k+1} - x^{k+1/2}\|^2
        \\
        &
        + \frac{\mu_r\gamma}{4} \| x^{k+1/2} - x^* \|^2 +\frac{4\gamma}{\mu_F + \mu_r} \| \E_{k+1/2}[g^{k}] - F(x^{k+1/2})\|^2
        \\
        &
        - 2\gamma \langle g^{k} - \E_{k+1/2}[g^{k}], x^{k+1/2} - x^* \rangle
        + 3\gamma^2 \| B^{-1} \sum\limits_{i=1}^{B} F(x^{k}, z^{k}_i) - F(x^k)\|^2
        \\
        &
        + 3\gamma^2 \| F(x^{k+1/2}) - g^{k}\|^2
        + 3\gamma^2 L^2 \| x^{k+1/2} - x^k\|^2
        - \| x^{k+1/2} - x^k\|^2.
    \end{align*}
With Cauchy-Schwartz inequality in the form: $-  \mu_r \gamma \| x^{k+1} - x^*\|^2 \leq - \tfrac{ \mu_r \gamma}{2} \| x^{k+1/2} - x^*\|^2 +  \mu_r \gamma \| x^{k+1} - x^{k+1/2}\|^2$, one can deduce
    \begin{align*}
        \|x^{k+1} - x^* \|^2
        \leq&
        \| x^k - x^*\|^2
        - \frac{(7\mu_F + \mu_r) \gamma}{4} \| x^{k+1/2} - x^*\|^2
        \\
        &
        +\frac{4\gamma}{\mu_F + \mu_r} \| \E_{k+1/2}[g^{k}] - F(x^{k+1/2})\|^2
        \\
        &
        - 2\gamma \langle g^{k} - \E_{k+1/2}[g^{k}], x^{k+1/2} - x^* \rangle
        + 3\gamma^2 \| B^{-1} \sum\limits_{i=1}^{B} F(x^{k}, z^{k}_i) - F(x^k)\|^2
        \\
        &
        + 3\gamma^2 \| F(x^{k+1/2}) - g^{k}\|^2
        + 3\gamma^2 L^2 \| x^{k+1/2} - x^k\|^2
        - \| x^{k+1/2} - x^k\|^2.
    \end{align*}
Taking the expectation and using \Cref{lem:tech_markov_app}, \Cref{lem:expect_bound_grad_appendix} (with $\Delta^2 \|x - x^* \|^2$ instead of $\delta^2 \| \nabla f(x)\|^2$), we have
    \begin{align*}
        \EEE{\|x^{k+1} - x^* \|^2}
        \leq&
        \EEE{\| x^k - x^*\|^2}
        - \frac{(7\mu_F + \mu_r) \gamma}{4} \EEE{\| x^{k+1/2} - x^*\|^2}
        \\
        &
        +\frac{4\gamma}{\mu_F + \mu_r} \EEE{\| \E_{k+1/2}[g^{k}] - F(x^{k+1/2})\|^2}
        \\
        &
        + 3\gamma^2 \EEE{\EE_k\left[\| B^{-1} \sum\limits_{i=1}^{B} F(x^{k}, z^{k}_i) - F(x^k)\|^2\right]}
        \\
        &
        + 3\gamma^2 \EEE{\EE_{k+1/2}\left[\| F(x^{k+1/2}) - g^{k}\|^2\right]}
        \\
        &
        + 3\gamma^2 L^2 \EEE{\| x^{k+1/2} - x^k\|^2}
        - \EEE{\| x^{k+1/2} - x^k\|^2}
        \\
        \leq&
        \EEE{\| x^k - x^*\|^2}
        - \frac{(7\mu_F + \mu_r) \gamma}{4} \EEE{\| x^{k+1/2} - x^*\|^2}
        \\
        &
        +\frac{4\gamma}{\mu_F + \mu_r} \cdot C_{2}\taumix^2\batchbound^{-2}B^{-2} \left(\cmax^2 + \Dmax^2 \EEE{\| x^{k+1/2} - x^*\|^2} \right)
        \\
        &
        + 3\gamma^2 \cdot C_{1}  \taumix B^{-1} \left(\cmax^2 + \Dmax^2 \EEE{\| x^{k} - x^*\|^2}\right)
        \\
        &
        + 3\gamma^2 \cdot \left(4 C_{1} \taumix B^{-1}\log_2 \batchbound + (4C_{1} + 2) \taumix^2 B^{-2}\right) (\cmax^2 + \Dmax^2 \| x^{k+1/2} - x^*\|^2)
        \\
        &
        + 3\gamma^2 L^2 \EEE{\| x^{k+1/2} - x^k\|^2}
        - \EEE{\| x^{k+1/2} - x^k\|^2}.
    \end{align*}
With $M \geq \sqrt{C_2 \gamma^{-1} (\mu_F + \mu_r)^{-1}}$, we have
   \begin{align*}
        \EEE{\|x^{k+1} - x^* \|^2}
        \leq&
        \EEE{\| x^k - x^*\|^2}
        - \frac{(7\mu_F + \mu_r) \gamma}{4} \EEE{\| x^{k+1/2} - x^*\|^2}
        \\
        &
        +4\gamma^2 \cdot \taumix^2 B^{-2} \left(\cmax^2 + \Dmax^2 \EEE{\| x^{k+1/2} - x^*\|^2} \right)
        \\
        &
        + 3\gamma^2 \cdot C_{1}  \taumix B^{-1} \left(\cmax^2 + \Dmax^2 \EEE{\| x^{k} - x^*\|^2}\right)
        \\
        &
        + 3\gamma^2 \cdot \left(4 C_{1} \taumix B^{-1}\log_2 \batchbound + (4C_{1} + 2) \taumix^2 B^{-2}\right) \left(\cmax^2 + \Dmax^2 \EEE{\| x^{k+1/2} - x^*\|^2}\right)
        \\
        &
        + 3\gamma^2 L^2 \EEE{\| x^{k+1/2} - x^k\|^2}
        - \EEE{\| x^{k+1/2} - x^k\|^2}
        \\
        \leq&
        \EEE{\| x^k - x^*\|^2}
        - \frac{(7\mu_F + \mu_r) \gamma}{4} \EEE{\| x^{k+1/2} - x^*\|^2}
        \\
        &
        + 3\gamma^2 \cdot C_{1}  \taumix B^{-1} \Dmax^2 \EEE{\| x^{k} - x^*\|^2}
        \\
        &
        + 12\gamma^2 \cdot \left(C_{1} \taumix B^{-1}\log_2 \batchbound + (C_{1} + 1) \taumix^2 B^{-2}\right)\Dmax^2 \EEE{\| x^{k+1/2} - x^*\|^2}
        \\
        &
        + 15\gamma^2 \cdot \left(C_{1} \taumix B^{-1}\log_2 \batchbound + (C_{1} + 1) \taumix^2 B^{-2}\right) \cmax^2
        \\
        &
        + 3\gamma^2 L^2 \EEE{\| x^{k+1/2} - x^k\|^2}
        - \EEE{\| x^{k+1/2} - x^k\|^2}.
    \end{align*}
Cauchy-Schwartz inequality \eqref{eq:inner_prod_and_sqr} gives
   \begin{align*}
        \EEE{\|x^{k+1} - x^* \|^2}
        \leq&
        \EEE{\| x^k - x^*\|^2}
        - \frac{(7\mu_F + \mu_r) \gamma}{4} \EEE{\| x^{k+1/2} - x^*\|^2}
        \\
        &
        + 15\gamma^2 \cdot \left(C_{1} \taumix B^{-1}\log_2 \batchbound + (C_{1} + 1) \taumix^2 B^{-2}\right)\Dmax^2 \EEE{\| x^{k+1/2} - x^*\|^2}
        \\
        &
        + 15\gamma^2 \cdot \left(C_{1} \taumix B^{-1}\log_2 \batchbound + (C_{1} + 1) \taumix^2 B^{-2}\right) \cmax^2
        \\
        &
        + 6\gamma^2 \cdot C_{1}  \taumix B^{-1} \Dmax^2 \EEE{\| x^{k+1/2} - x^k\|^2}
        \\
        &
        + 3\gamma^2 L^2 \EEE{\| x^{k+1/2} - x^k\|^2}
        - \EEE{\| x^{k+1/2} - x^k\|^2}.
    \end{align*}
Since $\gamma \leq (7\mu_F + \mu_r) \cdot  \left[120\left(C_{1} \taumix b^{-1} + (C_{1} + 1) \taumix^2 b^{-2}\right)\Dmax^2 \right]^{-1}$, $B = \lceil b \log_2 \batchbound \rceil$ and $M \geq 2$, one can obtain
\begin{align*}
    \gamma 
    &\leq 
    (7\mu_F + \mu_r) \cdot  \left[120\left(C_{1} \taumix b^{-1} + (C_{1} + 1) \taumix^2 b^{-2}\right)\Dmax^2 \right]^{-1}
    \\
    &\leq
    (7\mu_F + \mu_r) \cdot  \left[120\left(C_{1} \taumix B^{-1}\log_2 \batchbound + (C_{1} + 1) \taumix^2 B^{-2}\right)\Dmax^2 \right]^{-1},
\end{align*}
and then,
   \begin{align*}
        \EEE{\|x^{k+1} - x^* \|^2}
        \leq&
        \EEE{\| x^k - x^*\|^2}
        - \frac{(7\mu_F + \mu_r) \gamma}{8} \EEE{\| x^{k+1/2} - x^*\|^2}
        \\
        &
        + 15\gamma^2 \cdot \left(C_{1} \taumix B^{-1}\log_2 \batchbound + (C_{1} + 1) \taumix^2 B^{-2}\right) \cmax^2
        \\
        &
        + 6\gamma^2 \cdot C_{1}  \taumix B^{-1} \Dmax^2 \EEE{\| x^{k+1/2} - x^k\|^2}
        \\
        &
        + 3\gamma^2 L^2 \EEE{\| x^{k+1/2} - x^k\|^2}
        - \EEE{\| x^{k+1/2} - x^k\|^2}.
    \end{align*}
With Cauchy-Schwartz inequality in the form: $- \| x^{k+1/2} - x^*\|^2 \leq - \tfrac{ 1}{2} \| x^{k} - x^*\|^2 +  \| x^{k} - x^{k+1/2}\|^2$, we have
   \begin{align*}
        \EEE{\|x^{k+1} - x^* \|^2}
        \leq&
        \left( 1 - \frac{(7\mu_F + \mu_r) \gamma}{16} \right)\EEE{\| x^k - x^*\|^2}
        \\
        &
        - \left(1 - (\mu_F + \mu_r) \gamma - 3\gamma^2 L^2 - 6\gamma^2 \cdot C_{1}  \taumix B^{-1} \Dmax^2\right) \EEE{\| x^{k+1/2} - x^k\|^2}
        \\
        &
        + 15\gamma^2 \cdot \left(C_{1} \taumix B^{-1}\log_2 \batchbound + (C_{1} + 1) \taumix^2 B^{-2}\right) \cmax^2.
    \end{align*}
Since $\gamma \leq \min\left\{ (3\mu_F + 3\mu_r)^{-1}; (3L)^{-1};  \sqrt{(18C_{1})^{-1}  \taumix^{-1} b \Dmax^{-2}} \right\}$, we get   
   \begin{align*}
        \EEE{\|x^{k+1} - x^* \|^2}
        \leq&
        \left( 1 - \frac{(7\mu_F + \mu_r) \gamma}{16} \right)\EEE{\| x^k - x^*\|^2}
        \\
        &
        + 15\gamma^2 \cdot \left(C_{1} \taumix B^{-1}\log_2 \batchbound + (C_{1} + 1) \taumix^2 B^{-2}\right) \cmax^2.
    \end{align*}
Next, we perform the recursion
   \begin{align*}
        \EEE{\|x^{N} - x^* \|^2}
        \leq&
        \left( 1 - \frac{(7\mu_F + \mu_r) \gamma}{16} \right)^N \| x^0 - x^*\|^2
        \\
        &
        + \frac{240\gamma}{(\mu_F + \mu_r)} \cdot \left(C_{1} \taumix B^{-1}\log_2 \batchbound + (C_{1} + 1) \taumix^2 B^{-2}\right) \cmax^2
        \\
        \leq&
        \exp\left( - \frac{(\mu_F + \mu_r) \gamma N}{16} \right) \| x^0 - x^*\|^2
        \\
        &
        + \frac{240\gamma}{(\mu_F + \mu_r)} \cdot \left(C_{1} \taumix B^{-1}\log_2 \batchbound + (C_{1} + 1) \taumix^2 B^{-2}\right) \cmax^2.
    \end{align*}
Substituting $B = \lceil b \log_2 \batchbound \rceil$ and using $M \geq 2$ finish the proof.
\end{proof}    

\subsection{Proof of \Cref{th:vi_mon}} \label{sec:proof:th:mon_upper_bound}
\begin{theorem}[\Cref{th:vi_mon}]
Assume \Cref{as:lipshitz_op},~\Cref{as:strong_monotone_op} with $\mu_F + \mu_r = 0$,~\Cref{as:boundedset},~\Cref{as:Markov_noise_UGE},~\Cref{as:bounded_markov_noise_UGE_op}. Let problem \eqref{eq:VI}
be solved by \Cref{alg:EG}. Then for any $B \in \nsets$, and $\gamma$, $M$ satisfying $\gamma \lesssim L^{-1}\,, \,\, M = \sqrt{N}$, it holds that
\begin{equation*}
    \EEE{\text{Gap}(\bar x^\nbiter)}
        \lesssim
        \frac{D^2}{\gamma N} + \gamma (\taumix B^{-1}\log_2 N + \taumix^2 B^{-2} )(\cmax^2 + \Dmax^2 D^2)
\,,
\end{equation*}
where $\bar x^\nbiter = \tfrac{1}{\nbiter} \sum\limits_{k=0}^{\nbiter-1} x^{k+1/2}$.
\end{theorem}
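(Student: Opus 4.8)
The plan is to mirror the strongly-monotone argument (Theorem~\ref{th:vi_str_mon}) but to set $\mu_F = \mu_r = 0$ and replace the distance-to-solution contraction by a telescoping gap-function estimate. The starting point is \Cref{lem:tech1} applied twice at iteration $k$: once for the main step $x^{k+1} = \text{prox}_{\gamma r}(x^k - \gamma g^k)$ with comparison point $u = y \in \mathcal{X}$ arbitrary, and once for the extrapolation step $x^{k+1/2} = \text{prox}_{\gamma r}(x^k - \gamma B^{-1}\sum_i F(x^k,z^k_i))$ with $u = x^{k+1}$. Summing these and using the identity $2\langle a,b\rangle = \|a+b\|^2 - \|a\|^2 - \|b\|^2$ exactly as in the strongly-monotone proof, I obtain, for every fixed $y\in\mathcal{X}$,
\begin{align*}
2\gamma\bigl(\langle g^k, x^{k+1/2} - y\rangle + r(x^{k+1/2}) - r(y)\bigr)
\leq& \|x^k - y\|^2 - \|x^{k+1} - y\|^2 \\
& - \|x^{k+1/2} - x^k\|^2 + 3\gamma^2\|F(x^{k+1/2}) - F(x^k)\|^2 + (\text{noise terms})\,,
\end{align*}
where the noise terms are the same three quantities as before: $\|E_{k+1/2}[g^k] - F(x^{k+1/2})\|^2$ (bias), the martingale increment $\langle g^k - E_{k+1/2}[g^k], \cdot\rangle$, and the two variance terms $\|B^{-1}\sum_i F(x^k,z^k_i) - F(x^k)\|^2$ and $\|F(x^{k+1/2}) - g^k\|^2$.

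Next I would rewrite $\langle g^k, x^{k+1/2} - y\rangle = \langle F(x^{k+1/2}), x^{k+1/2} - y\rangle + \langle g^k - F(x^{k+1/2}), x^{k+1/2} - y\rangle$, and then use monotonicity in the disguised form $\langle F(y), x^{k+1/2} - y\rangle \le \langle F(x^{k+1/2}), x^{k+1/2} - y\rangle$ to lower-bound the left side by the per-iteration gap contribution $\langle F(y), x^{k+1/2} - y\rangle + r(x^{k+1/2}) - r(y)$. The Lipschitz term $3\gamma^2 L^2\|x^{k+1/2}-x^k\|^2$ is absorbed by $-\|x^{k+1/2}-x^k\|^2$ once $\gamma \lesssim L^{-1}$. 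Summing over $k = 0,\dots,N-1$ telescopes the distance terms to $\|x^0 - y\|^2 \le D^2$; dividing by $2\gamma N$ and invoking convexity of $y \mapsto \langle F(y), \cdot - y\rangle + r(\cdot) - r(y)$ (linearity in the comparison point, so Jensen applies to $\bar x^N = N^{-1}\sum_k x^{k+1/2}$) gives a bound on $\langle F(y), \bar x^N - y\rangle + r(\bar x^N) - r(y)$ for each fixed $y$. Taking the supremum over $y\in\mathcal{X}$ (legitimate because the right-hand side after handling the martingale term is $y$-uniform, using \Cref{as:boundedset} to bound $\|x^{k+1/2} - y\| \le D$) produces $\text{Gap}(\bar x^N)$.

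For the stochastic terms I would take total expectations. The martingale increment $\langle g^k - E_{k+1/2}[g^k], x^{k+1/2} - x^*\rangle$ vanishes in expectation against a fixed anchor, but the $\sup_y$ couples $y$ to the noise; the standard remedy is to split $x^{k+1/2} - y = (x^{k+1/2} - x^k) + (x^k - y)$, keep the $x^k - y$ part (whose inner product is mean-zero conditionally on $\mathcal{F}_k$ since $x^k$ is $\mathcal{F}_k$-measurable) and Young-dominate the $x^{k+1/2} - x^k$ part by the negative square term. Then \Cref{lem:tech_markov_app} bounds the variance terms by $\lesssim (\taumix B^{-1} + \taumix^2 B^{-2})(\sigma^2 + \Delta^2 D^2)$ and \Cref{lem:expect_bound_grad_appendix} bounds the bias by $C_2 \taumix^2 M^{-2} B^{-2}(\sigma^2 + \Delta^2 D^2)$; choosing $M = \sqrt{N}$ makes the bias term $O(\gamma D^2 / N)$ so it merges into the leading $D^2/(\gamma N)$. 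The main obstacle I anticipate is precisely this interchange of the supremum over $y$ with the expectation over the martingale noise: one cannot naively pull $\E[\sup_y(\cdot)]$ apart, so I would either fix $y$ first, take expectations, and only then take $\sup_y$ over the resulting deterministic bound (exploiting that $D$ uniformly bounds all $\|x^{k+1/2}-y\|$), or invoke the standard ghost-iterate / deterministic-$y$ argument of \cite{juditsky2011solving,nemirovski2004prox}; all remaining steps are routine algebra parallel to the strongly-monotone case.
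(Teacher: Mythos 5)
Your skeleton is the same as the paper's: apply \Cref{lem:tech1} to both prox steps, combine via the polarization identity to reach the analogue of \eqref{eq:vi_temp1} with an arbitrary comparison point $y$, pass from $\langle F(x^{k+1/2}),x^{k+1/2}-y\rangle$ to $\langle F(y),x^{k+1/2}-y\rangle$ by monotonicity, absorb $3\gamma^2L^2\|x^{k+1/2}-x^k\|^2$ into $-\|x^{k+1/2}-x^k\|^2$ using $\gamma\lesssim L^{-1}$, telescope, apply Jensen to pass to $\bar x^{\nbiter}$, bound the variance terms by \Cref{lem:tech_markov_app} and \Cref{lem:expect_bound_grad_appendix}, and kill the bias with $M=\sqrt{N}$. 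All of that matches the paper.

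The one substantive issue is the martingale term, and of the remedies you list only the ghost-iterate one is adequate. ``Fix $y$, take expectations, then take $\sup_y$ of the resulting deterministic bound'' controls $\sup_{y}\E[\langle F(y),\bar x^{\nbiter}-y\rangle + r(\bar x^{\nbiter})-r(y)]$, which is strictly weaker than the claimed $\E[\text{Gap}(\bar x^{\nbiter})]=\E[\sup_y(\cdot)]$; you cannot recover the latter from the former. Likewise, your split $x^{k+1/2}-y=(x^{k+1/2}-x^k)+(x^k-y)$ does not decouple $y$ from the noise: the residual $\langle g^k-\E_{k+1/2}[g^k],x^k-y\rangle$ still contains $y$, so its expectation no longer vanishes once $y$ is the (random) maximizer. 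The paper's fix --- a concrete, elementary instance of the ghost-iterate idea you cite --- is to anchor at the deterministic initial point: write $x^{k+1/2}-y=(x^{k+1/2}-x^0)+(x^0-y)$. The first piece is $\mathcal{F}_{k+1/2}$-measurable, so its inner product with $g^k-\E_{k+1/2}[g^k]$ has zero expectation; for the second, sum over $k$ first and bound $-2\gamma\langle \nbiter^{-1}\sum_k(g^k-\E_{k+1/2}[g^k]),x^0-y\rangle$ by Cauchy--Schwarz into $\gamma^2\bigl\|\nbiter^{-1}\sum_k(g^k-\E_{k+1/2}[g^k])\bigr\|^2+\nbiter^{-1}\|x^0-y\|^2$. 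This is uniform in $y$ (the second summand is at most $D^2/\nbiter$ by \Cref{as:boundedset}), so the supremum can be taken safely, and the expectation of the squared average reduces to $\nbiter^{-2}\sum_k\E\|g^k-\E_{k+1/2}[g^k]\|^2$ because the cross terms vanish by the martingale property. With that substitution your argument goes through and reproduces the stated bound.
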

\begin{proof}
We start from \eqref{eq:vi_temp1} with arbitrary $x \in \mathcal{X}$ instead of $x^*$ and $\mu_r = 0$:
    \begin{align*}
        \|x^{k+1} - x \|^2
        \leq&
        \| x^k - x\|^2
        - 2\gamma \langle F(x^{k+1/2}), x^{k+1/2} - x \rangle
        \notag\\
        &
        - 2\gamma \langle \E_{k+1/2}[g^{k}] - F(x^{k+1/2}), x^{k+1/2} - x \rangle
        \notag\\
        &
        - 2\gamma \langle g^{k} - \E_{k+1/2}[g^{k}], x^{k+1/2} - x \rangle
        + 3\gamma^2 \| B^{-1} \sum\limits_{i=1}^{B} F(x^{k}, z^{k}_i) - F(x^k)\|^2
        \notag\\
        &
        + 3\gamma^2 \| F(x^{k+1/2}) - g^{k}\|^2
        + 3\gamma^2 \| F(x^{k+1/2}) - F(x^k)\|^2
        - \| x^{k+1/2} - x^k\|^2
        \notag\\
        &
        - 2\gamma (r(x^{k+1/2}) - r(x)).
    \end{align*}
After small rearrangements, we get
    \begin{align*}
    2\gamma \big(\langle F(x^{k+1/2}) &, x^{k+1/2} - x \rangle +  r(x^{k+1/2}) - r(x) \big)
        \\\leq&
        \| x^k - x\|^2 - \|x^{k+1} - x \|^2
        - 2\gamma \langle \E_{k+1/2}[g^{k}] - F(x^{k+1/2}), x^{k+1/2} - x \rangle
        \notag\\
        &
        - 2\gamma \langle g^{k} - \E_{k+1/2}[g^{k}], x^{k+1/2} - x \rangle
        + 3\gamma^2 \| B^{-1} \sum\limits_{i=1}^{B} F(x^{k}, z^{k}_i) - F(x^k)\|^2
        \notag\\
        &
        + 3\gamma^2 \| F(x^{k+1/2}) - g^{k}\|^2
        + 3\gamma^2 \| F(x^{k+1/2}) - F(x^k)\|^2
        - \| x^{k+1/2} - x^k\|^2.
    \end{align*}
Applying Cauchy-Schwartz inequality and making more rearrangements, we get
    \begin{align*}
    2\gamma \big(\langle F(x^{k+1/2})&, x^{k+1/2} - x \rangle +  r(x^{k+1/2}) - r(x) \big)
        \\\leq&
        \| x^k - x\|^2 - \|x^{k+1} - x \|^2
        + \gamma^2 N \| \E_{k+1/2}[g^{k}] - F(x^{k+1/2}) \|^2  + \frac{1}{N}\| x^{k+1/2} - x \|^2
        \notag\\
        &
        - 2\gamma \langle g^{k} - \E_{k+1/2}[g^{k}], x^{k+1/2} - x^0 \rangle
        - 2\gamma \langle g^{k} - \E_{k+1/2}[g^{k}], x^{0} - x \rangle
        \notag\\
        &
        + 3\gamma^2 \| B^{-1} \sum\limits_{i=1}^{B} F(x^{k}, z^{k}_i) - F(x^k)\|^2 + 3\gamma^2 \| F(x^{k+1/2}) - g^{k}\|^2
        \notag\\
        &
        + 3\gamma^2 \| F(x^{k+1/2}) - F(x^k)\|^2
        - \| x^{k+1/2} - x^k\|^2.
    \end{align*}
Summing over all $k$ from $0$ to $\nbiter-1$ and dividing by $\nbiter$, we have 
    \begin{align*}
    2\gamma \cdot& \frac{1}{\nbiter} \sum\limits_{k=0}^{\nbiter-1}\big(\langle F(x^{k+1/2}),  x^{k+1/2} - x \rangle +  r(x^{k+1/2}) - r(x) \big)
        \\\leq&
        \frac{\| x^0 - x\|^2 - \|x^{K} - x \|^2}{\nbiter}
        + \gamma^2 \sum\limits_{k=0}^{\nbiter-1}\| \E_{k+1/2}[g^{k}] - F(x^{k+1/2}) \|^2  + \frac{1}{\nbiter^2} \sum\limits_{k=0}^{\nbiter-1}\| x^{k+1/2} - x \|^2
        \notag\\
        &
        - 2\gamma \cdot \frac{1}{\nbiter} \sum\limits_{k=0}^{\nbiter-1} \langle g^{k} - \E_{k+1/2}[g^{k}], x^{k+1/2} - x^0 \rangle
        - 2\gamma \langle \nbiter^{-1} \sum\limits_{k=0}^{\nbiter-1} \left[g^{k} - \E_{k+1/2}[g^{k}] \right], x^{0} - x \rangle
        \notag\\
        &
        + 3\gamma^2 \cdot \frac{1}{\nbiter} \sum\limits_{k=0}^{\nbiter-1} \| B^{-1} \sum\limits_{i=1}^{B} F(x^{k}, z^{k}_i) - F(x^k)\|^2 + 3\gamma^2\cdot \frac{1}{\nbiter} \sum\limits_{k=0}^{\nbiter-1}  \| F(x^{k+1/2}) - g^{k}\|^2
        \notag\\
        &
        + 3\gamma^2 \cdot \frac{1}{\nbiter} \sum\limits_{k=0}^{\nbiter-1} \| F(x^{k+1/2}) - F(x^k)\|^2
        - \frac{1}{\nbiter} \sum\limits_{k=0}^{\nbiter-1} \| x^{k+1/2} - x^k\|^2.
    \end{align*}
Using monotonicity and Jensen's inequality \eqref{eq:jensen} for convex function $r$, we get (with notation $\bar x^\nbiter = \frac{1}{\nbiter} \sum\limits_{k=0}^{\nbiter-1} x^{k+1/2}$)
    \begin{align*}
    2\gamma \big(\langle& F(x),  \bar x^\nbiter - x \rangle +  r(\bar x^\nbiter) - r(x) \big)
        \\\leq&
        \frac{\| x^0 - x\|^2 - \|x^{\nbiter} - x \|^2}{\nbiter}
        + \gamma^2 \sum\limits_{k=0}^{\nbiter-1}\| \E_{k+1/2}[g^{k}] - F(x^{k+1/2}) \|^2  + \frac{1}{\nbiter^2} \sum\limits_{k=0}^{\nbiter-1}\| x^{k+1/2} - x \|^2
        \notag\\
        &
        - 2\gamma \cdot \frac{1}{\nbiter} \sum\limits_{k=0}^{\nbiter-1} \langle g^{k} - \E_{k+1/2}[g^{k}], x^{k+1/2} - x^0 \rangle
        - 2\gamma \langle \nbiter^{-1} \sum\limits_{k=0}^{\nbiter-1} \left[g^{k} - \E_{k+1/2}[g^{k}] \right], x^{0} - x \rangle
        \notag\\
        &
        + 3\gamma^2 \cdot \frac{1}{\nbiter} \sum\limits_{k=0}^{\nbiter-1} \| B^{-1} \sum\limits_{i=1}^{B} F(x^{k}, z^{k}_i) - F(x^k)\|^2 + 3\gamma^2\cdot \frac{1}{\nbiter} \sum\limits_{k=0}^{\nbiter-1}  \| F(x^{k+1/2}) - g^{k}\|^2
        \notag\\
        &
        + 3\gamma^2 \cdot \frac{1}{\nbiter} \sum\limits_{k=0}^{\nbiter-1} \| F(x^{k+1/2}) - F(x^k)\|^2
        - \frac{1}{\nbiter} \sum\limits_{k=0}^{\nbiter-1} \| x^{k+1/2} - x^k\|^2.
    \end{align*}
Applying Cauchy-Schwartz inequality \eqref{eq:inner_prod} one more time,
    \begin{align*}
    2\gamma \big(\langle& F(x),  \bar x^\nbiter - x \rangle +  r(\bar x^\nbiter) - r(x) \big)
        \\\leq&
        \frac{2\| x^0 - x\|^2}{\nbiter}
        + \gamma^2 \sum\limits_{k=0}^{\nbiter-1}\| \E_{k+1/2}[g^{k}] - F(x^{k+1/2}) \|^2  + \frac{1}{\nbiter^2} \sum\limits_{k=0}^{\nbiter-1}\| x^{k+1/2} - x \|^2
        \notag\\
        &
        - 2\gamma \cdot \frac{1}{\nbiter} \sum\limits_{k=0}^{\nbiter-1} \langle g^{k} - \E_{k+1/2}[g^{k}], x^{k+1/2} - x^0 \rangle
        + \gamma^2 \| \nbiter^{-1} \sum\limits_{k=0}^{\nbiter-1} \left[g^{k} - \E_{k+1/2}[g^{k}] \right] \|^2        \notag\\
        &
        + 3\gamma^2 \cdot \frac{1}{\nbiter} \sum\limits_{k=0}^{\nbiter-1} \| B^{-1} \sum\limits_{i=1}^{B} F(x^{k}, z^{k}_i) - F(x^k)\|^2 + 3\gamma^2\cdot \frac{1}{\nbiter} \sum\limits_{k=0}^{\nbiter-1}  \| F(x^{k+1/2}) - g^{k}\|^2
        \notag\\
        &
        + 3\gamma^2 \cdot \frac{1}{\nbiter} \sum\limits_{k=0}^{\nbiter-1} \| F(x^{k+1/2}) - F(x^k)\|^2
        - \frac{1}{\nbiter} \sum\limits_{k=0}^{\nbiter-1} \| x^{k+1/2} - x^k\|^2.
    \end{align*}
Taking supermom on $x$ from $\mathcal{X}$ and then the full expectation, we get
    \begin{align*}
    2\gamma \EEE{\text{Gap}(\bar x^\nbiter)}
        \leq&
        \frac{2\max_{x \in \mathcal{X}}\| x^0 - x\|^2}{\nbiter} + \frac{1}{\nbiter^2} \sum\limits_{k=0}^{\nbiter-1} \EEE{\max_{x \in \mathcal{X}}\| x^{k+1/2} - x \|^2}
        \notag\\
        &
        - 2\gamma \cdot \frac{1}{\nbiter} \sum\limits_{k=0}^{\nbiter-1} \EEE{\langle g^{k} - \E_{k+1/2}[g^{k}], x^{k+1/2} - x^0 \rangle}
        \notag\\
        &
        + \gamma^2 \sum\limits_{k=0}^{\nbiter-1} \EEE{\| \E_{k+1/2}[g^{k}] - F(x^{k+1/2}) \|^2}  
        \notag\\
        &
        + \gamma^2 \EEE{\| \nbiter^{-1} \sum\limits_{k=0}^{\nbiter-1} \left[g^{k} - \E_{k+1/2}[g^{k}] \right] \|^2}
        \notag\\
        &
        + 3\gamma^2 \cdot \frac{1}{\nbiter} \sum\limits_{k=0}^{\nbiter-1} \EEE{\| B^{-1} \sum\limits_{i=1}^{B} F(x^{k}, z^{k}_i) - F(x^k)\|^2}
        \notag\\
        &
        + 3\gamma^2\cdot \frac{1}{\nbiter} \sum\limits_{k=0}^{\nbiter-1}  \EEE{\| F(x^{k+1/2}) - g^{k}\|^2}
        \notag\\
        &
        + 3\gamma^2 \cdot \frac{1}{\nbiter} \sum\limits_{k=0}^{\nbiter-1} \EEE{\| F(x^{k+1/2}) - F(x^k)\|^2}
        - \frac{1}{\nbiter} \sum\limits_{k=0}^{\nbiter-1} \EEE{\| x^{k+1/2} - x^k\|^2}.
    \end{align*}
One can note that 
\begin{align*}
\EEE{\langle g^{k} - \E_{k+1/2}[g^{k}], x^{k+1/2} - x^0 \rangle} 
&= 
\EEE{\EE_{k+1/2}[\langle g^{k} - \E_{k+1/2}[g^{k}], x^{k+1/2} - x^0 \rangle]} 
\\
&= \EEE{\langle \EE_{k+1/2}[g^{k} - \E_{k+1/2}[g^{k}]], x^{k+1/2} - x^0 \rangle} 
\\
&= 0,
\end{align*}
and (here we also need Cauchy-Schwartz inequality \eqref{eq:inner_prod_and_sqr})
\begin{align*}
\EEE{\|  \nbiter^{-1} \sum\limits_{k=0}^{\nbiter-1} \left[g^{k} - \E_{k+1/2}[g^{k}] \right] \|^2} 
=& 
\frac{1}{\nbiter^2} \sum\limits_{k=0}^{\nbiter-1} \EEE{\|   g^{k} - \E_{k+1/2}[g^{k}] \|^2} 
\\
&+ \frac{1}{\nbiter^2} \sum\limits_{k \neq j} \EEE{\langle g^{k} - \E_{k+1/2}[g^{k}], g^{j} - \E_{j+1/2}[g^{j}] \rangle}
\\
=& 
\frac{1}{\nbiter^2} \sum\limits_{k=0}^{\nbiter-1} \EEE{\|   g^{k} - \E_{k+1/2}[g^{k}] \|^2} 
\\
&+ \frac{2}{\nbiter^2} \sum\limits_{k > j} \EEE{\langle \EE_{k+1/2}[g^{k} - \E_{k+1/2}[g^{k}]], g^{j} - \E_{j+1/2}[g^{j}] \rangle}
\\
=& 
\frac{1}{\nbiter^2} \sum\limits_{k=0}^{\nbiter-1} \EEE{\|   g^{k} - \E_{k+1/2}[g^{k}] \|^2}
\\
\leq& 
\frac{2}{\nbiter^2} \sum\limits_{k=0}^{\nbiter-1} \EEE{\|   g^{k} - F(x^{k+1/2}) \|^2}
\\
&
+\frac{2}{\nbiter^2} \sum\limits_{k=0}^{\nbiter-1} \EEE{\|   F(x^{k+1/2}) - \E_{k+1/2}[g^{k}] \|^2}.
\end{align*}
Then, we have
    \begin{align*}
    2\gamma \EEE{\text{Gap}(\bar x^\nbiter)}
        \leq&
        \frac{2\max_{x \in \mathcal{X}}\| x^0 - x\|^2}{\nbiter}
        + \frac{1}{\nbiter^2} \sum\limits_{k=0}^{\nbiter-1} \EEE{\max_{x \in \mathcal{X}}\| x^{k+1/2} - x \|^2}
        \\
        &
        + 2\gamma^2 \sum\limits_{k=0}^{\nbiter-1} \EEE{\| \E_{k+1/2}[g^{k}] - F(x^{k+1/2}) \|^2}
        \notag\\
        &
        + 3\gamma^2 \cdot \frac{1}{\nbiter} \sum\limits_{k=0}^{\nbiter-1} \EEE{\| B^{-1} \sum\limits_{i=1}^{B} F(x^{k}, z^{k}_i) - F(x^k)\|^2} 
        \\
        &
        + 5\gamma^2\cdot \frac{1}{\nbiter} \sum\limits_{k=0}^{\nbiter-1}  \EEE{\| F(x^{k+1/2}) - g^{k}\|^2}
        \notag\\
        &
        + 3\gamma^2 \cdot \frac{1}{\nbiter} \sum\limits_{k=0}^{\nbiter-1} \EEE{\| F(x^{k+1/2}) - F(x^k)\|^2}
        - \frac{1}{\nbiter} \sum\limits_{k=0}^{\nbiter-1} \EEE{\| x^{k+1/2} - x^k\|^2}.
    \end{align*}
With \Cref{as:strong_monotone_op} and \Cref{as:boundedset}, we obtain
    \begin{align*}
    2\gamma \EEE{\text{Gap}(\bar x^\nbiter)}
        \leq&
        \frac{3 D^2}{\nbiter}
        + 2\gamma^2 \sum\limits_{k=0}^{\nbiter-1} \EEE{\| \E_{k+1/2}[g^{k}] - F(x^{k+1/2}) \|^2} 
        \notag\\
        &
        + 3\gamma^2 \cdot \frac{1}{\nbiter} \sum\limits_{k=0}^{\nbiter-1} \EEE{\| B^{-1} \sum\limits_{i=1}^{B} F(x^{k}, z^{k}_i) - F(x^k)\|^2}
        \notag\\
        &
        + 5\gamma^2\cdot \frac{1}{\nbiter} \sum\limits_{k=0}^{\nbiter-1}  \EEE{\| F(x^{k+1/2}) - g^{k}\|^2}
        \notag\\
        &
        - (1 - 3 \gamma^2 L^2)\frac{1}{\nbiter} \sum\limits_{k=0}^{\nbiter-1} \EEE{\| x^{k+1/2} - x^k\|^2}.
    \end{align*}
Using \Cref{lem:tech_markov_app} and \Cref{lem:expect_bound_grad_appendix}, we have
    \begin{align*}
    2\gamma \EEE{\text{Gap}(\bar x^\nbiter)}
        \leq&
        \frac{3 D^2}{\nbiter}
        + 2\gamma^2 C_2 \taumix^2\batchbound^{-2}B^{-2}  \sum\limits_{k=0}^{\nbiter-1} \left(\cmax^2 + \Dmax^2 \EEE{\| x^{k+1/2} - x^*\|^2} \right)
        \notag\\
        &
        + 3\gamma^2 C_{1}  \taumix B^{-1} \cdot \frac{1}{\nbiter} \sum\limits_{k=0}^{\nbiter-1} \left(\cmax^2 + \Dmax^2 \EEE{\| x^{k} - x^*\|^2}\right) 
         \notag\\
        &
        + 20\gamma^2 (C_1 \taumix B^{-1}\log_2 \batchbound + (C_1 + 1) \taumix^2 B^{-2} )\cdot \frac{1}{\nbiter} \sum\limits_{k=0}^{\nbiter-1}  \left(\cmax^2 + \Dmax^2 \EEE{\| x^{k+1/2} - x^*\|^2} \right) 
        \notag\\
        &
        - (1 - 3 \gamma^2 L^2)\frac{1}{\nbiter} \sum\limits_{k=0}^{\nbiter-1} \EEE{\| x^{k+1/2} - x^k\|^2}.
    \end{align*}
Again with \Cref{as:boundedset}, we get
    \begin{align*}
    2\gamma \EEE{\text{Gap}(\bar x^\nbiter)}
        \leq&
        \frac{3 D^2}{\nbiter}
        + 2\gamma^2 C_2 \taumix^2\batchbound^{-2}B^{-2} \nbiter  \left(\cmax^2 + \Dmax^2 D^2 \right)
        \notag\\
        &
        + 3\gamma^2 C_{1}  \taumix B^{-1} \left(\cmax^2 + \Dmax^2 D^2\right) 
         \notag\\
        &
        + 20\gamma^2 (C_1 \taumix B^{-1}\log_2 \batchbound + (C_1 + 1) \taumix^2 B^{-2} )\cdot \left(\cmax^2 + \Dmax^2 D^2 \right) 
        \notag\\
        &
        - (1 - 3 \gamma^2 L^2)\frac{1}{\nbiter} \sum\limits_{k=0}^{\nbiter-1} \EEE{\| x^{k+1/2} - x^k\|^2}.
    \end{align*}
With $M = \sqrt{\nbiter}$ and $\gamma \leq (3L)^{-1}$, one can deduce
    \begin{align*}
    2\gamma \EEE{\text{Gap}(\bar x^\nbiter)}
        \leq&
        \frac{3 D^2}{\nbiter}
        + 25\gamma^2 (C_1 \taumix B^{-1}\log_2 \batchbound + (C_1 + C_2 + 1) \taumix^2 B^{-2} )\cdot \left(\cmax^2 + \Dmax^2 D^2 \right) .
    \end{align*}
Substituting $M = \sqrt{\nbiter}$ finishes the proof.
\end{proof}

\section{Basic Facts}

\begin{lemma}[see Lemma 1.2.3 and Theorem 2.1.5 from \cite{nesterov2003introductory}]
\label{lem:smoth}
If $f$ is $L$-smooth in $\R^d$, then for any $x,y\in \R^d$
\begin{equation}
    \label{eq:smothness}
    f(x) - f(y) -  \langle \nabla f(y), x-y \rangle \leq \frac{L}{2} \| x - y\|^2.
\end{equation}
% If convexity is also assumed, then the following inequalities hold for all $x,y\in \R^d$:
% \begin{equation*}
%     \frac{1}{2L}\norm{\nabla f(x) - \nabla f(y)}^2 \leq f(x) - f(y) - \langle \nabla f(y), x-y \rangle,\qquad \forall x,y\in \R^d
% \end{equation*}
% By plugging $y=x^*$, we get for all $x\in \R^d$
% \begin{equation}
%     \label{eq:L-smooth}
%     \norm{\nabla f(x)}^2 \leq 2L(f(x) - f(x^*)).
% \end{equation}
\end{lemma}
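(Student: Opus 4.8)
The plan is to reduce the quadratic bound \eqref{eq:smothness} to the Lipschitz property of the gradient supplied by \Cref{as:lipsh_grad}, via a one-dimensional integration along the segment joining $y$ and $x$. First I would fix $x,y\in\R^d$ and introduce the auxiliary scalar function $\varphi(t)=f\bigl(y+t(x-y)\bigr)$ for $t\in[0,1]$. Since $f$ is differentiable with a Lipschitz (hence continuous) gradient, $\varphi$ is continuously differentiable with $\varphi'(t)=\langle\nabla f(y+t(x-y)),\,x-y\rangle$, so the fundamental theorem of calculus yields $f(x)-f(y)=\varphi(1)-\varphi(0)=\int_0^1\langle\nabla f(y+t(x-y)),\,x-y\rangle\,dt$.

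Next I would subtract the linear term $\langle\nabla f(y),x-y\rangle=\int_0^1\langle\nabla f(y),\,x-y\rangle\,dt$ from both sides, obtaining
\[
f(x)-f(y)-\langle\nabla f(y),x-y\rangle=\int_0^1\langle\nabla f(y+t(x-y))-\nabla f(y),\,x-y\rangle\,dt.
\]
Applying the Cauchy--Schwarz inequality inside the integral and then the $L$-smoothness bound $\|\nabla f(y+t(x-y))-\nabla f(y)\|\le L\,t\,\|x-y\|$ from \Cref{as:lipsh_grad}, I would majorize the integrand by $L\,t\,\|x-y\|^2$. The elementary integral $\int_0^1 L\,t\,\|x-y\|^2\,dt=\tfrac{L}{2}\|x-y\|^2$ then closes the argument and produces exactly \eqref{eq:smothness}.

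Since this is a classical descent-type lemma (cf.\ Lemma~1.2.3 in \cite{nesterov2003introductory}), there is no genuine obstacle here; the only point deserving care is the justification of the fundamental theorem of calculus, which is legitimate precisely because a Lipschitz gradient is in particular continuous, so $\varphi'$ is continuous on $[0,1]$ and the Newton--Leibniz formula applies. I emphasize that neither convexity nor strong convexity (\Cref{as:strong_conv}) enters the argument: only the $L$-smoothness assumption is used, which is consistent with the lemma being stated for general $L$-smooth $f$.
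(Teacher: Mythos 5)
Your argument is correct and is exactly the classical integration-along-the-segment proof of the descent lemma; the paper itself gives no proof but simply cites Lemma~1.2.3 and Theorem~2.1.5 of \cite{nesterov2003introductory}, where this same argument appears. Nothing is missing: only \Cref{as:lipsh_grad} is used, the Newton--Leibniz step is properly justified by continuity of the gradient, and the final integral yields the constant $\tfrac{L}{2}$ as required.
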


\begin{lemma}[Cauchy Schwartz inequality]
\label{lem:CS}
For any $a,b,x_1, \ldots, x_n \in \R^d$ and $c > 0$ the following inequalities hold:
\begin{equation}
  \label{eq:inner_prod}
  2\langle a,b \rangle  \leq \frac{\norm{a}^2}{c} + c
  \norm{b}^2,
\end{equation}
\begin{equation}
  \label{eq:inner_prod_and_sqr}
  \norm{a+b}^2 \leq \left(1 + \frac{1}{c}\right)\norm{a}^2 + (1+c)
  \norm{b}^2,
\end{equation}
\begin{equation}
  \label{eq:sum_sqr}
  \left\|\sum\limits_{i=1}^n x_i\right\|^2 \leq n \cdot \sum\limits_{i=1}^n \|x_i\|^2.
\end{equation}
\end{lemma}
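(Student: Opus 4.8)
The plan is to prove the three inequalities in order, deriving \eqref{eq:inner_prod_and_sqr} from \eqref{eq:inner_prod}, since all three are elementary consequences of the nonnegativity of squared norms together with the inner-product structure of $\R^d$. First I would establish the weighted Young-type inequality \eqref{eq:inner_prod}. The key observation is that for any $c>0$ the vector $c^{-1/2}a - c^{1/2}b$ has nonnegative squared norm; expanding $\norm{c^{-1/2}a - c^{1/2}b}^2 = c^{-1}\norm{a}^2 - 2\langle a,b\rangle + c\norm{b}^2 \geq 0$ and rearranging yields $2\langle a,b\rangle \leq c^{-1}\norm{a}^2 + c\norm{b}^2$ directly.

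Next, for \eqref{eq:inner_prod_and_sqr} I would expand $\norm{a+b}^2 = \norm{a}^2 + 2\langle a,b\rangle + \norm{b}^2$ and bound the cross term $2\langle a,b\rangle$ using the already-established \eqref{eq:inner_prod}. Collecting the coefficients multiplying $\norm{a}^2$ and $\norm{b}^2$ gives exactly $(1+c^{-1})\norm{a}^2 + (1+c)\norm{b}^2$, which is the claim.

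Finally, \eqref{eq:sum_sqr} follows from the convexity of the map $t \mapsto \norm{t}^2$ (equivalently, from Jensen's inequality): writing $\norm{\sum_{i=1}^n x_i}^2 = n^2\,\norm{n^{-1}\sum_{i=1}^n x_i}^2 \leq n^2 \cdot n^{-1}\sum_{i=1}^n \norm{x_i}^2 = n\sum_{i=1}^n \norm{x_i}^2$. Alternatively one could apply the triangle inequality followed by the scalar Cauchy--Schwarz inequality to the vectors $(1,\dots,1)$ and $(\norm{x_1},\dots,\norm{x_n}) \in \R^n$. I do not anticipate any genuine obstacle here, as these are standard elementary facts; the only point requiring mild care is the bookkeeping of the free parameter $c$ in the first two parts, which must be kept positive but otherwise arbitrary so that the inequalities can be specialized (typically with $c=1$) throughout the main proofs.
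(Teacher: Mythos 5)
Your proof is correct, and the paper itself states these inequalities as basic facts without proof, so there is nothing to compare against; your derivation (completing the square for \eqref{eq:inner_prod}, expanding the square and invoking \eqref{eq:inner_prod} for \eqref{eq:inner_prod_and_sqr}, and convexity of $\norm{\cdot}^2$ for \eqref{eq:sum_sqr}) is exactly the standard argument one would supply. All three steps check out, including the expansion $\norm{c^{-1/2}a - c^{1/2}b}^2 = c^{-1}\norm{a}^2 - 2\langle a,b\rangle + c\norm{b}^2 \geq 0$.
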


\begin{lemma}[Jensen's inequality]
\label{lem:Jensen}
If $f$ is a convex function, then for any $n \in \mathbb{N}^*$ and $x_1, \ldots, x_n \in \R^d$ the following inequality holds:
\begin{equation}
  \label{eq:jensen}
  f\left(\frac{1}{n} \sum\limits_{i=1}^n x_i\right) \leq \frac{1}{n} \sum\limits_{i=1}^n f(x_i).
\end{equation}
\end{lemma}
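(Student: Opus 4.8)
The plan is to prove \eqref{eq:jensen} by induction on the number of points $n$, using nothing beyond the defining two-point inequality for a convex function $f$, namely that $f(\lambda u + (1-\lambda) v) \leq \lambda f(u) + (1-\lambda) f(v)$ for all $u, v \in \R^d$ and $\lambda \in [0,1]$. This is the standard elementary argument, and the only free parameter is the choice of convex weights at each step.

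First I would dispatch the base case $n = 1$, for which \eqref{eq:jensen} reduces to the trivial identity $f(x_1) \leq f(x_1)$. For the inductive step I would assume the claim holds for $n-1$ points and establish it for $n$. The key algebraic observation is the decomposition of the average into a convex combination of the average of the first $n-1$ points and the last point:
\begin{equation*}
\frac{1}{n}\sum_{i=1}^n x_i = \frac{n-1}{n}\cdot \Bigl(\frac{1}{n-1}\sum_{i=1}^{n-1} x_i\Bigr) + \frac{1}{n}\, x_n.
\end{equation*}
Applying the two-point convexity inequality with weight $\lambda = (n-1)/n \in [0,1]$ yields
\begin{equation*}
f\Bigl(\frac{1}{n}\sum_{i=1}^n x_i\Bigr) \leq \frac{n-1}{n}\, f\Bigl(\frac{1}{n-1}\sum_{i=1}^{n-1} x_i\Bigr) + \frac{1}{n}\, f(x_n).
\end{equation*}
I would then invoke the inductive hypothesis on the first term to replace $f\bigl((n-1)^{-1}\sum_{i=1}^{n-1} x_i\bigr)$ by $(n-1)^{-1}\sum_{i=1}^{n-1} f(x_i)$; the prefactor $(n-1)/n$ cancels the $1/(n-1)$, and collecting terms gives exactly $\frac{1}{n}\sum_{i=1}^n f(x_i)$, which closes the induction.

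Since the argument is entirely elementary, there is no genuine obstacle. The only points requiring minor care are verifying that the weight $\lambda = (n-1)/n$ lies in $[0,1]$ (immediate for every $n \geq 1$) so that the two-point convexity inequality is legitimately applicable, and checking that the cancellation of the factors $(n-1)/n$ and $1/(n-1)$ in the inductive step is carried out correctly. An alternative route would be to apply convexity directly to the full convex combination with equal weights $1/n$, but the inductive presentation keeps the dependence on $n$ transparent and avoids having to state the multi-point convexity inequality as a separate fact.
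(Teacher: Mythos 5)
Your proof is correct. The paper states this lemma in its ``Basic Facts'' appendix without any proof, so there is no argument of the paper's to compare against; your induction on $n$ --- base case $n=1$ trivial, inductive step via the decomposition $\frac{1}{n}\sum_{i=1}^n x_i = \frac{n-1}{n}\bigl(\frac{1}{n-1}\sum_{i=1}^{n-1} x_i\bigr) + \frac{1}{n}x_n$ and the two-point convexity inequality with weight $\lambda = \frac{n-1}{n}$ --- is the standard elementary argument, and the algebra (the factor $\frac{n-1}{n}$ cancelling $\frac{1}{n-1}$) checks out, so the claim is fully established.
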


\end{document}